\documentclass[final,12pt]{amsart}
    \title[Torsion pairs]{Torsion pairs in a triangulated category generated by a spherical object}

\author{Raquel Coelho Sim\~oes}
\address{Centro de An\'alise Funcional, Estruturas Lineares e Aplica\c{c}\~oes, Faculdade de Ci\^encias da Universidade de Lisboa, Campo Grande, Edif\'icio C6, Piso 2, 1749-016, Lisboa, Portugal}
\email{rcoelhosimoes@campus.ul.pt}

\author{David Pauksztello}
\address{School of Mathematics, The University of Manchester, Oxford Road, Manchester, M13 9PL, United Kingdom.}
\email{david.pauksztello@manchester.ac.uk}

%========================================================================
%Use packages
%========================================================================

\usepackage{amssymb}
\usepackage{latexsym}
\usepackage{amsmath}
\usepackage{mathrsfs}
\usepackage[all]{xy}
\usepackage{enumerate}	
\usepackage{enumitem}
\usepackage{paralist}
\usepackage{graphicx}

\usepackage{verbatim}

%==============================================================================
%  Referencing and hyperreferencing commands
%==============================================================================

\usepackage[usenames,dvipsnames]{xcolor}
\usepackage{hyperref}
\hypersetup{
    colorlinks,
    linkcolor={red!50!black},
    citecolor={blue!50!black},
    urlcolor={blue!80!black}
}

\newcommand{\harxiv}[1]{ \href{http://arxiv.org/abs/#1}{\texttt{arXiv:#1}}}
\newcommand{\hyref}[2]{ \hyperref[#2]{#1~\ref*{#2}} }

\usepackage[notref, notcite]{showkeys} % for draft versions only

% =======================================================================
% Theorem commands
% =======================================================================

\theoremstyle{plain}
\newtheorem{theorem}{Theorem}[section]
\newtheorem{lemma}[theorem]{Lemma}
\newtheorem{corollary}[theorem]{Corollary}
\newtheorem{proposition}[theorem]{Proposition}
\newtheorem{introtheorem}{Theorem}

%The following give theorem-like environments but without italics.
\theoremstyle{definition}
\newtheorem{remark}[theorem]{Remark}

\newtheorem{definition}[theorem]{Definition} % numbered definition.
\newtheorem{definitions}[theorem]{Definitions} % numbered definitions.
\newtheorem{setup}[theorem]{Setup}
\newtheorem*{notation}{Notation}

%Cases
\newcommand{\Case}[1]{\medskip\noindent\emph{Case #1:}}
\newcommand{\Subcase}[1]{\medskip\noindent\emph{Subcase #1:}}

%==============================================================================
%   \mathscr upper case letters 
%==============================================================================

\DeclareMathAlphabet{\mathpzc}{OT1}{pzc}{m}{it}

\newcommand{\cC}{\mathscr{C}}
\newcommand{\cT}{\mathscr{T}}

%==============================================================================
%   Sans serif letters upper case 
%==============================================================================

\newcommand{\sA}{\mathsf{A}}
\newcommand{\sC}{\mathsf{C}}
\newcommand{\sF}{\mathsf{F}}
\newcommand{\sS}{\mathsf{S}}
\newcommand{\sT}{\mathsf{T}}
\newcommand{\sX}{\mathsf{X}}
\newcommand{\sY}{\mathsf{Y}}

%Bounded derived category
\newcommand{\Db}{\mathsf{D}^b}

%==============================================================================
%     Blackboard bold letters upper case 
%==============================================================================

\newcommand{\bN}{\mathbb{N}}
\newcommand{\bZ}{\mathbb{Z}}

%==============================================================================
%   \mathcal upper case letters
%==============================================================================

\newcommand{\mcP}{\mathcal{P}}

%==============================================================================
%    Typewriter uppercase 
%==============================================================================

\newcommand{\tA}{\mathtt{A}}
\newcommand{\tE}{\mathtt{E}}
\newcommand{\tX}{\mathtt{X}}

% =======================================================================
% Math Commands
% =======================================================================

\newcommand{\Tw}{\mathsf{T}_w}
\newcommand{\Cw}{\mathsf{C}_w}
\newcommand{\Cm}{\mathsf{C}_m}
\newcommand{\Ainf}{A_{\infty}}

\renewcommand{\geq}{\geqslant}
\renewcommand{\leq}{\leqslant}

\DeclareMathOperator{\coker}{\mathsf{coker}}

\newcommand{\cone}[1]{\mathsf{cone}(#1)}
\newcommand{\cocone}[1]{\mathsf{cocone}(#1)}
\DeclareMathOperator{\Hom}{\mathsf{Hom}}
\DeclareMathOperator{\Ext}{\mathsf{Ext}}
\DeclareMathOperator{\id}{{\mathsf{id}}}
\newcommand{\ind}[1]{\mathsf{ind}(#1)}
\renewcommand{\mod}[1]{\mathsf{mod}(#1)}
\newcommand{\inj}[1]{\mathsf{inj}(#1)}
\DeclareMathOperator{\add}{\mathsf{add}}
\DeclareMathOperator{\modulo}{\mathrm{mod}}
\DeclareMathOperator{\ext}{\mathrm{ext}}

%For the base field
\newcommand{\kk}{{\mathbf{k}}}

%For the Serre functor
\newcommand{\SSS}{\mathbb{S}}

\newcommand{\thick}[2]{\mathsf{thick}_{#1}(#2)}

\newcommand{\extn}[1]{\langle #1 \rangle}

%==============================================================================
%   Arrows
%==============================================================================

\newcommand{\too}{\longrightarrow}
\newcommand{\rightlabel}[1]{\stackrel{#1}{\longrightarrow}}
\newcommand{\isoto}{\rightlabel{\sim}}
\newcommand*{\longhookrightarrow}{\ensuremath{\lhook\joinrel\longrightarrow}}
\newcommand*{\longtwoheadrightarrow}{\ensuremath{\relbar\joinrel\twoheadrightarrow}}
\newcommand{\intoo}{\longhookrightarrow}
\newcommand{\ontoo}{\longtwoheadrightarrow}

%==============================================================================
%   Distinguished triangles
%==============================================================================

\newcommand{\tri}[3]{#1\rightarrow #2\rightarrow #3\rightarrow \Sigma #1}
\newcommand{\trilabel}[4]{#1\stackrel{#4}{\longrightarrow} #2\longrightarrow #3\longrightarrow \Sigma #1}
\newcommand{\trilabels}[6]{#1\stackrel{#4}{\longrightarrow} #2\stackrel{#5}{\longrightarrow} #3\stackrel{#6}{\longrightarrow} \Sigma #1}

%==============================================================================
%   Ray and coray stuff
%==============================================================================

\newcommand{\rayfrom}[1]{\mathsf{ray}_{\! +}(#1)}
\newcommand{\rayto}[1]{\mathsf{ray}_{\! -}(#1)}
\newcommand{\exray}[2]{\mathsf{exray}_{#1}(#2)}

\newcommand{\coray}[1]{\mathsf{coray}(#1)}            
\newcommand{\corayfrom}[1]{\mathsf{coray}_{\! +}(#1)}
\newcommand{\corayto}[1]{\mathsf{coray}_{\! -}(#1)}
\newcommand{\excoray}[2]{\mathsf{excoray}_{#1}(#2)}

%==============================================================================
%   Hom and Ext hammocks
%==============================================================================

\newcommand{\homfrom}[1]{\mathsf{H}^+(#1)}
\newcommand{\homto}[1]{\mathsf{H}^-(#1)}

\newcommand{\extfrom}[1]{\mathsf{E}^+(#1)}
\newcommand{\extto}[1]{\mathsf{E}^-(#1)}

%==============================================================================
% arcs corresponding to indecomposables
%==============================================================================

\newcommand{\arc}[1]{\mathtt{#1}}

%The following \entrymodifiers line gives baseline alignment in
%xymatrices, as opposed to the default centrepoint alignment.  Note that
%this only works properly if the objects in the diagrams are in
%textstyle.  If they're in scriptstyle, then \textfont2 should be
%changed to \scriptfont2.
\entrymodifiers={+!!<0pt,\fontdimen22\textfont2>}

%The following lines makes it possible to doctor the margins
\setlength{\textwidth}{154mm}
%The default \textheight is (close to) 206mm.
\setlength{\textheight}{246mm}
%textheight changed to get rid of underfull vbox
\addtolength{\oddsidemargin}{-1.5cm}
\addtolength{\evensidemargin}{-1.5cm}
\addtolength{\topmargin}{-20mm}

%==============================================================================
%   Matrices 
%==============================================================================

\newcommand{\sqmat}[4]{{\big[\genfrac{.}{.}{0pt}{1}{#1}{#3} \,
                        \genfrac{.}{.}{0pt}{1}{#2}{#4}\big] }}
\newcommand{\colmat}[2]{{\big[\genfrac{.}{.}{0pt}{1}{#1}{#2}\big] }}
\newcommand{\rowmat}[2]{\big[#1 \ #2\big]}
\newcommand{\rowvec}[3]{\big[#1 \ #2 \ #3\big]}
\newcommand{\rowvecc}[4]{\big[#1 \ #2 \ #3 \ #4\big]}
\newcommand{\colvec}[3]{\Bigg[\begin{smallmatrix}#1\\#2\\#3\end{smallmatrix}\Bigg]}

%===============================================================================
% Xypic commands
%===============================================================================

\newcommand{\arr}{\ar[ur] \ar[dr]}
\newcommand{\arup}{\ar@/^/[u]}
\newcommand{\ardn}{\ar@/^/[d]}

\newcommand{\smxy}[1]{{\text{\tiny$#1$}}}

%==============================================================================
% Fudges
%==============================================================================

\newcommand{\coloneqq}{\mathrel{\mathop:}=}
\newcommand{\eqqcolon}{=\mathrel{\mathop:}}

%===========================================================================
% Combinatorics notation
%===========================================================================

\newcommand{\rfountain}[2]{\mathbb{RF}(#1;#2)}
\newcommand{\lfountain}[2]{\mathbb{LF}(#1;#2)}

%===========================================================================
%Degree of an indecomposable object
%===========================================================================

\newcommand{\dd}[1]{\mathbf{d}(#1)}

%==============================================================================
%Tikz
%==============================================================================

\usepackage{tikz}
	\usetikzlibrary{decorations.pathmorphing}
        \usetikzlibrary{topaths}
        \usetikzlibrary{automata,arrows}
        \usetikzlibrary{shapes.geometric} 

%==============================================================================        
        
\begin{document}

\begin{abstract}
We extend Ng's characterisation of torsion pairs in the $2$-Calabi-Yau triangulated category generated by a $2$-spherical object to the characterisation of torsion pairs in the $w$-Calabi-Yau triangulated category, $\Tw$, generated by a $w$-spherical object for any $w \in \bZ$. Inspired by the combinatorics of $\Tw$ for $w\leq -1$, we also characterise the torsion pairs in certain negative Calabi-Yau  orbit categories of the bounded derived category of the path algebra of Dynkin type $A$.
\end{abstract}

\keywords{Auslander-Reiten theory; Calabi-Yau triangulated category; spherical object; Ptolemy arc; torsion pair.}

\subjclass[2010]{Primary: 05E10, 16G20, 16G70, 18E30; Secondary: 05C10}

\maketitle

{\small
\setcounter{tocdepth}{1}
\tableofcontents
}

%============================================================================
% Introduction
\addtocontents{toc}{\protect{\setcounter{tocdepth}{-1}}}  % No toc entry for Introduction
\section*{Introduction} 
\addtocontents{toc}{\protect{\setcounter{tocdepth}{1}}}   % but enable toc entries for other sections
%============================================================================

Calabi-Yau (CY) triangulated categories are triangulated categories that satisfy an important duality. They are becoming increasingly important throughout  mathematics and physics, for example as $3$-CY categories arising from Calabi-Yau threefolds in algebraic geometry and string theory, to $3$-CY categories arising in representation theory coming from quivers with potential. Of particular importance in representation theory are ($2$-)cluster categories, which provide categorifications of important aspects of the theory of cluster algebras. There are higher analogues, so-called $w$-cluster categories for $w \geq 2$, which are $w$-CY. These give rise to an important family of categories of positive CY dimension which satisfy many interesting and important homological and combinatorial properties. 

Throughout this article $\kk$ will be an algebraically closed field. Let $\sT$ be a $\kk$-linear triangulated category and $w \in \bZ$. An object $s \in \sT$ is \emph{$w$-spherical} if it is a $w$-Calabi-Yau object and its graded endomorphism algebra  is given by
\[
\Hom^{\bullet}(s,s) = \kk[x]/(x^2), \text{ where } x \text{ sits in cohomological degree } -w.
\]
In particular, $s$ has the `same cohomology' as the $w$-sphere.
We refer the reader to Section~\ref{sec:spherical} for a more precise definition.

Let $\Tw$ be a $\kk$-linear triangulated category that is idempotent complete and generated by a $w$-spherical object.
The $\Tw$ constitute a family of categories which are $w$-CY whose structure is sufficiently simple to allow concrete computation.
As such, they provide a `natural laboratory' in which to explore the properties of CY triangulated categories, as witnessed by the intense recent interest in these categories; see \cite{CS13,FY, HJ, Jorgensen-AR-theory, KYZ}. Indeed, for $w\geq 2$, $\Tw$ occurs naturally as a $w$-cluster category of type $\Ainf$.

Owing to their importance and ubiquity, much work has been carried out on understanding triangulated categories of positive CY-dimension. However, very little work has been carried out on understanding the properties of triangulated categories of negative CY-dimension, although there is the beginning of a theory emerging in \cite{CS10, CS11, CS13, HJY}. In \cite{HJY}, it was shown that for $w \geq 1$, the category $\Tw$ has one family of bounded t-structures and no bounded co-t-structures, whilst for $w \leq 0$ the opposite is true. This shows that there are important homological differences between triangulated categories of positive and negative CY dimension.

Both t-structures and co-t-structures are examples of torsion pairs in triangulated categories \cite{IY}. Torsion pairs have long been studied in representation theory in the context of tilting theory to provide important structural information about module categories of finite-dimensional algebras and a means of comparing different categories. In the context of cluster-tilting theory, they can be seen as a generalisation of cluster-tilting objects; furthermore, they admit a mutation theory \cite{ZZ}. Thus characterising and understanding torsion pairs is central to understanding the structure of triangulated categories.

In the context of cluster theory, geometric/combinatorial models are a useful tool, first arising in \cite{CCS, Schiffler}. Combinatorial models for the $\Tw$ were obtained by Holm and J\o rgensen in \cite{HJ1,HJ} and employed by Ng in \cite{Ng} to characterise torsion pairs in $\sT_2$. Building on Ng's ideas, characterisations of torsion pairs have since been given in various settings, see \cite{Baur-Buan-Marsh,HJR-A,HJR-D,HJR-tubes}, and used for detailed studies of their mutation theories \cite{Gratz,ZZ}.

The combinatorial models involve setting up a correspondence between indecomposable objects of the category and certain `admissible' arcs or diagonals of some geometric object.
For $\Tw$ with $w \neq 1$, the combinatorial model consists of `$(w-1)$-admissible' arcs of the $\infty$-gon; see Section~\ref{sec:contra} for precise details.
It is a well-known consequence of the $2$-Calabi-Yau property of cluster categories that the crossing of arcs corresponds to the existence of a non-trivial extension between the corresponding indecomposable objects. Given two crossing arcs, the admissible Ptolemy arcs are defined to be the admissible arcs connecting the endpoints of the two crossing arcs.

We extend Ng's characterisation of torsion pairs for $\sT_2$ to the entire family:

\begin{introtheorem} \label{thm:A}
Let $\sX$ be a full additive subcategory of $\Tw$ for $w \neq 1$ and $\tX$ be the corresponding set of arcs in the appropriate combinatorial model of $\Tw$. Then $(\sX,\sX^{\perp})$ is a torsion pair in $\Tw$ if and only if
\begin{compactenum}
\item for $w \geq 2$, any so-called `right fountain' in $\tX$ is a so-called `left fountain' and $\tX$ is closed under taking admissible Ptolemy arcs.
\item for $w \leq 0$, any left fountain in $\tX$ is a right fountain and $\tX$ is closed under taking admissible Ptolemy arcs and `modified Ptolemy' arcs. 
\end{compactenum}
\end{introtheorem}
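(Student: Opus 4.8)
The plan is to follow the strategy initiated by Ng for $\sT_2$ --- translating the axioms of a torsion pair into the combinatorial $\infty$-gon model and verifying them arc by arc --- adapting it to the $\Hom$- and $\Ext$-hammock shapes of a general $\Tw$ and, in the negative Calabi--Yau regime, to the modified Ptolemy arcs. The cases $w\geq 2$ and $w\leq 0$ are run in parallel, the differences being confined to those hammock shapes. Recall that, since $\sX^{\perp}$ is cut out by a $\Hom$-vanishing condition, $(\sX,\sX^{\perp})$ is a torsion pair precisely when $\Sigma\sX\subseteq\sX$ and every indecomposable $M\in\Tw$ fits into a triangle $\trilabel{X}{M}{Y}{f}$ with $X\in\sX$ and $Y\in\sX^{\perp}$; such an $f$ is then automatically a right $\sX$-approximation of $M$. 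Using the description of the model recalled in Section~\ref{sec:contra} --- in particular the identification of crossings of $(w-1)$-admissible arcs with non-split extensions, together with the explicit hammocks $\homfrom{-},\homto{-},\extfrom{-},\extto{-}$ --- the subcategory $\sX^{\perp}$ becomes an explicit hammock-avoidance condition on arcs relative to $\tX$, and $\Sigma\sX\subseteq\sX$ is read off from the combinatorial action of $\Sigma$ on arcs. Everything thus reduces to a combinatorial analysis of the approximation triangles.

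For necessity, assume $(\sX,\sX^{\perp})$ is a torsion pair; then $\sX$ is closed under extensions and under $\Sigma$. The Ptolemy relation --- two crossing arcs $\arc{a},\arc{b}\in\tX$ fit into a non-split triangle whose middle term is a direct sum of admissible Ptolemy arcs of the pair, and symmetrically with $\arc{a}$ and $\arc{b}$ interchanged --- then forces $\tX$ to contain every admissible Ptolemy arc of every crossing pair; for $w\leq 0$ the extra triangles coming from the degree-shifted (negative Serre) morphism spaces additionally force the modified Ptolemy arcs. The fountain condition is obtained by contradiction: were $\tX$ to contain a right fountain that is not a left fountain (and dually for $w\leq 0$), one exhibits an indecomposable $M$ for which the candidate approximating arcs in $\tX$ form an infinite non-stabilising family, so that no triangle $\trilabel{X}{M}{Y}{f}$ with $Y\in\sX^{\perp}$ can exist; the asymmetry between the $w\geq 2$ and $w\leq 0$ statements records that $\Sigma$ moves arcs across the $\infty$-gon in opposite directions in the positive and negative regimes.

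For sufficiency, suppose $\tX$ is closed under admissible Ptolemy arcs and satisfies the appropriate fountain condition (and, for $w\leq 0$, under modified Ptolemy arcs). Put $\sY\coloneqq\sX^{\perp}$; the vanishing $\Hom(\sX,\sY)=0$ and the inclusion $\Sigma\sX\subseteq\sX$ hold by construction, so it remains to produce, for each indecomposable $M\notin\sX$, a triangle $\trilabel{X}{M}{Y}{f}$ with $X\in\sX$ and $Y\in\sY$. Working in the relevant hammock around the arc of $M$, iterated Ptolemy closure yields a well-defined maximal configuration $X$ --- a finite direct sum of arcs of $\tX$ --- together with a map $f\colon X\to M$; one checks from the hammock descriptions that $f$ is a right $\sX$-approximation and that $\cone{f}$ crosses no arc of $\tX$, hence lies in $\sY$. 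When the relevant family of arcs fails to stabilise it is precisely a fountain in $\tX$, and the fountain condition supplies the complementary fountain, which furnishes the approximation directly (this is where modified Ptolemy arcs enter for $w\leq 0$). This gives $\Tw=\sX\ast\sY$, completing the argument.

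The main obstacle is the sufficiency direction: showing that the combinatorially defined maximal configuration genuinely computes a right $\sX$-approximation whose cone avoids every crossing with $\tX$, and carrying through without gaps the ensuing case analysis --- over the position of $M$ relative to $\tX$, the various fountain configurations, and (for $w\leq 0$) the interplay of ordinary and modified Ptolemy arcs. A secondary, prerequisite difficulty is pinning down the precise $\Hom$- and $\Ext$-hammocks in $\Tw$ for every $w\neq 1$, since these simultaneously govern the description of $\sX^{\perp}$, the admissible Ptolemy arcs, and the stabilisation behaviour that the fountain condition is designed to repair, and they differ qualitatively between the positive and negative Calabi--Yau cases.
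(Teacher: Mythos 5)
Your opening reduction contains a fundamental error: you assert that $(\sX,\sX^{\perp})$ is a torsion pair precisely when $\Sigma\sX\subseteq\sX$ \emph{and} every indecomposable fits into an approximation triangle. The condition $\Sigma\sX\subseteq\sX$ is the defining condition of a \emph{t-structure}, not of a torsion pair; the correct characterisation (Proposition~\ref{prop:Iyama-Yoshino}, due to Iyama--Yoshino) is that $\sX$ be extension-closed and contravariantly finite with $\sY=\sX^{\perp}$, with no suspension-closure requirement at all. Your sufficiency argument compounds this by asserting that $\Sigma\sX\subseteq\sX$ ``holds by construction'' from Ptolemy closure and the fountain condition, which is false: take $\sX=\add(a)$ for a single indecomposable $a$ with $\Ext^1(a,a)=0$; then $\tX$ is trivially Ptolemy-closed and fountain-free, $(\sX,\sX^{\perp})$ is a torsion pair, yet $\Sigma a\notin\add(a)$. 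With the extra $\Sigma$-closure requirement your argument would at best classify (a subset of) t-structures, not torsion pairs, so the logical framework of the proof does not match the statement being proved.

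Two further, more technical, gaps. First, you write as though every crossing of admissible arcs corresponds to a non-split extension and every Ptolemy arc of a crossing pair is admissible. Both fail for $w\neq 2$: for $w\geq 3$ and $w\leq 0$ a crossing of arcs forces $\Ext^1\neq 0$ only under an extra arithmetic condition (Propositions~\ref{Extnonzerowpositive} and~\ref{cor:Extnonzerownegative}), and even when extensions exist, some Ptolemy arcs of class I fail to be admissible (Proposition~\ref{prop:middletermsarcs} and Lemma~\ref{lemma:extvanishnotadmissiblePtolemyarcs}). The identification $\tE(\arc{a},\arc{b})=\{\text{admissible Ptolemy arcs}\}$ therefore requires a nontrivial argument, not a straightforward extrapolation of the 2-Calabi--Yau picture. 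Second, your sufficiency argument --- constructing approximation triangles directly from a ``maximal configuration'' in the hammock --- is left entirely unspecified and is substantially harder than the route the paper takes: use Proposition~\ref{prop:Iyama-Yoshino} in the converse direction, show Ptolemy closure is equivalent to extension-closedness (Theorems~\ref{thm:extensionclosurewneq01} and~\ref{thm:extensionclosureT0}, via the graphical calculus of Section~\ref{sec:extensions} and the reduction to indecomposable outer terms in Section~\ref{sec:non-indecouterterms}), and show the fountain condition is equivalent to contravariant finiteness (Corollary~\ref{cor:contravariant-finiteness}). Even for $w=2$, where Ng's direct construction works, the paper had to patch an unexplained step in the composition of backward maps (Lemma~\ref{lem:backward}); reproducing a direct construction for all $w$ without that machinery would need to confront the same issues.
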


See Sections~\ref{sec:Ptolemy} and \ref{sec:Ptolemy-0} for precise statements.
The statement for $w \geq 2$ is somewhat expected, although it is not a completely straightforward generalisation of Ng's characterisation for $w=2$ in \cite{Ng} because crossings of arcs instead correspond to the existence of some higher extension instead of simply extensions, which requires a substantially different approach from \cite{Ng}. The case $w=1$ is degenerate and does not admit such a combinatorial model; it is treated in the short Section ~\ref{sec:extensions-in-T1}. However, surprisingly, for $w \leq 0$ there is a pleasant combinatorial description using this combinatorial model.

As observed in \cite{CS13}, it turns out that the combinatorial model for $\Tw$ when $w < 0$ induces a combinatorial model on another important $w$-CY category, namely the following orbit category: $\Cw(A_n) \coloneqq \Db(\kk A_n)/\Sigma^{1-w}\tau$ for $w\leq -1$. When $w=-1$ the maximal rigid objects of this category are classified in \cite{CS11} using a different combinatorial model. With the combinatorial model of \cite{CS11}, the characterisation of torsion pairs in $\sC_{-1}(A_n)$ proved intractable. However, with the induced combinatorial model, the characterisation is tractable and gives us our second main result.

\begin{introtheorem} \label{thm:B}
Let $\sX$ be a full additive subcategory of $\Cw(A_n)$ for $w\leq -1$ and $\tX$ be the corresponding set of arcs in the combinatorial model for $\Cw(A_n)$. Then $(\sX, \sX^{\perp})$ is a torsion pair in $\Cw(A_n)$ if and only if $\tX$ is closed under taking admissible Ptolemy arcs and `modified' Ptolemy arcs.
\end{introtheorem}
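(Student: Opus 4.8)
The plan is to transport the combinatorial argument used for Theorem~\ref{thm:A}(2) into the periodic combinatorial model for $\Cw(A_n)$ induced from that of $\Tw$ (see \cite{CS13}), while exploiting the fact that $\Cw(A_n)$, being an orbit category of $\Db(\kk A_n)$, has only finitely many indecomposables. The first step is to extract from that model the three ingredients on which everything depends: (i) the shape of the Auslander-Reiten triangles and of the morphism hammocks $\Hom(X,-)$, read off directly from the arcs; (ii) the $w$-Calabi-Yau duality, which translates the condition that $\arc x$ crosses $\arc y$ into the non-vanishing of $\Hom(X,\Sigma^{j}Y)$ for a specific integer $j=j(\arc x,\arc y)$ determined by the relative positions of the two arcs; and (iii) the identification of the admissible Ptolemy and modified Ptolemy arcs of a crossing pair with the indecomposable direct summands of the middle terms of the associated \emph{Ptolemy triangles}. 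Since $\Cw(A_n)$ has finitely many indecomposables it contains neither right nor left fountains, so the fountain clause of Theorem~\ref{thm:A}(2) is vacuous here and only the two closure conditions survive; finiteness also makes the inductions below terminate automatically.

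For the \emph{only if} direction, suppose $(\sX,\sX^\perp)$ is a torsion pair, let $\arc a,\arc b\in\tX$ be crossing arcs, and let $\arc p$ be an admissible Ptolemy or modified Ptolemy arc of the pair $\{\arc a,\arc b\}$, corresponding to an indecomposable $P$. Since a torsion pair in the sense of \cite{IY} satisfies $\sX={}^{\perp}(\sX^\perp)$, it suffices to show that every morphism from $P$ to an object of $\sX^\perp$ vanishes, which I would do contrapositively: by (i) a non-zero map $P\to Z$ constrains $\arc z$ to an explicit hammock region around $\arc p$; a short analysis of the relative positions of $\arc a,\arc b,\arc p,\arc z$ then forces $\arc z$ to cross $\arc a$ or $\arc b$; and by (ii), together with a second use of Calabi-Yau duality, this yields a non-zero morphism from $A$ or $B$ (which lie in $\sX$) into $Z$, contradicting $Z\in\sX^\perp$. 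The wrap-around crossings permitted by the periodic model, and the separate geometry of the modified Ptolemy arcs, each require their own sub-case but introduce no new idea.

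The \emph{if} direction is where I expect the real work to lie. Assuming $\tX$ is closed under admissible Ptolemy and modified Ptolemy arcs, the vanishing $\Hom(\sX,\sX^\perp)=0$ is automatic, so by \cite{IY} the task reduces to showing $\Cw(A_n)=\sX*\sX^\perp$: for each indecomposable $M\leftrightarrow\arc m$ one must exhibit a triangle $\tri{X}{M}{Y}$ with $X\in\sX$ and $Y\in\sX^\perp$. I would build it by an explicit algorithm: if $\arc m\in\tX$, take $X=M$; otherwise pick $\arc a\in\tX$ crossing $\arc m$ that is extremal for a suitable order on the arcs of $\tX$ crossing $\arc m$, form the triangle in which $\arc a$ (or its relevant shift) maps to $M$, pass to the cone, and iterate on its indecomposable summands. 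The arcs created along the way are precisely the Ptolemy or modified Ptolemy arcs of the intervening crossing pairs, so closure forces them either into $\tX$ or into a position with strictly smaller crossing parameter; finiteness of $\Cw(A_n)$ then guarantees termination, leaving a cone whose arcs lie in $\tX^\perp$. The main obstacle, I expect, is to choose the extremality condition so that this parameter genuinely decreases in the negative Calabi-Yau regime, where both \emph{positive} and \emph{negative} extensions must be resolved and where the modified Ptolemy arcs are exactly what is needed to absorb the negative-degree part, and then to check that the resulting $Y$ admits no morphism from \emph{any} object of $\sX$, not merely from $M$ and from the $A$ chosen first. These are the points where the argument for $w\leq 0$ in Section~\ref{sec:Ptolemy-0} must be adapted rather than quoted, and where the method of \cite{Ng} for the $2$-Calabi-Yau case does not apply directly.
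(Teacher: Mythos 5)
Your proposal misses the structural shortcut the paper takes, and as a result bites off a much harder problem than necessary in the ``if'' direction; it also overlooks a subtlety about transporting the combinatorics from $\Tw$ to $\Cw(A_n)$.

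The paper's route is: since $\Cm(A_n)$ has only finitely many indecomposables, \emph{every} full additive subcategory is automatically contravariantly finite; by Proposition~\ref{prop:Iyama-Yoshino}, $(\sX,\sX^\perp)$ is then a torsion pair if and only if $\sX$ is extension-closed; and Theorem~\ref{thm:orbit-extension-closure} identifies extension closure with closure of $\tX$ under Ptolemy $m$-diagonals of classes~I and~II. That is the whole reduction. By choosing to verify the torsion pair axioms directly --- showing $\Cw(A_n)=\sX*\sX^\perp$ by an explicit algorithm --- you are effectively reproving the hard implication of Iyama--Yoshino's criterion in this special case. You yourself flag the two places where this breaks down: you do not show the termination parameter genuinely decreases, and you do not show the eventual cone $Y$ is $\sX^\perp$-local (i.e.\ receives no nonzero map from \emph{any} object of $\sX$, not just from the objects that appeared during the construction). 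Neither point is routine; the clean way to get both is precisely to establish extension closure and contravariant finiteness and invoke Proposition~\ref{prop:Iyama-Yoshino}, which is what the paper does.

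The ``only if'' direction in your proposal also takes an unnecessary detour. If $(\sX,\sX^\perp)$ is a torsion pair then $\sX$ is extension-closed and closed under direct summands; a Ptolemy $m$-diagonal $\arc{p}$ corresponds to a direct summand of the middle term of a triangle with outer terms in $\sX$, so the corresponding object lies in $\sX$ immediately. Your alternative --- showing $\Hom(P,\sX^\perp)=0$ by tracking crossings of $\arc{z}$ against $\arc{a}$ and $\arc{b}$ --- requires a case analysis you do not carry out, and in the negative Calabi--Yau regime crossings detect higher extensions rather than morphisms, so the ``second use of Calabi--Yau duality'' needs to be made precise.

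Finally, your ingredient~(iii) --- that the admissible Ptolemy and modified Ptolemy arcs of a crossing pair are exactly the middle-term summands in $\Cw(A_n)$ --- cannot simply be transported from $\Tw$. The equivalence of $\Cm$ with the full subcategory $\cC_w$ of $\Tw$ given in~\cite{CS13} is \emph{not} a triangle equivalence, and Remark~\ref{rmk:torsionpairsCvsT} gives an explicit example where $\Ext^1_{\sT_{-1}}$ vanishes but $\Ext^1_{\sC_2}$ does not. The paper therefore proves a separate graphical calculus for $\Cm$ (Proposition~\ref{prop:graphicalcalculusorbitcat}), by choosing a fundamental domain in $\Db(\kk A_n)$ and computing case by case, together with an analogue of Theorem~\ref{thm:extensionsnonindecomposable} for decomposable outer terms; this is a substantial portion of the argument and is not available ``off the shelf'' from the $\Tw$ computations as your proposal suggests.
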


It is our viewpoint that for $w \leq -1$ the categories $\Cw(A_n)$ are naturally $w$-CY, i.e. natural examples of triangulated categories having negative CY dimension. However, even in the case $w=-1$ there is some debate on the CY dimension of these categories. For example, in \cite{Dugas}, Dugas takes the CY dimension to be defined as the \emph{least positive} integer $d$ such that $\Sigma^d$ is (isomorphic to) the Serre functor. According to this definition, the CY dimension of $\sC_{-1}(A_n)$ is $2n-1$; \cite[Theorem 6.1]{Dugas}. Note, however, that in $\sC_{-1}(A_n)$, the inverse suspension $\Sigma^{-1}$ is also isomorphic to the Serre functor of $\sC_{-1}(A_n)$.

In contrast, by \cite[Proposition 2.8]{HJY}, $\Tw$ is unambiguously $w$-CY.
It was argued in \cite{CS13} that for $\Cw(A_n)$, with $w \leq -1$, the `correct' CY dimension should be $w$, owing to similarities in the combinatorics of so-called $w$-Hom-configurations in the categories $\Cw(A_n)$ and $\Tw$. We believe the similarities in the combinatorics of torsion pairs in Theorems~\ref{thm:A} and \ref{thm:B} provide further support for this viewpoint. Moreover, we believe that this means triangulated categories of negative CY dimension are more widespread than previously believed, and warrant further, systematic, study. This article should be considered as a step in this direction.

\subsection*{Acknowledgments.} 
This paper was begun while both authors were at the Leibniz Universit\"at Hannover. 
The first author gratefully acknowledges the financial support of the Riemann Center for Geometry and Physics during her stay in Hannover. 
RCS would also like to thank Funda\c{c}\~ao para a Ci\^encia e Tecnologia, for financial support through the grant SFRH/BPD/90538/2012. 
DP gratefully acknowledges the support of the EPSRC through the grant EP/K022490/1. 
We would like to thank the referee for a careful reading and useful comments and suggestions.

%=============================================================================
%Section
\section{Torsion pairs, extension closure and functorial finiteness} \label{sec:torsion}
%=============================================================================

A triangulated category $\sT$ is called \emph{Krull-Schmidt} if every object $t$ admits a direct sum decomposition $t=t_1 \oplus \cdots \oplus t_n$ into indecomposable objects, which is unique up to reordering and isomorphism. 
We shall denote the collection of (isomorphism classes of) indecomposable objects by $\ind{\sT}$. Throughout this paper all categories will be Krull-Schmidt and all subcategories will be full and additive.

A \emph{torsion pair} in $\sT$ consists of a pair of full subcategories $(\sX,\sY)$, which are closed under direct summands, and satisfy $\Hom_{\sT}(\sX,\sY)=0$ and $\sX * \sY = \sT$, 
where 
\begin{equation*} \label{star-product}
\sX * \sY \coloneqq \{t \in \sT \mid \exists \ \tri{x}{t}{y} \text{ with } x\in \sX \text{ and } y \in \sY\}.
\end{equation*}
A torsion pair is called a \emph{t-structure} when $\Sigma \sX \subseteq \sX$ ($\Leftrightarrow \Sigma^{-1} \sY \subseteq \sY$); see \cite{BBD}. It is called a \emph{co-t-structure} (or \emph{weight structure}) when $\Sigma^{-1} \sX \subseteq \sX$ ($\Leftrightarrow \Sigma \sY \subseteq \sY$); see \cite{Bondarko, P}.
If $\sT$ is Krull-Schmidt, a torsion pair $(\sX,\sY)$ is called \emph{split} if for any $t \in \ind{\sT}$ we have either $t \in \sX$ or $t \in \sY$. 

A subcategory $\sX$ of $\sT$ is \emph{closed under extensions} or \emph{extension-closed} if given any distinguished triangle $\tri{x'}{x}{x''}$ in $\sT$ with $x',x'' \in \sX$ then $x\in \sX$. The object $x$ will be called the \emph{middle term} of the extension. We denote by $\extn{\sX}$ the smallest extension-closed subcategory of $\sT$ containing $\sX$.

Let $\sC$ be any category and $\sA$ be a subcategory. 
A morphism $f\colon a \to c$ is called a \emph{right $\sA$-approximation} of $c$ if the induced map $\Hom_{\sC}(a',f)\colon \Hom_{\sC}(a',a) \to \Hom_{\sC}(a',c)$ is surjective for each object $a'$ of $\sA$. 
In the case that any object of $\sC$ admits a right $\sA$-approximation we say that $\sA$ is a \emph{contravariantly finite subcategory of $\sC$}. There are dual notions of \emph{left $\sA$-approximation} and \emph{covariantly finite}. If $\sA$ is both contra- and covariantly finite, $\sA$ is called \emph{functorially finite}.
Right (resp. left) $\sA$-approximations are often called  $\sA$-precovers (resp. $\sA$-preenvelopes). 

These concepts are linked by the following proposition.

\begin{proposition}[{\cite[Proposition 2.3]{IY}}]
\label{prop:Iyama-Yoshino}
Let $\kk$ be an algebraically closed field and $\sT$ be a $\kk$-linear, Krull-Schmidt and Hom-finite triangulated category. The following conditions are equivalent:
\begin{compactenum}
\item $(\sX,\sY)$ is a torsion pair;
\item $\sX$ is an extension-closed contravariantly finite subcategory of $\sT$ and $\sY = \sX^{\perp}$;
\item $\sY$ is an extension-closed covariantly finite subcategory of $\sT$ and $\sX = {}^{\perp}\sY$.
\end{compactenum}
\end{proposition}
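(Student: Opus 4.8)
The plan is to prove $(1)\Leftrightarrow(2)$ directly and then deduce $(2)\Leftrightarrow(3)$ by applying $(1)\Leftrightarrow(2)$ to the opposite category $\sT^{\mathrm{op}}$, which is again $\kk$-linear, Hom-finite, Krull-Schmidt and triangulated. This uses the standard dictionary between $\sT$ and $\sT^{\mathrm{op}}$: a pair $(\sX,\sY)$ is a torsion pair in $\sT$ if and only if $(\sY,\sX)$ is one in $\sT^{\mathrm{op}}$ (distinguished triangles in $\sT^{\mathrm{op}}$ are those of $\sT$ read backwards, so $\sX*\sY=\sT$ translates into $\sY *_{\sT^{\mathrm{op}}} \sX = \sT^{\mathrm{op}}$); being extension-closed is self-dual; right approximations in $\sT^{\mathrm{op}}$ are left approximations in $\sT$; and $\sY^{\perp}$ computed in $\sT^{\mathrm{op}}$ is ${}^{\perp}\sY$ computed in $\sT$. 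So it suffices to treat $(1)\Leftrightarrow(2)$.

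For $(1)\Rightarrow(2)$, assume $(\sX,\sY)$ is a torsion pair and fix, for $t\in\sT$, a triangle $x\stackrel{f}{\too}t\stackrel{g}{\too}y\too\Sigma x$ with $x\in\sX$, $y\in\sY$. The inclusion $\sY\subseteq\sX^{\perp}$ is the vanishing $\Hom_{\sT}(\sX,\sY)=0$; conversely, if $t\in\sX^{\perp}$ then $f$ lies in $\Hom_{\sT}(x,t)=0$, so $g$ is a split monomorphism and $t$ is a direct summand of $y\in\sY$, hence $t\in\sY$. Next, $f$ is a right $\sX$-approximation: any $x'\to t$ with $x'\in\sX$ becomes zero after composing with $g$ (as $\Hom_{\sT}(\sX,\sY)=0$), hence factors through $f$; in particular $\sX$ is contravariantly finite. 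Finally $\sX$ is extension-closed: given a triangle $x'\to m\to x''\to\Sigma x'$ with $x',x''\in\sX$, apply the torsion pair to $m$ to get $x_m\to m\stackrel{g_m}{\too}y_m\to\Sigma x_m$; then $x'\to m\stackrel{g_m}{\too}y_m$ vanishes, so $g_m$ factors through $x''\to y_m$, which also vanishes, whence $g_m=0$, the map $x_m\to m$ is a split epimorphism, and $m$ is a direct summand of $x_m\in\sX$.

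For $(2)\Rightarrow(1)$, one has $\Hom_{\sT}(\sX,\sX^{\perp})=0$ by definition, and both $\sX^{\perp}$ and $\sX$ are closed under direct summands, so everything comes down to $\sX*\sX^{\perp}=\sT$. Fix $t\in\sT$; using Hom-finiteness and Krull-Schmidt, choose a \emph{minimal} right $\sX$-approximation $f\colon x\to t$ and complete it to a triangle $x\stackrel{f}{\too}t\stackrel{g}{\too}y\stackrel{h}{\too}\Sigma x$. We claim $y\in\sX^{\perp}$. Let $c\in\sX$ and $\phi\colon c\to y$, and form the base change (homotopy pullback) of this triangle along $\phi$: by the octahedral axiom one gets a triangle $x\stackrel{a}{\too}d\too c\stackrel{h\phi}{\too}\Sigma x$ together with a morphism of triangles down to $x\stackrel{f}{\too}t\stackrel{g}{\too}y\stackrel{h}{\too}\Sigma x$ restricting to the identity on $x$ and on $\Sigma x$ and to $\phi$ on $c$; write $\mu\colon d\to t$ for its middle component, so that $\mu a=f$. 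Since $x,c\in\sX$ and $\sX$ is extension-closed, $d\in\sX$, so $\mu=f\nu$ for some $\nu\colon d\to x$, and then $f(\nu a)=\mu a=f$; right-minimality of $f$ forces $\nu a$ to be an automorphism of $x$. Hence $a$ is a split monomorphism, so the connecting morphism $h\phi$ of its triangle vanishes, $\phi$ factors through $g$, and then — because $c\in\sX$ — through $f$, so $\phi=0$. Therefore $y\in\sX^{\perp}$ and the triangle $x\to t\to y\to\Sigma x$ places $t$ in $\sX*\sX^{\perp}$.

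The main obstacle is exactly this last step: a single, not-necessarily-minimal, right $\sX$-approximation does not in general put its cone into $\sX^{\perp}$, essentially because $\sX$ need not be closed under $\Sigma$ or $\Sigma^{-1}$, so one genuinely needs two hypotheses to cooperate — extension-closure of $\sX$, to keep the base-change object $d$ inside $\sX$ so the approximation property can be applied to $\mu$, and right-minimality of $f$, available because $\sT$ is Hom-finite and Krull-Schmidt, to promote the identity $f(\nu a)=f$ into a splitting of $a$. The facts I would record as separate routine lemmas are: existence of minimal right approximations in a Hom-finite Krull-Schmidt $\kk$-category; that a distinguished triangle whose first morphism is a split monomorphism has vanishing connecting morphism (equivalently, its other two morphisms split off the corresponding summands); and the base-change construction of triangles via the octahedral axiom.
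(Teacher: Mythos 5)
The paper does not supply its own proof of this proposition: it is quoted verbatim from Iyama--Yoshino \cite{IY}, so there is no in-text argument to compare against. Your write-up is nevertheless essentially the standard argument, which is also the one behind \cite[Proposition~2.3]{IY}. Reducing $(1)\Leftrightarrow(3)$ to $(1)\Leftrightarrow(2)$ by passing to $\sT^{\mathrm{op}}$ is fine; $(1)\Rightarrow(2)$ is routine and you do it cleanly; and $(2)\Rightarrow(1)$ is the triangulated Wakamatsu lemma: pick a \emph{minimal} right $\sX$-approximation $f\colon x\to t$, base-change its triangle along a test map $\phi\colon c\to y$ with $c\in\sX$, use extension-closure to keep the homotopy pullback $d$ in $\sX$, use the approximation property and right-minimality to make $a\colon x\to d$ a split monomorphism, and conclude $\phi=0$. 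The auxiliary facts you defer to lemmas --- existence of minimal right approximations in a $\kk$-linear Hom-finite Krull--Schmidt category, vanishing of the connecting map when the first arrow of a triangle is a split mono, and base-change of triangles via (TR3) after a rotation --- are all standard and applied correctly.

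One small gap: in $(2)\Rightarrow(1)$ you assert, without justification, that $\sX$ is closed under direct summands, which is part of the paper's definition of a torsion pair. Extension-closure does not imply closure under summands, and your own construction also tacitly needs it: the right-minimal part of a right $\sX$-approximation has source a direct \emph{summand} of an object of $\sX$, so to even have a minimal right $\sX$-approximation (with source in $\sX$) one already wants $\sX$ closed under summands. This is harmless --- it is a standing convention in \cite{IY} --- but it should be stated explicitly, either as a blanket assumption on subcategories or as an added hypothesis in $(2)$. With that caveat recorded, the proof is correct.
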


%=============================================================================
% SECTION
\section{Triangulated categories generated by $w$-spherical objects} \label{sec:spherical}
%=============================================================================

Let $\kk$ be an algebraically closed field. Let $\sT$ be a $\kk$-linear triangulated category and $w \in \bZ$.  An object $s$ of $\sT$ is called \emph{$w$-spherical} \cite{Seidel-Thomas} if it satisfies the following axioms:
\begin{compactitem}
\item[(S1)] \label{spherelike} it is a \emph{$w$-spherelike} object \cite{HKP}, i.e.
\[
\Hom_{\sT}(s, \Sigma^i s) = 
\left\{
\begin{array}{ll}
\kk          & \text{if } i=0,w; \\
0            & \text{otherwise, and}
\end{array}
\right.
\]
\item[(S2)] \label{CY} it is a \emph{$w$-Calabi-Yau} object ($w$-CY, for short), i.e. there is a functorial isomorphism $\Hom_{\sT}(s,t) \simeq D\Hom_{\sT}(t,\Sigma^w s)$, where $t \in \sT$ and $D(-)\coloneqq \Hom_{\kk}(-,\kk)$ is the usual vector space duality.
\end{compactitem}

An object $s$ \emph{generates} $\sT$ if $\thick{\sT}{s} = \sT$, i.e. the smallest triangulated subcategory containing $s$ that is also closed under direct summands is $\sT$.

Let $\Tw$ be a $\kk$-linear triangulated category that is idempotent complete and generated by a $w$-spherical object. By \cite[Theorem 2.1]{KYZ}, $\Tw$ is unique up to triangle equivalence. Thus, we shall refer to $\Tw$ as \emph{the triangulated category generated by a $w$-spherical object}. The categories $\Tw$ satisfy many nice properties:
\begin{compactitem}
\item $\Tw$ is Hom-finite and Krull-Schmidt;
\item $\Tw$ has a Serre functor $\SSS$, i.e. a functor $\SSS\colon \Tw \to \Tw$ satisfying a functorial isomorphism $\Hom_{\Tw}(x,y) \simeq D\Hom_{\Tw}(y,\SSS x)$ for all $x,y\in \Tw$. Moreover, $\SSS = \Sigma \tau$, where $\tau$ is the Auslander--Reiten translate in $\Tw$.
\item $\Tw$ is  $w$-CY, i.e. $\SSS \simeq \Sigma^w$ and all objects $s,t \in \Tw$ satisfy (S2). 
\end{compactitem}

\subsection{The AR quiver of $\Tw$}

 For background on Auslander--Reiten (AR) theory we direct the reader to \cite{ASS}, \cite{ARS} and, in the triangulated setting \cite{Happel}.

The structure of the AR quiver of $\Tw$ was described in \cite{HJY} by using a model of $\Tw$ as a thick subcategory of the derived category of a certain differential graded algebra. 
The indecomposable objects of $\Tw$ and the form of the AR quiver of $\Tw$  was determined for $w \geq 2$ in \cite[Theorem 8.13]{Jorgensen-AR-theory} and for general $w$ in \cite[Section 3.3]{FY}. We summarise this below using the notation from \cite{HJY}.

\begin{proposition}
The indecomposable objects of $\Tw$ are precisely the (co)suspensions of a family of objects $X_r$ for $r\geq 0$.
If $w\neq 1$ then the AR quiver of $\Tw$ consists of $|w-1|$ copies of $\bZ A_{\infty}$. If $w=1$ then the AR quiver consists of a homogeneous tube and all its (co)suspensions. See Figure~\ref{fig:AR-quiver1}.
\end{proposition}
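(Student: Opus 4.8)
The plan is to reduce to a concrete model and then build the indecomposables explicitly.

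First I would invoke \cite[Theorem 2.1]{KYZ} and work throughout in the model of $\Tw$ used in \cite{HJY}, namely a thick subcategory of the derived category of the (differential graded) endomorphism algebra of a $w$-spherical object $s$; write $\Sigma$ for the suspension and normalise so that $X_0 \coloneqq s$. Using axiom (S1) and the Calabi-Yau property (S2) to produce essentially unique nonzero morphisms between $s$ and its suspensions, define $X_{r+1}$ inductively as a mapping cone relating $X_r$ to a suitable suspension of $s$, so that $X_r$ is the ``string object of length $r+1$''. One checks along the way that each $X_r$ is indecomposable, which drops out of the Hom computation below since $\Hom_{\Tw}(X_r,X_r)$ will be seen to be local. (Hom-finiteness and the Krull--Schmidt property are already recorded in Section~\ref{sec:spherical}.)

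The technical heart is the computation of $\Hom_{\Tw}(\Sigma^i X_r, \Sigma^j X_t)$ for all $i,j \in \bZ$ and all $r,t \geq 0$, carried out by induction on $r+t$: one applies $\Hom_{\Tw}(-,\Sigma^j X_t)$ and $\Hom_{\Tw}(\Sigma^i X_r,-)$ to the defining triangles and chases the resulting long exact sequences, the base case $r=t=0$ being (S1). From this one obtains the \emph{completeness} of the list: any morphism between objects of the form $\Sigma^i X_r$ is, up to composition with automorphisms, a ``standard'' map whose cone decomposes as a finite direct sum of objects $\Sigma^k X_m$. Hence $\add\{\Sigma^i X_r \mid i \in \bZ,\ r \geq 0\}$ is closed under cones and direct summands; being thick and containing $s$, it equals $\thick{\Tw}{s} = \Tw$, so every indecomposable of $\Tw$ is a (co)suspension of some $X_r$. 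I expect this closure-under-cones step, together with the case bookkeeping in the Hom computation (the answer depends on the sign of $w$ and on congruences among $i-j$, $r$, $t$), to be the main obstacle: one must keep the family minimal while ruling out the appearance of any ``new'' indecomposable.

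Finally, the AR quiver. Since $\Tw$ has Serre functor $\SSS = \Sigma\tau$ and is $w$-Calabi-Yau, $\tau = \Sigma^{w-1}$. Feeding the Hom computation into the existence of AR triangles (\cite{Happel}), one identifies the AR triangle ending at $X_r$ as having middle term $X_{r-1}\oplus\Sigma^{w-1}X_{r+1}$ for $r\geq 1$ and $\Sigma^{w-1}X_1$ for $r=0$; this pins down the arrows and the mesh relations, so each connected component is a stable translation quiver with exactly one arrow into each boundary object $\tau^k s$, hence of the form $\bZ\Ainf$ unless $\tau$ acts periodically on it. Now $\Sigma^n s \cong s$ forces $n=0$ (compare $\Hom_{\Tw}(s,\Sigma^n s)$ with $\Hom_{\Tw}(\Sigma^n s,s)$ using (S1)), so for $w\neq 1$ the boundary objects $\tau^k s = \Sigma^{k(w-1)}s$ of the component of $s$ are pairwise non-isomorphic and that component is $\bZ\Ainf$; moreover it consists precisely of the objects $\Sigma^{k(w-1)}X_r$, so $\Sigma$ permutes the components transitively with $\Sigma^{w-1}=\tau$ stabilising each, leaving exactly $|w-1|$ of them. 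For $w=1$ we have $\tau=\id$, so the component of $s$ is $\tau$-periodic of period $1$, i.e.\ a homogeneous tube, and its (co)suspensions --- pairwise non-isomorphic since $\Sigma^n s\cong s$ only for $n=0$ --- exhaust the AR quiver. Alternatively, this last part may simply be assembled from \cite{HJY}, \cite[Theorem 8.13]{Jorgensen-AR-theory} and \cite[Section 3.3]{FY}.
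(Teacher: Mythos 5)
The paper does not actually prove this proposition: it is stated as a summary of results cited from \cite{HJY}, \cite{Jorgensen-AR-theory} (for $w\geq 2$) and \cite{FY} (for general $w$), so there is no ``paper's own proof'' to match. Your proposal reconstructs, in outline, the argument used in those references: realise $\Tw$ inside the derived category of a DG algebra, build the $X_r$ as iterated cones of the essentially unique nonzero maps supplied by (S1) and (S2), compute $\Hom_{\Tw}(\Sigma^i X_r,\Sigma^j X_t)$ by chasing long exact sequences, argue that $\add\{\Sigma^i X_r\}$ is thick, and then use $\tau=\Sigma^{w-1}$ together with the AR triangles to read off the shape of the quiver. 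That is the right strategy, and you correctly flag the closure-under-cones bookkeeping as the genuine technical core.

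One concrete slip: the AR triangle ending at $X_r$ has middle term $X_{r+1}\oplus\Sigma^{w-1}X_{r-1}$ for $r\geq 1$, and simply $X_1$ for $r=0$, rather than $X_{r-1}\oplus\Sigma^{w-1}X_{r+1}$ and $\Sigma^{w-1}X_1$ as you wrote — the suspension sits on the \emph{coray} summand $X_{r-1}$, not the ray summand $X_{r+1}$. This is visible from the cohomology formula $H^n(X_t)=\kk$ for $n=0,-d,\ldots,-td$ (with $d=w-1$): already for $r=0$, your claimed middle term $\Sigma^{w-1}X_1$ has nonzero $H^{-2d}$, which neither $\tau X_0=\Sigma^d X_0$ nor $X_0$ has, so the long exact cohomology sequence could not balance. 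This is a labelling error rather than a conceptual gap — either orientation yields a $\bZ A_\infty$ stable translation quiver, so the ensuing deduction that components are $\bZ A_\infty$ for $w\neq 1$ and homogeneous tubes for $w=1$ still goes through — but the triangle as stated is wrong and should be corrected if the argument is to be written out in full.
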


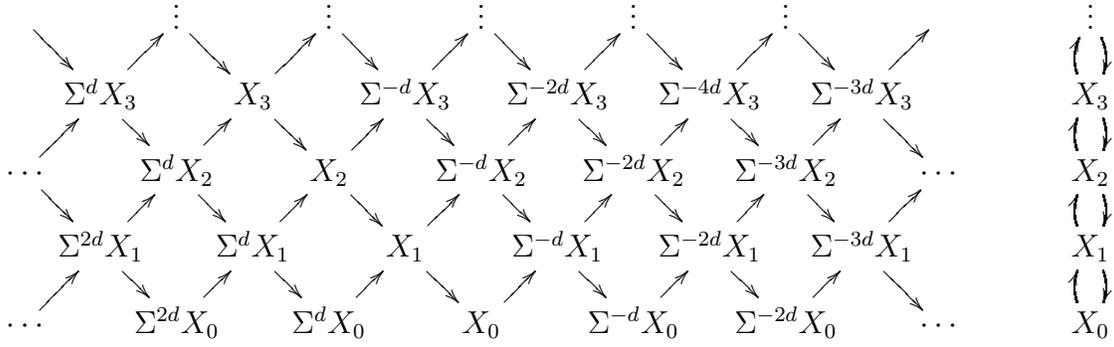
\begin{figure}
\centerline{
\xymatrix@!=0.4pc{
\ar[dr]    &                      & \vdots \ar[dr]           &                   & \vdots \ar[dr]                  &          & \vdots \ar[dr] &                      & \vdots \ar[dr]       &                        & \vdots \ar[dr]         &                      &  
&& \vdots \ardn \\
                        & \Sigma^d X_3 \arr &                        & X_3 \arr &                      & \Sigma^{-d} X_3 \arr &                 & \Sigma^{-2d} X_3 \arr &                        & \Sigma^{-4d} X_3 \arr  &                         & \Sigma^{-3d} X_3 \arr &    
&& X_3 \ardn \arup               \\
\cdots \arr        &                      & \Sigma^d X_2  \arr         &                   & X_2  \arr            &          & \Sigma^{-d} X_2 \arr &                      & \Sigma^{-2d} X_2  \arr     &                        & \Sigma^{-3d} X_2  \arr    &                      & \cdots 
&&  X_2 \arup \ardn  \\
                        & \Sigma^{2d} X_1 \arr &                        & \Sigma^d X_1 \arr &                      & X_1 \arr &                 & \Sigma^{-d} X_1 \arr &                        & \Sigma^{-2d} X_1 \arr  &                         & \Sigma^{-3d} X_1 \arr &    
&& X_1 \arup \ardn              \\
\cdots \ar[ur] &                      & \Sigma^{2d} X_0 \ar[ur] &                   & \Sigma^d X_0 \ar[ur] &          & X_0 \ar[ur]     &                     & \Sigma^{-d} X_0 \ar[ur] &                       & \Sigma^{-2d} X_0 \ar[ur] &                      & \cdots 
&& X_0 \arup
}}
\caption{\label{fig:AR-quiver1} {\it Left:} A component of type $\bZ \Ainf$ of the AR quiver in the case $w\neq 1$. Here $d=w-1$. The other components are obtained by applying $\Sigma^i$ to this one, for $i=1,\ldots, d-1$. 
{\it Right:} A component of the AR quiver in the case $w=1$. It is a homogeneous tube; all other components are obtained by applying $\Sigma^i$ to it, for $i\in \bZ$.}
\end{figure}

\subsection{Hom-hammocks in $\Tw$ for $w\neq 1$}

The notion of a Hom-hammock, introduced in \cite{Brenner}, is well known in the setting of Auslander--Reiten theory.
To describe the Hom-hammocks in $\Tw$ conveniently, we need to introduce some notation regarding rays and corays, which is borrowed from \cite{BPP}.

Consider the object $\Sigma^i X_j$ and make the following definitions (see Figure~\ref{fig:rayscorays} for an illustration):
\begin{align*}
\rayfrom{\Sigma^i X_j}      & \coloneqq \{\Sigma^{i-nd} X_{j+n} \mid n\geq 0\}, & \corayfrom{\Sigma^i X_j}    & \coloneqq \{\Sigma^i X_k \mid 0 \leq k \leq j\}; \\
\rayto{\Sigma^i X_j}        & \coloneqq \{\Sigma^{i+nd} X_{j-n} \mid 0 \leq n \leq j\}, & \corayto{\Sigma^i X_j}      & \coloneqq \{\Sigma^i X_k \mid k \geq j\}.  
\end{align*}

Given a set $\sS \subseteq \ind{\Tw}$ we make the obvious definitions of rays and corays determined by $\sS$, for example, 
$
\rayfrom{\sS} \coloneqq \bigcup_{s \in \sS}\rayfrom{s}.
$

For an object $a \in \ind{\Tw}$ define $L(a) \in \rayto{a}$ to be the unique object lying on the mouth of the component. Analogously, define $R(a) \in \corayfrom{a}$ to be the unique object lying on the mouth. Thus, if $a$ itself lies on the mouth, then $a=L(a) = R(a)$.

Given two indecomposable objects $a,b \in \ind{\Tw}$ that lie on the same ray or coray in the AR quiver of $\Tw$, then the finite set consisting of these two objects and all indecomposables lying between them on the (co)ray is denoted by $\overline{ab}$.
In an abuse of notation,  we identify $\rayfrom{a}\cap\corayto{b}$ with its indecomposable additive generator.

Following the usage prevalent in algebraic geometry, for objects $a,b \in \Tw$ we set $\hom_{\Tw}(a,b) \coloneqq \dim_{\kk} \Hom_{\Tw}(a,b)$. 
For $a \in \ind{\Tw}$, define the \emph{forward Hom-hammock} and the \emph{backward Hom-hammock} of $a$ as, respectively, 
\[
\homfrom{a} \coloneqq \rayfrom{\overline{aR(a)}} 
\ \text{and} \
\homto{a} \coloneqq \corayto{\overline{L(a)a}}.
\]

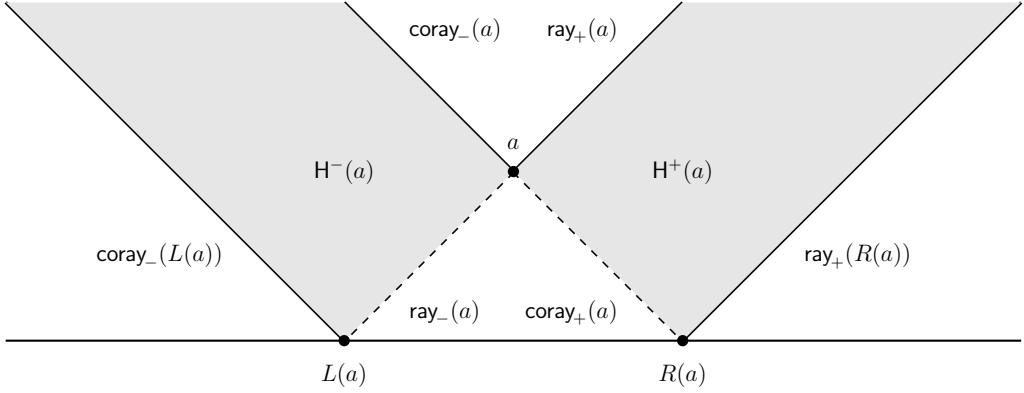
\begin{figure}
\begin{center}
\begin{tikzpicture}[thick,scale=0.75, every node/.style={scale=0.75}]

%coray-L(x)
\node (1) at (-5,1.5) {};
\node[left] at (1) {$\corayto{L(a)}$};

%coray- (x)
\node (2) at (-2,5.5) {};
\node[right] at (2) {$\corayto{a}$};

%ray+(x)
\node (3) at (2,5.5) {};
\node[left] at (3) {$\rayfrom{a}$};

%ray+R(x)
\node (4) at (5,1.5) {};
\node[right] at (4) {$\rayfrom{R(a)}$};

%coray+x
\node (5) at (2,0.5) {};
\node[left] at (5) {$\corayfrom{a}$};

%ray-(x)
\node (6) at (-2,0.5) {};
\node[right] at (6) {$\rayto{a}$};

%lines:
\draw[very thick] (0,3) -- (3,6);
\draw[very thick] (3,0) -- (9,6);
\draw[very thick] (-9,6) -- (-3,0);
\draw[very thick] (-3,6) -- (0,3);
\draw[very thick,dashed] (0,3) -- (3,0);
\draw[very thick,dashed] (0,3) -- (-3,0);

%shaded areas
\fill[fill=gray!20] (0,3) -- (-3,0) -- (-9,6) -- (-3,6) -- cycle;
\fill[fill=gray!20] (0,3) -- (3,0) -- (9,6) -- (3,6) -- cycle;

\draw[thick] (-9,0) -- (9,0); 

%x:
\fill (0,3) circle (1mm);
\node (x) at (0,3.25) {};
\node[above] at (x) {$a$};

%L(x):
\fill (-3,0) circle (1mm);
\node (L) at (-3,-0.25) {};
\node[below] at (L) {$L(a)$};

%R(x):
\fill (3,0) circle (1mm);
\node (R) at (3,-0.25) {};
\node[below] at (R) {$R(a)$};

%H-(x)
\node (-) at (-3,3) {};
\node at (-) {$\homto{a}$};

%H+(x)
\node (+) at (3,3) {};
\node at (+) {$\homfrom{a}$};

\end{tikzpicture}
\end{center}
\caption{The forward and backward Hom-hammocks of $a$.} \label{fig:rayscorays}
\end{figure}

\begin{proposition}[{\cite[Propositions 3.2 and 3.3]{HJY}}] \label{prop:hom-hammocks}
Let $a,b \in \ind{\Tw}$ for $w\neq 1$.
\begin{compactenum}[(i)]
\item If $w\neq 0$, then
\[
\hom_{\Tw}(a,b) = 
\left\{
\begin{array}{ll}
1 & \text{if } b \in \homfrom{a} \cup \homto{\SSS a}, \\
0 & \text{otherwise.}
\end{array}
\right.
\]
\item If $w= 0$, then
\[
\hom_{\Tw}(a,b) = 
\left\{
\begin{array}{ll}
1 & \text{if } b \in \homfrom{a} \cup \homto{\SSS a}\setminus \{a\}, \\
2 & \text{if } b=a, \\
0 & \text{otherwise.}
\end{array}
\right.
\]

\end{compactenum}
\end{proposition}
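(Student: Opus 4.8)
The plan is to reduce everything to the graded endomorphism algebra of the spherical generator, using (S1) and (S2), and then to propagate the computation outward along the AR quiver using the triangles that build up the indecomposables. (Alternatively, and closer to how \cite{HJY} proceed, one can work inside the dg-algebra model of $\Tw$ alluded to above and compute the Hom spaces there directly; the bookkeeping is essentially the same either way.)

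First I would fix the $w$-spherical generator $s$; by the previous proposition we may take $s = X_0$, which lies on the mouth of one of the $|w-1|$ components of type $\bZ\Ainf$. Since every functor in sight commutes with $\Sigma$, it suffices to compute $\hom_{\Tw}(X_j, \Sigma^i X_{j'})$ for all $i \in \bZ$ and all $j, j' \geq 0$, and I would do this by induction on $j + j'$. The base case $j = j' = 0$ is precisely axiom (S1), with the proviso that when $w = 0$ the two prescribed cohomological degrees $0$ and $w$ coincide, so that $\hom_{\Tw}(X_0, X_0) = 2$ (matching the formulation $\Hom^\bullet(s,s) = \kk[x]/(x^2)$ with $x$ in degree $-w = 0$); comparing against the definitions of $\rayfrom{-}$ and $\corayto{-}$ shows this already agrees with the claimed answer $\homfrom{X_0} \cup \homto{\SSS X_0}$.

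Next comes the inductive step. The indecomposables sit in distinguished triangles relating $X_r$, $X_{r-1}$ and a suspension of $X_0$ (these come from the iterated-cone construction of the generators of $\Tw$, equivalently from the AR triangles of the $\bZ\Ainf$ components, which degenerate near the mouth so that their middle term is a single shift of $X_0$). Applying the cohomological functors $\Hom_{\Tw}(X_j, -)$ and $\Hom_{\Tw}(-, \Sigma^{i'} X_{j'})$ to such triangles yields long exact sequences in which every term other than the one being computed is a Hom space with strictly smaller $j + j'$, hence known by induction; the values $0$, $1$, $2$ then fall out once one checks that the relevant connecting maps have the expected rank, which is where (S1) is invoked again to control maps into and out of $X_0$. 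Serre duality from (S2), $\Hom_{\Tw}(x,y) \simeq D\Hom_{\Tw}(y, \SSS x)$ with $\SSS \simeq \Sigma^w$, roughly halves this work: it suffices to pin down $\hom_{\Tw}(a,b)$ for $b$ on the outgoing side of $a$, and it is exactly what converts the forward hammock of $a$ into the backward hammock of $\SSS a$, explaining the shape of the union $\homfrom{a} \cup \homto{\SSS a}$.

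Finally, a purely combinatorial bookkeeping step: translate the set of triples $(i,j,j')$ at which the computed dimension is nonzero into the ray/coray language of Figure~\ref{fig:rayscorays}, using $d = w - 1$ to match one step along a ray with one application of $\Sigma^{-d}$ and thereby to identify how the $|w-1|$ components interleave under suspension. One checks that the nonzero region is exactly $\homfrom{a} \cup \homto{\SSS a}$, that the dimension is $1$ away from overlaps, and that overlaps occur precisely when $w = 0$ — where $\SSS \simeq \id$ forces $\homto{\SSS a} = \homto{a}$, which meets $\homfrom{a}$ in the single point $a$ — accounting for the lone dimension-$2$ entry. I expect this bookkeeping, especially getting the mouth behaviour of the $\bZ\Ainf$ components right and correctly synchronising the $|w-1|$ components with the period of the suspension, to be the main obstacle; the homological input is routine long-exact-sequence chasing.
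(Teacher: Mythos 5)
Your outline is essentially sound, and you are right that the paper does not prove this statement itself but cites \cite[Propositions~3.2 and 3.3]{HJY}; their argument takes the second route you mention, namely realising $\Tw$ as the thick subcategory $\thick{}{R}$ inside the derived category of a polynomial dg algebra $R = \kk[T]$ (with $T$ placed in degree $1-w$), identifying the indecomposables $X_r$ with the dg modules $R/(T^{r+1})$, and reading off the Hom-spaces from the underlying complexes together with Serre duality. The inductive route you put first is a legitimate alternative, but a couple of points should be tightened. First, a minor imprecision: the defining triangles you want to induct over are the iterated-cone triangles $\Sigma^{-rd}X_0 \to X_r \to X_{r-1} \to \Sigma^{1-rd}X_0$ coming from the filtration of $R/(T^{r+1})$ by powers of $T$; these are not literally the AR triangles, whose middle term is $X_{r-1}\oplus\Sigma^d X_{r+1}$ (only the AR triangle of $X_0$ itself truly degenerates). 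More substantively, (S1) gives the dimensions $\hom(s,\Sigma^i s)$ but says nothing about ranks of the connecting maps in your long exact sequences; to pin those down you need the multiplicative structure $\Hom^\bullet(s,s)\simeq\kk[x]/(x^2)$ — specifically that $x^2 = 0$ while $x \neq 0$, and that the CY pairing is nondegenerate — and once you invoke that, you are effectively computing inside the dg model anyway. So the two routes converge and both give the result; the dg-module computation simply packages the rank bookkeeping into the familiar short exact sequences of modules over $\kk[T]$, avoiding explicit diagram chasing. Your observations about the $w=0$ special case and the role of Serre duality in producing the union $\homfrom{a}\cup\homto{\SSS a}$ are correct.
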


\subsection{Factorisation properties}

Later, it will be important to know how morphisms between indecomposable objects of $\Tw$ factor. This is dealt with in the following proposition, which generalises the statements of \cite[Propositions 2.1 and 2.2]{HJ}.

\begin{proposition} \label{prop:factoring}
Suppose $a,b$ and $c$ are in $\ind{\Tw}$ for $w \in \bZ\setminus\{0,1\}$.
\begin{compactenum}[(i)]
\item \label{item:fac-same-comp} Suppose that $c\in \homfrom{a} \cap \homfrom{b}$ and $b\in \homfrom{a}$. If $g\colon a \to b$ is a nonzero morphism, then each morphism $f\colon a \to c$ factors through $g$ as $a \rightlabel{g} b \too c$.
\item \label{item:fac-different-comp} Suppose that $c \in \homto{\SSS a} \cap \homto{\SSS b}$ and $b \in \homfrom{a}$. If $g\colon a \to b$ is a nonzero morphism, then each morphism $f\colon a \to c$ factors through $g$ as $a \rightlabel{g} b \too c$.
\end{compactenum}
Dually, 
\begin{compactenum}[($i^\prime$)]
\item  Suppose that $c\in \homto{a} \cap \homto{b}$ and $b\in \homto{a}$. If $g\colon b \to a$ is a nonzero morphism, then each morphism $f\colon c \to a$ factors through $g$ as $c \too b \rightlabel{g} a$.
\item  Suppose that $c \in \homfrom{\SSS^{-1} a} \cap \homfrom{\SSS^{-1} b}$ and $b \in \homto{a}$. If $g\colon b \to a$ is a nonzero morphism, then each morphism $f\colon c \to a$ factors through $g$ as $c \too b \rightlabel{g} a$.
\end{compactenum}
\end{proposition}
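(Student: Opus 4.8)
The plan is to prove statement (i) in full, then obtain $(i')$ by running the same argument in the opposite category, derive (ii) from $(i')$ using the Calabi--Yau property, and deduce $(i i')$ from (i) dually. First, since $w\neq 0$, Proposition~\ref{prop:hom-hammocks}(i) applies: under the hypotheses of (i) the spaces $\Hom_{\Tw}(a,b)$, $\Hom_{\Tw}(a,c)$ and $\Hom_{\Tw}(b,c)$ are all one-dimensional (from $b\in\homfrom{a}$, $c\in\homfrom{a}$ and $c\in\homfrom{b}$ respectively), and under the hypotheses of (ii) so are $\Hom_{\Tw}(a,b)$, $\Hom_{\Tw}(a,c)$ and $\Hom_{\Tw}(b,c)$ (from $b\in\homfrom{a}$, $c\in\homto{\SSS a}$ and $c\in\homto{\SSS b}$). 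Consequently $g$ spans $\Hom_{\Tw}(a,b)$ and any nonzero $h\colon b\to c$ spans $\Hom_{\Tw}(b,c)$, so it suffices to show that the composite $h\circ g$ is nonzero: then $h\circ g$ spans $\Hom_{\Tw}(a,c)$, every $f\colon a\to c$ is a scalar multiple of $h\circ g$, and hence factors through $g$ as claimed.

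For the non-vanishing of $h\circ g$ in case (i), note that since $b,c\in\homfrom{a}$ and the hammock $\homfrom{a}$ lies in the AR-component of $a$, the objects $a,b,c$ all lie in a single component $\Gamma\cong\bZ\Ainf$ of the AR quiver (Figure~\ref{fig:AR-quiver1}), whose AR triangles are precisely the meshes of $\bZ\Ainf$, with $\tau=\Sigma^{w-1}$ on $\Gamma$. I would then use the standard combinatorics of the mesh category of $\bZ\Ainf$: its $\Hom$-spaces are at most one-dimensional; a nonzero morphism between indecomposables is, up to a nonzero scalar, a composition of irreducible maps along a ``staircase'' path inside the relevant Hom-hammock; parallel staircase paths agree up to sign; and, by the mesh relations, the concatenation of two staircase paths straightens to a staircase path, which represents a nonzero morphism provided it does not ``bounce off the mouth'', i.e.\ does not contain a sub-path of the form $\tau v\to u\to v$ with $v$ on the mouth of $\Gamma$ (the zero composition of the AR triangle ending at a mouth object). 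The remaining work is a finite case analysis, according to the position of $b$ relative to $a$ and $c$ along the (co)rays $\rayfrom{a},\corayfrom{a},\rayfrom{b},\corayfrom{b}$, showing that staircase representatives of $g$ and of a nonzero $h\colon b\to c$ can be chosen whose concatenation does not bounce off the mouth; in the configurations where every concatenation would be forced to do so, one checks that $c$ cannot lie in $\homfrom{a}$, contradicting the hypothesis of (i). This case analysis is the step I expect to be the main obstacle, and it is exactly here that the hypothesis $c\in\homfrom{a}$ (and not merely $c\in\homfrom{b}$) is needed. An alternative, less combinatorial route would be to compute the relevant Hom-spaces and their compositions directly in the differential graded model of $\Tw$ used in \cite{HJY}.

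Statement $(i')$ follows by applying the argument of the previous paragraph with all arrows of $\Gamma$ reversed, i.e.\ in the opposite category. For (ii), the map ``compose with $g$'' $\colon\Hom_{\Tw}(b,c)\to\Hom_{\Tw}(a,c)$ becomes, under the functorial Calabi--Yau isomorphisms $\Hom_{\Tw}(-,c)\cong D\Hom_{\Tw}(c,\SSS(-))$, the transpose of post-composition along $\SSS g\colon\SSS a\to\SSS b$, that is, of the map $\Hom_{\Tw}(c,\SSS a)\to\Hom_{\Tw}(c,\SSS b)$. Since $\SSS$ is an autoequivalence, $b\in\homfrom{a}$ gives $\SSS b\in\homfrom{\SSS a}$, equivalently $\SSS a\in\homto{\SSS b}$ (one verifies directly from the definitions that $y\in\homfrom{x}$ if and only if $x\in\homto{y}$); together with $c\in\homto{\SSS a}\cap\homto{\SSS b}$ this is exactly the hypothesis of $(i')$ applied to the objects $\SSS b,\SSS a,c$ and the morphism $\SSS g$. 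Hence $(i')$ shows that $\Hom_{\Tw}(c,\SSS a)\to\Hom_{\Tw}(c,\SSS b)$ is nonzero, therefore so is ``compose with $g$'', and therefore $h\circ g\neq 0$. Finally $(i i')$ follows from (i) by the same opposite-category reduction.
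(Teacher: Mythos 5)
Your overall strategy is sound, and the organisation is cleaner than it may appear at first. The reduction to showing that a single composite $h\circ g$ is nonzero (using that $\Hom(a,b)$, $\Hom(b,c)$, $\Hom(a,c)$ are all one-dimensional by Proposition~\ref{prop:hom-hammocks}) is exactly right, and your derivation of (ii) from $(i')$ via the functoriality of Serre duality --- that $g^\ast\colon\Hom(b,c)\to\Hom(a,c)$ is the dual of $(\SSS g)_\ast\colon\Hom(c,\SSS a)\to\Hom(c,\SSS b)$ --- is correct, as is the identification $y\in\homfrom{x}\Leftrightarrow x\in\homto{y}$ that you need for it. The opposite-category reduction for $(i')$ and $(ii')$ also goes through, since $\Tw^{\mathrm{op}}$ is again an idempotent-complete triangulated category generated by a $w$-spherical object (the graded endomorphism ring is commutative and the $w$-CY condition is self-dual), hence equivalent to $\Tw$ by the uniqueness theorem of \cite{KYZ}. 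This nicely reduces all four statements to (i) alone, which is a cleaner decomposition than the paper uses.

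However, there is a genuine, load-bearing gap: part (i) itself is only sketched. You correctly identify that it comes down to a combinatorial statement about the mesh category of a $\bZ A_\infty$ component (composites of staircase paths are nonzero unless they bounce off the mouth), and you flag the case analysis as ``the main obstacle,'' but you never carry it out. Since your proofs of $(i')$, (ii), and $(ii')$ all rest on (i), that unproved case analysis is the entire mathematical content. For comparison, the paper does not prove (i) from scratch either --- it cites \cite[Propositions 2.1 and 2.2]{HJ} for $w\geq 2$ (which cover (i) and (ii) directly, with the dual statements $(i')$ and $(ii')$ obtained formally) and observes that those proofs use only one-dimensionality of Hom-spaces and Serre duality, so they transfer verbatim to $w\leq -1$. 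If you want a self-contained argument rather than a citation, you would need to actually execute the $\bZ A_\infty$ case analysis or the DG-model computation you mention as an alternative; until then the proposal is a correct outline with the key step missing.
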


\begin{proof}
For $w \geq 2$ these statements are \cite[Propositions 2.1 and 2.2]{HJ} and their duals. The proofs in \cite{HJ} use only one-dimensionality of the Hom-spaces and Serre duality. Thus, the same arguments apply to $\Tw$ when $w \leq -1$.
\end{proof}

When $w=0$, each $a\in \ind{\sT_0}$ has a two-dimensional endomorphism space and we tweak the result for this case.

\begin{proposition} \label{prop:factoring-T0}
Suppose $a$, $b$ and $c$ are indecomposable objects in $\sT_0$.
\begin{compactenum}[(i)]
\item If $a$, $b$ and $c$ are pairwise non-isomorphic, then the statements in Proposition~\ref{prop:factoring} hold without modification.
\item Suppose $a \neq b$ and $b \in \homfrom{a} \cup \homto{a}$. Then for any non-isomorphism $f \in \Hom_{\sT_0}(a,a)$ there exist nonzero maps $g \colon a \to b$ and $h \colon b \to a$ such that $f = hg$.
\end{compactenum}
\end{proposition}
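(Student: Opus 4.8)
The plan is to treat the two parts separately: part~(i) will be a verbatim transcription of the proof of Proposition~\ref{prop:factoring}, while part~(ii) needs a genuinely new argument exploiting the $0$-Calabi--Yau property. For part~(i), recall that, as noted in the proof of Proposition~\ref{prop:factoring}, the arguments of \cite{HJ} use only the one-dimensionality of the relevant Hom-spaces and Serre duality. Serre duality holds in $\sT_0$, so it is enough to check that every Hom-space occurring in those arguments is one-dimensional. By Proposition~\ref{prop:hom-hammocks}(ii) we have $\hom_{\sT_0}(x,y)\leq 1$ whenever $x\not\cong y$, and for such $x,y$ the value of $\hom_{\sT_0}(x,y)$ is given by exactly the same hammock formula as in the $w\neq 0$ case of Proposition~\ref{prop:hom-hammocks}(i). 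Since $\SSS\simeq\Sigma^0=\id$ on $\sT_0$, the $\SSS^{\pm1}$-translates of $a,b,c$ appearing in the argument are (isomorphic to) $a,b,c$ themselves, so they again form a pairwise non-isomorphic triple; and, exactly as in \cite{HJ}, the auxiliary objects produced along the way (mapping cones, middle terms of Auslander--Reiten triangles) are never isomorphic to $a$, $b$ or $c$ under the stated configurations. Hence all Hom-spaces used are one-dimensional and the proof of Proposition~\ref{prop:factoring} carries over unchanged.

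For part~(ii), first note that $\Hom_{\sT_0}(a,a)$ is a local finite-dimensional $\kk$-algebra (as $\sT_0$ is Hom-finite and Krull--Schmidt) of dimension $2$ by Proposition~\ref{prop:hom-hammocks}(ii), hence $\Hom_{\sT_0}(a,a)\cong\kk[x]/(x^2)$; its radical is one-dimensional and equals the set of non-isomorphisms $a\to a$. Since $a\neq b$ and $b\in\homfrom{a}\cup\homto{a}=\homfrom{a}\cup\homto{\SSS a}$, Proposition~\ref{prop:hom-hammocks}(ii) gives $\hom_{\sT_0}(a,b)=1$, and Serre duality together with $\SSS b\cong b$ gives $\hom_{\sT_0}(b,a)=\hom_{\sT_0}(a,\SSS b)=1$. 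Fix a nonzero $g\colon a\to b$. The heart of the argument is to produce a nonzero $h\colon b\to a$ with $hg\neq 0$: I would use that the Serre functor supplies a trace $t\colon\Hom_{\sT_0}(a,\SSS a)\to\kk$ for which the pairing $\Hom_{\sT_0}(a,b)\times\Hom_{\sT_0}(b,\SSS a)\to\kk$, $(\phi,\psi)\mapsto t(\psi\phi)$, is non-degenerate; since $g\neq 0$ there is $\psi\colon b\to\SSS a$ with $t(\psi g)\neq 0$, so $\psi g\neq 0$, and composing $\psi$ with an isomorphism $\SSS a\isoto a$ gives $h$ with $hg\neq 0$. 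Now $hg$ is a composition of non-isomorphisms between pairwise non-isomorphic indecomposables, so it lies in the radical of $\Hom_{\sT_0}(a,a)$; being nonzero, it spans that one-dimensional radical. Consequently any non-isomorphism $f\colon a\to a$ can be written $f=\lambda\,hg=(\lambda h)g$ for some $\lambda\in\kk$, and when $f\neq 0$ this is the required factorisation with both $g$ and $\lambda h$ nonzero. (The case $f=0$ is degenerate: since $\hom_{\sT_0}(a,b)=\hom_{\sT_0}(b,a)=1$, the zero map is not a composite of two nonzero maps $a\to b\to a$, so the statement is to be understood as concerning nonzero non-isomorphisms, which is the only case needed later.)

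The main obstacle is the non-vanishing $hg\neq 0$ in part~(ii). This genuinely depends on $\sT_0$ being $0$-Calabi--Yau: the argument uses $\SSS\simeq\id$ both to identify $\hom_{\sT_0}(b,a)$ with $\hom_{\sT_0}(a,b)$ and, through the Serre pairing, to force the composite $b\rightlabel{\psi}\SSS a\cong a$ to pair nontrivially with $g$. It is precisely the two-dimensionality of $\Hom_{\sT_0}(a,a)$ that makes the statement non-empty (and that necessitates the $f\neq 0$ caveat). Everything else — identifying the structure of $\Hom_{\sT_0}(a,a)$, the dimension counts, and the transfer of the proof of Proposition~\ref{prop:factoring} in part~(i) — is routine.
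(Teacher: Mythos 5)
Your part~(i) matches the paper's: both appeal to one-dimensionality of the relevant Hom-spaces to transfer the arguments of \cite{HJ}. For part~(ii), however, your argument is genuinely different from the paper's, and both are correct. The paper runs $f$ through an Auslander--Reiten triangle $\tri{\tau a}{e_1\oplus e_2}{a}$: by the almost-split property the non-isomorphism $f$ lifts to a map $a\to e_1\oplus e_2$, part~(i) lets the components factor through $b$ with a common first map $g$, and assembling them gives $f=hg$; a dual argument with the other AR triangle handles the case $b\in\{e_1,e_2\}$. You instead invoke only the non-degeneracy of the Serre pairing $\Hom(a,b)\times\Hom(b,\SSS a)\to\kk$, which immediately produces a nonzero composite $hg$, and then observe that $hg$ must span the one-dimensional radical of the local algebra $\Hom_{\sT_0}(a,a)\cong\kk[x]/(x^2)$. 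Your route is arguably slicker: it is uniform in $b$ (no separate case for $b$ adjacent to $a$ in the AR quiver), and it makes the dependence on the $0$-CY property maximally transparent. The paper's route is more elementary in the sense that it uses only almost-split triangles and the factorisation result already proved, avoiding explicit manipulation of the Serre pairing. Your closing remark that the hypothesis should be read as $f\neq0$ is also consistent with the paper, whose proof begins ``Let $f$ be a \emph{nonzero} non-isomorphism.''
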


\begin{proof}
For statement (i) use one-dimensionality of the Hom-spaces and \cite{HJ} as above.

Let $f \colon a \to a$ be a nonzero non-isomorphism and consider the AR triangle $\tri{\tau a}{e_1 \oplus e_2}{a}$.
Let $b \in \homfrom{a} \cup \homto{a} \setminus \{a,e_1,e_2\}$. By the almost split property of the second map in the AR triangle, there is a commutative diagram:
\[
\xymatrix{
                   &                                                                                         & a \ar[d]^-{f} \ar[dl]_-{\exists\, \colmat{k_1}{k_2}} & \\
\tau a \ar[r] & e_1 \oplus e_2 \ar[r]_-{\rowmat{\epsilon_1}{\epsilon_2}} & a \ar[r]                                                    & \Sigma \tau a .
}
\]
By part (i) the maps $k_i \colon a \to e_i$ factor through $b$ as $a \rightlabel{g} b \rightlabel{h_i} e_i$. Note that the same map $g \colon a \to b$ can be chosen for each factorisation by the one-dimensionality of the Hom-spaces. Thus,
\[
f = \rowmat{\epsilon_1}{\epsilon_2} \colmat{k_1}{k_2} = \rowmat{\epsilon_1}{\epsilon_2} \colmat{h_1 g}{h_2 g} = \epsilon_1 h_1 g + \epsilon_2 h_2 g = (\epsilon_1 h_1 + \epsilon_2 h_2) g \eqqcolon hg,
\]
giving the required factorisation. If $b \in \{e_1,e_2\}$, then we argue dually using the other AR triangle $\tri{a}{e'_1 \oplus e'_2}{\tau^{-1} a}$.
\end{proof}

\begin{remark} \label{rem:basis-T0}
Let $a,b\in \ind{\sT_0}$, $a \neq b$ and $b \in \homfrom{a}$. Let $g \in \Hom_{\sT_0}(a,b)$ and $h \in \Hom_{\sT_0}(b,a)$ be nonzero maps. Then $\{\id_a,hg\}$ forms a basis of $\Hom_{\sT_0}(a,a)$.
\end{remark}

%==============================================================================
%  SECTION 
\section{Extensions in $\Tw$ with indecomposable outer terms for $w \neq 1$}\label{sec:extensions}
%==============================================================================

In this section, we describe how to compute the middle terms of extensions in $\Tw$ for which the outer terms are indecomposable.

\subsection{A necessary condition}

In this subsection we assume only that $\sT$ is a Krull-Schmidt triangulated category. 
Given a triangle $\tri{a}{e}{b}$ we give necessary conditions that the object $e$ must satisfy with respect to $a$ and $b$.
 The material is well-known to experts, but we give brief proofs for the convenience of the reader.

In a distinguished triangle $\trilabels{x}{y}{z}{f}{g}{}$, the object $z$ is called the \emph{cone} of $f$ and written $\cone{f}$, and the object $x$ is called the \emph{cocone} of $g$ and written $\cocone{g}$. 
The following lemma is straightforward.

\begin{lemma} \label{lem:cone}
Let $\trilabels{a}{e_1}{b}{f}{g}{h}$ be a distinguished triangle in a Krull-Schmidt triangulated category $\sT$. We have the following isomorphism of triangles:
\[
\xymatrix@!R=10pt{
a \ar[r]^-{\colmat{f}{0}} \ar@{-}[d]^-{\simeq} & e_1 \oplus e_2 \ar[r] \ar@{-}[d]^-{\simeq}  & \cone{\colmat{f}{0}} \ar[r] \ar@{-}[d]^-{\simeq} & \Sigma a \, , \ar@{-}[d]^-{\simeq} \\
a \ar[r]_-{\colmat{f}{0}}                      & e_1 \oplus e_2 \ar[r]_-{\sqmat{g}{0}{0}{1}} & b \oplus e_2 \ar[r]_-{\rowmat{h}{0}}             & \Sigma a
} 
\]
Dually when taking the cocone of a map of the form $\rowmat{g}{0}$.
\end{lemma}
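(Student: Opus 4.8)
The plan is to recognise the bottom row of the diagram as the direct sum of the given distinguished triangle with a split triangle, and then to invoke the essential uniqueness of cones. Concretely, I would first record that the split triangle $\trilabels{0}{e_2}{e_2}{}{\id}{}$ is distinguished and that a finite direct sum of distinguished triangles is again distinguished. Forming the direct sum of this split triangle with $\trilabels{a}{e_1}{b}{f}{g}{h}$, and identifying $a\oplus 0$ with $a$ and $\Sigma a\oplus \Sigma 0$ with $\Sigma a$, one obtains exactly the distinguished triangle
\[
\trilabels{a}{e_1\oplus e_2}{b\oplus e_2}{\colmat{f}{0}}{\sqmat{g}{0}{0}{1}}{\rowmat{h}{0}},
\]
which is the bottom row of the diagram. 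The only point to verify here is that the matrix entries produced by the direct-sum construction coincide with those in the statement; since the component maps $0\to e_2$ and $e_2\to \Sigma 0$ are zero while the component map $e_2\to e_2$ is the identity, this is immediate.

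Next I would complete $\colmat{f}{0}\colon a\to e_1\oplus e_2$ to a distinguished triangle via (TR1); its third term is by definition $\cone{\colmat{f}{0}}$, which gives the top row. Both rows are now distinguished triangles sharing the same first morphism $\colmat{f}{0}$, so applying the morphism-of-triangles axiom (TR3) to the commutative square with vertical maps $\id_a$ and $\id_{e_1\oplus e_2}$ yields a morphism of triangles which is the identity in the first two positions. Its third component $\cone{\colmat{f}{0}}\to b\oplus e_2$ is an isomorphism by the five lemma for triangulated categories, and this is precisely the asserted isomorphism of triangles. The dual statement, concerning the cocone of a morphism of the form $\rowmat{g}{0}$, then follows by applying the argument just given in the opposite category $\sT^{\mathrm{op}}$, which is again triangulated and Krull--Schmidt.

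I do not anticipate any genuine obstacle: the assertion is essentially a bookkeeping exercise with the axioms of a triangulated category. The two points that warrant a little care are keeping track of the matrix conventions for direct sums of triangles, and remembering that the comparison isomorphism supplied by (TR3) need not be canonical — but since the lemma claims only the \emph{existence} of an isomorphism of triangles, this causes no difficulty.
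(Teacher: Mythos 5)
Your proof is correct. The paper offers no proof of this lemma, dismissing it as ``straightforward'', and the argument you give --- realising the bottom row as the direct sum of the given triangle with the split triangle $\trilabels{0}{e_2}{e_2}{}{\id}{}$, completing $\colmat{f}{0}$ to a triangle via (TR1), and then invoking (TR3) together with the Five Lemma to produce the isomorphism in the third slot --- is exactly the standard verification the authors have in mind, with the dual handled in $\sT^{\mathrm{op}}$ as you say.
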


An analogue of the following lemma is contained in the proof of \cite[Proposition~8.3]{BMRRT}.

\begin{lemma} \label{lem:homext-vanishing}
Let $\sT$ be a Krull-Schmidt triangulated category and suppose that
\[
\trilabels{a}{\bigoplus_{i=1}^n e_i}{b}{\colvec{f_1}{\vdots}{f_n}}{\rowvec{g_1}{\cdots}{g_n}}{}
\]
is a non-split distinguished triangle for some $a,b\in \ind{\sT}$. Then:
\begin{compactenum}
\item The $f_i$ and $g_i$ are each nonzero. 
\end{compactenum}
If additionally,  $\hom(x,y) \leq 1$ for each $x,y\in \ind{\sT}$ and $\Ext^1(x,x)=0$ for all $x\in\ind{\sT}$, then we also have
\begin{compactenum}
\item[(2)] $\Ext^1(e_i,a)=0$ and $\Ext^1(b,e_i)=0$. 
\item[(3)] The multiplicity of each indecomposable summand of the middle term is at most one.
\end{compactenum}
\end{lemma}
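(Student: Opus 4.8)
The plan is to extract all three conclusions from the non-split triangle $\trilabels{a}{\bigoplus_{i=1}^n e_i}{b}{(f_i)}{(g_i)}{}$ using only the long exact sequences in cohomology together with the standard fact that a triangle $\tri{x}{y}{z}$ splits precisely when the connecting morphism $z \to \Sigma x$ vanishes (equivalently $y \cong x \oplus z$). For part (1), suppose some $f_i = 0$, say $f_1 = 0$. Then the map $a \to \bigoplus e_i$ factors through $\bigoplus_{i\geq 2} e_i$, so by Lemma~\ref{lem:cone} (with $e_1$ playing the role of the ``extra'' summand $e_2$ there) the cone of $(f_i)$ is isomorphic to $(\cone{(f_i)_{i\geq 2}}) \oplus e_1$, i.e. $b \cong b' \oplus e_1$ for some $b'$; since $b$ is indecomposable this forces $b = e_1$ and the triangle splits off $e_1$ as a direct summand, contradicting that the triangle is non-split (and indecomposability of $b$). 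The argument that each $g_i \neq 0$ is dual, using the dual part of Lemma~\ref{lem:cone} applied to the cocone of $(g_i)$, which would otherwise split off $e_i$ from $a$.

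For part (2), apply $\Hom(e_j, -)$ to the triangle to obtain the exact sequence
\[
\Hom(e_j,a) \rightlabel{(f_i)_*} \Hom(e_j, \textstyle\bigoplus_i e_i) \rightlabel{(g_i)_*} \Hom(e_j, b) \too \Ext^1(e_j, a) \too \Ext^1(e_j, \textstyle\bigoplus_i e_i).
\]
By hypothesis $\Ext^1(e_j, e_j) = 0$, and for $i \neq j$ we have $\Ext^1(e_j, e_i) = 0$ as well, since the Hom-hammock description (Proposition~\ref{prop:hom-hammocks}) together with Serre duality forces the relevant space to be at most one-dimensional and one checks it vanishes --- alternatively, this is built into the hypothesis regime we are working in; in any case $\Ext^1(e_j, \bigoplus_i e_i) = 0$. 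Hence $\Ext^1(e_j, a)$ is a quotient of $\Hom(e_j, b)$, and it suffices to show the map $(g_i)_* \colon \Hom(e_j, \bigoplus_i e_i) \to \Hom(e_j, b)$ is surjective. The identity $\id_{e_j}$ sits in the $j$-th summand of $\Hom(e_j, \bigoplus_i e_i)$ and maps to $g_j \in \Hom(e_j, b)$, which is nonzero by part~(1); since $\hom(e_j, b) \leq 1$, this already shows surjectivity, giving $\Ext^1(e_j, a) = 0$. The statement $\Ext^1(b, e_i) = 0$ is obtained dually by applying $\Hom(-, e_i)$ and using that $f_i \neq 0$ spans $\Hom(a, e_i)$.

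For part (3), suppose for contradiction that some $e_i$, say $e_1$, occurs with multiplicity $\geq 2$, i.e. $e_1 \cong e_2$ and we write the middle term as $e_1 \oplus e_1 \oplus \bigoplus_{i \geq 3} e_i$ with the two leading components of the map being nonzero scalars $f_1, f_2 \in \kk^\times$ (using $\hom(a, e_1) \leq 1$, so $f_2 = \lambda f_1$ for some $\lambda \in \kk^\times$). Change basis in the $e_1 \oplus e_1$ block by an invertible matrix taking $(f_1, f_2)$ to $(f_1, 0)$; this is an automorphism of the middle term, so the triangle is isomorphic to one in which the second component of the structure map is zero, and part~(1) then yields a contradiction. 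The only real subtlety here is checking that $\hom(a, e_1) \leq 1$ does hold in the case at hand --- in the ambient category $\Tw$ with $w \neq 0,1$ this is immediate from Proposition~\ref{prop:hom-hammocks}, and in the abstract formulation it is exactly the standing hypothesis $\hom(x,y) \leq 1$.

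I expect the main obstacle to be purely bookkeeping: being careful with the change-of-basis / block-decomposition arguments in parts (1) and (3) so that Lemma~\ref{lem:cone} applies cleanly, and making sure the vanishing $\Ext^1(e_j, e_i) = 0$ for $i \neq j$ is legitimately available (it follows from the hypotheses plus the structure of the category, but it must be invoked explicitly). None of the steps requires a genuinely new idea beyond the long exact sequence and the splitting criterion.
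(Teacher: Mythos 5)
Parts (1) and (3) of your argument are correct and match the paper's approach: part (1) is exactly the application of Lemma~\ref{lem:cone}, and part (3) uses the same change-of-basis trick to create a zero component of $f$ and then invokes part~(1); in fact your version is slightly cleaner because you do not need to pass through the Five Lemma again.

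Part (2), however, has a genuine gap, and you seem to half-sense it yourself. You apply $\Hom(e_j,-)$, and the long exact sequence
\[
\Hom(e_j, E) \rightlabel{g_*} \Hom(e_j, b) \rightlabel{\delta} \Ext^1(e_j, a) \rightlabel{f_*} \Ext^1(e_j, E)
\]
shows, once $g_*$ is surjective (which you argue correctly from $g_j\neq 0$ and $\hom(e_j,b)\leq 1$), only that $\delta=0$, hence that $\Ext^1(e_j,a)$ \emph{injects into} $\Ext^1(e_j,E)=\bigoplus_i\Ext^1(e_j,e_i)$. To conclude you must kill this target, and for $i\neq j$ the hypothesis gives you nothing: the standing assumptions are only $\hom(x,y)\leq 1$ and $\Ext^1(x,x)=0$, not $\Ext^1(x,y)=0$ for $x\neq y$. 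Your fallback appeals --- to the Hom-hammock description and to ``the hypothesis regime'' --- are not available: the lemma is stated for an abstract Krull--Schmidt category, and even inside $\Tw$ the vanishing $\Ext^1(e_j,e_i)=0$ for distinct summands of a middle term is a nontrivial fact that the authors have to establish separately elsewhere (see, e.g., the use of Lemmas~\ref{lem:extensionsaisandeis} and~\ref{lem:extbism>=3} inside Proposition~\ref{prop:m=1incomparable}); it is not built into the hypotheses of the present lemma.

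The fix is to apply the \emph{other} functors. Applying $\Hom(-,a)$ to the triangle gives
\[
\Hom(a,a) \rightlabel{\delta'} \Ext^1(b,a) \too \Ext^1(E,a) \too \Ext^1(a,a)=0,
\]
where now the terminal vanishing is exactly the hypothesis $\Ext^1(a,a)=0$. The map $\delta'$ sends $\id_a$ to the class of the connecting morphism $h\colon b\to\Sigma a$, which is nonzero because the triangle does not split; since $\hom(b,\Sigma a)\leq 1$, $\delta'$ is surjective, hence the map $\Ext^1(b,a)\to\Ext^1(E,a)$ is zero, hence $\Ext^1(E,a)=\bigoplus_i\Ext^1(e_i,a)=0$. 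Dually, applying $\Hom(b,-)$ and using $\Ext^1(b,b)=0$ gives $\Ext^1(b,e_i)=0$. In short: the correct functor makes the terminal $\Ext$ group a \emph{self}-extension, which is hypothesised to vanish; the functor you chose makes it a cross-extension among the $e_i$, which is not.
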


\begin{proof}
Statement $(1)$ follows immediately from Lemma~\ref{lem:cone}. 
For $(2)$, apply the functors $\Hom(-,a)$ and $\Hom(b,-)$ to the distinguished triangle and use one-dimensionality of the Hom-spaces and  the vanishing self-extension property.

Without loss of generality, to show $(3)$ it is enough to show that a non-split triangle of the form 
$\trilabel{a}{e^m}{b}{f}$
with $b \in \ind{\sT}$,
is forced to satisfy $m \leq 1$. Suppose for a contradiction that $m > 1$. Write $0 \neq f \coloneqq \rowvec{f_1}{\cdots}{f_m}^t$. Since $\hom(a,e) = 1$, we have $f_i = - \lambda_i f_1$ with $\lambda_i \in \kk$ for $2 \leq i \leq m$. Let $D$ be the matrix with $1$s along the leading diagonal and $\lambda_i$ for $2 \leq i \leq m$ down the first column. Then $D$ defines an isomorphism, making the first square in the following diagram of triangles commute:
\[
\xymatrix{
a \ar[r]^-{f} \ar@{=}[d] & e^m \ar[r] \ar[d]^-{D}              & b \ar[r] \ar[d]^-{\simeq} & \Sigma a \ar@{=}[d] \\
a \ar[r]_-{f'}           & e^m \ar[r] \ar[r]                   & b' \oplus e^{m-1} \ar[r]   & \Sigma a,}
\]
where $f' \coloneqq \rowvecc{f_1}{0}{\cdots}{0}^t$. Lemma~\ref{lem:cone} tells us that $\cone{f'} = b' \oplus e^{m-1}$, where $b' = \cone{f_1}$, and the Five Lemma for triangles gives $b \simeq b' \oplus e^{m-1}$. If $b' \neq 0$, then we get the desired contradiction to the indecomposability of $b$. If $b' = 0$, then $f_1$ is an isomorphism, meaning that we started with a split triangle, also a contradiction.
\end{proof}

\subsection{Ext-hammocks}
As with Hom-spaces, we write $\ext^1_{\Tw}(b,a) \coloneqq \dim_{\kk} \Ext^1_{\Tw}(b,a)$ for $a,b\in \ind{\Tw}$.
The Ext-hammocks for $a$ can be obtained by combining Proposition~\ref{prop:hom-hammocks} with Serre duality. The \emph{forward} and \emph{backward Ext-hammocks} of $a$ are, respectively,
\[
\extfrom{a} \coloneqq \homfrom{\tau^{-1}a} 
\ \text{and} \
\extto{a} \coloneqq \homto{\Sigma a}.
\]

\begin{proposition} \label{prop:ext-hammocks}
Suppose that $w \in \bZ\setminus \{1\}$ and $a, b \in \ind{\Tw}$.
\begin{compactenum}[(i)]
\item If $w\neq 0$, then
\[
\ext^1_{\Tw}(b,a) = 
\left\{
\begin{array}{ll}
1 & \text{if } b \in \extfrom{a} \cup \extto{a}, \\
0 & \text{otherwise.}
\end{array}
\right.
\]
\item If $w=0$, then
\[
\ext^1_{\sT_0}(b,a) = 
\left\{
\begin{array}{ll}
1 & \text{if } b \in \extfrom{a} \cup \extto{a} \setminus \{\Sigma a \}, \\
2 & \text{if } b= \Sigma a, \\
0 & \text{otherwise.}
\end{array}
\right.
\]
\end{compactenum}
\end{proposition}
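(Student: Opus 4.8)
The proposition is essentially a translation of the Hom-hammock description of Proposition~\ref{prop:hom-hammocks} across the Serre duality isomorphism, so the plan is to reduce everything to that proposition. Recall that $\Tw$ has Serre functor $\SSS = \Sigma\tau$, giving the functorial isomorphism $\Ext^1_{\Tw}(b,a) = \Hom_{\Tw}(b,\Sigma a) \simeq D\Hom_{\Tw}(\Sigma a, \SSS b) = D\Hom_{\Tw}(\Sigma a, \Sigma \tau b) \simeq D\Hom_{\Tw}(a,\tau b)$, using that $\Sigma$ is an autoequivalence. Hence $\ext^1_{\Tw}(b,a) = \hom_{\Tw}(a,\tau b)$. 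Alternatively, and more directly aligned with the stated definition of the Ext-hammocks, one writes $\Ext^1_{\Tw}(b,a) = \Hom_{\Tw}(b,\Sigma a)$ and observes that $b \in \extto{a} = \homto{\Sigma a}$ means precisely $\Sigma a \in \homfrom{b}$-type data; I would settle on whichever of the two bookkeeping conventions makes the ray/coray arithmetic cleanest and stick with it throughout.

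\textbf{Key steps.}
First, fix the identification $\ext^1_{\Tw}(b,a) = \hom_{\Tw}(b,\Sigma a)$ from the defining triangle, and the identification $\ext^1_{\Tw}(b,a) = \hom_{\Tw}(a,\tau b)$ from Serre duality as above; these two will be used to move freely between the ``forward'' and ``backward'' pictures. Second, for the case $w \neq 0$: apply Proposition~\ref{prop:hom-hammocks}(i). We get $\hom_{\Tw}(b,\Sigma a) = 1$ iff $\Sigma a \in \homfrom{b} \cup \homto{\SSS b}$, and $0$ otherwise. Now translate each of the two alternatives: $\Sigma a \in \homfrom{b}$ is equivalent to $a \in \Sigma^{-1}\homfrom{b} = \homto{\SSS a}$-type rearrangement — more carefully, I would use the other formula $\hom_{\Tw}(a,\tau b)=1$ iff $\tau b \in \homfrom{a}\cup\homto{\SSS a}$, i.e. iff $b \in \tau^{-1}\homfrom{a} \cup \tau^{-1}\homto{\SSS a} = \extfrom{a} \cup \homto{\tau^{-1}\SSS a}$, and then check $\tau^{-1}\SSS a = \tau^{-1}\Sigma\tau a = \Sigma a$, so $\homto{\tau^{-1}\SSS a} = \homto{\Sigma a} = \extto{a}$. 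This yields exactly the claimed set $\extfrom{a}\cup\extto{a}$. Third, for $w = 0$: repeat the same manipulation but feeding in Proposition~\ref{prop:hom-hammocks}(ii). The only difference is the single object where $\hom_{\Tw}$ jumps to $2$, namely $\tau b = a$, i.e. $b = \tau^{-1}a$; but under the identification $\ext^1_{\sT_0}(b,a) = \hom_{\sT_0}(b,\Sigma a)$ the anomalous object is where $b = \Sigma a$. I must verify these two descriptions of the exceptional object agree: via $\ext^1(b,a)=\hom(a,\tau b)$ the doubling happens at $\tau b = a$; via $\ext^1(b,a) = \hom(b,\Sigma a)$ with Proposition~\ref{prop:hom-hammocks}(ii) applied to the pair $(b,\Sigma a)$ the doubling happens at $b = \Sigma a$. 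Since $\SSS = \Sigma^0 = \id$ when $w=0$, we have $\tau = \Sigma^{-1}$, so $\tau b = a \iff \Sigma^{-1} b = a \iff b = \Sigma a$: the two descriptions coincide, as required. Away from this object the value is $1$ exactly on $\extfrom{a}\cup\extto{a}\setminus\{\Sigma a\}$ and $0$ elsewhere, by the same ray/coray translation as in the $w \neq 0$ case.

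\textbf{Main obstacle.}
The only real subtlety is the ray/coray bookkeeping: I must be scrupulous that applying $\tau^{\pm 1}$ and $\Sigma^{\pm 1}$ to the hammocks $\homfrom{-}$ and $\homto{-}$ produces precisely the sets named $\extfrom{a} = \homfrom{\tau^{-1}a}$ and $\extto{a} = \homto{\Sigma a}$, with no off-by-one shift along the $\bZ A_\infty$ components (recall $d = w-1$, and the rays move by $\Sigma^{\pm d}$, so the interaction of $\tau$, $\Sigma$ and $\SSS=\Sigma\tau$ must be tracked carefully when $w \neq 0$). I would handle this by working entirely with the two identities $\ext^1(b,a)=\hom(a,\tau b)$ and $\ext^1(b,a)=\hom(b,\Sigma a)$, applying Proposition~\ref{prop:hom-hammocks} verbatim, and only at the very end rewriting the resulting hammocks using $\SSS = \Sigma\tau$ and the definitions of $\extfrom{a}$, $\extto{a}$; this keeps all manipulations at the level of the named operations and avoids ad hoc index juggling. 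The $w=0$ case then requires only the one extra verification, carried out above, that the exceptional object is consistently located.
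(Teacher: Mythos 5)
Your proposal is correct and takes essentially the same route as the paper, which presents the proposition without an explicit proof immediately after the sentence ``The Ext-hammocks for $a$ can be obtained by combining Proposition~\ref{prop:hom-hammocks} with Serre duality'' and the definitions $\extfrom{a} = \homfrom{\tau^{-1}a}$, $\extto{a} = \homto{\Sigma a}$. Your Serre-duality reduction $\ext^1(b,a) = \hom(a,\tau b)$, the translation $\tau^{-1}\homfrom{a} = \extfrom{a}$ and $\tau^{-1}\homto{\SSS a} = \homto{\Sigma a} = \extto{a}$, and the $w=0$ consistency check $\tau b = a \iff b = \Sigma a$ (since $\tau = \Sigma^{-1}$ when $\SSS = \id$) are exactly the bookkeeping the paper leaves to the reader.
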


Consider the object $X_r \in \ind{\Tw}$ for $r\geq 0$. The Ext-hammocks of $X_r$ are given by
\[
\extfrom{X_r} = \bigcup_{i=0}^{r} \rayfrom{\Sigma^{-d} X_{r-i}} 
\quad \text{and} \quad
\extto{X_r}  = \bigcup_{i=0}^r \corayto{\Sigma^{id+1} X_{r-i}}.
\]
These are indicated graphically in Figure~\ref{fig:ext}.

\subsection{Cohomology of the middle terms}
In this section we compute the cohomology of the middle terms of extensions in $\Tw$ for $w\neq 1$ whose outer terms are indecomposable. Since the action of $\Sigma$ and $\tau$ is transitive on the AR quiver of $\Tw$, without loss of generality we may restrict our attention to the objects $X_r$ for $r\geq 0$. 

We first deal with the Ext-hammock $\extfrom{X_r}$. Note that the non-trivial extensions occurring in this Ext-hammock have the form
\begin{equation} \label{eq:ray-triangle}
\trilabels{X_r}{E}{\Sigma^{-sd} X_{r+s-i}}{}{}{f} 
\quad \text{for }
s \geq 1, \text{ and } 1\leq i \leq r+1.
\end{equation}

\begin{lemma}\label{lem:ray-cohomology}
Let $w \in \bZ\setminus\{0,1\}$. Consider a triangle of the form \eqref{eq:ray-triangle} above. Then:
\[
H^n (E) = 
\left\{
\begin{array}{ll}
\kk   & \text{for } n=d,2d,\ldots,sd, \\
\kk^2 & \text{for } n=0,-d,\ldots, -(r-i)d, \\
\kk   & \text{for } n=-(r-i+1)d,-(r-i+2)d,\ldots,-rd, \\
0     & \text{otherwise,}
\end{array}
\right.
\]
where when $i=r+1$, we take the second condition to be empty.
\end{lemma}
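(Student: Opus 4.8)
The plan is to compute the cohomology of $E$ by applying a cohomological functor to the triangle \eqref{eq:ray-triangle} and reading off a long exact sequence, using the fact that we already know the cohomology of the indecomposable objects $X_r$. Concretely, fix a degree function: for each indecomposable $Y \in \ind{\Tw}$, one knows $H^n(Y) = \Hom_{\Tw}(\Sigma^{-n} s, Y)$ (or the appropriate graded piece with respect to the underlying dg-algebra model of \cite{HJY}), and by the structure of the AR quiver the object $X_r$ has cohomology concentrated in degrees $0, -d, \dots, -rd$, each one-dimensional (this is the $s=0$, ``no extension'' baseline, and can be extracted from the description of the Hom-hammocks in Proposition~\ref{prop:hom-hammocks} together with the explicit form of $X_r$ in \cite{HJY,FY}). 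First I would record this baseline computation as the input data.

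Next, I would apply $H^\bullet(-) = \bigoplus_n H^n(-)$ to the triangle $\trilabels{X_r}{E}{\Sigma^{-sd} X_{r+s-i}}{}{}{f}$, obtaining the long exact sequence
\[
\cdots \to H^n(X_r) \to H^n(E) \to H^n(\Sigma^{-sd} X_{r+s-i}) \rightlabel{\delta} H^{n+1}(X_r) \to \cdots
\]
where the connecting map $\delta$ in degree $n$ is (up to shift) induced by the third morphism $f \colon \Sigma^{-sd} X_{r+s-i} \to \Sigma X_r$. The object $\Sigma^{-sd} X_{r+s-i}$ has cohomology in degrees $sd, (s-1)d, \dots, -(r-i)d$ (shift the baseline for $X_{r+s-i}$ by $sd$), and $\Sigma X_r$ has cohomology in degrees $d, 0, -d, \dots, -(r-1)d$. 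The degrees in which both $H^\bullet(\Sigma^{-sd}X_{r+s-i})$ and $H^{\bullet+1}(X_r)$ are nonzero are precisely $n = -d, -2d, \dots, -(r-i)d$ (and $n=0$ if one is careful with the boundary), so on this overlap range $\delta$ is a map of one-dimensional spaces; everywhere else $\delta$ is forced to be zero for degree reasons. The crux of the argument is therefore to show that $\delta$ is an \emph{isomorphism} on this entire overlapping range, i.e. that $f$ induces a nonzero map on each relevant graded piece.

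The main obstacle is exactly this non-vanishing of the connecting homomorphism. I would handle it using the factorisation properties of Proposition~\ref{prop:factoring}: since the triangle is non-split, Lemma~\ref{lem:homext-vanishing}(1) guarantees $f \neq 0$, and then the one-dimensionality of Hom-spaces plus the factorisation statements force the composite $\Sigma^{-n}s \to \Sigma^{-sd} X_{r+s-i} \rightlabel{f} \Sigma X_r$ to be nonzero for each $n$ in the overlap — any cohomology class of $\Sigma^{-sd}X_{r+s-i}$ in such a degree, composed with $f$, lands in a one-dimensional Hom-space that is known to be nonzero, and Proposition~\ref{prop:factoring} identifies it with a nonzero multiple of the corresponding class. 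Granting that $\delta$ is an isomorphism on the overlap, the long exact sequence breaks into short exact sequences $0 \to H^n(X_r) \to H^n(E) \to \ker\delta^n \to 0$ (respectively $0 \to \coker\delta^{n-1} \to H^n(E) \to H^n(\Sigma^{-sd}X_{r+s-i}) \to 0$), and the stated dimensions fall out by bookkeeping: degrees $d, \dots, sd$ contribute $\kk$ from $E \cong \Sigma^{-sd}X_{r+s-i}$'s ``top'' part where $H^\bullet(X_r)=0$; degrees $0, -d, \dots, -(r-i)d$ get contributions from both objects (hence $\kk^2$), surviving because $\delta$ kills only the parts matched against each other in shifted degree; and degrees $-(r-i+1)d, \dots, -rd$ contribute $\kk$ from $X_r$'s ``bottom'' part where $H^\bullet(\Sigma^{-sd}X_{r+s-i})=0$. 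The boundary convention ($i=r+1$ making the middle range empty) is precisely the case where $\Sigma^{-sd}X_{r+s-i} = \Sigma^{-sd}X_{s-1}$ has no cohomology in non-positive degrees after the shift, so there is no overlap and $H^\bullet(E)$ is just the concatenation $H^\bullet(X_r) \oplus H^\bullet(\Sigma^{-sd}X_{s-1})$ with all pieces one-dimensional.
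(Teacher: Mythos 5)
Your overall strategy — apply $H^\bullet(-)$ to the triangle, read off the long exact sequence, and pin down the connecting maps — is the right framework and is indeed what the paper does. But the central claim of your proposal is the \emph{opposite} of what is true, and if you were to carry it out it would give the wrong answer.

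You assert that the crux is to prove the connecting map $\delta \colon H^n(\Sigma^{-sd}X_{r+s-i}) \to H^{n+1}(X_r)$ is an \emph{isomorphism} on the degrees where both sides are nonzero. But in a long exact sequence, an isomorphic connecting map \emph{cancels} the adjacent contributions: $\ker\delta_n = 0$ and $\coker\delta_{n-1}=0$ would force $H^n(E)$ to be small, not large. In fact, if you grant your own claim and run the bookkeeping on the short exact sequence $\coker\delta_{n-1} \hookrightarrow H^n(E) \twoheadrightarrow \ker\delta_n$ for $n$ in the interior of your overlap range, you get $H^n(E)=0$, whereas the statement demands $\kk^2$. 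The correct assertion, and the one the paper proves, is that $\delta$ \emph{vanishes}, so the long exact sequence splits into the three-term short exact sequences $0 \to H^n(X_r) \to H^n(E) \to H^n(\Sigma^{-sd}X_{r+s-i}) \to 0$, and dimensions simply add.

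There is a second error feeding into the first: your identification of the overlap is wrong in the generic case. The connecting map in degree $n$ runs from $H^n(\Sigma^{-sd}X_{r+s-i})$ to $H^{n+1}(X_r)$, and both groups are concentrated in degrees that are multiples of $d$. So nonvanishing would require $n$ and $n+1$ to both be multiples of $d$, which is impossible once $|d|>1$. In other words, for $|d|>1$ the overlap is \emph{empty} and $\delta=0$ for free — that is the easy case and needs no argument at all. You seem to have confused the degrees where $H^n(X_r)$ and $H^n(\Sigma^{-sd}X_{r+s-i})$ overlap in the \emph{same} degree (which is precisely $n=0,-d,\ldots,-(r-i)d$, the $\kk^2$ range) with the degrees where a connecting map could be nonzero; these are different sets. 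The only genuinely hard case is $d=1$, where $n$ and $n+1$ can both be ``admissible,'' and there the paper proves $\delta=0$ by factoring $f$ through $\Sigma^{r+2-i}X_{i-1}$, whose cohomology vanishes in the relevant degrees so that $H(f)=H(h)H(g)=0$. Your appeal to Proposition~\ref{prop:factoring} and one-dimensionality of Hom-spaces to prove $f$ induces a \emph{nonzero} map on graded pieces therefore needs to be replaced by the opposite argument: use the factorisation to show the induced map on cohomology is \emph{zero}.
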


\begin{proof}
Suppose $|d|>1$. Applying the functor $H^n(-)$ to the distinguished triangle~\eqref{eq:ray-triangle} and using the fact that, by the proofs of \cite[Propositions 3.2--3.4]{HJY},
\[
H^n(X_t) =
\left\{
\begin{array}{ll}
\kk & \text{if } n=0,-d,-2d,\ldots,-td, \\
0   & \text{otherwise.}
\end{array}
\right.
\]
we read off the cohomology from the following (short) exact sequences when $0\leq i \leq r$:
\begin{align}
&H^{(s-j)d}(E)   \isoto  H^{-jd}(X_{r+s-i}) &\text{for } 0\leq j < s; \notag \\
H^{-jd}(X_r) \intoo &H^{-jd}(E) \ontoo H^{-(j+s)d}(X_{r+s-i}) &\text{for } 0\leq j \leq r-i;  \label{eq:ray-ses} \\
H^{-jd}(X_r) \isoto &H^{-jd}(E) & \text{for } r-i+1 \leq j \leq r. \notag
\end{align}
When $i=r+1$, the sequence \eqref{eq:ray-ses} degenerates into the isomorphism on the third line.

Now suppose $d=1$. Then the short exact sequences \eqref{eq:ray-ses} above are connected into the long exact sequence
\begin{align*}
H^{-(r-i)}(X_r) \intoo &H^{-(r-i)}(E) \too H^{-(r+s-i)}(X_{r+s-i}) \rightlabel{H^{-(r-i)}(f)} H^{-(r-i-1)}(X_r) \too \cdots \\
\cdots \too 
&H^{-s-1}(X_{r+s-i}) \rightlabel{H^{-1}(f)} H^0(X_r) \too H^0(E) \ontoo H^{-s}(X_{r+s-i}).
\end{align*}
For $i=r$ and $i=r+1$ there is nothing to prove: in the first case, there is already only one short exact sequence, and in the second, \eqref{eq:ray-ses} degenerates into an isomorphism. Thus, we need only consider the cases $1\leq i < r$.
There are nonzero maps $g\colon \Sigma^{-s} X_{r+s-i} \to \Sigma^{r+2-i} X_{i-1}$ and $h\colon \Sigma^{r+2-i} X_{i-1} \to \Sigma X_r$ by Proposition~\ref{prop:hom-hammocks}. Thus the map $f$ in triangle~\eqref{eq:ray-triangle} factors as $f = hg$ by 
Proposition~\ref{prop:factoring}\eqref{item:fac-different-comp}. 
It follows that $H(f) = H(h)H(g)$. Now for $0\leq j \leq r-i$ we have 
\[
H^{-j}(\Sigma^{r+2-i}X_{i-1}) = H^{r+2-i-j}(X_{i-1})=0
\]
because $r+2-i-j > 0$ and $H^n(X_t) = 0$ for any $n >0$ and $t\geq 0$ in the case $d>0$. Thus it follows that the connecting maps $H^{-j}(f)$ for $0\leq j \leq r-i$ in the long exact sequence above are zero, which thus decomposes back into the short exact sequences \eqref{eq:ray-ses} allowing us to again read off the cohomology of $E$.
\end{proof}

We now deal with the Ext-hammock $\extto{X_r}$. Note that the non-trivial extensions occurring in this Ext-hammock have the form
\begin{equation} \label{eq:coray-triangle}
\trilabels{X_r}{E}{\Sigma^{1+id} X_{r+s-i}}{}{}{f} \quad \text{for } s\geq 0 \text{ and } 0\leq i \leq r.
\end{equation}

\begin{lemma} \label{lem:coray-cohomology}
Let $w \in \bZ \setminus \{0,1\}$. Consider a triangle of the form \eqref{eq:coray-triangle} above. Then:
\[
H^n (E) = 
\left\{
\begin{array}{ll}
\kk   & \text{for } n=0,d,\ldots,(i-1)d, \\
\kk   & \text{for } n=(r+i)d-1,(r+i+1)d-1,\ldots,(r+s+i)d-1,  \\
0     & \text{otherwise,}
\end{array}
\right.
\]
where when $i=0$ we assume the first condition to be empty.
\end{lemma}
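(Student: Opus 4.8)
The strategy is to mimic the proof of Lemma~\ref{lem:ray-cohomology}: apply the cohomological functor $H^n(-)$ to the distinguished triangle~\eqref{eq:coray-triangle} and read off $H^n(E)$ from the resulting long exact sequence, using the known values $H^n(X_t) = \kk$ for $n = 0, -d, \ldots, -td$ and $0$ otherwise (from the proofs of \cite[Propositions 3.2--3.4]{HJY}). First I would rotate~\eqref{eq:coray-triangle} to $\trilabels{\Sigma^{id} X_{r+s-i}}{X_r}{E}{}{}{}$ so that the outer terms are the ones whose cohomology is already recorded, and note that $H^n(\Sigma^{id} X_{r+s-i}) = H^{n+id}(X_{r+s-i})$ is supported in degrees $n = id, (i-1)d, \ldots, -(r+s-2i)d$. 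Comparing the supports of $H^\bullet(X_r)$ (degrees $0$ down to $-rd$) and $H^\bullet(\Sigma^{id}X_{r+s-i})$ (degrees $id$ down to $-(r+s-2i)d$), one finds that in the case $d > 1$ the two supports are disjoint, so the long exact sequence splits into isomorphisms and the cohomology of $E$ is simply the "union": a $\kk$ in degrees $0, d, \ldots, (i-1)d$ coming from the shifted copy (the degrees $id$ down to $0$ of $\Sigma^{id}X_{r+s-i}$, but shifted by $[1]$ so appearing in degrees $0,\dots,(i-1)d$ — I will need to be careful tracking the degree shift from the connecting map), and a $\kk$ in the high degrees $(r+i)d-1, \ldots, (r+s+i)d-1$ coming from $X_r$ after the shift. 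The bookkeeping of exactly which shift sends which block where is the part that needs care, but it is purely combinatorial.

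The genuine obstacle, exactly as in Lemma~\ref{lem:ray-cohomology}, is the case $d = 1$ (i.e. $w = 0$ excluded, $w = 2$), where the supports of $H^\bullet(X_r)$ and $H^\bullet(\Sigma^i X_{r+s-i})$ overlap and the long exact sequence no longer obviously decomposes into short exact sequences. Here I would argue that the relevant connecting homomorphisms vanish by factoring the map $f$ in~\eqref{eq:coray-triangle} through a suitable intermediate indecomposable object. Concretely: by Proposition~\ref{prop:hom-hammocks} there are nonzero maps exhibiting $f \colon X_r \to \Sigma^{1+i}X_{r+s-i}$ as a composite $X_r \to z \to \Sigma^{1+i}X_{r+s-i}$ through some $z$ lying on the appropriate (co)ray, using Proposition~\ref{prop:factoring} to produce the factorisation; then $H^n(f) = H^n(h)H^n(g)$, and one checks that $H^n(z) = 0$ in precisely the range of degrees where a nonzero connecting map would be needed, because $z$ will be of the form $\Sigma^m X_t$ with $m > 0$ and $H^{>0}(X_t) = 0$ when $d > 0$. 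This forces the connecting maps to be zero, the long exact sequence falls apart into short exact (indeed split, by the disjoint supports after accounting for the vanishing) pieces, and the cohomology of $E$ is read off as before.

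Finally I would handle the degenerate boundary case $i = 0$, where~\eqref{eq:coray-triangle} becomes $\trilabels{X_r}{E}{\Sigma X_{r+s}}{}{}{}$; here the first block of the claimed answer is empty (as stated), and the computation reduces directly to the rotated triangle with no overlap to worry about. The symmetry between this lemma and Lemma~\ref{lem:ray-cohomology} — essentially an application of the Serre functor / suspension swapping the two Ext-hammocks $\extfrom{X_r}$ and $\extto{X_r}$ — suggests one could alternatively deduce this statement formally from the previous one, but I expect the direct computation above to be cleaner to write out than setting up the requisite duality carefully.
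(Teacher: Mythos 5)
Your plan has a genuine gap at the crucial step, and in fact the approach you describe would compute the wrong answer. The structure of this lemma is the \emph{opposite} of Lemma~\ref{lem:ray-cohomology}: there the key point is that the connecting homomorphisms vanish, so the cohomologies ``stack'' (which is why $\kk^2$ appears in that lemma); here the non-trivial content of the proof is that the connecting homomorphism $H^{-jd-1}(f)\colon H^{(i-j)d}(X_{r+s-i}) \to H^{-jd}(X_r)$ is an \emph{isomorphism} for $j$ in the range where both sides are nonzero, which causes cancellation. This is forced already by a crude count: $H^\bullet(X_r)$ and $H^\bullet(\Sigma^{1+id}X_{r+s-i})$ have total dimension $(r+1)+(r+s-i+1)$, strictly larger than the total dimension of $H^\bullet(E)$ asserted by the lemma, so $H^\bullet(E)$ cannot be ``simply the union''. (Note also that after your rotation both outer terms have cohomology supported on the lattice $d\bZ$, so your claim that the supports are disjoint for $d>1$ is false; the paper keeps the triangle as written precisely so that $X_r$ lives on $d\bZ$ and $\Sigma^{1+id}X_{r+s-i}$ on $-1+d\bZ$, which are disjoint when $|d|>1$.)

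Consequently your proposed argument for $d=1$ --- factor $f$ through some $z$ with vanishing cohomology in the relevant degrees to conclude $H^n(f)=0$ --- is exactly the device used in Lemma~\ref{lem:ray-cohomology}, but here it tries to establish the wrong thing: the connecting maps you want to kill are precisely the ones that must be shown to be invertible. What the paper actually does (for all $d$, not just $d=1$) is factor $f$ as $\Sigma^{1+id}X_{r+s-i}\stackrel{h}{\to}\Sigma^{1+id}X_{r-i}\stackrel{g}{\to}\Sigma X_r$ via Proposition~\ref{prop:factoring}, and then invoke the explicit DG-module realisation of $\Tw$ from \cite[Section 2]{HJY}: the map $g$ (and likewise $h$) is induced by an inclusion of the underlying DG modules, so it is an isomorphism on $H^{-jd-1}$ in the overlap range and zero otherwise, and $H(f)=H(g)H(h)$ inherits this. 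This appeal to the concrete DG description of the objects $X_t$ is the ingredient your plan is missing; it cannot be recovered purely formally from the Hom-hammocks and Proposition~\ref{prop:factoring} alone.
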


\begin{proof}
Apply the functor $H^n(-)$ to the triangle \eqref{eq:coray-triangle} and note that the long exact cohomology sequence decomposes into exact sequences
\[
H^{-jd-1}(E)   \intoo H^{(i-j)d}(X_{r+s-i})  \rightlabel{H^{-jd-1}(f)}  H^{-jd}(X_r)  \ontoo  H^{-jd}(E).     
\]
Since $H^{-jd-1}(f)=0$ for $0 \leq j < i$ and $i+r \leq j \leq i+r+s$, we have $H^{-jd}(X_r) \simeq H^{-jd}(E)$ for $0 \leq j < i$ and $H^{-jd-1}(E) \simeq H^{(i-j)d}(X_{r+s-i})$ for $i+r \leq j \leq i+r+s$.

The map $f\colon \Sigma^{1+id} X_{r+s-i} \to \Sigma X_r$ factors as $\Sigma^{1+id} X_{r+s-i} \rightlabel{h} \Sigma^{1+id} X_{r-i} \rightlabel{g} \Sigma X_r$ by Proposition~\ref{prop:factoring}. The map $g$ is induced from an inclusion map of the underlying DG modules; see \cite[Section 2]{HJY} for precise details. As such $H^{-jd-1}(g)\colon H^{(i-j)d}(X_{r-i}) \too H^{-jd}(X_r)$ is nonzero and thus an isomorphism (by one-dimensionality) for $i \leq j \leq i+r$, and zero otherwise. 
Similarly the induced map $H^{(i-j)d}(h)\colon H^{jd-1}(X_{r+s-i}) \to H^{-jd-1}(X_{r-i})$ is an isomorphism for $i\leq j \leq i+r$, and zero otherwise.
Since $H(f) = H(g)H(h)$, it follows that $H^{-jd-1}(f)$ is an isomorphism for $i \leq j \leq i+r$. 
Now one can read off the cohomology of $E$ from the sequences above. 
\end{proof}

\begin{lemma} \label{lem:w=0twoextensions}
In the case $w=0$, the statements of Lemmas~\ref{lem:ray-cohomology} and \ref{lem:coray-cohomology} also hold, with the modification that there are, up to equivalence, two extensions,
\[
\tri{X_r}{E}{\Sigma X_r},
\]
one whose middle term has cohomology as in Lemma~\ref{lem:ray-cohomology}, and one whose middle term has trivial cohomology.
\end{lemma}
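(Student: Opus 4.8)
The plan is to separate the generic extensions, for which the relevant $\Ext^1$-space is one-dimensional, from the single exceptional family $b=\Sigma X_r$, for which it is two-dimensional. For the generic case, note that by Proposition~\ref{prop:ext-hammocks}(ii), if $b\in(\extfrom{X_r}\cup\extto{X_r})\setminus\{\Sigma X_r\}$ then $\Ext^1_{\sT_0}(b,X_r)$ is one-dimensional, so the extension $\tri{X_r}{E}{b}$ is unique up to equivalence and we must recover the formulae of Lemmas~\ref{lem:ray-cohomology} and \ref{lem:coray-cohomology}. I would check that the proofs of those two lemmas carry over verbatim to $w=0$ (where $d=w-1=-1$), subject to two adjustments. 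First, although $\Hom_{\sT_0}(a,a)$ is now two-dimensional, so Proposition~\ref{prop:factoring} no longer applies, in each factorisation used in those proofs the three objects involved lie on pairwise distinct (co)rays once $b=\Sigma X_r$ is excluded; in particular the source and target of the connecting map differ. Hence Proposition~\ref{prop:factoring-T0}(i) supplies exactly the required factorisations, and all Hom- and cohomology spaces between the indecomposables that occur stay one-dimensional by Proposition~\ref{prop:hom-hammocks}(ii). Second, since $|d|=1$ the cohomology $H^n(X_t)$ is supported in the consecutive degrees $n=0,1,\dots,t$, so — exactly as in the $d=1$ case of the original proofs — one must show the long exact cohomology sequence breaks into short exact pieces; the same factorisation trick does this, now invoking $H^n(X_t)=0$ for $n<0$ where the $d=1$ case used $H^n(X_t)=0$ for $n>0$.

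For the exceptional family $b=\Sigma X_r$, one has $\Ext^1_{\sT_0}(\Sigma X_r,X_r)=\Hom_{\sT_0}(\Sigma X_r,\Sigma X_r)$, which by Proposition~\ref{prop:hom-hammocks}(ii) is a two-dimensional local $\kk$-algebra, hence isomorphic to $\kk[\epsilon]/(\epsilon^2)$ (compare Remark~\ref{rem:basis-T0}); it is commutative, and its nonzero elements are either units or nonzero multiples of the radical generator $\epsilon$. An extension class $\xi$ is the connecting morphism of its own triangle $\trilabels{X_r}{E}{\Sigma X_r}{}{}{\xi}$, so rotating gives $E\simeq\cone{\phi}$ with $\phi\coloneqq\Sigma^{-1}\xi\in\Hom_{\sT_0}(X_r,X_r)$. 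If $\xi$ is a unit, then so is $\phi$, whence $E\simeq\cone{\phi}=0$ has trivial cohomology. If $\xi$ is a nonzero non-unit, then $\phi$ is a nonzero radical element, and since each $H^n(X_r)$ is at most one-dimensional while $H^n(\phi)^2=H^n(\phi^2)=0$, we get $H^n(\phi)=0$ for all $n$; hence the long exact cohomology sequence of $\trilabel{X_r}{X_r}{E}{\phi}$ decomposes into short exact sequences $0\to H^n(X_r)\to H^n(E)\to H^n(\Sigma X_r)\to 0$, giving $\dim_{\kk}H^n(E)=\dim_{\kk}H^n(X_r)+\dim_{\kk}H^n(\Sigma X_r)$. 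This is precisely the cohomology predicted by Lemma~\ref{lem:ray-cohomology} in the case $s=1$, $i=1$ under which $\Sigma X_r$ occurs as the third term of a ray triangle, and it coincides with the cohomology of the split middle term $X_r\oplus\Sigma X_r$. Finally, the group $\mathrm{Aut}_{\sT_0}(X_r)\times\mathrm{Aut}_{\sT_0}(\Sigma X_r)$ acts on $\Hom_{\sT_0}(\Sigma X_r,\Sigma X_r)$ by $\xi\mapsto(\Sigma\varphi)\,\xi\,\psi^{-1}=\xi\cdot(\Sigma\varphi)\psi^{-1}$, i.e. by multiplication by an arbitrary unit, so the nonzero classes fall into exactly two orbits; thus, up to equivalence, there are exactly the two non-split extensions described, the class $\xi=0$ being the split triangle.

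I expect the second step to be short and essentially structural, and the first step to be the main obstacle: carefully confirming that the case analysis in the proofs of Lemmas~\ref{lem:ray-cohomology} and \ref{lem:coray-cohomology} is genuinely insensitive both to replacing Proposition~\ref{prop:factoring} by Proposition~\ref{prop:factoring-T0}(i) and to the sign change $d=-1$. The delicate point there is that, away from $b=\Sigma X_r$, every object appearing in a factorisation remains non-isomorphic to the others, so all the relevant Hom- and cohomology spaces stay one-dimensional and the connecting-map vanishing arguments go through unchanged.
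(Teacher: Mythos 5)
Your treatment of the exceptional family $b=\Sigma X_r$ is correct, and it takes a genuinely different — and arguably cleaner — route from the paper's. The paper observes that $\Ext^1(\Sigma X_r,X_r)=\Hom(\Sigma X_r,\Sigma X_r)$ has basis $\{\id,f\}$, identifies the triangle with connecting map $f$ as the Auslander--Reiten triangle (so its cohomology is ``known''), and notes the triangle with connecting map $\id$ has zero middle term. You instead argue structurally: $\Hom(\Sigma X_r,\Sigma X_r)\cong\kk[\epsilon]/(\epsilon^2)$ is local, a unit class gives $E\simeq\mathsf{cone}(\text{iso})=0$, and a nonzero non-unit $\phi=\Sigma^{-1}\xi$ satisfies $\phi^2=0$, which forces $H^n(\phi)=0$ because each $H^n(X_r)$ is at most one-dimensional; the long exact sequence then splits and gives $\dim H^n(E)=\dim H^n(X_r)+\dim H^n(\Sigma X_r)$, which you correctly match against the $s=1$, $i=1$ case of Lemma~\ref{lem:ray-cohomology}. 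You also supply the orbit computation under $\mathrm{Aut}(X_r)\times\mathrm{Aut}(\Sigma X_r)$, which makes the phrase ``up to equivalence, two extensions'' precise in a way the paper leaves informal. What each buys: the paper's route is shorter by appealing to AR theory; yours is self-contained and does not require recognising the AR triangle.

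On the generic case $b\neq\Sigma X_r$, both you and the paper are brief, and your substitution of Proposition~\ref{prop:factoring-T0}(i) for Proposition~\ref{prop:factoring} is the right adjustment. One caveat: the sentence ``exactly as in the $d=1$ case'' is slightly optimistic. For $d=1$ the proof of Lemma~\ref{lem:ray-cohomology} dismisses $i=r$ and $i=r+1$ outright because the cohomological supports of $X_r$ (degrees $\leq 0$) and of $\Sigma^{-sd}X_{r+s-i}$ (degrees $\geq 1$) cannot give a nonzero connecting map there. For $d=-1$ the shift $\Sigma^{s}$ pushes the support of the third term the other way, so even for $i=r,r+1$ there are degrees ($n=-1$, and $n=0$ when $i=r$) where a connecting map could a priori be nonzero; the factorisation argument must therefore actually be run in those cases, which the $d=1$ proof never had to. This is not a fatal flaw — the factorisation device still applies — but the claim that the case split is insensitive to the sign of $d$ should be weakened. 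The paper's own proof of Lemma~\ref{lem:w=0twoextensions} is equally terse on this generic part, so you are not behind it; but since your proposal explicitly advertises a careful check of the $d=1$ case analysis, it should flag that the list of cases requiring the factorisation trick grows when $d=-1$.
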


\begin{proof}
In the case that $w=0$, $d=-1$ and so the AR quiver of $\sT_0$ consists of only one $\bZ A_{\infty}$ component. However, the extensions with indecomposable outer terms are formulated exactly as in Lemmas~\ref{lem:ray-cohomology} and \ref{lem:coray-cohomology}. The only difference occurs because the Ext-hammocks $\extfrom{X_r}$ and $\extto{X_r}$ have non-trivial intersection $\extfrom{X_r} \cap \extto{X_r} = \{\Sigma X_r\}$.  The two-dimensional Ext-space,  $\Ext^1(\Sigma X_r,X_r) = \Hom(\Sigma X_r,\Sigma X_r)$ (see Proposition~\ref{prop:hom-hammocks}), has a basis $\{\id_a,f\}$ where $f$ can be chosen to be a non-isomorphism factoring through any indecomposable object in $\homfrom{\Sigma X_r}$; see Remark~\ref{rem:basis-T0}. The corresponding extensions are:
\[
\trilabels{X_r}{E}{\Sigma X_r}{}{}{f} \quad \text{and} \quad
\trilabels{X_r}{0}{\Sigma X_r}{}{}{\id}.
\]
The first triangle is (equivalent to) the AR triangle, thus its cohomology is known. However, one can also argue exactly as in the case $d=1$ in the proof of Lemma~\ref{lem:ray-cohomology}. It is clear that the middle term of the second triangle has trivial cohomology.
\end{proof}

\subsection{Graphical calculus}\label{sec:graphical}

The main technical result of this section is the following computation of the middle terms of extensions whose outer terms are indecomposable. It is analogous to the graphical calculus in \cite[Corollary 8.5]{BMRRT}. The strategy of our proof is inspired by \cite[Section 8]{Jorgensen-AR-theory}.

\begin{theorem}
\label{thm:graphical-calculus}
Let $a,b \in \ind{\Tw}$ for $w\neq 0,1$. Suppose $\Ext^1_{\sT}(b,a)\neq 0$. 
Let $\tri{a}{e}{b}$ be the unique non-split extension of $b$ by $a$. Then $e$ decomposes as $e=e_1 \oplus e_2$, where $e_i$ is either indecomposable or zero for $i=1,2$. Moreover, the $e_i$ can be computed by the following graphical calculus:
\begin{compactenum}[(i)]
\item \label{item:same-component}
If $b \in \extfrom{a}$ then
$e_1  = \rayfrom{a} \cap \corayto{b}$ and $e_2  = \corayfrom{a} \cap \rayto{b}$.
\item \label{item:different-component}
If $b\in \extto{a}$ then
$e_1  = \corayto{L(\sS a)} \cap \rayto{b}$ and $e_2  = \corayfrom{a} \cap \rayfrom{R(\sS^{-1}b)}$.
\end{compactenum}
If any of the intersections in parts \eqref{item:same-component} and \eqref{item:different-component} are empty, then we interpret the corresponding object as being the zero object. See Figure~\ref{fig:ext} for an illustration.
\end{theorem}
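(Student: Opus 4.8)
The plan is to pin down the middle term $e$ by computing its cohomology and combining that with the vanishing properties extracted from Lemma~\ref{lem:homext-vanishing}. Since every indecomposable of $\Tw$ is a suspension of some $X_r$ and $\Sigma$ is compatible with everything in sight, I would first reduce to the case $a=X_r$. As $w\neq 0,1$ we have $\ext^1_{\Tw}(b,a)=1$ by Proposition~\ref{prop:ext-hammocks}, so $\tri{a}{e}{b}$ is the unique non-split triangle, and $b$ lies either in $\extfrom{a}$ — the situation of \eqref{eq:ray-triangle}, leading to part~\eqref{item:same-component} — or in $\extto{a}$ — the situation of \eqref{eq:coray-triangle}, leading to part~\eqref{item:different-component}. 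Because $\hom_{\Tw}(x,y)\leq 1$ for all $x,y\in\ind{\Tw}$ and $\Tw$ has no self-extensions when $w\neq 0,1$ (Propositions~\ref{prop:hom-hammocks} and~\ref{prop:ext-hammocks}), Lemma~\ref{lem:homext-vanishing} applies and yields $e=\bigoplus_j f_j$ with the $f_j$ pairwise non-isomorphic indecomposable of multiplicity one and $\Ext^1_{\Tw}(f_j,a)=0=\Ext^1_{\Tw}(b,f_j)$ for each $j$.

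Next I would exploit that cohomology pins down indecomposables. Each indecomposable $\Sigma^m X_t$ has cohomology $\kk$ exactly in the degrees $-m,-m-d,\ldots,-m-td$ and zero otherwise (the formula for $H^\bullet(X_t)$ used in Lemma~\ref{lem:ray-cohomology}), so it is supported on an arithmetic progression of common difference $d$ — a ``$d$-interval'' — and is recovered uniquely from that support. As $H^\bullet(-)$ is additive on direct sums, identifying $e$ becomes the problem of writing $H^\bullet(e)$ — given by Lemma~\ref{lem:ray-cohomology} in part~\eqref{item:same-component} and by Lemma~\ref{lem:coray-cohomology} in part~\eqref{item:different-component} — as a sum of $d$-intervals subject to the above vanishing. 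I would then check directly, using the $H^\bullet(X_t)$ formula and the ray/coray descriptions of $\extfrom{X_r}$ and $\extto{X_r}$, that the objects $e_1,e_2$ in the statement are each zero or indecomposable, that $\Ext^1_{\Tw}(e_i,a)=0=\Ext^1_{\Tw}(b,e_i)$, and that $H^\bullet(e_1\oplus e_2)=H^\bullet(e)$; so $e_1\oplus e_2$ is a valid candidate.

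For uniqueness I would first bound the number of summands: applying $\Hom_{\Tw}(a,-)$ to $\tri{a}{e}{b}$ and using one-dimensionality of Hom-spaces together with $\Ext^1_{\Tw}(a,a)=0$ gives $\hom_{\Tw}(a,e)\leq\hom_{\Tw}(a,a)+\hom_{\Tw}(a,b)\leq 2$, while the first part of Lemma~\ref{lem:homext-vanishing} and one-dimensionality identify $\hom_{\Tw}(a,e)$ with the number of indecomposable summands of $e$. In part~\eqref{item:different-component} this already closes the argument: there $H^\bullet(e)$ has multiplicity at most one, hence is a disjoint union of $d$-intervals, one per connected component of its support, and these components are exactly the (non-zero) $H^\bullet(e_i)$, forcing $e=e_1\oplus e_2$. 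In part~\eqref{item:same-component}, $H^\bullet(e)$ has multiplicity one on a prefix, two on a central block $M$ (which is precisely the support of $e_2$), and one on a suffix, all inside a single $d$-interval; if $e$ has one summand then $M$ is empty and $e=e_1$ (with $e_2=0$), while if $e$ has two summands $f_1,f_2$ then both must contain all of $M$, and covering the whole support with multiplicity one outside $M$ leaves exactly two possibilities, $\{f_1,f_2\}=\{e_1,e_2\}$ or $\{f_1,f_2\}=\{a,b\}$. The latter is excluded because it would make $a$ a summand of $e$ while $\Ext^1_{\Tw}(b,a)\neq 0$, contradicting the second part of Lemma~\ref{lem:homext-vanishing}. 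Hence $e=e_1\oplus e_2$.

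The hard part will be exactly this last uniqueness step in part~\eqref{item:same-component}: cohomology alone is genuinely insufficient, since $a\oplus b$ has the same cohomology as the true middle term $e$, so the cohomological computations must be married to the vanishing conclusions of Lemma~\ref{lem:homext-vanishing}, and the fiddly bookkeeping is the combinatorics of $d$-interval decompositions together with the Ext-hammock calculations behind $\Ext^1_{\Tw}(e_i,a)=0=\Ext^1_{\Tw}(b,e_i)$. The hypothesis $w\neq 0$ enters throughout via one-dimensionality of Hom-spaces; the case $w=0$, where endomorphism rings are two-dimensional and the forward and backward Ext-hammocks of an object overlap, really is different and is deliberately set aside here.
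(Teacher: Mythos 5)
Your strategy coincides with the paper's in its two essential inputs---the cohomology computations of Lemmas~\ref{lem:ray-cohomology} and~\ref{lem:coray-cohomology}, and the vanishing/nonvanishing constraints of Lemma~\ref{lem:homext-vanishing}---but you organise the identification of the summands differently. The paper first asserts that the only indecomposables meeting the necessary conditions of Lemma~\ref{lem:homext-vanishing} form the explicit two-element ``candidate'' set $\{X_{r-i},\Sigma^{-sd}X_{r+s}\}$ and then uses cohomology merely to decide which candidates actually appear. You instead extract the bound $\hom_{\Tw}(a,e)\leq 2$ from the triangle, equate $\hom_{\Tw}(a,e)$ with the number of summands, and then determine the summands through a $d$-interval decomposition of $H^\bullet(e)$. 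Both routes are legitimate; yours is, if anything, slightly cleaner in part~\eqref{item:different-component}, where the two blocks of $H^\bullet(e)$ lie in distinct residue classes modulo $|d|$ when $|d|\geq 2$ and, when $d=1$, are still separated by a gap, so the $d$-interval decomposition there is automatically unique.

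There is, however, one genuine hole in your treatment of part~\eqref{item:same-component}. Your dichotomy ``two summands $\Rightarrow$ $\{f_1,f_2\}$ equals $\{e_1,e_2\}$ or $\{a,b\}$'' tacitly assumes the multiplicity-two block $M$ is nonempty. If $M=\emptyset$ (the case $i=r+1$ in triangle~\eqref{eq:ray-triangle}), the constraint ``both $f_j$ contain $M$'' is vacuous, and a priori \emph{every} split of the single $d$-interval $[sd,-rd]$ into two consecutive pieces is a candidate, so your count of exactly two possibilities breaks down. The repair stays within your own framework: when $i=r+1$ a Hom-hammock computation shows $b=\Sigma^{-sd}X_{s-1}\notin\homfrom{a}\cup\homto{\SSS a}$, hence $\hom_{\Tw}(a,b)=0$, hence $\hom_{\Tw}(a,e)\leq 1$ and $e$ cannot have two summands at all---but you need to say this explicitly rather than let it hide in the ``fiddly bookkeeping''. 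With that supplement the proposed proof is complete and correct.
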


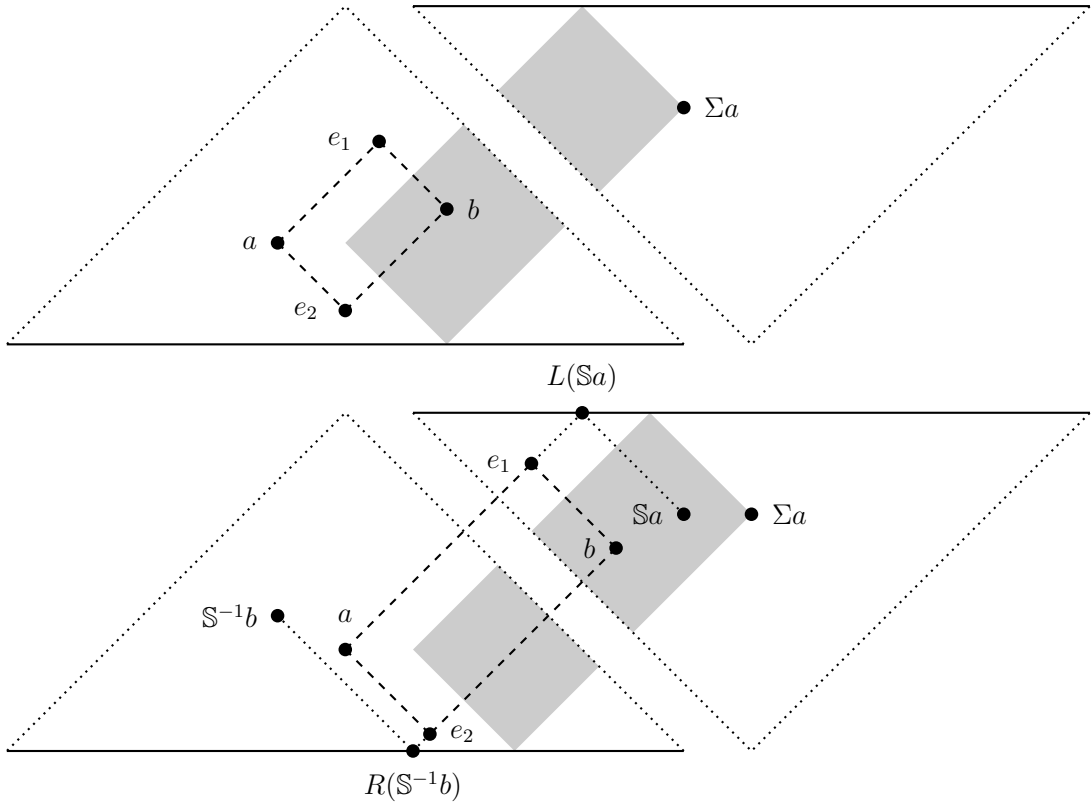
\begin{figure}
\begin{center}
\begin{tikzpicture}[thick,scale=0.9, every node/.style={scale=0.9}]

\fill[gray!40] (5,1.5) -- (6.5,0) -- (8.25,1.75) -- (6.75,3.25) -- cycle;
\fill[gray!40] (7.25, 3.75) -- (8.75,2.25) -- (10,3.5) -- (8.5,5) -- cycle;

%The components
\draw (0,0) -- (10,0); 
\draw[dotted] (10,0) -- (5,5) -- (0,0);
\draw[dotted] (6,5) -- (11,0) -- (16,5); 
\draw (16,5) -- (6,5);

\fill (4,1.5) circle (1mm); %x-r
\node (x-r) at (4,1.5) {};
\node [left] at (x-r.west) {$a$};

%Ray coming out of (x-r)
\draw[dashed] (4,1.5) -- (5.5,3); 
\draw[dashed] (4,1.5) -- (5,0.5); 

\fill (10,3.5) circle (1mm); %\Sigma x-r
\node (sigma) at (10,3.5) {}; 
\node [right] at (sigma.east) {$\Sigma a$};

\fill (6.5,2) circle (1mm); % b
\node (b) at (6.5,2) {};
\node [right] at (b.east) {$b$};

\draw[dashed] (5.5,3) -- (6.5,2); 
\draw[dashed] (5,0.5) -- (6.5,2); 

\fill (5.5,3) circle (1mm);
\node (e1) at (5.25,3) {};
\node[left] at (e1) {$e_1$};

\fill (5,0.5) circle (1mm);
\node (e2) at (4.75,0.5) {};
\node[left] at (e2) {$e_2$};

\end{tikzpicture}
\end{center}
\begin{center}
\begin{tikzpicture}[thick,scale=0.9, every node/.style={scale=0.9}]

\fill[gray!40] (6,1.5) --(7.5,0) -- (8.75,1.25) -- (7.25,2.75) -- cycle;
\fill[gray!40] (7.75,3.25) -- (9.25,1.75) -- (11,3.5) -- (9.5,5) -- cycle;

%The components
\draw (0,0) -- (10,0);
\draw[dotted] (10,0) -- (5,5) -- (0,0);
\draw[dotted] (6,5) -- (11,0) -- (16,5);
\draw (16,5) -- (6,5);

\fill (5,1.5) circle (1mm); %x-r
\node (x-r) at (5,1.75) {};
\node [above] at (x-r) {$a$};

%Ray coming out of (x-r)
\draw[dashed] (5,1.5) -- (6.75,3.25) -- (7.25,3.75) -- (7.75,4.25); 

%Coray coming out of (x-r)
\draw[dashed] (5,1.5) -- (6.25,0.25);

\fill (11,3.5) circle (1mm); %\Sigma x-r
\fill (10,3.5) circle (1mm); %\SSS x-r
\node (sigma) at (11,3.5) {}; 
\node [right] at (sigma.east) {$\Sigma a$};
\node (serre) at (10,3.5) {};
\node [left] at (serre.west) {$\SSS a$};

\fill (8.5,5) circle (1mm); %L(\SSS x-r)$
\node (L) at (8.5,5) {};
\node [above] at (L.north) {$L(\SSS a)$};

%dashed line from \SSS x-r to L(\SSS x-r)
\draw [dotted] (10,3.5) -- (8.5,5);

%dashed line from x-r to L(\SSS x-r)
\draw [dotted] (8.5,5) -- (7.75,4.25); 

\fill (9,3) circle (1mm); % b
\node (b) at (9,3) {};
\node [left] at (b.west) {$b$};

\fill (4,2) circle (1mm); % \SSSinv b
\node (Sb) at (4,2) {};
\node [left] at (Sb.west) {$\SSS^{-1} b$};

\fill (6,0) circle (1mm); %R(\SSSinv b)$
\node (R) at (6,0) {};
\node [below] at (R.south) {$R(\SSS^{-1}b)$};

%dashed line from \SSS^{-1}b to R(\SSS^{-1}b)
\draw [dotted] (4,2) -- (6,0);

%dashed line from R(\SSS^{-1} b) to b
\draw [dotted] (6,0) -- (6.25,0.25); 

\draw[dashed] (9,3) -- (6.25,0.25); 
\draw[dashed] (7.75,4.25) -- (9,3); 

\fill (7.75,4.25) circle (1mm);
\node (e1) at (7.6,4.25) {};
\node[left] at (e1) {$e_1$};

\fill (6.25,0.25) circle (1mm);
\node (e2) at (6.4,0.25) {};
\node[right] at (e2) {$e_2$};

\end{tikzpicture}
\end{center}
\caption{Top: Middle terms of extensions whose outer terms lie in the same component. Bottom: Middle terms of extensions whose outer terms lie in different components. Top and bottom: Shaded regions are the Ext-hammocks $\extfrom{a}$ and $\extto{a}$.} \label{fig:ext}
\end{figure}

\begin{proof}
Without loss of generality, we may assume that $a= X_r$ for some $r \geq 0$. Firstly consider triangle~\eqref{eq:ray-triangle}:
\[
\tri{X_r}{E}{\Sigma^{-sd} X_{r+s-i}}
\]
Suppose $E=\bigoplus_{i=1}^n E_i^{m(i)}$ with $E_i \in \ind{\Tw}$ and $m(i) \geq 0$. Note that when $w \neq 0,1$, Lemma~\ref{lem:homext-vanishing}$(3)$ means that the $m(i) \leq 1$. Moreover, the only indecomposable objects satisfying the necessary conditions of Lemma~\ref{lem:homext-vanishing} are $\{X_{r-i}, \Sigma^{-sd} X_{r+s}\}$. Call these objects \emph{candidates}. Note that when $i=r+1$, the candidate is simply $\Sigma^{-sd}X_{r+s}$.
We now use Lemma~\ref{lem:ray-cohomology} to identify whether these two indecomposable summands appear in $E$ with multiplicity $0$ or $1$.
Write $E=E_1\oplus E_2$ with $E_1$ indecomposable.

Suppose that $d<-1$. In this case the cohomology of the indecomposable objects $X_t$ is concentrated in non-negative degrees. By Lemma~\ref{lem:ray-cohomology} the lowest degree in which $E$ has non-trivial cohomology is $sd$. Thus, $\Sigma^{-sd} X_{r+s}$ must be a direct summand of $E$.
Set $E_1 = \Sigma^{-sd} X_{r+s}$, and note that $E_1$ has one-dimensional cohomology in degrees $sd, (s-1)d, \ldots, -rd$. This leaves $E_2$ with one-dimensional cohomology in degrees $0, -d, \ldots,-(r-i)d$. The only candidate object with cohomology in these degrees is $X_{r-i}$, giving the unique non-split triangle as
\[
\tri{X_r}{\Sigma^{-sd} X_{r+s} \oplus X_{r-i}}{\Sigma^{-sd} X_{r+s-i}}.
\] 
Inspecting the AR quiver gives: 
\begin{align*}
\rayfrom{X_r} \cap \corayto{\Sigma^{-sd} X_{r+s-i}} & = \{\Sigma^{-sd} X_{r+s}\}, \\
\corayfrom{X_r} \cap \rayto{\Sigma^{-sd} X_{r+s-i}} & = \{ X_{r-i} \}.
\end{align*}
The case $d>0$ is analogous, taking into account that the indecomposables $X_t$ now have non-trivial cohomology only in non-positive degrees, giving statement (i). An analogous argument applied to the triangle \eqref{eq:coray-triangle} using Lemma~\ref{lem:coray-cohomology} gives (ii).
\end{proof}

\begin{proposition} \label{prop:graphical-calculus}
If $w=0$ and $a,b \in \ind{\sT_0}$ with $b \neq \Sigma a$, then the statement of Theorem~\ref{thm:graphical-calculus} also holds. If $b = \Sigma a$, then there are two non-split triangles 
\[
\trilabels{a}{e_1 \oplus e_2}{\Sigma a}{}{}{f} \ \text{and} \ \trilabels{a}{0}{\Sigma a}{}{}{\id},
\]
where $f$ is a non-isomorphism. The first is computed as in Theorem~\ref{thm:graphical-calculus}(i), the second corresponds to Theorem~\ref{thm:graphical-calculus}(ii).
\end{proposition}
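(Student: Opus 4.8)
The plan is to run the proof of Theorem~\ref{thm:graphical-calculus} almost verbatim, substituting Lemma~\ref{lem:w=0twoextensions} for Lemmas~\ref{lem:ray-cohomology}--\ref{lem:coray-cohomology}, and to deal separately with the two features that make $\sT_0$ different: the Hom- and Ext-spaces need no longer be one-dimensional, so parts~(2) and~(3) of Lemma~\ref{lem:homext-vanishing} are not available as stated; and $\Ext^1_{\sT_0}(\Sigma a, a)$ is two-dimensional. Throughout we may take $a = X_r$.

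\emph{The case $b\neq\Sigma a$.} Here Proposition~\ref{prop:ext-hammocks}(ii) gives $\ext^1_{\sT_0}(b,a)=1$, so the non-split triangle $\tri{a}{E}{b}$ is unique; it has the form \eqref{eq:ray-triangle} or \eqref{eq:coray-triangle} with parameters not producing $b=\Sigma a$, so in particular $a\neq b$, and by Lemma~\ref{lem:w=0twoextensions} the cohomology $H^*(E)$ is the one prescribed by Lemmas~\ref{lem:ray-cohomology}--\ref{lem:coray-cohomology}; note that $\dim_{\kk}H^n(E)\leq2$ for all $n$ and that $H^*(E)$ has connected support. Writing $E=\bigoplus_j F_j^{m_j}$ with the $F_j$ pairwise non-isomorphic indecomposables, I would then pin down the $F_j$ using three inputs. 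First, the argument in the proof of Lemma~\ref{lem:homext-vanishing}(3) uses one-dimensionality of $\Hom_{\sT_0}(a,-)$ only on summands different from $a$, so it still shows $m_j\leq1$ whenever $F_j\neq a$, and dually $m_j\leq1$ whenever $F_j\neq b$; since $a\neq b$, all $m_j=1$. Second, the argument in the proof of Lemma~\ref{lem:homext-vanishing}(2) needs only that the connecting morphism is nonzero and that $\Ext^1_{\sT_0}(b,a)$ is one-dimensional, and so yields $\dim\Ext^1_{\sT_0}(E,a)\leq\dim\Ext^1_{\sT_0}(a,a)$ and $\dim\Ext^1_{\sT_0}(b,E)\leq\dim\Ext^1_{\sT_0}(b,b)$, both right-hand sides being at most $1$ by Proposition~\ref{prop:hom-hammocks}(ii); in particular $a$ and $b$ cannot both be summands of $E$. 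Third, by Lemma~\ref{lem:homext-vanishing}(1) every $F_j$ lies in $\homfrom{a}\cup\homto{\SSS a}$ and has $b\in\homfrom{F_j}\cup\homto{\SSS F_j}$ (Proposition~\ref{prop:hom-hammocks}(ii)). Because an indecomposable of $\sT_0$ is determined up to isomorphism by the interval of degrees on which its cohomology is non-zero, I would finish by listing the ways to write $H^*(E)$ as a sum of interval-indicators compatible with these three inputs; the only survivor is $E=e_1\oplus e_2$ with $e_1, e_2$ as in Theorem~\ref{thm:graphical-calculus}(i), respectively~(ii).

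\emph{The case $b=\Sigma a$.} Now $\Ext^1_{\sT_0}(\Sigma a, a)\cong\Hom_{\sT_0}(\Sigma a,\Sigma a)$ is two-dimensional, with a basis $\{\id_{\Sigma a}, f\}$ in which $f$ is a non-isomorphism, by Remark~\ref{rem:basis-T0}; since the endomorphism ring $\Hom_{\sT_0}(\Sigma a,\Sigma a)$ is local with square-zero radical, $\lambda\id_{\Sigma a}+\mu f$ is an isomorphism exactly when $\lambda\neq0$. The connecting morphism $\id_{\Sigma a}$ produces the triangle $\tri{a}{0}{\Sigma a}$, whose middle term is zero; checking the formulae of Theorem~\ref{thm:graphical-calculus}(ii) in the coray parameters that realise $b=\Sigma a$ (with $\SSS\simeq\id$) shows that both prescribed intersections are empty, so this is the triangle ``corresponding to Theorem~\ref{thm:graphical-calculus}(ii)''. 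The connecting morphism $f$ produces a triangle whose middle term, by Lemma~\ref{lem:w=0twoextensions}, has the non-trivial cohomology of Lemma~\ref{lem:ray-cohomology}, and repeating the analysis of the previous case identifies it with $e_1\oplus e_2=(\rayfrom{a}\cap\corayto{\Sigma a})\oplus(\corayfrom{a}\cap\rayto{\Sigma a})$, as in Theorem~\ref{thm:graphical-calculus}(i). Since every non-split extension class is a scalar multiple of $f$ or of some $\id_{\Sigma a}+\lambda f$, these two are the only middle terms that occur.

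The step I expect to be the main obstacle is the last one in the case $b\neq\Sigma a$: once one-dimensionality of Hom-spaces is lost, the Hom- and Ext-hammocks no longer reduce the list of candidate middle terms to exactly two indecomposables, and one must combine the multiplicity bound, the two Ext-dimension bounds, and the explicit cohomology of Lemma~\ref{lem:w=0twoextensions} to discard the spurious matches---notably the pair $\{a,b\}$ itself, which in the ``ray'' case has exactly the cohomology of the genuine middle term. Everything else should be a careful transcription of the proof of Theorem~\ref{thm:graphical-calculus}.
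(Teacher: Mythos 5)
Your overall strategy matches the paper's: compute $H^{\bullet}(E)$ via Lemma~\ref{lem:w=0twoextensions}, restrict the candidate summands using Lemma~\ref{lem:homext-vanishing}(1) together with the Hom-hammocks of Proposition~\ref{prop:hom-hammocks}, and handle $b=\Sigma a$ separately via the two extension classes $\id_{\Sigma a}$ and $f$. One of your ingredients, however, is genuinely different from the paper's. You salvage a weakened form of Lemma~\ref{lem:homext-vanishing}(2), namely $\ext^1(E,a)\leq\ext^1(a,a)$ and $\ext^1(b,E)\leq\ext^1(b,b)$, whereas the paper explicitly declines to use part (2) (``each indecomposable object has two-dimensional endomorphism spaces and\dots self-extensions'') and relies entirely on cohomology and part (1). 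Your weakened bound is legitimate: for $b\neq\Sigma a$ the connecting map $\Hom(a,a)\to\Ext^1(b,a)$ is nonzero onto a one-dimensional target (one-dimensionality is by Proposition~\ref{prop:ext-hammocks}(ii), not Proposition~\ref{prop:hom-hammocks}(ii) as you cite), hence surjective, so the same long-exact-sequence argument gives your inequalities, and these do efficiently kill the dangerous spurious candidate $E\simeq a\oplus b$ in the ray case, as you correctly single out.

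The concern is that the step you characterise as ``the main obstacle'' is exactly the part you leave unexecuted: ``listing the ways to write $H^*(E)$ as a sum of interval-indicators compatible with these three inputs'' is not a routine transcription, and there are several tilings of $H^{\bullet}(E)$ besides $\{a,b\}$ and $\{e_1,e_2\}$ that must be eliminated (for instance three-interval tilings like $[-s,j]$, $[j+1,r]$, $[0,r-i]$ in the ray case). Each is indeed excluded by some combination of your inputs (1)--(3), but this must actually be checked, and the checking is the bulk of the argument. The paper does this concretely: it first locates the summand whose cohomology attains the extreme degree and shows, via Lemma~\ref{lem:homext-vanishing}(1) and the Hom-hammocks, that it must be $\Sigma^s X_{r+s}$; then it repeats the analysis on the complementary cohomology to get $X_{r-i}$. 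So your proposal outlines a correct and slightly more heavily armed route (the Ext bounds are a nice addition that lets you dispose of $a\oplus b$ in one stroke), the $b=\Sigma a$ case is handled correctly and essentially as in the paper, but without the final enumeration the argument remains a plan rather than a proof.
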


\begin{proof}
Here we have $d=-1$ and we cannot apply Lemma~\ref{lem:homext-vanishing}$(2)$ because each indecomposable object has two-dimensional endomorphism spaces and, provided it does not lie on the boundary of the AR quiver, self-extensions. However, we can apply Lemma~\ref{lem:homext-vanishing}$(1)$ and brute force using Lemmas~\ref{lem:ray-cohomology} and \ref{lem:coray-cohomology}.

Consider the triangle \eqref{eq:ray-triangle} corresponding to the Ext-hammock  $\extfrom{X_r}$. Again write $E=E_1\oplus E_2$ with $E_1$ indecomposable. First observe that Lemma~\ref{lem:ray-cohomology} implies that one summand of $E$ is $\Sigma^s X_t$ for some $t\geq 0$. Lemma~\ref{lem:homext-vanishing}$(1)$ and Proposition~\ref{prop:hom-hammocks} mean that $\Sigma^s X_{r+s}$ is the only possibility, which we take to be $E_1$. This means that $E_2$ has cohomology in degrees $0,1,\ldots,r-i$, and so there is precisely one summand of $E_2$ that consists of an unsuspended $X_t$ for some $0 \leq t \leq r-i$. Note that if $t > r-i$ then $X_t$ has cohomology in too many degrees. Inspecting the AR quiver and using Proposition~\ref{prop:hom-hammocks} again, we see that if $t<r-i$, there is no map $X_t \to \Sigma^{-s}X_{r+s-i}$, and thus by Lemma~\ref{lem:homext-vanishing}$(1)$ such $X_t$ cannot be a summand of $E_2$. This leaves only $X_{r-i}$ itself, giving $E=\Sigma^{-s}X_{r+s} \oplus X_{r-i}$ again, as claimed.

The argument for the triangle \eqref{eq:coray-triangle} is analogous, however, one must deal with the case $\tri{X_r}{E}{\Sigma X_r}$ separately. As remarked in Lemma~\ref{lem:w=0twoextensions}, the two triangles are the standard triangle $\trilabels{X_r}{0}{\Sigma X_r}{}{}{\id}$ and the AR triangle. The AR triangle puts us in case (i) of the theorem, and the second triangle puts us in case (ii) of the theorem with empty intersections, and therefore zero middle term.
\end{proof}

\begin{remark}
In the case that $w = 0$ and $a$ is an indecomposable object not lying on the boundary of the AR quiver we have $\Ext^1_{\sT_0}(a,a) = \kk$. The middle term of the self-extension $\tri{a}{e_1 \oplus e_2}{a}$ is computed using Theorem~\ref{thm:graphical-calculus}(ii): one should regard the second occurrence of $a$ as being $\SSS a$ and lying in a `different AR component'.
\end{remark}

%========================================================================
% SECTION
\section{Extensions in $\Tw$ with decomposable outer terms for $w \neq 1$}\label{sec:non-indecouterterms}
%========================================================================

When computing the extension closure of a set of objects, it is useful to be able to reduce to computing only the middle terms of extensions whose outer terms are indecomposable. The aim of this section is to establish this for the categories $\Tw$ by proving the following theorem.

\begin{theorem} \label{thm:extensionsnonindecomposable}
Let $w\in \bZ \setminus \{1\}$. Let $\{a_i\}_{i=1}^n$ and $\{ b_j \}_{j=1}^m$ be sets of (not necessarily pairwise non-isomorphic) indecomposable objects of $\Tw$. Any extension of the form 
\[
\tri{\bigoplus_{i=1}^n a_i}{e}{\bigoplus_{j=1}^m b_j}\]
can be computed iteratively from extensions whose outer terms are indecomposable and built from $\{a_i\}_{i=1}^n$ and $\{b_j\}_{j=1}^m$. 
\end{theorem}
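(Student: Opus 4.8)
The plan is to induct on $n+m$, the total number of indecomposable summands appearing in the outer terms, reducing each extension to a sequence of extensions with strictly fewer summands on one side. The base case $n=m=1$ is exactly the situation handled by Theorem~\ref{thm:graphical-calculus} (and Proposition~\ref{prop:graphical-calculus} when $w=0$), so there is nothing to prove there. For the inductive step, consider an arbitrary triangle $\tri{a}{e}{b}$ with $a=\bigoplus_{i=1}^n a_i$ and $b=\bigoplus_{j=1}^m b_j$, and assume without loss of generality that $m\geq 2$ (the case $n\geq 2$ is dual, or one applies the argument to the rotated triangle $\tri{\Sigma^{-1}b}{a}{e}$).

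The key step is the following standard homological reduction. Write $b=b'\oplus b_m$ where $b'=\bigoplus_{j=1}^{m-1}b_j$. The class of the extension lives in $\Ext^1_{\sT}(b,a)=\Ext^1_{\sT}(b',a)\oplus\Ext^1_{\sT}(b_m,a)$, so it splits as a sum of an extension $\tri{a}{e'}{b'}$ and an extension $\tri{a}{e''}{b_m}$. I would then realise $e$ as an iterated extension of $e'$ and $e''$: using the octahedral axiom applied to the composite $\Sigma^{-1}b \to \Sigma^{-1}b_m$ (projection) and the connecting map, one obtains a triangle
\[
\tri{e'}{e}{e''}
\]
fitting $e$ between $e'$ and $e''$. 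Concretely: form the triangle $\tri{e'}{x}{b_m}$ pulling back the $b_m$-component of the extension class along $e'\to\cdots$; more cleanly, start from the triangle $\tri{a}{e'}{b'}$, take its direct sum with $\tri{0}{b_m}{b_m}$ to get $\tri{a}{e'\oplus b_m}{b'\oplus b_m}$, and then modify by the class in $\Ext^1(b_m,a)$ — the octahedron on this modification produces a triangle $\tri{e'}{e}{b_m}$... wait, I should be careful: the clean statement is that $e\in e' * e''$, i.e. there is a triangle with $e'$ and $e''$ as outer terms and $e$ in the middle. This is the Verdier-style fact that if $t\in a*(b'\oplus b_m)$ then $t\in (a*b')*b_m$ up to reassociating, which follows from the octahedral axiom. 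Now $e'$ is, by the inductive hypothesis (applied to $\tri{a}{e'}{b'}$, which has $n+(m-1)$ summands), an iterated extension of indecomposable-outer-term extensions built from $\{a_i\}$ and $\{b_j\}_{j<m}$; and $\tri{e'}{e}{b_m}$ now has outer terms $e'$ (generally decomposable) and $b_m$ (indecomposable), so induction does not immediately apply to it directly — instead I decompose $e'=\bigoplus e'_k$ and apply the inductive hypothesis once more to $\tri{e'}{e}{b_m}$, which has strictly fewer summands on the right-hand outer term ($1 < m$) and whose left-hand summands are themselves, by the first application of induction, built from the $a_i$ and $b_j$. Threading these together expresses $e$ via extensions with indecomposable outer terms all drawn from (iterated extensions of) the original $\{a_i\}$ and $\{b_j\}$.

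The main obstacle I anticipate is bookkeeping rather than a genuine mathematical difficulty: one must be careful that the intermediate objects $e'$, and the indecomposable summands thereof, are legitimately "built from $\{a_i\}$ and $\{b_j\}$" in the precise sense the theorem intends, so that the induction is well-founded and the phrase "computed iteratively from extensions whose outer terms are indecomposable and built from $\{a_i\}$ and $\{b_j\}$" is honoured — in particular one should track that no new "alien" indecomposables are introduced beyond those forced by the graphical calculus of Theorem~\ref{thm:graphical-calculus}. A secondary point requiring care is the case $w=0$: there, $\Ext^1$-spaces can be two-dimensional and self-extensions exist, so when splitting the extension class over a direct sum decomposition of $b$ one must invoke Proposition~\ref{prop:graphical-calculus} in place of Theorem~\ref{thm:graphical-calculus} at the base case, but the splitting $\Ext^1(b'\oplus b_m,a)=\Ext^1(b',a)\oplus\Ext^1(b_m,a)$ is still additive in each variable, so the inductive scheme goes through unchanged. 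The octahedral construction of the triangle $\tri{e'}{e}{b_m}$ is purely formal and valid in any triangulated category, so it causes no trouble.
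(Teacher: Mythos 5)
Your inductive scheme on $n+m$ is not well-founded, and this is a genuine gap rather than the ``bookkeeping'' issue you anticipate. After splitting off $b_m$ you obtain $\tri{a}{e'}{b'}$, which indeed has $n+m-1$ outer summands, and $\tri{e'}{e}{b_m}$, whose outer-summand count is $k+1$ where $k$ is the number of indecomposable summands of $e'$ --- and nothing bounds this by anything less than $n+m$. Already for $n=1$, Proposition~\ref{prop:m=1incomparable} shows that $e'$ can have $m$ indecomposable summands ($e'_1\oplus x_1\oplus\cdots\oplus x_{m-2}\oplus e''_{m-1}$), so $\tri{e'}{e}{b_m}$ has $m+1=n+m$ outer summands: no decrease. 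A lexicographic variant (induct first on $m$, then $n$) does not help either: the base case $m=1$ with $n$ arbitrary needs the dual reduction, whose intermediate object again has an uncontrolled summand count relative to $(1,n)$. The star-product reassociation you invoke is correct, but purely formal octahedral manipulations cannot, on their own, produce a decreasing measure; that is exactly why Section~\ref{sec:non-indecouterterms} has the structure it does.

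The paper sidesteps this by proving the case $n=1$, arbitrary $m$, directly and non-recursively: Lemma~\ref{lem:n=1} follows from Corollary~\ref{cor:m=1directsum} (reduction to a factorisation-free family of maps $b_i\to\Sigma a$, i.e.\ pairwise Hom-orthogonal $b_i$'s by Lemma~\ref{lem:orthogonal}), Proposition~\ref{prop:m=1incomparable} (an explicit closed-form computation of the middle term $e'_1\oplus x_1\oplus\cdots\oplus x_{m-1}\oplus e''_m$, whose internal induction is grounded because the Ext-hammock Lemmas~\ref{lem:extensionsaisandeis} and~\ref{lem:extbism>=3} identify the intermediate objects outright), and Proposition~\ref{prop:Sigma-a} for the $\Sigma a$ summand when $w=0$. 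Only then does the proof of the theorem induct --- on $n$ alone: the octahedral diagram peels off $a_1$, the remaining triangle is covered by the inductive hypothesis at $n-1$, and the resulting triangle $\tri{a_1}{e}{\bigoplus x_i}$ falls under Lemma~\ref{lem:n=1} regardless of how many $x_i$ there are. To repair your argument you would need an analogue of that one-sided lemma, proved by explicit structural computation rather than recursion.
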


\subsection{Factorisation-free extensions}

We start with a short general subsection. Here $\sT$ will be a $\kk$-linear, Hom-finite, Krull-Schmidt triangulated category.

\begin{definition}
A collection of morphisms $\{h_i \colon b_i \rightarrow \Sigma a\}_{i=1}^m$ in $\sT$ will be called \textit{factorisation-free} if for each $i \neq j$ there is no map $\beta \colon b_i \rightarrow b_j$ such that $h_i = h_j \beta$.  
\end{definition}

\begin{lemma} \label{lem:reduction}
Let $a, b_1, b_2 \in \ind{\sT}$. 
Suppose $h_1 \colon b_1 \rightarrow \Sigma a$ and $h_2 \colon b_2 \rightarrow \Sigma a$ are non-zero maps such that there exists $\beta \colon b_1 \rightarrow b_2$ with $h_1 = h_2 \beta$. Then, there is an isomorphism of triangles:
\[
\xymatrix@!R=10pt{
a \ar[r] \ar@{=}[d]^-{\simeq} & e \ar[r] \ar@{-}[d]^-{\simeq}  & b_1 \oplus b_2 \ar[r]^{\rowmat{h_1}{h_2}} \ar@{-}[d]^-{\simeq} & \Sigma a \ar@{=}[d]^-{\simeq} \\
a \ar[r]                    & b_1 \oplus f \ar[r] & b_1 \oplus b_2 \ar[r]_-{\rowmat{0}{h_2}}             & \Sigma a,
}
\]
where $f$ is the cocone of $h_2$, i.e. $\trilabels{a}{f}{b_2}{}{}{h_2}$.
\end{lemma}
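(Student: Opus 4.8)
The plan is to exhibit an explicit isomorphism of triangles whose middle-column vertical map $b_1 \oplus b_2 \to b_1 \oplus b_2$ is the elementary automorphism $\sqmat{1}{0}{-\beta}{1}$ (or $\sqmat{1}{0}{\beta}{1}$, depending on sign conventions), which intertwines the maps $\rowmat{h_1}{h_2}$ and $\rowmat{0}{h_2}$ on the right. First I would note that
\[
\rowmat{h_1}{h_2}\sqmat{1}{0}{-\beta}{1} = \rowmat{h_1 - h_2\beta}{h_2} = \rowmat{0}{h_2},
\]
using the hypothesis $h_1 = h_2\beta$; this makes the rightmost square of the claimed diagram commute with the identity on $\Sigma a$ and the stated automorphism in the middle. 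Since that automorphism is invertible, the completion axiom for triangulated categories (together with the $3\times 3$/five lemma argument) produces an isomorphism of the two triangles, and in particular an isomorphism $e \simeq b_1 \oplus f$ where $f$ is any cocone of $h_2$.

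Next I would identify the cocone of $\rowmat{0}{h_2} \colon b_1 \oplus b_2 \to \Sigma a$ concretely. Rotating, this is the cone of a map $a \to b_1 \oplus b_2$; since the first component of $\rowmat{0}{h_2}$ is zero, Lemma~\ref{lem:cone} (applied to the rotated triangle, i.e.\ to the dual statement about cocones of maps of the form $\rowmat{g}{0}$) gives that the cocone splits off a copy of $b_1$: explicitly $\cocone{\rowmat{0}{h_2}} \simeq b_1 \oplus \cocone{h_2}$. Writing $f \coloneqq \cocone{h_2}$, so that $\trilabels{a}{f}{b_2}{}{}{h_2}$ is a distinguished triangle, this yields exactly the bottom row $a \to b_1 \oplus f \to b_1 \oplus b_2 \to \Sigma a$ of the claimed diagram. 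Combining the two steps gives the isomorphism of triangles as stated.

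The only mildly delicate point — the step I'd flag as the main obstacle — is bookkeeping: making sure the leftmost vertical map really can be taken to be the identity on $a$ (not merely some isomorphism), and that the explicit automorphism $\sqmat{1}{0}{\pm\beta}{1}$ is compatible with the chosen maps $a \to b_1\oplus b_2$ in the bottom triangle, so that the left square commutes \emph{on the nose}. This is handled by first fixing the bottom triangle as the direct sum of the split triangle $a \xrightarrow{\id} a \to 0 \to \Sigma a$ rotated appropriately — no, more cleanly: take the bottom triangle to be the direct sum of $\trilabels{a}{f}{b_2}{}{}{h_2}$ with the trivial triangle $0 \to b_1 \xrightarrow{\id} b_1 \to 0$, which has middle map $\rowmat{0}{h_2}$ by construction; then build the isomorphism to the top triangle using the automorphism above and the five lemma for triangles, which forces the induced map $e \to b_1\oplus f$ to be an isomorphism. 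No genuinely hard computation is involved; it is the standard "change of basis in the middle term" manoeuvre, and the rest is routine diagram-chasing.
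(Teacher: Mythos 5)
Your proposal is correct and matches the paper's proof: both use the elementary shearing automorphism $\sqmat{1}{0}{\pm\beta}{1}$ of $b_1 \oplus b_2$ together with Lemma~\ref{lem:cone} to identify the cocone of $\rowmat{0}{h_2}$ as $b_1 \oplus f$, and then invoke the five lemma for triangles. Your extra discussion of why the left vertical map can be taken to be $\id_a$ (by fixing the bottom row as the direct sum of the $h_2$-triangle with the trivial triangle on $b_1$) is a reasonable explicit unpacking of the same step.
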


\begin{proof}
This follows from the Five Lemma for triangulated categories by considering the isomorphism $F \colon b_1 \oplus b_2 \rightarrow b_1 \oplus b_2$ given by $F = \sqmat{1}{0}{\beta}{1}$ and using Lemma~\ref{lem:cone}. 
\end{proof} 

\begin{corollary}\label{cor:m=1directsum}
Any extension $\tri{a}{e}{\bigoplus_{i=1}^m b_i}$ in $\sT$ is isomorphic to a direct sum of the triangles $\tri{a}{E}{B}$ and $\tri{0}{B'}{B'}$, in which $B$ is the sum of objects $b_i$ such that $\{h_i \colon b_i \rightarrow \Sigma a\}$ is factorisation-free and $B^\prime$ is the sum of the remaining summands. 
\end{corollary}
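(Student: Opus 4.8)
The plan is to argue by induction on $m$, using Lemma~\ref{lem:reduction} to peel one indecomposable summand off the third term at each step.

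The base case $m \le 1$ is trivial, and more generally if the family $\{h_i \colon b_i \to \Sigma a\}_{i=1}^m$ is already factorisation-free we simply take $B = \bigoplus_{i=1}^m b_i$ and $B' = 0$. So assume the family is not factorisation-free and fix $i \neq j$ together with $\beta \colon b_i \to b_j$ such that $h_i = h_j\beta$. First I would conjugate the connecting map $h = [\,h_1\ \cdots\ h_m\,] \colon \bigoplus_k b_k \to \Sigma a$ by the automorphism $F$ of $\bigoplus_k b_k$ which is the identity except for the off-diagonal entry $-\beta \colon b_i \to b_j$; since $i \neq j$ this $F = \id + N$ with $N^2 = 0$, hence invertible, and $hF$ equals $h$ with its $i$-th component replaced by $h_i - h_j\beta = 0$ and all other components unchanged. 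Placing $b_i$ in the last slot, $hF$ then has the shape $[\,h'\ 0\,]$ with $h' = [\,h_k\,]_{k \neq i} \colon \bigoplus_{k \neq i} b_k \to \Sigma a$, so the dual form of Lemma~\ref{lem:cone} exhibits the (isomorphic) triangle $\tri{a}{e}{\bigoplus_k b_k}$ as the direct sum of $\tri{a}{E'}{\bigoplus_{k \neq i} b_k}$, with connecting map $h'$, and the contractible triangle $\tri{0}{b_i}{b_i}$. (Alternatively, this step is exactly Lemma~\ref{lem:reduction} applied with ``$b_2$'' the decomposable object $\bigoplus_{k\neq i}b_k$ and ``$\beta$'' the map $b_i \to \bigoplus_{k\neq i}b_k$ equal to $\beta$ into the $j$-th component and zero elsewhere; the proof of Lemma~\ref{lem:reduction} uses only the Five Lemma and Lemma~\ref{lem:cone}, so it is insensitive to whether the second outer term is indecomposable.)

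It then remains to apply the induction hypothesis to $\tri{a}{E'}{\bigoplus_{k\neq i}b_k}$, whose associated family of connecting maps is $\{h_k\}_{k \neq i}$: it decomposes as $\tri{a}{E}{B} \oplus \tri{0}{B''}{B''}$ with the connecting maps emanating from the summands of $B$ forming a factorisation-free family. Combined with the summand $\tri{0}{b_i}{b_i}$ obtained above, this gives $\tri{a}{e}{\bigoplus_k b_k} \simeq \tri{a}{E}{B} \oplus \tri{0}{B'}{B'}$ with $B' = B'' \oplus b_i$, which is the required form; and since each step strictly lowers the number of indecomposable summands of the third term, the recursion terminates after at most $m$ steps. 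I do not anticipate a genuine obstacle here: the one place calling for care is the observation that Lemma~\ref{lem:reduction} (equivalently Lemma~\ref{lem:cone}) remains valid verbatim when the relevant outer term is decomposable, which is exactly what guarantees that the reduced extension in the inductive step really has one fewer summand in its third term and connecting map literally $[\,h_k\,]_{k \neq i}$; granting that, everything else is bookkeeping with direct sums of triangles.
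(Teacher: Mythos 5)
Your proof is correct, and it is essentially the argument the paper implicitly intends: the paper states this result as a corollary of Lemma~\ref{lem:reduction} without a written proof, and the natural reading is exactly your inductive peeling via Lemma~\ref{lem:reduction} (or equivalently, conjugation plus the dual of Lemma~\ref{lem:cone}). Your side remark is the one point worth flagging — that Lemma~\ref{lem:reduction} is stated for indecomposable $b_1,b_2$ but its proof uses only Lemma~\ref{lem:cone} and the Five Lemma, hence applies verbatim to decomposable outer terms — and you handle it correctly.
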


\begin{remark} \label{rem:multiplesummands}
In the case of $\Tw$, Corollary~\ref{cor:m=1directsum} implies that we need only consider extensions of the form $\tri{a}{e}{\bigoplus_{i=1}^m b_i}$ in which the $b_i$ are pairwise non-isomorphic: any morphism $b_i \to \Sigma a$ factors through an isomorphism $b_i \to b_i$.
\end{remark}

We now show that factorisation-freeness in $\Tw$ is essentially the same as Hom-orthogonality.

\begin{lemma} \label{lem:orthogonal}
Suppose $b_1, b_2 \in \extfrom{a} \cup \extto{a}$ are indecomposable objects such that $b_i \neq \Sigma a$ for $i=1,2$. The pair of morphisms $\{h_i \colon b_i \rightarrow a\}_{i=1}^2$ is factorisation-free if and only if $b_1$ and $b_2$ are Hom-orthogonal.  
\end{lemma}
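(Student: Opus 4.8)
The plan is to handle the two implications separately. First I would record two elementary observations. Factorisation-freeness of $\{h_1,h_2\}$ automatically forces $h_1,h_2\neq 0$, since otherwise $h_i=h_j\circ 0$; and ``$b_1,b_2$ Hom-orthogonal'' means precisely $\Hom_{\Tw}(b_1,b_2)=\Hom_{\Tw}(b_2,b_1)=0$. Moreover, because $b_i\in\extfrom{a}\cup\extto{a}$ with $b_i\neq\Sigma a$, Proposition~\ref{prop:ext-hammocks} gives $\Hom_{\Tw}(b_i,\Sigma a)\cong\Ext^1_{\Tw}(b_i,a)\cong\kk$, so each $h_i$, being non-zero, spans $\Hom_{\Tw}(b_i,\Sigma a)$. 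Granting these (and the standing assumption $h_1,h_2\neq 0$), the implication ``Hom-orthogonal $\Rightarrow$ factorisation-free'' is immediate: if there is no non-zero morphism $b_1\to b_2$ or $b_2\to b_1$, then no $\beta$ can satisfy $h_1=h_2\beta$ or $h_2=h_1\beta$, as the right-hand sides would vanish while the $h_i$ do not.

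For the converse I would argue the contrapositive. Suppose $b_1,b_2$ are not Hom-orthogonal; since both ``not Hom-orthogonal'' and ``not factorisation-free'' are symmetric in the indices, I may assume there is a non-zero $\beta_0\colon b_1\to b_2$, and the goal becomes to produce $\beta\colon b_1\to b_2$ with $h_1=h_2\beta$. The case $b_1\cong b_2$ follows at once from one-dimensionality of $\Hom_{\Tw}(b_1,\Sigma a)$, so assume $b_1\not\cong b_2$. By the same one-dimensionality it suffices to show $h_2\beta_0\neq 0$, i.e. that the composite $b_1\rightlabel{\beta_0}b_2\rightlabel{h_2}\Sigma a$ of non-zero maps is non-zero --- equivalently, that the non-zero map $h_1\colon b_1\to\Sigma a$ factors through $\beta_0$. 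Thus the lemma reduces to the statement that a composite of non-zero maps $b_1\to b_2\to\Sigma a$, with $b_1,b_2$ in the Ext-hammock of $a$ and $b_2\neq\Sigma a$, is non-zero.

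To prove this I would invoke the factorisation properties of Proposition~\ref{prop:factoring} (and, when $w=0$, Proposition~\ref{prop:factoring-T0} with Remark~\ref{rem:basis-T0}). Using that every indecomposable is a suspension of some $X_r$, one may assume $a=X_r$, so that $\extfrom{a}\cup\extto{a}$ is the explicit fan of rays and corays recorded after Proposition~\ref{prop:ext-hammocks}; this pins down the possible positions of $b_1$, $b_2$ and $\Sigma a$ in the AR quiver. Reading off the relevant Hom-hammock memberships from Proposition~\ref{prop:hom-hammocks} and Serre duality, each admissible configuration should fall into one of a few cases: either $b_2\in\homfrom{b_1}$ with $\Sigma a\in\homfrom{b_1}\cap\homfrom{b_2}$, when Proposition~\ref{prop:factoring}(\ref{item:fac-same-comp}) (applied with $g=\beta_0$, $f=h_1$) factors $h_1$ through $\beta_0$; or $b_2\in\homfrom{b_1}$ with $\Sigma a\in\homto{\SSS b_1}\cap\homto{\SSS b_2}$, handled the same way by Proposition~\ref{prop:factoring}(\ref{item:fac-different-comp}); or $b_2\in\homto{\SSS b_1}$, when one instead checks $b_2\in\homto{\Sigma a}$ and $b_1\in\homfrom{\SSS^{-1}\Sigma a}\cap\homfrom{\SSS^{-1}b_2}$ and applies the dual of Proposition~\ref{prop:factoring}(\ref{item:fac-different-comp}) with $g=h_2$ and $f=h_1$ to write $h_1=h_2\beta$. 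For $w=0$ one first notes $b_i\neq a=X_r$ (since $\Hom_{\sT_0}(X_r,\Sigma X_r)=0$ by Proposition~\ref{prop:hom-hammocks}(ii)), so that $a,b_1,b_2$ are pairwise non-isomorphic and Proposition~\ref{prop:factoring-T0}(i) allows the same arguments.

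The hard part will be this case analysis: one must check that the three non-vanishing hypotheses $\Hom(b_1,\Sigma a)\neq 0$, $\Hom(b_2,\Sigma a)\neq 0$, $\Hom(b_1,b_2)\neq 0$ always force $b_1,b_2,\Sigma a$ into a configuration covered by some part of Proposition~\ref{prop:factoring}, and that the correct part to invoke is dictated by the configuration --- in particular, when $b_2\in\homto{\SSS b_1}$ the factorisation must be organised as ``$h_1$ factors through $h_2$'' via the dual statement, rather than as ``$h_1$ factors through $\beta_0$'' directly. The $w=0$ case, with its two-dimensional endomorphism spaces, needs a little extra bookkeeping but no essentially new idea.
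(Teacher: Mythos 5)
The easy direction and your preliminary observations (the $b_1\cong b_2$ case, one-dimensionality of $\Hom(b_i,\Sigma a)$) are fine, but the pivotal reduction in the hard direction is flawed. You assert that after relabelling so that $\Hom(b_1,b_2)\neq 0$, the goal is to produce $\beta\colon b_1\to b_2$ with $h_1=h_2\beta$, equivalently that the composite $b_1\xrightarrow{\beta_0}b_2\xrightarrow{h_2}\Sigma a$ is non-zero. But failure of factorisation-freeness is a disjunction --- either $h_1=h_2\beta$ for some $\beta\colon b_1\to b_2$, or $h_2=h_1\beta'$ for some $\beta'\colon b_2\to b_1$ --- and relabelling to fix the direction of the non-zero Hom does not fix the direction of the factorisation. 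In fact the composite can vanish. Take $w=0$, $\arc{a}=(0,-3)$, $\arc{b_1}=(0,-1)$, $\arc{b_2}=(1,-1)$: then $b_1,b_2\in\extto{a}\setminus\{\Sigma a\}$ are distinct, $b_2\in\homto{\SSS b_1}$ (so $\beta_0$ is a backward map), while $b_1,\Sigma a\in\homfrom{b_2}$. By Proposition~\ref{prop:factoring-T0}(i) one gets $h_2=\lambda h_1\beta'$ for a forward map $\beta'\colon b_2\to b_1$; by Proposition~\ref{prop:factoring-T0}(ii) the endomorphism $\beta'\beta_0$ of $b_1$ is a non-zero multiple of the nilpotent generator $\phi$ of $\End(b_1)$; and $h_1\phi=0$, since $\phi^2=0$ and $\Hom(b_1,\Sigma a)$ is one-dimensional (write $h_1\phi=ch_1$ and compose with $\phi$ once more to force $c=0$). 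Hence $h_2\beta_0=\lambda h_1\beta'\beta_0=0$, so $h_1$ does \emph{not} factor through $h_2$, even though $\{h_1,h_2\}$ is not factorisation-free (the other disjunct $h_2=h_1\beta'$ holds). The same happens for $w=2$, where one-dimensionality of $\End(b_1)$ forces $\beta'\beta_0=0$ whenever $b_1\not\cong b_2$.

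Your proposed handling of the case $b_2\in\homto{\SSS b_1}$ --- the dual of Proposition~\ref{prop:factoring}\eqref{item:fac-different-comp} under the hypothesis $b_1\in\homfrom{\SSS^{-1}\Sigma a}=\homfrom{\tau^{-1}a}$ --- cannot rescue this, since in the problematic configurations $b_1\in\homto{\Sigma a}$ and, as $b_1\neq\Sigma a$, this is disjoint from $\homfrom{\tau^{-1}a}$. So a genuine family of configurations escapes your case analysis; the paper handles them by establishing the \emph{other} factorisation $h_2=h_1\beta'$. One further minor slip: the assertion $\Hom_{\sT_0}(X_r,\Sigma X_r)=0$ is false for $r\geq 1$ (indeed $\Sigma X_r\in\homfrom{X_r}$), so $b_i\neq a$ is not excluded for $w=0$; this is harmless, since the distinctness required in Proposition~\ref{prop:factoring-T0}(i) is among $b_1,b_2,\Sigma a$, not $a$ itself.
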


\begin{proof}
One direction is clear. For the other, suppose $\Hom (b_1, b_2) \neq 0$. We claim that $\{h_1, h_2\}$ is not factorisation-free. 

\Case{$b_1 \in \homfrom{\tau^{-1} a}$}
In this case, either $b_2 \in \homfrom{b_1}$ or $b_2, \Sigma a \in \homto{\SSS b_1}$. Either way, we have that $h_1$ factors through $h_2$, by Propositions \ref{prop:factoring} and \ref{prop:factoring-T0} and by one-dimensionality of $\Hom (b_2, \Sigma a)$.  

\Case{$b_1 \in \homto{\Sigma a}$}
Since $\Hom (b_1, b_2) \neq 0$, we have two subcases.

\Subcase{$b_2 \in \homfrom{b_1}$} 
If $w \neq 0, 2$, then $b_2$ must lie in $\homto{\Sigma a}$ since $\Hom (b_2, \Sigma a) \neq 0$. Hence, by Proposition \ref{prop:factoring}, $h_1$ factors through $h_2$.
If $w = 0$ or $2$, then $b_2$ can lie either in $\homto{\Sigma a}$, in which case $h_1$ factors through $h_2$, or $\homfrom{\tau^{-1} a}$. In the latter case, note that $b_1, \Sigma a \in \homto{\SSS b_2}$, and so, by Propositions \ref{prop:factoring} and \ref{prop:factoring-T0}, $h_2$ factors through $h_1$. 

\Subcase{$b_2 \in \homto{\SSS b_1}$} 
If $w \neq 0, 2$, then $b_2 \not\in \homfrom{\tau^{-1} a} \cup \homto{\Sigma a}$, and so $\Hom (b_2, \Sigma a) = 0$, a contradiction. So, $w$ must be $0$ or $2$. But then we have $b_1, \Sigma a \in \homfrom{b_2}$, and so again, by Propositions \ref{prop:factoring} and \ref{prop:factoring-T0}, $h_2$ factors through $h_1$.  
\end{proof}

\subsection{Extensions for which the first term is indecomposable}

For this subsection we invoke the following setup.

\begin{setup} \label{setup:decomposable}
Let $w \in \bZ \setminus \{1\}$ and fix $a \in \ind{\Tw}$. Suppose $b_i \in \ind{\Tw}$ are such that $b_i \in \extfrom{a} \cup \extto{a}$ for $i=1,\ldots,m$. Consider the triangles
\[
\trilabels{a}{e_i^\prime \oplus e_i^{\prime \prime}}{b_i}{}{}{h_i}
\]
from Theorem~\ref{thm:graphical-calculus} and Proposition~\ref{prop:graphical-calculus}. We shall always assume that $e^{\prime \prime}_i \in \corayfrom{a}$.

We now compute the middle terms of triangles of the form 
\[
\trilabels{a}{e}{\bigoplus_{i=1}^m b_i}{}{}{\rowvec{h_1}{\cdots}{h_m}},
\]
where $\{h_i \colon b_i \rightarrow \Sigma a \}_{i=1}^m$ is factorisation-free, and each map $h_i \colon b_i \to \Sigma a$ is nonzero.
If $w = 0$, we further assume that $b_i \neq \Sigma a$. In particular, we always have $\ext(b_i,a) = 1$.

In light of Lemma~\ref{lem:orthogonal}, the assumption that $\{ h_i \colon b_i \to \Sigma a\}_{i=1}^m$ is factorisation-free boils down to assuming that the $b_i$ are pairwise Hom-orthogonal.
\end{setup}

To simplify our arguments we need the following technical definition.

\begin{definition}
Let $b \in \extfrom{a} \cup \extto{a}$. We define the \textit{extended ray and extended coray of $b$ with respect to $a$} as follows:
\begin{align*}
\exray{a}{b} & := \begin{cases}
\rayfrom{L(b)} \cup \corayto{\SSS L(b)} &\mbox{if } b \in \extfrom{a}, \\
\corayto{R(b)} \cup \rayfrom{\SSS^{-1} R(b)} &\mbox{if } b \in \extto{b},\mbox{ and } \end{cases} \\
\excoray{a}{b} & := \begin{cases}
\corayto{R(b)} \cup \rayfrom{\SSS^{-1} R(b)} &\mbox{if } b \in \extfrom{a}, \\
\rayfrom{L(b)} \cup \corayto{\SSS L(b)} &\mbox{if } b \in \extto{a}. \end{cases}
\end{align*}
Note that every element in $\extfrom{a} \cup \extfrom{a}$ lies in $\exray{a}{b}$ for some $b \in \overline{(\tau^{-1} a) R(\tau^{-1}a)}$. 
\end{definition}

We can define a total order on extended rays of elements in $\extfrom{a} \cup \extto{a}$ as follows. Given $x, y \in \extfrom{a} \cup \extto{a}$, we say that $\exray{a}{x} \leq \exray{a}{y}$ if and only if there is $k \geq 0$ for which one of the following conditions holds:
\begin{compactenum}
\item $x, y \in \extfrom{a}$ and $L(x) = \tau^k L(y)$;
\item $x, y \in \extto{a}$ and $R(x) = \tau^k R(y)$;
\item $x \in \extfrom{a}, y \in \extto{a}$ and $\SSS L(x) = \tau^k R(y)$, or
\item $x \in \extto{a}, y \in \extfrom{a}$ and $R(x) = \tau ^k \SSS L(y)$.
\end{compactenum} 

\begin{lemma}\label{lem:extensionsaisandeis}
Suppose  $b_1, b_2 \in \ind{\Tw}$ are Hom-orthogonal objects in $\extfrom{a} \cup \extto{a}$ such that $\exray{a}{b_1} < \exray{a}{b_2}$. Then
\begin{compactenum}
\item $\Ext^1 (b_2, e^\prime_1) = 0$ but $\Ext^1 (b_2, e^{\prime \prime}_1) \neq 0$;
\item $\Ext^1 (b_1, e^{\prime \prime}_2) = 0$ but $\Ext^1 (b_1,e^\prime_2) \neq 0$.
\end{compactenum}
\end{lemma}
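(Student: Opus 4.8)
The plan is to establish this purely from the graphical calculus of Theorem~\ref{thm:graphical-calculus} (and Proposition~\ref{prop:graphical-calculus} when $w=0$) together with the explicit description of the Ext-hammocks in Proposition~\ref{prop:ext-hammocks}. Everything is symmetric under suspension/AR translation, so I would fix $a = X_r$ without loss of generality and work directly in the AR quiver picture of Figure~\ref{fig:ext}. The two statements are dual to one another (swapping the roles of $b_1$ and $b_2$, of $\extfrom{-}$ and $\extto{-}$, and of rays and corays), so I would prove (1) carefully and then indicate that (2) follows by the analogous argument; it may be cleanest to phrase both as consequences of a single lemma about when $\Ext^1(c, e') $ vanishes for $c$ on an extended ray strictly above that of $e'$.

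The key computation is: read off $e_1'$ and $e_1''$ from Theorem~\ref{thm:graphical-calculus} as intersections of rays and corays determined by $a$ and $b_1$, and then check membership of $b_2$ in the forward/backward Ext-hammocks $\extfrom{e_1'}, \extto{e_1'}$ and $\extfrom{e_1''}, \extto{e_1''}$. I would split into the four cases according to whether each of $b_1, b_2$ lies in $\extfrom{a}$ or $\extto{a}$; in each case the graphical calculus tells us exactly which ray/coray $e_1'$ (resp.\ $e_1''$) sits on relative to $a$ and $b_1$, and the hypothesis $\exray{a}{b_1} < \exray{a}{b_2}$ pins down the position of $b_2$ relative to those (co)rays. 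Concretely, $e_1'$ always lies on $\exray{a}{b_1}$ (it is the object where that extended ray meets the relevant (co)ray from $b_1$), while $e_1''$ lies on the extended coray $\excoray{a}{b_1}$; because $b_2$'s extended ray is strictly above $b_1$'s, $b_2$ falls outside the forward and backward Ext-hammocks of $e_1'$ — one checks $b_2 \notin \homfrom{\tau^{-1}e_1'} \cup \homto{\Sigma e_1'}$ by comparing the defining ray/coray inequalities — but $b_2$ does lie in one of the Ext-hammocks of $e_1''$, since $e_1''$ sits on $\corayfrom{a}$ and the extended coray through $b_1$ passes through the Ext-hammock region containing $b_2$. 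The Hom-orthogonality of $b_1, b_2$ is used to guarantee we are genuinely in the factorisation-free situation of Setup~\ref{setup:decomposable} and, via Lemma~\ref{lem:orthogonal}, that $b_1$ and $b_2$ sit on distinct extended rays so the strict inequality makes sense and $e_1'' \neq e_2''$ etc. When $w = 0$ one must additionally handle the degenerate hammock overlap at $\Sigma a$ using Proposition~\ref{prop:graphical-calculus} and the $w=0$ clause of Proposition~\ref{prop:ext-hammocks}, but the hypotheses $b_i \neq \Sigma a$ built into Setup~\ref{setup:decomposable} keep this under control.

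The main obstacle I anticipate is purely bookkeeping: there are genuinely several cases (four for the choices of components of $b_1, b_2$, each potentially subdividing according to whether certain intersections are empty so that $e_1'$ or $e_1''$ degenerates to zero), and in each one must translate the abstract condition $\exray{a}{b_1} < \exray{a}{b_2}$ into a concrete statement about indices $r, s, i$ and then verify a hammock-membership condition. The conceptual content is small — it is entirely encoded in Figure~\ref{fig:ext} — but making the index arithmetic airtight, especially tracking how $\SSS = \Sigma^w$ and the translate $\tau$ move objects between the $|w-1|$ components, is where care is needed. I would organise the proof around the observation that $e_1'$ lies on $\exray{a}{b_1}$ and $e_1''$ on $\excoray{a}{b_1}$, reducing the four cases to two essentially identical checks.
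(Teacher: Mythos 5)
Your overall strategy --- a case analysis on the positions of $b_1, b_2$ inside $\extfrom{a} \cup \extto{a}$, then comparing the Ext-hammocks of $e_1', e_1''$ with the position of $b_2$ --- is exactly what the paper's one-sentence proof appeals to, so the approach itself matches.

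However, your central geometric claim is backwards. Reading Theorem~\ref{thm:graphical-calculus} against the convention $e_i'' \in \corayfrom{a}$ of Setup~\ref{setup:decomposable}: when $b_1 \in \extfrom{a}$ one has $e_1'' = \corayfrom{a} \cap \rayto{b_1}$, which lies on the ray through $b_1$, hence on $\rayfrom{L(b_1)} \subseteq \exray{a}{b_1}$, whereas $e_1' = \rayfrom{a} \cap \corayto{b_1}$ lies on the coray through $b_1$, hence on $\corayto{R(b_1)} \subseteq \excoray{a}{b_1}$. The case $b_1 \in \extto{a}$ comes out the same once you account for the swap built into the definitions of $\exray{a}{-}$ and $\excoray{a}{-}$. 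So $e_1'' \in \exray{a}{b_1}$ and $e_1' \in \excoray{a}{b_1}$, the opposite of what you assert. This is not a harmless relabelling: your justification for $\Ext^1(b_2,e_1')=0$ explicitly rests on $e_1'$ lying on $\exray{a}{b_1}$ with $\exray{a}{b_1} < \exray{a}{b_2}$, and your justification for $\Ext^1(b_2,e_1'')\ne 0$ rests on $e_1''$ lying on $\excoray{a}{b_1}$. Applying that same implication to the correct positions would give $\Ext^1(b_2,e_1'')=0$ and $\Ext^1(b_2,e_1')\ne 0$, the opposite of the lemma. In effect you have two compensating errors --- the placement of $e_1', e_1''$ and the direction of the implication linking $\exray$/$\excoray$ membership to Ext-(non)vanishing --- which cancel in the conclusion but do not constitute a valid argument. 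The statements of the lemma are of course true; the repair is to correctly place $e_1' \in \excoray{a}{b_1}$ (intersected with $\rayfrom{a}$, resp. $\corayto{L(\SSS a)}$) and $e_1'' \in \exray{a}{b_1} \cap \corayfrom{a}$, and then redo the hammock-membership check, whereupon one finds that it is precisely the object on $\exray{a}{b_1}$ whose Ext-hammocks capture $b_2$.
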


\begin{proof}
This follows by a case analysis in examining the various positions $b_1$ and $b_2$ can have inside $\extfrom{a} \cup \extto{a}$ and then comparing the Ext-hammocks of $e_i^\prime$ and $e_i^{\prime \prime}$ with the positions of $b_i$, for $i = 1, 2$.
\end{proof} 

\begin{remark}\label{rmk:middletermeisandbis}
By Theorem \ref{thm:graphical-calculus} and Proposition \ref{prop:graphical-calculus}, the non-split triangles corresponding to the non-vanishing extensions $\Ext^1(b_2, e^{\prime \prime}_1)$ and $\Ext^1 (b_1, e^\prime_2)$ above are 
\[
\tri{e^{\prime \prime}_1}{x \oplus e^{\prime \prime}_2}{b_2} \text{ and } \tri{e^\prime_2}{x \oplus e^\prime_1}{b_1},
\]
respectively, where $x = \excoray{a}{b_2} \cap \exray{a}{b_1}$. 
\end{remark}

\begin{lemma}\label{lem:extbism>=3}
Suppose  $b_1, b_2 \in \ind{\Tw}$ are Hom-orthogonal objects in $\extfrom{a} \cup \extto{a}$ such that $\exray{a}{b_1} < \exray{a}{b_2}$. We have:
\begin{compactenum}  
\item If $w \neq 0$ then $\Ext^1 (b_1, b_2) = 0 = \Ext^1 (b_2, b_1)$.
\item If $w = 0$ then $\Ext^1 (b_1, b_2) = 0 = \Ext^1 (b_2, b_1)$ provided there is $b \in \extfrom{a} \cup \extto{a}$ for which $b_1$, $b_2$ and $b$ are pairwise Hom-orthogonal and $\exray{a}{b_1} < \exray{a}{b} < \exray{a}{b_2}$.
\end{compactenum}
\end{lemma}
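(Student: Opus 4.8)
The plan is to prove the vanishing by a case analysis on the positions of $b_1$ and $b_2$ inside the Ext-hammock $\extfrom{a} \cup \extto{a}$ of $a$, organised according to the four clauses defining the total order on extended rays --- this is the template already used for Lemma~\ref{lem:extensionsaisandeis}. First I would recast the conclusion homologically: for $w \neq 0$, Proposition~\ref{prop:ext-hammocks} says $\Ext^1_{\Tw}(b_1,b_2) \neq 0$ if and only if $b_1 \in \extfrom{b_2} \cup \extto{b_2}$, and symmetrically $\Ext^1_{\Tw}(b_2,b_1) \neq 0$ if and only if $b_2 \in \extfrom{b_1} \cup \extto{b_1}$. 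Since $\extfrom{x} = \homfrom{\tau^{-1}x}$ and $\extto{x} = \homto{\Sigma x}$, these are membership conditions in $\tau^{-1}$- and $\Sigma$-translates of the Hom-hammocks, which by Proposition~\ref{prop:hom-hammocks} can be read off the AR quiver directly. So it suffices to deduce, from Hom-orthogonality and $\exray{a}{b_1} < \exray{a}{b_2}$ alone, that $b_1 \notin \extfrom{b_2} \cup \extto{b_2}$ and $b_2 \notin \extfrom{b_1} \cup \extto{b_1}$.

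For $w \neq 0$ I would then work through the four cases according to whether each $b_i$ lies in $\extfrom{a}$ or $\extto{a}$. In each case the hypothesis $\exray{a}{b_1} < \exray{a}{b_2}$ says, via the matching clause of the definition, that the extended ray of $b_1$ lies strictly ``behind'' that of $b_2$ --- for instance $L(b_1) = \tau^k L(b_2)$ with $k \geq 1$, the inequality being strict because two objects on a common extended ray always admit a nonzero morphism one way or the other and so cannot be Hom-orthogonal. Hom-orthogonality further gives $b_2 \notin \homfrom{b_1} \cup \homto{\SSS b_1}$ and $b_1 \notin \homfrom{b_2} \cup \homto{\SSS b_2}$, and combining this with the fact that both $b_1$ and $b_2$ sit inside a single Ext-hammock of $a$ confines the pair to a configuration in which $\extfrom{b_2} \cup \extto{b_2}$ misses $b_1$ and $\extfrom{b_1} \cup \extto{b_1}$ misses $b_2$; this last check reduces, in each subcase, to comparing two explicit wedge-shaped regions in a single $\bZ A_{\infty}$ component. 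Here the fact that for $w \notin \{0,2\}$ the Serre functor $\SSS = \Sigma^w$ moves between the $|w-1|$ AR components keeps the ``within-component'' and ``across-component'' branches of the various hammocks from interfering; the case $w = 2$ (one component, $\SSS = \Sigma^2$) is dealt with in the same way with slightly more care. I expect the bookkeeping of these subcases to be the main obstacle, rather than any single computation.

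Finally the case $w = 0$ must be treated apart, and it is exactly the point where the argument above can fail: the AR quiver of $\sT_0$ is a single $\bZ A_{\infty}$ component, $\SSS$ acts within it, and the forward and backward Ext-hammocks of $a$ overlap, $\extfrom{a} \cap \extto{a} = \{\Sigma a\}$ (Lemma~\ref{lem:w=0twoextensions}); consequently Hom-orthogonal objects lying on adjacent extended rays can still extend one another. The extra hypothesis supplies $b \in \extfrom{a} \cup \extto{a}$ with $b_1$, $b$, $b_2$ pairwise Hom-orthogonal and $\exray{a}{b_1} < \exray{a}{b} < \exray{a}{b_2}$. I would re-run the $w = 0$ form of the hammock comparison, now using $b$ to recover the separation that is lost when the Ext-hammocks meet: the presence of a Hom-orthogonal object strictly between $b_1$ and $b_2$ in the order forces $b_1$ and $b_2$ to be at least two extended rays apart, which is what is needed to place $b_1$ outside $\extfrom{b_2} \cup \extto{b_2}$ and $b_2$ outside $\extfrom{b_1} \cup \extto{b_1}$, hence $\Ext^1_{\sT_0}(b_1,b_2) = 0 = \Ext^1_{\sT_0}(b_2,b_1)$. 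The delicate point here is quantifying precisely how much separation the enlarged $w = 0$ Ext-hammocks force and verifying that a single intermediate object delivers it.
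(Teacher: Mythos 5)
Part~(1) ($w \neq 0$) matches the paper's approach, which likewise disposes of it by the same sort of case analysis as in Lemma~\ref{lem:extensionsaisandeis}; nothing to quibble with there.

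For part~(2) ($w=0$) there is a gap. Your plan rests on the claim that if a Hom-orthogonal $b$ sits strictly between $b_1$ and $b_2$ in the extended-ray order, then $b_1$ and $b_2$ are ``at least two extended rays apart,'' and that this separation is ``what is needed'' to place $b_1$ outside $\extfrom{b_2}\cup\extto{b_2}$ (and vice versa). But the extended-ray count is not the right invariant: in $\sT_0$ the Ext-hammocks of $b_2$ are unbounded wedges that sweep across arbitrarily many extended rays, so being two rays away in the total order does not, by itself, put $b_1$ outside them. The thing that actually does the work is the \emph{Hom-orthogonality of $b$ with both $b_1$ and $b_2$}, and you have not used it. The paper argues contrapositively: assuming $\Ext^1(b_1,b_2)\neq 0$, the three constraints (Hom-orthogonality of $b_1,b_2$, $b_1\in\extfrom{b_2}\cup\extto{b_2}$, and $b_1\in\extfrom{a}\cup\extto{a}$) confine $b_1$ to a small region $X$; with $b_1\in X$ and $b_2\in\extto{a}$ fixed, the locus in $\extfrom{a}\cup\extto{a}$ of objects whose extended ray lies strictly between $\exray{a}{b_1}$ and $\exray{a}{b_2}$ is then shown to be entirely covered by the Hom-hammocks of $b_1$ and of $b_2$, so no pairwise Hom-orthogonal $b$ can exist there. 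This is a concrete overlap-of-regions computation (Figure~\ref{fig:w=0exrays}) rather than a ray-counting bound, and it is precisely the step you flag as ``the delicate point'' without carrying out. As it stands, the $w=0$ case of your proposal is a sketch that leans on an unjustified (and likely incorrect) separation criterion.
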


\begin{proof}
Statement (1) can be checked by a case analysis as in Lemma \ref{lem:extensionsaisandeis}.

Now let $w = 0$. We shall only show that if $\Ext^1 (b_1, b_2) \neq 0$ then there is no $b$ satisfying the conditions of statement (2), since the argument is dual for $\Ext^1 (b_2, b_1) \neq 0$. Suppose $\Ext^1 (b_1, b_2) \neq 0$. We only consider the case when $b_2 \in \extto{a}$; the case when $b_2 \in \extfrom{a}$ is similar.

Since $b_1$ and $b_2$ are Hom-orthogonal and $b_1 \in (\extfrom{b_2} \cup \extto{b_2}) \cap (\extfrom{a} \cup \extto{a})$, we have that $b_1$ must lie in the region $X \cup Y \cup Z$ indicated in Figure \ref{fig:w=0exrays}.
If $b_1 \in Y \cup Z$, then $\exray{a}{b_2} < \exray{a}{b_1}$, contradicting the hypothesis. Hence, $b_1 \in X$. If there is an indecomposable object $b$ in $\extfrom{a} \cup \extto{a}$ such that $\exray{a}{b_1} < \exray{a}{b} < \exray{a}{b_2}$, then $b$ must lie in the shaded area of lower sketch in Figure \ref{fig:w=0exrays}. But then we have either $\Hom (b_1, b) \neq 0$ or $\Hom (b_2, b) \neq 0$, meaning that $b_1$, $b_2$ and $b$ are not pairwise Hom-orthogonal.
\end{proof}

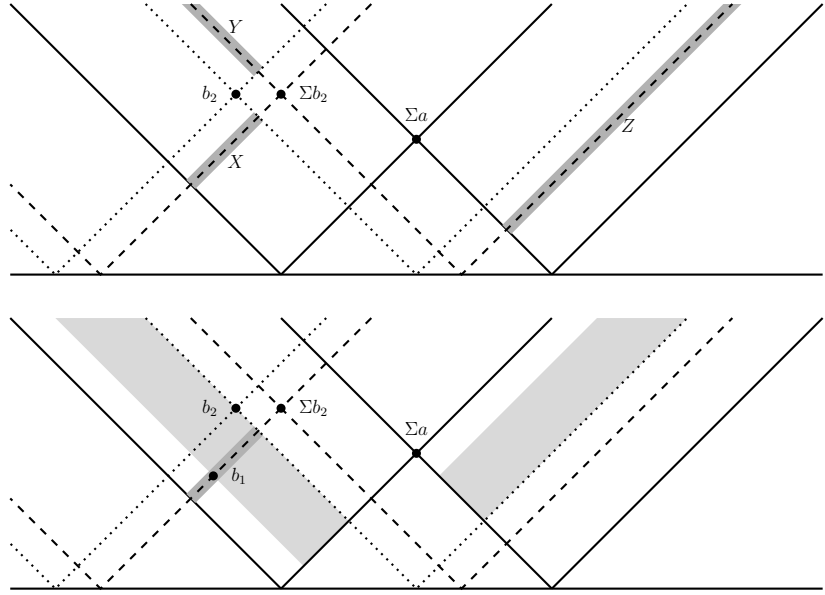
\begin{figure}
\begin{center}
\begin{tikzpicture}[thick,scale=0.6, every node/.style={scale=0.6}]

%lines:
\draw[thick] (0,3) -- (3,6);
\draw[thick] (3,0) -- (9,6);
\draw[thick] (-9,6) -- (-3,0);
\draw[thick] (-3,6) -- (0,3);
\draw[thick] (0,3) -- (3,0);
\draw[thick] (0,3) -- (-3,0);

\draw[thick] (-9,0) -- (9,0); 

% X region
\fill[gray!60] (-5.1,2.1) -- (-4.9,1.9) -- (-3.4,3.4) -- (-3.6,3.6) -- cycle;
\node (X) at (-4,2.5) {$X$};

% Y region
\fill[gray!60] (-5.2,6) -- (-4.8,6) -- (-3.4,4.6) -- (-3.6,4.4) -- cycle;
\node (Y) at (-4,5.5) {$Y$};

% Z region
\fill[gray!60] (1.9, 1.1) -- (2.1,0.9) -- (7.2,6) -- (6.8,6) -- cycle;
\node (Z) at (5,3) {};
\node[above left] at (Z) {$Z$};

% right \Sigma b hammock
\draw[thick, dashed] (-1,6) -- (-3,4) -- (1,0) -- (7,6);

% left \Sigma b hammock
\draw[thick, dashed] (-5,6) -- (-3,4) -- (-7,0) -- (-9,2);

% right b hammock
\draw[thick, dotted] (-2,6) -- (-4,4) -- (0,0) -- (6,6);

% left \Sigma b hammock
\draw[thick, dotted] (-6,6) -- (-4,4) -- (-8,0) -- (-9,1);

% \Sigma a:
\fill (0,3) circle (1mm);
\node (a) at (0,3.25) {};
\node[above] at (a) {$\Sigma a$};

% \Sigma b
\fill (-3,4) circle (1mm);
\node (Sb) at (-2.75,4) {};
\node[right] at (Sb) {$\Sigma b_2$};

% b
\fill (-4,4) circle (1mm);
\node (b) at (-4.25,4) {};
\node[left] at (b) {$b_2$};

\end{tikzpicture}
\end{center}
\bigskip
\begin{center}
\begin{tikzpicture}[thick,scale=0.6, every node/.style={scale=0.6}]

% shaded region
\fill[gray!30] (-8,6) -- (-2.5,0.5) -- (-1.5,1.5) -- (-6,6);
\fill[gray!30] (0.5,2.5) -- (1.5,1.5) -- (6,6) -- (4,6);

%lines:
\draw[thick] (0,3) -- (3,6);
\draw[thick] (3,0) -- (9,6);
\draw[thick] (-9,6) -- (-3,0);
\draw[thick] (-3,6) -- (0,3);
\draw[thick] (0,3) -- (3,0);
\draw[thick] (0,3) -- (-3,0);

\draw[thick] (-9,0) -- (9,0); 

% X region
\fill[gray!60] (-5.1,2.1) -- (-4.9,1.9) -- (-3.4,3.4) -- (-3.6,3.6) -- cycle;

% right \Sigma b hammock
\draw[thick, dashed] (-1,6) -- (-3,4) -- (1,0) -- (7,6);

% left \Sigma b hammock
\draw[thick, dashed] (-5,6) -- (-3,4) -- (-7,0) -- (-9,2);

% right b hammock
\draw[thick, dotted] (-2,6) -- (-4,4) -- (0,0) -- (6,6);

% left \Sigma b hammock
\draw[thick, dotted] (-6,6) -- (-4,4) -- (-8,0) -- (-9,1);

% \Sigma a:
\fill (0,3) circle (1mm);
\node (a) at (0,3.25) {};
\node[above] at (a) {$\Sigma a$};

% \Sigma b
\fill (-3,4) circle (1mm);
\node (Sb) at (-2.75,4) {};
\node[right] at (Sb) {$\Sigma b_2$};

% b
\fill (-4,4) circle (1mm);
\node (b) at (-4.25,4) {};
\node[left] at (b) {$b_2$};

% b'
\fill (-4.5,2.5) circle (1mm);
\node (b') at (-4.25,2.5) {};
\node[right] at (b') {$b_1$};

\end{tikzpicture}
\end{center}
\caption{The regions used in the proof of Lemma~\ref{lem:extbism>=3}. The Hom-hammocks for $\Sigma a$, $\Sigma b_2$ and $b_2$ are shown in solid, broken and dotted lines, respectively.} \label{fig:w=0exrays}
\end{figure}

We are now ready to compute the middle term of the extension in Setup~\ref{setup:decomposable}.

\begin{proposition} \label{prop:m=1incomparable}
Suppose we are in Setup~\ref{setup:decomposable} and we have ordered the objects $b_i$ such that $\exray{a}{b_i} < \exray{a}{b_{i+1}}$, for $1 \leq i < m$. In the non-split triangle 
\[
\tri{a}{e}{\bigoplus_{i=1}^m b_i}, \text{ we have } e \simeq e^{\prime}_1 \oplus x_1 \oplus \cdots \oplus x_{m-1} \oplus e^{\prime \prime}_m,
\]
where $x_i = \exray{a}{b_i} \cap \excoray{a}{b_{i+1}}$ for $i = 1, \ldots, m-1$.
\end{proposition}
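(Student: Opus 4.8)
The plan is to induct on $m$, peeling off the largest object $b_m$ by an application of the octahedral axiom. The base case $m=1$ is exactly Theorem~\ref{thm:graphical-calculus} together with Proposition~\ref{prop:graphical-calculus}, the convention $e_i'' \in \corayfrom{a}$ from Setup~\ref{setup:decomposable} fixing which of the two summands is called $e_1'$ and which $e_1''$. For the inductive step I would set $B' = \bigoplus_{i=1}^{m-1} b_i$ and $B = \bigoplus_{i=1}^{m} b_i$, write $h = \rowvec{h_1}{\cdots}{h_m} \colon B \to \Sigma a$ and $h' = h|_{B'}$, and let $e$, $e'$ be the middle terms of the non-split extensions of $B$, $B'$ by $a$. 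Applying the octahedral axiom to $B' \hookrightarrow B \xrightarrow{h} \Sigma a$ (whose composite is $h'$), and using $\cone(B'\hookrightarrow B) = b_m$, $\cone(h) = \Sigma e$, $\cone(h') = \Sigma e'$, produces a distinguished triangle $\tri{e'}{e}{b_m}$ whose connecting morphism $b_m \to \Sigma e'$ is, by the compatibility built into the octahedral axiom (in the spirit of Lemma~\ref{lem:reduction}), the composite $b_m \xrightarrow{h_m} \Sigma a \xrightarrow{\Sigma\alpha} \Sigma e'$, where $\alpha \colon a \to e'$ is the first morphism of the triangle $\tri{a}{e'}{B'}$. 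By the inductive hypothesis $e' \simeq e_1' \oplus x_1 \oplus \cdots \oplus x_{m-2} \oplus e_{m-1}''$, so this connecting morphism decomposes accordingly.

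The crux is then to show that the connecting morphism is supported on the single summand $\Sigma e_{m-1}''$ and is nonzero there. Since the objects are ordered so that $\exray{a}{b_1} < \exray{a}{b_{m-1}} < \exray{a}{b_m}$, Lemma~\ref{lem:extensionsaisandeis}(1) applied to the pairs $(b_1, b_m)$ and $(b_{m-1}, b_m)$ gives $\Ext^1(b_m, e_1') = 0$ and $\Ext^1(b_m, e_{m-1}'') \neq 0$ (one-dimensional for $w\neq 0$), so the component of the connecting morphism into $\Sigma e_1'$ vanishes while the one into $\Sigma e_{m-1}''$ lands in a one-dimensional group. A case analysis on the relative positions of $b_m$ and each $x_i$ in the Auslander--Reiten quiver, of the same nature as those proving Lemmas~\ref{lem:extensionsaisandeis} and \ref{lem:extbism>=3}, shows $\Ext^1(b_m, x_i) = 0$ for $1 \leq i \leq m-2$. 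Thus all components vanish except possibly $b_m \to \Sigma e_{m-1}''$; that this one is nonzero follows from $h_m \neq 0$, the nonvanishing of the component $a \to e_{m-1}''$ of $\alpha$, and the factorisation properties of Propositions~\ref{prop:factoring} and \ref{prop:factoring-T0}, which guarantee that the composite of nonzero maps between indecomposables in the relevant configuration is again nonzero. With the connecting morphism identified, Lemma~\ref{lem:cone} splits off the summands $e_1', x_1, \ldots, x_{m-2}$ of $e'$ and yields $e \simeq e_1' \oplus x_1 \oplus \cdots \oplus x_{m-2} \oplus \tilde e$, where $\tilde e$ is the middle term of the non-split extension of $b_m$ by $e_{m-1}''$ corresponding to the surviving extension class. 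Finally, Remark~\ref{rmk:middletermeisandbis} (which is Theorem~\ref{thm:graphical-calculus} and Proposition~\ref{prop:graphical-calculus} applied to the pair $e_{m-1}''$, $b_m$) identifies $\tilde e \simeq x_{m-1} \oplus e_m''$ with $x_{m-1} = \exray{a}{b_{m-1}} \cap \excoray{a}{b_m}$, completing the induction.

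The main obstacle is the verification underlying the second paragraph: pinning down exactly which summands of $\Sigma e'$ the octahedral connecting morphism meets, and checking that the surviving component does not vanish. This is a position-by-position analysis in the Auslander--Reiten quiver built on the Ext-hammock descriptions of Proposition~\ref{prop:ext-hammocks} and the factorisation statements; the case $w=0$ requires the usual additional care, owing to the two-dimensional endomorphism rings and self-extensions, but one can always reduce to a situation in which Proposition~\ref{prop:factoring-T0} applies.
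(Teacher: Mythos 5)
Your proof takes a genuinely different route from the paper's. Both start with the same reduction: peel off a summand via one application of the octahedral axiom, and use Lemma~\ref{lem:extensionsaisandeis} together with the family of triangles $\{\tri{e'_{i+1}}{e'_i\oplus x_i}{b_i}\}$ to show that the only summand of $e'$ to which $b_m$ has a nonzero $\Ext^1$ is $e''_{m-1}$. Where you diverge is the crucial step of showing that the connecting morphism is actually nonzero on that summand. You attempt to prove this directly, by expressing the connecting morphism as the composite $b_m\xrightarrow{h_m}\Sigma a\xrightarrow{\Sigma\beta}\Sigma e'$ and then invoking the factorisation statements of Propositions~\ref{prop:factoring} and \ref{prop:factoring-T0}. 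The paper avoids this question entirely with a neat trick: it runs a \emph{second} octahedron peeling off $b_1$ instead of $b_m$, obtains a second pair of candidate answers for $e$ (one for the connecting map being zero, one for it being nonzero), and observes that the only answer consistent across both octahedra is the one in which both connecting morphisms are nonzero. In other words, the paper derives the non-vanishing indirectly from the uniqueness of the cone, never having to produce the non-vanishing from a local factorisation argument.

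There is a gap in your direct argument as stated. It is not true that ``the composite of nonzero maps between indecomposables in the relevant configuration is again nonzero'' follows formally from Propositions~\ref{prop:factoring} and \ref{prop:factoring-T0}; indeed, Lemma~\ref{lem:backward} shows that two nonzero backward maps always compose to zero, so a blanket statement of this kind is false. What is true (for $w\neq 0$) is that if the hypotheses of one of the four factorisation statements (i), (ii), ($i'$), ($ii'$) of Proposition~\ref{prop:factoring} are satisfied for the triple $(b_m,\,\Sigma a,\,\Sigma e''_{m-1})$, then every nonzero map $b_m\to\Sigma e''_{m-1}$ factors through $\Sigma a$, and one-dimensionality of the Hom-spaces then forces the composite to be nonzero. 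Verifying that one of these hypotheses actually holds is a genuine case analysis depending on whether $b_m\in\extfrom{a}$ or $b_m\in\extto{a}$ and on the position of $e''_{m-1}$: for instance, when $w\geq 3$ and $b_m\in\extfrom{a}$, it is the dual statement ($ii'$) — with the triple relabelled — that applies, and one must check $b_m\in\homfrom{\tau^{-1}e''_{m-1}}$ and $\Sigma a\in\homto{\Sigma e''_{m-1}}$ explicitly. So your approach is viable but requires filling in this case-by-case verification, which is precisely the work the paper's two-octahedron argument was designed to circumvent. (The paper's argument, in turn, relies implicitly on the observation that the two ``connecting map zero'' answers cannot coincide with each other or with the claimed answer, which follows from $\Ext^1(b_m,a)\neq 0$ and Lemma~\ref{lem:homext-vanishing}(2) — an observation worth making explicit if you adopt it.)

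One small terminological point: identifying the connecting morphism $b_m\to\Sigma e'$ as $\Sigma\beta\circ h_m$ follows from the commutativity built into the octahedral axiom itself (one of the squares in the paper's $3\times 3$ diagram), not from Lemma~\ref{lem:cone}, which is a statement about direct sums.
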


\begin{proof}
We proceed by induction on $m$. For $m=1$ this is Theorem~\ref{thm:graphical-calculus} and Proposition~\ref{prop:graphical-calculus}. 
Let $m \geq 2$ and assume, by induction, we have computed
$\tri{a}{e^{\prime}_1 \oplus x_1 \oplus \cdots \oplus x_{m-2} \oplus e^{\prime \prime}_{m-1}}{\bigoplus_{i=1}^{m-1} b_i}$ and  
$\tri{a}{e^{\prime}_2 \oplus x_2 \oplus \cdots \oplus x_{m-1} \oplus e^{\prime \prime}_m}{\bigoplus_{i=2}^m b_i}$.
Consider a diagram coming from the octahedral axiom in which the first column is the split triangle:
\[
\xymatrix{
                                     & \Sigma^{-1} (\bigoplus_{i=1}^{m-1} b_i) \ar@{=}[r] \ar[d] & \Sigma^{-1} (\bigoplus_{i=1}^{m-1} b_i) \ar[d] & \\
\Sigma^{-1} e \ar[r] \ar@{=}[d] & \Sigma^{-1} (\bigoplus_{i=1}^m b_i) \ar[r] \ar[d]  & a \ar[r] \ar[d]  & e \ar@{=}[d] \\
\Sigma^{-1} e \ar[r]      & \Sigma^{-1} b_m \ar[r]^{\alpha} \ar[d]^0          & f \ar[r] \ar[d]             & e  \\
                                     & \bigoplus_{i=1}^{m-1} b_i \ar@{=}[r]                        & \bigoplus_{i=1}^{m-1} b_i
}
\] 
where $f \simeq e^{\prime}_1 \oplus x_1 \oplus \cdots \oplus x_{m-2} \oplus e^{\prime \prime}_{m-1}$. If $\alpha = 0$, then $e \simeq b_m \oplus e^{\prime}_1 \oplus x_1 \oplus \cdots \oplus x_{m-2} \oplus e^{\prime \prime}_{m-1}$. Suppose now that $\alpha \neq 0$. By applying $\Hom(\Sigma^{-1} b_m, -)$ to the family of triangles $\{\tri{e_{i+1}^\prime}{e^{\prime}_i \oplus x_i}{b_i}\}_{1 \leq i \leq m-2}$, and using Lemmas \ref{lem:extensionsaisandeis} and \ref{lem:extbism>=3}, we have $\Ext^1 (b_m, x_i) = 0$, for $i= 1, \ldots, m-2$. On the other hand, $\ext^1 (b_m, e^{\prime \prime}_{m-1}) = 1$ and the corresponding non-split triangle is $\tri{e^{\prime \prime}_{m-1}}{e^{\prime \prime}_m \oplus x_{m-1}}{b_m}$. Hence, by Lemma \ref{lem:cone}(i), we have $e \simeq e^{\prime}_1 \oplus x_1 \oplus \cdots \oplus x_{m-1} \oplus e^{\prime \prime}_m$. 

Now consider the following application of the octahedral axiom:
\[
\xymatrix{
                                     & \Sigma^{-1} (\bigoplus_{i=2}^{m} b_i) \ar@{=}[r] \ar[d] & \Sigma^{-1} (\bigoplus_{i=2}^{m} b_i) \ar[d] & \\
\Sigma^{-1} e \ar[r] \ar@{=}[d] & \Sigma^{-1} (\bigoplus_{i=1}^m b_i) \ar[r] \ar[d]  & a \ar[r] \ar[d]  & e \ar@{=}[d] \\
\Sigma^{-1} e \ar[r]      & \Sigma^{-1} b_1 \ar[r]^{\alpha^\prime} \ar[d]^0          & f^\prime \ar[r] \ar[d]             & e  \\
                                     & \bigoplus_{i=2}^{m-2} b_i \ar@{=}[r]                        & \bigoplus_{i=2}^{m-2} b_i
}
\] 
where $f^\prime = e^{\prime}_2 \oplus x_2 \oplus \cdots \oplus x_{m-1} \oplus e^{\prime \prime}_m$. 
Using Lemmas \ref{lem:extensionsaisandeis} and \ref{lem:extbism>=3}, we have that $e \simeq b_1 \oplus e^{\prime}_2 \oplus x_2 \oplus \cdots \oplus x_{m-1} \oplus e^{\prime \prime}_m$ if $\alpha^\prime = 0$, and $e \simeq e^{\prime}_1 \oplus x_1 \oplus \cdots \oplus x_{m-1} \oplus e^{\prime \prime}_m$ otherwise. Since the cone, $e$, must coincide in both diagrams, we must have $\alpha \neq 0, \alpha^\prime \neq 0$ and $e \simeq  e^{\prime}_1 \oplus x_1 \oplus \cdots \oplus x_{m-1} \oplus e^{\prime \prime}_m$. 
\end{proof}

\subsection{Extensions involving $\Sigma a$}

In the previous subsection we excluded $b_i = \Sigma a$ for each $i$ from our analysis. The following proposition deals with this case.

\begin{proposition} \label{prop:Sigma-a}
Let $w=0$ and $b_1, \ldots, b_k \in \extfrom{a} \cup \extto{a}$ with $b_i \neq \Sigma a$, for $i = 1, \ldots, k$. Consider the triangle 
\[
\trilabels{a}{e}{(\bigoplus_{i=1}^k b_i) \oplus \Sigma a}{}{}{\rowmat{H}{h}},
\]
where $H = \rowvec{h_1}{\cdots}{h_k}$. Then either $e \simeq \bigoplus_{i=1}^k b_i$ or $e \simeq \cocone{h} \oplus \Sigma a$. 
\end{proposition}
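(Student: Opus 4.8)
The proof proceeds by a case analysis on whether the last component $h \colon \Sigma a \to \Sigma a$ of the connecting morphism is an isomorphism. Recall from Remark~\ref{rem:basis-T0} and Proposition~\ref{prop:hom-hammocks} that $\Ext^1_{\sT_0}(\Sigma a, a) = \Hom_{\sT_0}(\Sigma a, \Sigma a)$ is a $2$-dimensional local $\kk$-algebra, so $h$ is either invertible or nilpotent with $h^2 = 0$; in the latter case $\cocone{h}$ is, up to equivalence, the middle term of the non-split triangle $\trilabels{a}{\cocone{h}}{\Sigma a}{}{}{h}$, with structure morphism $p \colon \cocone{h} \to \Sigma a$ (described by Proposition~\ref{prop:graphical-calculus}). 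Since the given triangle presents $e$ as $\cocone{\rowmat{H}{h}}$, the whole task is to compute the cocone of $\rowmat{H}{h} \colon \bigl(\bigoplus_{i=1}^{k} b_i\bigr) \oplus \Sigma a \to \Sigma a$.

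Suppose first that $h$ is an isomorphism. Then $\rowmat{H}{h}$ is a split epimorphism, with section $\colmat{0}{h^{-1}}$, so its cocone $e$ satisfies $e \oplus \Sigma a \simeq \bigl(\bigoplus_{i=1}^{k} b_i\bigr) \oplus \Sigma a$; cancelling the common summand $\Sigma a$ (the category is Krull--Schmidt) gives $e \simeq \bigoplus_{i=1}^{k} b_i$, the first alternative. Equivalently, the unipotent automorphism $\sqmat{\id}{0}{h^{-1}H}{\id}$ of the third term identifies our triangle with the one whose connecting morphism is $\rowmat{0}{h}$, exactly as in Lemma~\ref{lem:reduction}, and this visibly has cocone $\bigoplus_{i=1}^{k} b_i$.

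Now suppose $h$ is not an isomorphism; this is the substantive case. The first step is to factor $\rowmat{H}{h}$ through $p$. Since $h$ is nilpotent we have $h^2 = 0$, and moreover $h \circ h_i = 0$ for each $i$ (a radical-power count, using that $h$ lies in the radical of $\Hom_{\sT_0}(\Sigma a, \Sigma a)$ and that $\Hom_{\sT_0}(b_i, \Sigma a)$ is one-dimensional). Applying $\Hom_{\sT_0}(\Sigma a, -)$ and $\Hom_{\sT_0}(b_i, -)$ to the triangle $\trilabels{a}{\cocone{h}}{\Sigma a}{}{}{h}$ and combining these vanishings with exactness shows that $h$ and every $h_i$ factor through $p$; hence $\rowmat{H}{h} = p \circ \mu$ for a suitable $\mu \colon \bigl(\bigoplus_{i=1}^{k} b_i\bigr) \oplus \Sigma a \to \cocone{h}$. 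Because $\cocone{p} \simeq a$, the octahedral axiom applied to the composite $p \circ \mu$ then produces a distinguished triangle $\cocone{\mu} \to e \to a \to \Sigma\cocone{\mu}$.

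It remains to identify $\cocone{\mu}$ and to verify that this last triangle assembles to $e \simeq \cocone{h} \oplus \Sigma a$ --- equivalently, that every indecomposable summand contributed by the $b_i$ is cancelled while exactly one copy of $\Sigma a$ splits off. I expect this to be the main obstacle. The plan is to decompose $\mu$ according to its components $b_i \to \cocone{h}$ and $\Sigma a \to \cocone{h}$, to compute $\cocone{\mu}$ by iterating the graphical calculus of Theorem~\ref{thm:graphical-calculus} and Proposition~\ref{prop:graphical-calculus} together with the multi-term computation of Proposition~\ref{prop:m=1incomparable}, and to control the extension $\cocone{\mu} \to e \to a$ using the $\Ext$-vanishing of Lemmas~\ref{lem:extensionsaisandeis} and~\ref{lem:extbism>=3}; the disappearance of the $b_i$-summands should then be read off from the cohomology of the middle terms (Lemmas~\ref{lem:ray-cohomology}, \ref{lem:coray-cohomology} and~\ref{lem:w=0twoextensions}) and the positions of the objects in the Auslander--Reiten quiver of $\sT_0$, organised if necessary by an induction on $k$ modelled on the octahedral bootstrap in the proof of Proposition~\ref{prop:m=1incomparable}.
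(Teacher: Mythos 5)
Your Case~1 argument ($h$ invertible) is correct and agrees with the paper: exhibiting $\rowmat{H}{h}$ as a split epimorphism (or, equivalently, conjugating by a unipotent automorphism of the third term as in Lemma~\ref{lem:reduction}) and cancelling the common $\Sigma a$ summand gives $e \simeq \bigoplus_{i=1}^k b_i$.

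Your Case~2, however, has a genuine gap and is aimed at the wrong target. You factor $\rowmat{H}{h}$ through $p\colon \cocone{h}\to\Sigma a$, invoke the octahedral axiom to obtain a triangle $\cocone{\mu}\to e\to a\to\Sigma\cocone{\mu}$, and then stop, explicitly deferring the identification of $\cocone{\mu}$ and the reassembly of $e$. That deferred step is the entire content of the case, and the sketched strategy (iterated graphical calculus, Ext-vanishing arguments, a possible induction on $k$) is not carried out. Moreover, your guiding picture --- that ``every indecomposable summand contributed by the $b_i$ is cancelled'' --- is not what happens: the paper's proof shows that in this case $e\simeq\cocone{H}\oplus\Sigma a$, i.e.\ the cocone of $H=\rowvec{h_1}{\cdots}{h_k}\colon\bigoplus_{i=1}^k b_i\to\Sigma a$ together with a copy of $\Sigma a$, so the summands built from the $b_i$ survive inside $\cocone{H}$. (The proof makes clear that the ``$\cocone{h}$'' in the displayed statement should be read as $\cocone{H}$.)

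The paper's Case~2 is short and avoids the octahedral axiom entirely; the key fact you are missing is Proposition~\ref{prop:factoring-T0}(ii). Since $h$ is a non-isomorphism of $\Sigma a$ and each $b_i \in\homfrom{\Sigma a}\cup\homto{\Sigma a}$ with $b_i\neq\Sigma a$, that proposition together with one-dimensionality of $\Hom_{\sT_0}(b_1,\Sigma a)$ gives a factorisation $h=h_1\beta$ through the \emph{given} map $h_1$, with $\beta\colon\Sigma a\to b_1$. The unipotent automorphism $F$ of $(\bigoplus_{i=1}^k b_i)\oplus\Sigma a$ with $\beta$ in position $(1,k+1)$ then carries $\rowmat{H}{h}$ to $\rowmat{H}{0}$ exactly as in Lemma~\ref{lem:reduction}, and the dual of Lemma~\ref{lem:cone} immediately reads off $e\simeq\cocone{H}\oplus\Sigma a$. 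In short: a non-isomorphism $h\colon\Sigma a\to\Sigma a$ always factors through one of the given $b_i$, so a single column operation removes the $\Sigma a$-entry from the connecting morphism, and no cohomology computations or inductions are needed.
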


\begin{proof}
Suppose $h$ is an isomorphism. Then clearly every map $h_i$ factors through $h$. By Lemma \ref{lem:reduction}, and because the cocone of $h$ is zero, we have an isomorphism of triangles:
\[
\xymatrix@!R=10pt{
a \ar[r] \ar@{=}[d] & e \ar[r] \ar@{-}[d]^-{\simeq}  & (\bigoplus_{i=1}^k b_i) \oplus \Sigma a \ar[r]^-{\rowmat{H}{h}} \ar@{-}[d]^-{\simeq}_{F} & \Sigma a \ar@{=}[d] \\
a \ar[r]    & \bigoplus_{i=1}^k b_i \ar[r] & (\bigoplus_{i=1}^k b_i) \oplus \Sigma a \ar[r]_-{\rowmat{0}{h}}             & \Sigma a,
}
\]
where $F$ is the matrix with 1s along the leading diagonal and $h^{-1} h_i$ as the entry in position $(k+1, i)$, for $i = 1, \ldots, k$.

Now suppose $h$ is a non-isomorphism. By Proposition \ref{prop:factoring-T0}(ii), $h$ factors through any of the $h_i$'s. Write, for instance, $h = h_1 \beta$, with $\beta: \Sigma a \rightarrow b$ nonzero. Again, by Lemma \ref{lem:reduction}, we have an isomorphism of triangles:
\[
\xymatrix@!R=10pt{
a \ar[r] \ar@{=}[d] & e \ar[r] \ar@{-}[d]^-{\simeq}  & (\bigoplus_{i=1}^k b_i) \oplus \Sigma a \ar[r]^-{\rowmat{H}{h}} \ar@{-}[d]^-{\simeq}_{F} & \Sigma a \ar@{=}[d] \\
a \ar[r]    & f \oplus \Sigma a \ar[r] & (\bigoplus_{i=1}^k b_i) \oplus \Sigma a \ar[r]_-{\rowmat{H}{0}}             & \Sigma a,
}
\]
where $F$ is the matrix with 1s along the leading diagonal and $\beta$ as the entry in position $(1,k+1)$ and $f = \cocone{h}$.
\end{proof}

\subsection{Proof of Theorem~\ref{thm:extensionsnonindecomposable}}

We are now ready to prove the main result of this section.

\begin{lemma} \label{lem:n=1}
Any extension of the form $\tri{a}{e}{\bigoplus_{i=1}^m b_i}$ in $\sT_w$, $w \in \bZ \setminus \{1\}$, can be computed iteratively from extensions whose outer terms are indecomposable and involving only $a, b_1, b_2, \ldots, b_m$ and objects constructed from these objects. 
\end{lemma}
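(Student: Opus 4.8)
The plan is essentially to assemble the results of this section into a single statement. If the original $m$ equals $1$ there is nothing to prove, so assume $m\geq 2$. First I would reduce to a convenient normal form. By Corollary~\ref{cor:m=1directsum} the triangle $\tri{a}{e}{\bigoplus_{i=1}^m b_i}$ is a direct sum of a split triangle $\tri{0}{B'}{B'}$, whose middle term $B'$ is a sum of some of the $b_i$, and a triangle $\tri{a}{E}{B}$ in which $B=\bigoplus b_i$ ranges over the summands for which $\{h_i\colon b_i\to\Sigma a\}$ is factorisation-free; by Remark~\ref{rem:multiplesummands} these remaining $b_i$ may be taken pairwise non-isomorphic. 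After these reductions, either fewer than two of the $b_i$ remain in $B$ — in which case $\tri{a}{E}{B}$ already has indecomposable outer terms and we are done — or we may assume that $\{h_i\colon b_i\to\Sigma a\}_{i=1}^m$ with $m\geq 2$ is factorisation-free, the $b_i$ are pairwise non-isomorphic, and each $h_i$ is nonzero (a zero $h_i$ would violate factorisation-freeness). Since $e=E\oplus B'$ and $B'$ is built from the $b_i$, it suffices to compute $E$; and $h_i\neq 0$ gives $\Ext^1_{\Tw}(b_i,a)\neq 0$, so $b_i\in\extfrom{a}\cup\extto{a}$ by Proposition~\ref{prop:ext-hammocks}.

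Next I would handle the generic case, where no $b_i$ is isomorphic to $\Sigma a$. When $w\neq 0$ this is automatic, since $\Hom_{\Tw}(\Sigma a,\Sigma a)=\kk$ forces a nonzero $h_i\colon\Sigma a\to\Sigma a$ to be invertible, through which every other $h_j$ factors, contradicting factorisation-freeness. Under this hypothesis Lemma~\ref{lem:orthogonal} converts factorisation-freeness into pairwise Hom-orthogonality of the $b_i$, placing us exactly in Setup~\ref{setup:decomposable}; after reordering so that $\exray{a}{b_i}<\exray{a}{b_{i+1}}$, Proposition~\ref{prop:m=1incomparable} computes $E$. The key point is that the proof of that proposition produces $E$ by an induction on $m$ which repeatedly applies the octahedral axiom to the triangles $\tri{a}{e_i'\oplus e_i''}{b_i}$ of Theorem~\ref{thm:graphical-calculus} and the triangles $\tri{e_{i+1}'}{e_i'\oplus x_i}{b_i}$ of Remark~\ref{rmk:middletermeisandbis}, all of which have indecomposable outer terms built from $a$ and the $b_i$. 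Thus $E$, and hence $e$, is obtained iteratively from extensions with indecomposable outer terms, as claimed.

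It remains to treat the case $w=0$ with exactly one $b_i$, say $b_m$, isomorphic to $\Sigma a$. Here Proposition~\ref{prop:Sigma-a}, applied to $\tri{a}{E}{(\bigoplus_{i<m}b_i)\oplus\Sigma a}$, gives either $E\simeq\bigoplus_{i<m}b_i$, which is built directly from the $b_i$, or $E\simeq\cocone{h}\oplus\Sigma a$ for a non-isomorphism $h\colon\Sigma a\to\Sigma a$; rotating $\trilabels{\cocone{h}}{\Sigma a}{\Sigma a}{}{}{h}$ exhibits $\cocone{h}$ as the middle term of the extension $\tri{a}{\cocone{h}}{\Sigma a}$, whose outer terms are indecomposable and which is computed by Proposition~\ref{prop:graphical-calculus}. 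In both subcases $E$ is built iteratively from extensions with indecomposable outer terms, finishing the proof.

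I expect the genuine difficulty to have been discharged already in Propositions~\ref{prop:m=1incomparable} and~\ref{prop:Sigma-a}; the main thing to get right here is the bookkeeping of the initial reduction — verifying that passing to the factorisation-free part is compatible with assuming the $b_i$ pairwise non-isomorphic and each $h_i$ nonzero — and confirming that the generic case and the $\Sigma a$ case between them exhaust all possibilities for every $w\in\bZ\setminus\{1\}$.
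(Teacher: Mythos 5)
Your proof is correct and takes essentially the same approach as the paper, whose own proof is simply "Immediate from Corollary~\ref{cor:m=1directsum} and Propositions~\ref{prop:m=1incomparable} and~\ref{prop:Sigma-a}"; you have assembled exactly those three results, and your elaboration of the bookkeeping (factorisation-freeness forcing the $h_i$ nonzero and the $b_i$ pairwise non-isomorphic, Lemma~\ref{lem:orthogonal} converting to Hom-orthogonality, Proposition~\ref{prop:m=1incomparable} computing $E$ by octahedral inductions with indecomposable outer terms, and Proposition~\ref{prop:Sigma-a} for the $w=0$, $\Sigma a$ case) is sound. One small observation: the argument you give for $w\neq 0$ that a $b_i\simeq\Sigma a$ would violate factorisation-freeness in fact extends to $w=0$ via Proposition~\ref{prop:factoring-T0}(ii) (a nonzero non-isomorphism $\Sigma a\to\Sigma a$ factors through each remaining $h_j$ since $\hom(b_j,\Sigma a)=1$), so after the Corollary~\ref{cor:m=1directsum} reduction the $\Sigma a$ case with $m\geq 2$ is actually vacuous — invoking Proposition~\ref{prop:Sigma-a} as you (and the authors) do is therefore safe but not strictly necessary.
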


\begin{proof}
Immediate from Corollary~\ref{cor:m=1directsum} and Propositions~\ref{prop:m=1incomparable} and \ref{prop:Sigma-a}.
\end{proof}

\begin{proof}[Proof of Theorem \ref{thm:extensionsnonindecomposable}]
We proceed by induction on $n$. For $n=1$, this is Lemma~\ref{lem:n=1}. Suppose $n > 1$ and consider the octahedral axiom diagram in which the first column is split.
\[
\xymatrix{
                                     & a_1 \ar@{=}[r] \ar[d] & a_1 \ar[d] & \\
\Sigma^{-1} (\bigoplus_{i=1}^m b_i) \ar[r] \ar@{=}[d] & \bigoplus_{i=1}^n a_i \ar[r] \ar[d]  & e \ar[r] \ar[d]  & \bigoplus_{i=1}^m b_i \ar@{=}[d] \\
\Sigma^{-1} (\bigoplus_{i=1}^m b_i) \ar[r]      & \bigoplus_{i=2}^n a_i \ar[r] \ar[d]^0          & \bigoplus_{i=1}^k x_i \ar[r] \ar[d]             & \bigoplus_{i=1}^m b_i  \\
                                     & \Sigma a_1 \ar@{=}[r]                        & \Sigma a_1
}
\]
By induction, the triangle $\bigoplus_{i=2}^n a_i \rightarrow \bigoplus_{i=1}^k x_i \rightarrow \bigoplus_{i=1}^{m} b_i \rightarrow \Sigma (\bigoplus_{i=2}^n a_i)$ is constructed from extensions whose outer terms are indecomposable, which are built from the $a_i$'s and the $b_j$'s. Therefore, by Lemma~\ref{lem:n=1}, it follows that so is $\tri{a_1}{e}{\bigoplus_{i=1}^k x_i}$, as required.
\end{proof}

%=========================================================================
% Section
\section{The combinatorial model} \label{sec:comb}
%=========================================================================

Here we recall the combinatorial model for $\Tw$ from \cite{HJ} in the case $w\geq 2$ and its natural extension to $w\leq 0$ in terms of `arcs/diagonals of the $\infty$-gon'. Namely, we regard each pair of integers $(t,u)$ as an arc connecting the integers $t$ and $u$. 

For $w\in \bZ\setminus\{1\}$ set $d=w-1$. A pair of integers $(t,u)$ is called a \emph{$d$-admissible arc} if
\begin{compactenum}[(i)]
\item for $w \geq 2$, one has $u-t \geq w$ and $u-t \equiv 1 \modulo d$;
\item for $w=0$, a $(-1)$-admissible arc $(t,u)$ is one with $u-t \leq 0$; and
\item for $w \leq -1$, one has $u-t \leq w$ and $u-t \equiv 1 \modulo d$;
\end{compactenum}

\begin{notation}
Whenever the value of $w$ is not specified, the arc incident with the distinct integers $t$ and $u$ is denoted by $\{t,u\}$. In other words, assuming $t > u$, $\{t,u\} = (t,u)$ when $w \leq -1$ and $\{t,u\} = (u,t)$ when $w \geq 2$. 
\end{notation}

The \emph{length} of the arc $(t,u)$ is $|u-t|$.

When $d$ is clear from context we refer to $d$-admissible arcs simply as \emph{admissible arcs}. Figure~\ref{fig:AR-quiver2} shows how admissible arcs correspond to the indecomposable objects of $\Tw$ when $w \neq 1$. We note that the action of the suspension, AR translate and Serre functor in this model are given by
\[
\Sigma(t,u) = (t-1,u-1) \quad
\tau(t,u) = (t-d,u-d) \quad
\SSS(t,u) = (t-w,u-w).
\]

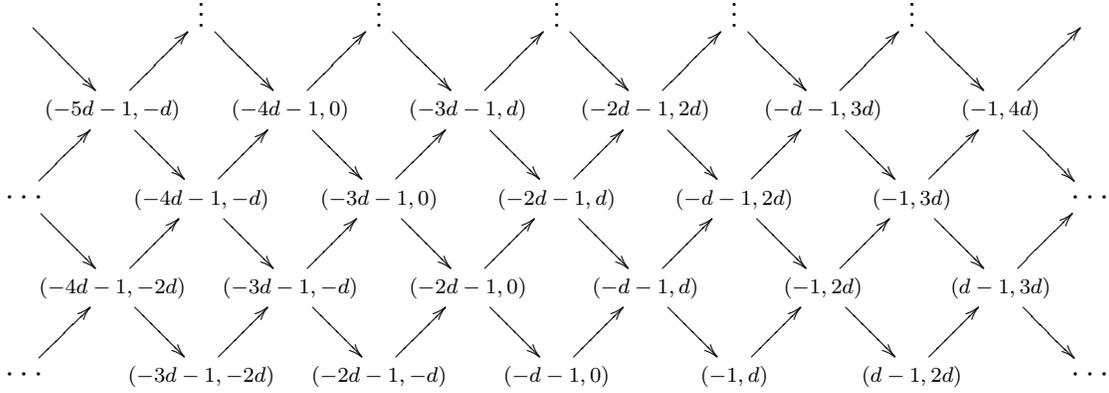
\begin{figure}
\centerline{
\xymatrix@!=0.8pc{
\ar[dr]    &                      & \vdots \ar[dr]           &                   & \vdots \ar[dr]                  &          & \vdots \ar[dr] &                      & \vdots \ar[dr]       &                        & \vdots \ar[dr]         &                      &  \\
                        & \smxy{(-5d-1,-d)} \arr &                        & \smxy{(-4d-1,0)} \arr &                      & \smxy{(-3d-1,d)} \arr &                 & \smxy{(-2d-1,2d)} \arr &                        & \smxy{(-d-1,3d)} \arr  &                         & \smxy{(-1,4d)} \arr &                  \\
\cdots \arr        &                      & \smxy{(-4d-1,-d)}  \arr         &                   & \smxy{(-3d-1,0)}  \arr            &          & \smxy{(-2d-1,d)} \arr &                      & \smxy{(-d-1,2d)}  \arr     &                        & \smxy{(-1,3d)}  \arr    &                      & \cdots \\
                        & \smxy{(-4d-1,-2d)} \arr &                        & \smxy{(-3d-1,-d)} \arr &                      & \smxy{(-2d-1,0)} \arr &                 & \smxy{(-d-1,d)} \arr &                        & \smxy{(-1,2d)} \arr  &                         & \smxy{(d-1,3d)} \arr &                  \\
\cdots \ar[ur] &                      & \smxy{(-3d-1,-2d)} \ar[ur] &                   & \smxy{(-2d-1,-d)} \ar[ur] &          & \smxy{(-d-1,0)} \ar[ur]     &                     & \smxy{(-1,d)} \ar[ur] &                       & \smxy{(d-1,2d)} \ar[ur] &                      & \cdots
}}
\caption{\label{fig:AR-quiver2} A component of type $\bZ \Ainf$ with the endpoints of the $d$-admissible arcs described.}
\end{figure}

Let $\tA$ be a collection of $d$-admissible arcs. Then an integer $t$ is called a \emph{left fountain} of $\tA$ if $\tA$ contains infinitely many arcs of the form $\{s,t\}$ with $s \leq t$. Dually, one defines a \emph{right fountain} of $\tA$; a \emph{fountain} of $\tA$ is both a left fountain and a right fountain.

By Proposition~\ref{prop:Iyama-Yoshino}, to obtain a characterisation of torsion pairs in $\Tw$ we need to characterise extension-closed contravariantly finite subcategories $\sX$ of $\Tw$. 
The computations of Sections~\ref{sec:extensions} and \ref{sec:non-indecouterterms} will be used to characterise the extension-closed subcategories. Below, we state a characterisation of contravariantly finite subcategories in $\Tw$.

%===============================================================================
%  SECTION
\section{Contravariant-finiteness} \label{sec:contra}
%===============================================================================

The characterisation of contravariantly finite subcategories in $\Tw$ is as follows:

\begin{proposition}\label{prop:contravariant-finiteness}
Suppose $w \in \bZ \setminus\{1\}$. A full subcategory $\sX$ of $\Tw$ is contravariantly finite if and only if, for each $i \in \bZ$, an infinitude of objects in $\sX \cap \rayfrom{\Sigma^i X_0}$ forces an infinitude of objects in $\sX \cap \corayto{\SSS \Sigma^i X_0}$. 
\end{proposition}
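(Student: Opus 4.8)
Throughout, the plan is to reduce to a statement about single right approximations and then to localise it to individual rays and corays of the Auslander--Reiten quiver. Since $\Tw$ is Hom-finite and Krull--Schmidt, $\sX$ is contravariantly finite if and only if every $T \in \ind{\Tw}$ admits a right $\sX$-approximation; so fix such a $T$. By Proposition~\ref{prop:hom-hammocks} together with Serre duality, the set of indecomposables admitting a nonzero map to $T$ is $\homto{T} \cup \SSS^{-1}\homfrom{T}$, the union of the finitely many corays making up $\homto{T}$ and the finitely many rays making up $\SSS^{-1}\homfrom{T}$ (which equals $\homfrom{\SSS^{-1}T}$ since $\SSS$ is an autoequivalence). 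On each of these rays and corays every object maps nonzero to $T$, so the existence of a right $\sX$-approximation of $T$ amounts to being able, separately on each of these finitely many rays and corays, to absorb the maps to $T$ from the $\sX$-objects lying on it by finitely many $\sX$-objects.

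For sufficiency I would treat each coray and each ray in turn. If a coray $\mathfrak{c} \subseteq \homto{T}$ carries infinitely many objects of $\sX$, then Proposition~\ref{prop:factoring}($i'$) (or Proposition~\ref{prop:factoring-T0} when $w=0$) shows that every map to $T$ from an $\sX$-object on $\mathfrak{c}\cap\homto{T}$ factors through the one closest to the mouth, so a single $\sX$-object suffices. If a ray $\mathfrak{r}$ of $\SSS^{-1}\homfrom{T}$ carries infinitely many objects of $\sX$, then $\mathfrak{r}$ is a subray of some $\rayfrom{\Sigma^j X_0}$, whence $\sX\cap\rayfrom{\Sigma^j X_0}$ is infinite; the hypothesis then makes $\sX\cap\corayto{\SSS\Sigma^j X_0}$ infinite, a Hom-hammock computation shows that (because $T$ sits ``not too far from the mouth'' relative to $\mathfrak{r}$) infinitely many of these coray objects map nonzero to $T$, and Proposition~\ref{prop:factoring}($ii'$) shows that one of them, far enough from the mouth, absorbs all the maps $\mathfrak{r}\cap\sX \to T$. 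The finitely many remaining sources are included directly, and the direct sum of all these finitely many $\sX$-objects, with all their maps to $T$, is a right $\sX$-approximation.

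For necessity I would argue contrapositively: suppose $\sX\cap\rayfrom{\Sigma^i X_0}$ is infinite while $\sX\cap\corayto{\SSS\Sigma^i X_0}$ is finite, let $N$ bound the positions of the latter on their coray, and let $T$ be the object of $\corayto{\SSS\Sigma^i X_0}$ one step beyond position $N$; then $\rayfrom{\Sigma^i X_0}$ is precisely the mouth-ray of $\SSS^{-1}\homfrom{T}$. Writing $Z_0, Z_1,\ldots$ for the objects of $\rayfrom{\Sigma^i X_0}$, Proposition~\ref{prop:factoring}($ii$) shows that for $n<n'$ the nonzero map $Z_n \to T$ factors through the sectional map $Z_n \to Z_{n'}$, so precomposition along the ray induces isomorphisms $\Hom_{\Tw}(Z_{n'},T) \isoto \Hom_{\Tw}(Z_n,T)$. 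If $g\colon X \to T$ were a right $\sX$-approximation with $X\in\add\sX$, then a pigeonhole argument yields an indecomposable summand $W$ of $X$ through which $Z_{n_0}\to T$ factors for some large $n_0$; since a coray meets a ray in at most one object this forces $\Hom_{\Tw}(Z_n,W)\neq 0$ for all $n$, and Proposition~\ref{prop:factoring}($ii$) together with the isomorphisms above then forces $Z_n\to T$ to factor through $W$ for every $n\geq n_0$. One then locates $W$: the condition $\rayfrom{\Sigma^i X_0}\subseteq\SSS^{-1}\homfrom{W}$ pins $W$ down to a short list of positions, and combining it with $\Hom_{\Tw}(W,T)\neq 0$ and the non-vanishing of the composite $Z_n\to W\to T$ forces $W$ onto $\corayto{\SSS\Sigma^i X_0}$ at a position beyond $N$, contradicting the choice of $T$. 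Hence $T$ admits no right $\sX$-approximation.

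I expect the main obstacle to be the last step of the necessity argument: ruling out ``spurious catchers'', that is, showing that an object $W$ \emph{not} lying on $\corayto{\SSS\Sigma^i X_0}$ --- even when $\Hom_{\Tw}(Z_n,W)\neq 0$ for all large $n$ and $\Hom_{\Tw}(W,T)\neq 0$ --- cannot absorb the maps $Z_n\to T$, i.e.\ every composite $Z_n\to W\to T$ vanishes. This is a statement about the \emph{non-}existence of factorisations, for which the factorisation properties of Section~\ref{sec:spherical} are the natural tool, but it requires a careful analysis of the relative positions of $W$ and $T$ in the Auslander--Reiten quiver, with the degenerate cases $w=0$ (two-dimensional endomorphism spaces, a single $\bZ\Ainf$ component) and $w=2$ ($d=1$, again a single component) needing separate treatment. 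The analogous bookkeeping in the sufficiency direction --- matching each subray of $\SSS^{-1}\homfrom{T}$ to the shifted coray that catches it --- is then routine.
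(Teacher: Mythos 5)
Your overall strategy — decomposing along the rays and corays of the Hom-hammock of $T$, reducing the construction of right approximations to absorbing the maps from each (co)ray, and, for necessity, combining a pigeonhole argument with a ``spurious catcher'' analysis — is exactly the approach the paper takes, following Ng and Holm--J\o rgensen. The sufficiency sketch is sound, and the necessity direction correctly reduces to showing that a catcher $W$ not lying on $\corayto{\SSS\Sigma^iX_0}$ cannot make the composite $Z_n\to W\to T$ nonzero.

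The gap is precisely the step you flag as ``the main obstacle''. The paper isolates it as a self-contained statement (Lemma~\ref{lem:backward}): the composition of two \emph{backward maps} is zero, where $f\colon a\to b$ between indecomposables is called backward when $b\in\homto{\SSS a}$. With this in hand, for large $n$ the map $Z_n\to W$ is backward (since $W\notin\homfrom{Z_n}$ eventually), and the assumption that $W$ lies off $\corayto{\SSS\Sigma^iX_0}$ forces $W\to T$ to be backward too, so the composite vanishes. You correctly guess that $w=0$ and $w=2$ need special treatment, but this observation does not close the gap: the lemma itself must still be proved. When $|d|>2$ the source and target of the composite lie in distinct AR components, so the Hom-space is trivially zero; when $|d|=2$ a direct hammock computation is needed; for $w=0$ one uses the extra factorisation statement of Proposition~\ref{prop:factoring-T0}; and for $w=2$ one needs a separate argument exploiting irreducibility of almost-split morphisms — indeed, the authors remark that exactly this point was an unexplained step in Ng's original $w=2$ proof. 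The lemma is also not a consequence of the factorisation results of Section~\ref{sec:spherical} alone, since those \emph{produce} factorisations, whereas here one must establish that a composite \emph{vanishes}. So the hole in your proposal is genuine, and filling it is the main technical content of the paper's proof of this proposition.
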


This can be neatly described in terms of fountains.

\begin{corollary}\label{cor:contravariant-finiteness}
Suppose $w \in \bZ \setminus\{1\}$. Let $\sX$ be a full subcategory of $\Tw$ and $\tX$ be the set of admissible arcs corresponding to $\ind{\sX}$. Then
\begin{compactenum}
\item If $w \geq 2$, $\sX$ is contravariantly finite in $\Tw$ if and only if every right fountain in $\tX$ is also a left fountain.
\item If $w \leq 0$, $\sX$ is contravariantly finite in $\Tw$ if and only if every left fountain in $\tX$ is also a right fountain.
\end{compactenum}
\end{corollary}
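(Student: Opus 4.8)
The plan is to deduce Corollary~\ref{cor:contravariant-finiteness} from Proposition~\ref{prop:contravariant-finiteness} by translating the ray/coray condition on $\sX$ into the combinatorial language of fountains via the dictionary of Figure~\ref{fig:AR-quiver2}. First I would recall that under the bijection between $\ind{\Tw}$ and $d$-admissible arcs, a ray $\rayfrom{\Sigma^i X_0}$ corresponds to the family of arcs sharing a fixed left endpoint (or right endpoint, depending on the sign of $w$), and dually a coray $\corayto{\SSS\Sigma^i X_0}$ corresponds to the family of arcs sharing a fixed endpoint on the other side; this is immediate from the formulas $\rayfrom{\Sigma^i X_j} = \{\Sigma^{i-nd}X_{j+n} \mid n \geq 0\}$ and the description of the endpoints in Figure~\ref{fig:AR-quiver2}, together with $\SSS(t,u) = (t-w,u-w)$. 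Thus ``$\sX \cap \rayfrom{\Sigma^i X_0}$ is infinite'' says precisely that $\tX$ has infinitely many arcs with a common endpoint of one type, i.e. a fountain on one side, and ``$\sX \cap \corayto{\SSS\Sigma^i X_0}$ is infinite'' says $\tX$ has a fountain on the other side at the endpoint shifted by $\SSS$.

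The key computational step is to check that the endpoint of $\rayfrom{\Sigma^i X_0}$ and the endpoint of $\corayto{\SSS\Sigma^i X_0}$ are in fact \emph{the same integer}, so that an infinitude on the ray forcing an infinitude on the coray translates exactly into ``this integer is a fountain'' (as opposed to a right fountain at one vertex forcing a left fountain at some unrelated vertex). I would verify this directly: using Figure~\ref{fig:AR-quiver2}, the arc $\Sigma^i X_0 = \Sigma^i(-d-1,0)$ has endpoints $\{-d-1-i, -i\}$; as $n$ increases, $\rayfrom{\Sigma^i X_0}$ keeps the endpoint $-d-1-i$ fixed (for $w \geq 2$; for $w \leq 0$ the roles of the two endpoints swap), so this is the ``apex'' of a potential right fountain. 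Similarly $\corayto{\SSS\Sigma^i X_0}$ keeps an endpoint fixed, and one computes using $\SSS(t,u)=(t-w,u-w)$ and the shape of corays that this fixed endpoint coincides with $-d-1-i$. Hence ``ray-infinitude forces coray-infinitude for every $i$'' is equivalent to ``every right fountain of $\tX$ is a left fountain'' when $w \geq 2$, and the sign reversal in the admissibility condition (arcs have $u - t \leq w$ rather than $\geq w$) swaps the roles of left and right, yielding statement (2) for $w \leq 0$.

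The main obstacle I anticipate is bookkeeping with signs and orientation conventions: the definition of $\{t,u\}$ versus $(t,u)$ depends on the sign of $w$, the notion of ``left fountain'' is defined via arcs $\{s,t\}$ with $s \leq t$, and the rays/corays are indexed so that the geometry of Figure~\ref{fig:AR-quiver2} flips between the $w \geq 2$ and $w \leq 0$ cases. Getting these consistent — in particular making sure that the ``$\SSS$-shift'' appearing in Proposition~\ref{prop:contravariant-finiteness} does not actually move the apex of the fountain — is where care is needed; everything else is a direct unravelling of definitions. I would handle the two cases $w \geq 2$ and $w \leq 0$ in parallel, noting that the $w \leq 0$ case is obtained from the $w \geq 2$ case by the symmetry that reverses the order on the $\infty$-gon, which interchanges left and right fountains and interchanges rays with corays.
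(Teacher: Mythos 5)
Your proposal is correct and takes essentially the same route as the paper's proof: both deduce the corollary from Proposition~\ref{prop:contravariant-finiteness} by observing that the arcs of $\rayfrom{\Sigma^i X_0}$ all share the endpoint $-d-1-i$ and the arcs of $\corayto{\SSS\Sigma^i X_0}$ share that same endpoint, so ray-infinitude versus coray-infinitude is exactly a right (resp.\ left) fountain forcing a left (resp.\ right) fountain at that integer when $w \geq 2$ (resp.\ $w \leq 0$). The key computation you flag as needing care --- that the $\SSS$-shift does not move the fountain apex --- indeed works out because $\SSS = \Sigma^w$ subtracts $w = d+1$ from both coordinates while passing from a ray (fixed first coordinate) to a coray (fixed second coordinate) moves the fixed endpoint by $-d-1$, and these cancel.
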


\begin{proof}
For $w \geq 2$, a right fountain at a given integer corresponds to having infinitely many objects from a ray, $\rayfrom{\Sigma^i X_0}$, in the subcategory $\sX$. The corresponding left fountain consists of infinitely many objects from $\corayto{\SSS \Sigma^i X_0}$. For $w \leq 0$ the situation is dual: a left fountain at a given integer corresponds to having infinitely many objects from a ray, $\rayfrom{\Sigma^i X_0}$ in $\sX$, and the corresponding right fountain consists of infinitely many objects from $\corayto{\SSS \Sigma^i X_0}$. The result then follows from Proposition \ref{prop:contravariant-finiteness}.
\end{proof}

When $w=2$ Propositon~\ref{prop:contravariant-finiteness} is \cite[Theorem 2.2]{Ng}; for $w > 2$ it is \cite[Proposition 2.4]{HJ}.
Note that the statements of \cite[Theorem 2.2]{Ng} and \cite[Proposition 2.4]{HJ} are in terms of fountains, however the authors prove what is stated in Proposition~\ref{prop:contravariant-finiteness}. The same arguments as in \cite{HJ} and \cite{Ng} also work, with minor modifications, for $w \leq 0$, so we refrain from giving full details. 

A crucial part of the arguments in \cite{HJ} and \cite{Ng} is 
showing that the composition of two so-called `backward maps' is zero.
The term `backward map' was first used in the proof of \cite[Theorem 2.2]{Ng} but not explicitly defined.

\begin{definition}
Assume $w\in \bZ\setminus \{1\}$. A \emph{backward map} in $\Tw$ is a non-zero morphism between indecomposable objects $f\colon a \to b$ with $b \in \homto{\SSS a}$. In $\sT_0$ we additionally require that $b$ is not isomorphic to $a$.  
\end{definition}

We require a new argument for the case $w=0$. While thinking about the case $w=0$, we noticed an unexplained step in the `backward maps' part of the proof in \cite[Theorem 2.2]{Ng}. For the convenience of the reader, we prove the required zero composition of `backward maps' in $\Tw$ for any $w \neq 1$.

\begin{lemma} \label{lem:backward}
Let $w \in \bZ\setminus \{1\}$. The composition of two backward maps in $\Tw$ is zero.
\end{lemma}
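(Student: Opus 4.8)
The plan is to take two consecutive backward maps $a \xrightarrow{f} b \xrightarrow{g} c$ with $b \in \homto{\SSS a}$ and $c \in \homto{\SSS b}$, and to show that the positions of $a$, $b$, $c$ in the AR quiver force one of the relevant Hom-spaces to vanish, or else force a factorisation that kills the composite. First I would recall from Proposition~\ref{prop:hom-hammocks} that $g \neq 0$ requires $c \in \homfrom{b} \cup \homto{\SSS b}$, and since $g$ is a backward map the first possibility is excluded (in the $w=0$ case one also uses the extra hypothesis $c \not\simeq b$ together with the fact that $b \in \homto{\SSS a}$ already pins $b$ off the relevant coray). Applying $\SSS$ to the containment $b \in \homto{\SSS a}$ gives $\SSS b \in \homto{\SSS^2 a}$, and combining this with $c \in \homto{\SSS b}$ places $c$ inside a nested backward Hom-hammock determined by $\SSS^2 a$. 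The key point is that this nesting puts $c$ outside $\homfrom{a} \cup \homto{\SSS a}$, so that $\hom_{\Tw}(a,c) = 0$ by Proposition~\ref{prop:hom-hammocks}, and hence $gf = 0$ automatically.

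To make this rigorous I would set up coordinates, writing $a = \Sigma^i X_j$ and using the explicit descriptions $\homto{a} = \corayto{\overline{L(a)a}}$ and $\SSS = \Sigma^w$; then the containments $b \in \homto{\SSS a}$, $c \in \homto{\SSS b}$ become concrete inequalities on the indices of $b$ and $c$, and $c \in \homfrom{a} \cup \homto{\SSS a}$ becomes another pair of inequalities which one checks are incompatible with the first. Since $\Sigma$ and $\tau$ act transitively on $\ind{\Tw}$, there is no loss in taking $a = X_r$ to reduce the bookkeeping. I expect the casework to split according to whether $b$ lies on the same $\bZ A_\infty$ component as $\SSS a$ or on another, but in every case the containment $c \in \homto{\SSS b}$ forces $c$ strictly further into the backward direction than $\homto{\SSS a}$ extends, which is exactly what disjointness from $\homfrom{a}$ needs.

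The main obstacle will be the boundary and degenerate configurations, rather than the generic nesting argument: when $b = \SSS a$ or $a$, $b$ lie on the mouth of a component, the hammocks $\homto{\SSS a}$ and $\homfrom{a}$ can abut, and one must check that even then $c$ does not fall into $\homfrom{a}$. The $w=0$ case needs separate care because the Hom-spaces there can be two-dimensional and $\homfrom{a} \cap \homto{\SSS a} = \{a\}$ is nonempty (Proposition~\ref{prop:hom-hammocks}(ii)); here the definition of backward map excludes $b \simeq a$ and $c \simeq b$, and I would additionally invoke Proposition~\ref{prop:factoring-T0} to handle any residual factorisation through $a$ itself, showing that the surviving component of $gf$ along $\id_a$-type maps is zero. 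Once these boundary cases are dispatched, the generic case is a short index computation.
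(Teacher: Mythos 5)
Your proposed ``generic'' argument --- that the containments $b \in \homto{\SSS a}$ and $c \in \homto{\SSS b}$ force $c$ outside $\homfrom{a} \cup \homto{\SSS a}$, so that $\Hom_{\Tw}(a,c) = 0$ and $gf = 0$ for free --- fails in the case $w=2$, which is the original setting of Ng's argument. Concretely, in $\sT_2$ take $a = X_2$, $b = \Sigma^2 X_3$, $c = \Sigma^4 X_3$: then $b \in \corayto{\Sigma^2 X_2} \subseteq \homto{\SSS a}$ and $c = \SSS b \in \homto{\SSS b}$, so $a \to b \to c$ is a pair of consecutive backward maps, yet $c = \Sigma^4 X_3 \in \corayto{\Sigma^4 X_0} \subseteq \homto{\SSS a}$, so $\hom_{\sT_2}(a,c) = 1 \neq 0$. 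Your vanishing argument does work when $|w-1| \geq 2$, because then the AR quiver has $|w-1|$ components, a backward map advances one component, and $c$ ends up two components past $a$ (the only subcase needing a small computation is $|w-1|=2$); this is exactly the paper's Case $w \neq 0,2$. But for $w=2$ there is a single component and the backward hammocks wrap around, so one must prove the particular composite is zero even though the target Hom-space is nonzero.

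The paper closes the $w=2$ case with a genuinely different, non-vanishing argument: assuming $gf \neq 0$, one factors $g$ through $a$ as $b \xrightarrow{h} a \xrightarrow{h'} c$ using Proposition~\ref{prop:factoring}, writes $h$ as a composite of irreducible maps $h = h_n \cdots h_1$, observes that $hf \in \Hom(a,a) = \kk\cdot\id_a$ must then be a nonzero multiple of the identity, and concludes that $h_n$ is a split epimorphism --- contradicting its irreducibility. Your plan correctly flags $w=0$ as needing a separate factorisation argument via Proposition~\ref{prop:factoring-T0}, but it overlooks that $w=2$ is equally exceptional and in fact requires this extra idea; as written the ``generic nesting'' step simply does not hold there.
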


\begin{proof}
We deal with three cases: $w = 0$, $w=2$ and $w \neq 0,2$. Suppose $f \colon a \to b$ and $g \colon b \to c$ are backward maps. We start with the easier general case.

\Case{$w \neq 0,2$} Denote the components of the AR quiver of $\Tw$ by $\sC_0,\ldots,\sC_{|d|-1}$, where, as usual, $d=w-1$. When $w \neq 0,2$ we have $|d| > 1$, i.e. the AR quiver has more than one component.
By applying a power of the suspension if necessary, we may assume that $a \in \sC_0$. Since $f$ and $g$ are backward maps, it follows that $b \in \sC_1$ and $c \in \sC_2$, where the subscript is interpreted modulo $|d|$. When $|d| > 2$ we have $\Hom_{\Tw}(a,c)=0$ since $\homfrom{a} \cup \homto{\SSS a} \subseteq \sC_0 \cup \sC_1$ and $c \in \sC_2$. If $|d| = 2$ a direct computation shows that $\homto{\SSS b} \cap \homfrom{a} = \emptyset$, whence $\Hom_{\Tw}(a,c)=0$. Thus backward maps compose to zero whenever $w \neq 0,2$.

\Case{$w=0$} By Proposition~\ref{prop:factoring-T0} the map $gf$ factors as $a \rightlabel{f} b \rightlabel{g_1} a \rightlabel{g_2} c$, where $g_1 f$ is a non-isomorphism and $g_2$ is unique up to scalars. Applying Proposition~\ref{prop:factoring-T0} again gives that $g_2$ factors through $R(a)$ as $g_2 \colon a \rightlabel{h} R(a) \rightlabel{h'} c$. Since $a$ is not isomorphic to $b$, we have $R(a) \notin \homfrom{b} \cup \homto{b}$, whence the composite $g_2 g_1 = 0$, giving $gf=0$, as claimed.

\Case{$w=2$} Assume for a contradiction that $gf \neq 0$. By Proposition~\ref{prop:factoring}, the map $g$ factors through $a$ as $g \colon b \rightlabel{h} a \rightlabel{h'} c$. 
By \cite[Proposition 2.1 (i)]{HJ}, the map $h$ is a composition of irreducible maps $h=h_n \cdots h_1$ say. Thus, the map $gf$ factors as $a \rightlabel{f} b \rightlabel{h} a \rightlabel{h'} c$ and we have a nonzero map $hf\colon a \to a$. By Proposition~\ref{prop:hom-hammocks}, $\hom_{\sT_2}(a,a) = 1$, whence $hf = \lambda \id_a$ with $\lambda \neq 0$. It follows that $\frac{1}{\lambda} (h_{n-1} \cdots h_1 f)$ is a right inverse for $h_n$, i.e. $h_n$ is a split epimorphism. This contradicts the irreducibility of $h_n$, whence the original composition $gf$ must have been zero.
\end{proof}

To show how the composition of two backward maps being zero is used in the proof of Proposition~\ref{prop:contravariant-finiteness}, we sketch this part of the argument for $w=0$.

\begin{proof}[Proof of Proposition~\ref{prop:contravariant-finiteness}]
Firstly, we show the forward implication for $w=0$. Suppose, without loss of generality, that there are infinitely many objects $s_i \in \rayfrom{X_0}$ occurring as objects in $\sX$. We want to show that this implies there are infinitely many objects $c \in \corayto{X_0}$ occurring in $\sX$, recalling that $\SSS X_0 = X_0$. The diagrams in \cite[Proof of Theorem 2.2]{Ng} may be useful to help understand our arguments.

Since $\sX$ is contravariantly finite, there is a right $\sX$-approximation $x \to c$, where $x = x_1 \oplus \cdots \oplus x_n$. We may assume that the map $x \to c$ is nonzero from each summand. Since $c \in \homto{\SSS s_i} = \homto{s_i}$ for each $i \in \bN$, the map $s_i \to c$ factors as $s_i \to x \to c$. In particular, there is a summand, $x_i$ say, of $x$ such that the map $s_i \rightlabel{f} x_i \rightlabel{g} c$ is nonzero. Now inspecting the Hom-hammocks shows that $x_i$ either lies on $\corayto{X_0}$ or lies in the region of the AR quiver of $\sT_0$ bounded by the following:
\[
\corayto{X_0}, \ \corayto{R(s_i)}, \ \text{and,} \ \rayfrom{L(c)}, \ \rayfrom{X_0}.
\]
If $x_i \in \corayto{X_0}$ there is nothing to show, so suppose that $x_i \notin \corayto{X_0}$.

Now by Lemma~\ref{lem:backward}, unless $x_i \simeq s_i$, we have $gf=0$; a contradiction. Now, since $x$ contains only finitely many indecomposable summands, only finitely many of the $s_i$ may occur as summands of $x$. Taking an $s_j$ that is not a summand of $x$ thus yields the required contradiction. This shows that each of the $x_i$ must lie on $\corayto{X_0}$ above $c$. Repeating this argument indefinitely for $c \in \corayto{X_0}$ further and further from the mouth then gives infinitely many objects of $\coray{X_0}$ in $\sX$, as claimed.

The reverse implication works in exactly the same way as in \cite[Theorem 2.2]{Ng}.
\end{proof}

%===============================================================================
% Section
\section{Torsion pairs and the Ptolemy condition in $\Tw$ for $w\neq 0,1$}\label{sec:Ptolemy}
%===============================================================================

Throughout this section $w \in \bZ \setminus\{0,1\}$. We will give a combinatorial description of the extension closure of a subcategory of $\Tw$ using the combinatorial model presented in Section \ref{sec:comb}. This description is in terms of Ptolemy diagrams of different classes, which include those defined in \cite{Ng}.

For $a \in \ind{\Tw}$ denote the corresponding admissible arc by $\arc{a}$. If $\arc{a}=(t,u)$ then the \emph{starting point} is $s(\arc{a})=t$ and the \emph{ending point} is $t(\arc{a})=u$. 

Recall that when $w \leq -1$, the first coordinate of an admissible arc is strictly bigger than the second coordinate, and when $w \geq 2$ the opposite holds. 

\begin{definitions}
Let $\arc{a} = \{t, u\}, \arc{b} = \{v, w\}$, with $t > u$ and $v > w$, be two admissible arcs of $\Tw$. 
\begin{compactenum}
\item The arcs $\arc{a}$ and $\arc{b}$ are said to \textit{cross} if either $u < w < t < v$ or $w < u < v < t$.
\item If $\arc{a}$ and $\arc{b}$ are crossing arcs, then the \textit{Ptolemy arcs of class I} associated to $\arc{a}$ and $\arc{b}$ are the remaining four arcs connecting the vertices incident with $\arc{a}$ or $\arc{b}$, i.e. the set of Ptolemy arcs is $\{\{x,y\} \mid x, y \in \{t, u, v, w\}, x \neq y, \{x,y\} \neq \arc{a}, \arc{b} \}$.
\item The \textit{distance} between $\arc{a}$ and $\arc{b}$ is defined as
\[
d(\arc{a},\arc{b}) \coloneqq \min\{|t - v|,|u - w|,|t - w|,|u-v|\}.
\]
\item The arcs $\arc{a}$ and $\arc{b}$ are \textit{neighbouring} if they do not cross and $d(\arc{a},\arc{b}) = 1$. 
\item If $\arc{a}$ and $\arc{b}$ are neighbouring arcs and $d(\arc{a},\arc{b})$ is given by the distance between vertices $x$ and $x-1$, then the corresponding \textit{Ptolemy arc of class II} is the arc connecting the vertices incident with $\arc{a}$ or $\arc{b}$ which are not $x$ and $x-1$.   
 \end{compactenum}
Figures \ref{fig:ptolemy-arcs} and \ref{fig:modified-ptolemy} illustrate these concepts. For brevity we shall sometimes refer to Ptolemy arcs of both classes simply as Ptolemy arcs.
\end{definitions}

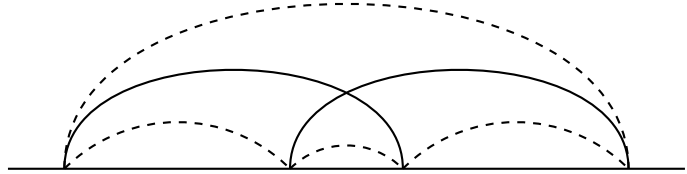
\begin{figure}[!ht]
\begin{center}
\begin{tikzpicture}[thick,scale=0.75, every node/.style={scale=0.75},out=90,in=90]

\draw (0,1) -- (12,1);

%arcs a and b:
\path (1,1) edge (7,1);
\path (5,1) edge (11,1);

%corresponding Ptolemy arcs:
\path[out=45,in=135,dashed] (1,1) edge (5,1);
\path[dashed] (1,1) edge (11,1);
\path[out=45,in=135,dashed] (5,1) edge (7,1);
\path[out=45,in=135,dashed] (7,1) edge (11,1);

\end{tikzpicture}
\end{center}
\caption{The Ptolemy arcs of class I.}
\label{fig:ptolemy-arcs}
\end{figure}

\begin{figure}[!ht]
\begin{center}
\begin{tikzpicture}[thick,scale=0.75, every node/.style={scale=0.75}]

\draw (3,4) -- (10,4);

%neighbours which are not innerarcs:
\path[out=85,in=95] (3.5,4) edge (6,4);
\path[out=85,in=95] (7,4) edge (9.5,4);

%the modified Ptolemy arc:
\path[out=90,in=90,dashed] (3.5,4) edge (9.5,4);

%distance 1:
\draw[<->] (6,3.8)-- (7,3.8);

\node (arrow) at (6.5,3.8) {};
\node [below] at (arrow.south) {1};

%neighbours which are innerarcs:
\draw (1,1) -- (6,1);

\draw (7,1) -- (12,1);

%neighbour to the left:
\path[out=85,in=95] (1.5,1) edge (5.5,1);
\path[out=85,in=95] (2.5,1) edge (4,1);
\path[out=45,in=135,dashed] (4,1) edge (5.5,1);

\draw[<->] (1.5,0.8)-- (2.5,0.8);
 %node(1)[above] {1}

\node (arrow1) at (2,0.8) {};
\node [below] at (arrow1.south) {1};

%neighbour to the right:
\path[out=85,in=95] (7.5,1) edge (11.5,1);
\path[out=85,in=95] (9,1) edge (10.5,1);
\path[out=45,in=135,dashed] (7.5,1) edge (9,1);

\draw[<->] (10.5,0.8)-- (11.5,0.8);
\node (arrow2) at (11,0.8) {};
\node [below] at (arrow2.south) {1};

\end{tikzpicture}
\end{center}
\caption{The Ptolemy arcs of class II.}
\label{fig:modified-ptolemy}
\end{figure}

Recall that for a full subcategory $\sX$ of $\Tw$, $\extn{\sX}$ denotes the smallest extension-closed subcategory of $\Tw$ containing $\sX$. In this section, we prove the following main result.

\begin{theorem}\label{thm:extensionclosurewneq01}
Let $\sX$ be a full additive subcategory of $\Tw$ and let $\tX$ be the arcs corresponding to the objects of $\ind{\sX}$. Then the objects of $\ind{\extn{\sX}}$ correspond to the arcs of
\begin{compactenum}
\item for $w \geq 2$, the closure of $\tX$ under admissible Ptolemy arcs of class I,
\item for $w \leq -1$, the closure of $\tX$ under admissible Ptolemy arcs of classes I and II. 
\end{compactenum}  
\end{theorem}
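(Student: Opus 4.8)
The plan is to prove the two inclusions separately. For the inclusion "$\extn{\sX}$ is contained in the Ptolemy closure of $\tX$", one shows that the set of arcs corresponding to $\ind{\extn{\sX}}$ is closed under taking admissible Ptolemy arcs (of the relevant classes), since it contains $\tX$ and the Ptolemy closure is by definition the smallest such set. For this it suffices to show that whenever $\arc{a},\arc{b} \in \tX'$ (where $\tX'$ is the arcs of $\ind{\extn{\sX}}$) cross or are neighbouring, every admissible Ptolemy arc of $\arc{a},\arc{b}$ already lies in $\tX'$. This is precisely where the computations of Section~\ref{sec:extensions} enter: by Proposition~\ref{prop:ext-hammocks}, two arcs cross if and only if the corresponding indecomposables $a,b$ admit a non-trivial extension in either direction, and by Theorem~\ref{thm:graphical-calculus} the middle term of that extension is exactly a direct sum of (at most two) indecomposables whose arcs are the Ptolemy arcs of class I. For $w\leq -1$ one checks additionally that neighbouring arcs give rise to an extension whose middle term realises the class II (modified) Ptolemy arc; I would identify which triangle from~\eqref{eq:ray-triangle} or~\eqref{eq:coray-triangle} this is (it should be a short ray/coray of length one) and read off the endpoints via the $\Sigma,\tau,\SSS$-action formulas $\Sigma(t,u)=(t-1,u-1)$, etc. Since $\extn{\sX}$ is extension-closed, these middle terms lie in $\extn{\sX}$, giving the inclusion.

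For the reverse inclusion "the Ptolemy closure is contained in $\extn{\sX}$", the content is that $\extn{\sX}$, translated into arcs, really is closed under Ptolemy operations — i.e. the arcs of $\ind{\extn{\sX}}$ form a Ptolemy-closed set. Equivalently, one must show that every indecomposable object of $\extn{\sX}$ can be obtained from $\ind{\sX}$ by iterated middle-term-of-extension operations that, at the level of arcs, are compositions of Ptolemy moves. The key reduction here is Theorem~\ref{thm:extensionsnonindecomposable}: any extension with decomposable outer terms is computed iteratively from extensions with indecomposable outer terms built from the original summands. So it is enough to understand a single extension $\tri{a}{e}{b}$ with $a,b$ indecomposable, and Theorem~\ref{thm:graphical-calculus} tells us exactly that $e = e_1 \oplus e_2$ with $\arc{e_1},\arc{e_2}$ among the Ptolemy arcs of $\arc{a},\arc{b}$ (class I when the arcs cross, i.e. $b\in\extfrom{a}\cup\extto{a}$). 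One must also verify that when $b \notin \extfrom{a}\cup\extto{a}$ the extension splits, so no new arcs appear; this is immediate from $\ext^1_{\Tw}(b,a)=0$ in that range (Proposition~\ref{prop:ext-hammocks}). For $w\leq-1$ the same analysis, together with the observation that the modified (class II) Ptolemy arcs arise exactly from the short ray/coray triangles of length one identified above, closes the argument.

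**Main obstacle.** The routine but genuinely delicate part is the bookkeeping in the $w \leq -1$ case: one must carefully match the combinatorics of crossing versus neighbouring arcs with the two families of triangles~\eqref{eq:ray-triangle} and~\eqref{eq:coray-triangle} and their associated Ext-hammocks $\extfrom{X_r},\extto{X_r}$, and confirm that \emph{every} admissible class I and class II Ptolemy arc is realised as a summand of some middle term (and that no \emph{inadmissible} arc ever appears, so the "admissible" qualifier is automatically respected — this follows since all objects live in $\Tw$, whose indecomposables correspond precisely to admissible arcs). Translating the endpoint formulas into the distance/crossing language of the \texttt{Definitions} block, keeping track of the convention $\{t,u\}=(t,u)$ for $w\leq-1$ versus $(u,t)$ for $w\geq2$, is where sign errors are most likely; I would organise this as a finite case analysis on the relative position of $b$ within $\extfrom{a}\cup\extto{a}$, mirroring the proof of Theorem~\ref{thm:graphical-calculus}. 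The induction closing the reverse inclusion — showing that an \emph{iterated} sequence of middle-term operations stays inside the Ptolemy closure — is then formal, using that the Ptolemy closure is by construction closed under the single-step operation.
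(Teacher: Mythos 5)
Your overall strategy matches the paper's: reduce to extensions with indecomposable outer terms via Theorem~\ref{thm:extensionsnonindecomposable}, then identify middle-term summands with admissible Ptolemy arcs via Theorem~\ref{thm:graphical-calculus}. However, the argument as written has a genuine gap, anchored on a false claim. You assert that ``two arcs cross if and only if the corresponding indecomposables $a,b$ admit a non-trivial extension in either direction,'' citing Proposition~\ref{prop:ext-hammocks}. This biconditional holds only when $w=2$ (where the $2$-Calabi--Yau property forces crossing $\Leftrightarrow$ extension). For $w\geq 3$ and for $w\leq -1$ it fails in both directions: by Propositions~\ref{Extnonzerowpositive} and \ref{cor:Extnonzerownegative}, $\Ext^1_{\Tw}(b,a)\neq 0$ requires not merely that $\arc{a},\arc{b}$ cross but that $\arc{b}$ meets a vertex in the restricted set $V_{\arc{a}}$ (an arithmetic progression modulo $d$), and for $w\leq -1$ extensions also arise from neighbouring arcs, which do not cross. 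Consequently there exist crossing admissible arcs $\arc{a},\arc{b}$ with $\Ext^1_{\Tw}(a,b)=\Ext^1_{\Tw}(b,a)=0$. Your proof of the inclusion ``Ptolemy closure $\subseteq$ arcs of $\ind{\extn{\sX}}$'' needs the arcs of $\ind{\extn{\sX}}$ to be Ptolemy-closed, and this only works if one \emph{proves} that whenever crossing arcs carry no extension, none of their class~I Ptolemy arcs is admissible; that nontrivial arithmetic fact is Lemma~\ref{lemma:extvanishnotadmissiblePtolemyarcs} in the paper, and without it the argument breaks.

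A second, related omission: even when $\Ext^1_{\Tw}(b,a)\neq 0$, the middle term of $\tri{a}{e}{b}$ produces at most two of the (up to) four class~I Ptolemy arcs. To conclude that \emph{all} admissible class~I Ptolemy arcs arise as middle-term summands, one must consider the extension $\tri{b}{f}{a}$ in the other direction --- which for $w\neq 2$ may vanish --- and verify that the two ``missing'' Ptolemy arcs are inadmissible precisely when the reverse extension vanishes. That is the content of Proposition~\ref{prop:crossingwithextensionsS}, and your remark that ``the admissible qualifier is automatically respected since all objects live in $\Tw$'' addresses the converse issue (middle terms are always admissible) but not this one (every admissible Ptolemy arc is a middle term). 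There is also a bookkeeping slip at the level of the write-up: showing that the arcs of $\ind{\extn{\sX}}$ form a Ptolemy-closed set containing $\tX$ gives $\text{Ptolemy closure}\subseteq \text{arcs of }\ind{\extn{\sX}}$, whereas the iterated-middle-term argument gives the opposite containment; your prose attaches each argument to the wrong inclusion in both paragraphs, so the two directions should be disentangled when you revise.
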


Putting this together with Propositions~\ref{prop:Iyama-Yoshino} and \ref{cor:contravariant-finiteness} gives us part of Theorem~\ref{thm:A}:

\begin{corollary}
Let $w\in \bZ\setminus\{0,1\}$, $\sX$ be a full additive subcategory of $\Tw$ and $\tX$ be the corresponding set of arcs. Then $(\sX,\sX^{\perp})$ is a torsion pair in $\Tw$ if and only if
\begin{compactenum}
\item for $w \geq 2$, any right fountain in $\tX$ is also a left fountain and $\tX$ is closed under taking admissible Ptolemy arcs of class I.
\item for $w \leq -1$, any left fountain in $\tX$ is also a right fountain and $\tX$ is closed under taking admissible Ptolemy arcs of classes I and II.
\end{compactenum}
\end{corollary}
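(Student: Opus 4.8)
The plan is to obtain this corollary as a formal consequence of three results already in place: the Iyama--Yoshino characterisation of torsion pairs (Proposition~\ref{prop:Iyama-Yoshino}), the combinatorial description of extension closures via Ptolemy arcs (Theorem~\ref{thm:extensionclosurewneq01}), and the fountain criterion for contravariant finiteness (Corollary~\ref{cor:contravariant-finiteness}). Since $\Tw$ is $\kk$-linear, Hom-finite and Krull--Schmidt over the algebraically closed field $\kk$ (Section~\ref{sec:spherical}), Proposition~\ref{prop:Iyama-Yoshino} applies: for a full additive subcategory $\sX$ closed under direct summands, $(\sX,\sX^{\perp})$ is a torsion pair if and only if $\sX$ is extension-closed and contravariantly finite in $\Tw$. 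The requirement $\sY=\sX^{\perp}$ appearing in part~(2) of that proposition is vacuous in our formulation, and any subcategory arising as the first term of a torsion pair is necessarily summand-closed, so there is no loss in assuming this.

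First I would reformulate extension-closedness combinatorially. By definition $\sX$ is extension-closed precisely when $\sX = \extn{\sX}$, and Theorem~\ref{thm:extensionclosurewneq01} identifies $\ind{\extn{\sX}}$ with the arc set obtained from $\tX$ by closing under admissible Ptolemy arcs of class~I (when $w \geq 2$) or of classes~I and~II (when $w \leq -1$). Because the correspondence between $\ind{\Tw}$ and admissible arcs in the combinatorial model of Section~\ref{sec:comb} is a bijection, $\sX = \extn{\sX}$ holds if and only if $\tX$ already coincides with its closure under the relevant family of admissible Ptolemy arcs.

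Next I would reformulate contravariant finiteness. Corollary~\ref{cor:contravariant-finiteness} states that, for $w \geq 2$, the subcategory $\sX$ is contravariantly finite in $\Tw$ exactly when every right fountain of $\tX$ is also a left fountain, and, for $w \leq -1$ (a subcase of the $w \leq 0$ statement there), exactly when every left fountain of $\tX$ is also a right fountain. Conjoining this with the previous paragraph and with the Iyama--Yoshino equivalence yields precisely the two numbered conditions in the statement, finishing the proof.

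There is essentially no genuine obstacle at this stage: all the substance sits in the three cited statements and the remainder is bookkeeping. The only points requiring a moment's care are checking that the hypotheses of Proposition~\ref{prop:Iyama-Yoshino} are met, noting that the condition $\sY=\sX^{\perp}$ is automatic in our formulation, and confirming that passing between the categorical notion of extension-closedness and the arc-theoretic Ptolemy-closure condition is faithful --- which it is, since the model is a bijection on indecomposable objects and both extension closure and the torsion pair axioms are detected on indecomposable summands in a Krull--Schmidt category.
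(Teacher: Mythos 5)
Your proof is correct and follows exactly the route the paper takes: the corollary is deduced by combining Proposition~\ref{prop:Iyama-Yoshino} (torsion pair $\Leftrightarrow$ extension-closed and contravariantly finite), Theorem~\ref{thm:extensionclosurewneq01} (extension closure $\Leftrightarrow$ Ptolemy closure of $\tX$), and Corollary~\ref{cor:contravariant-finiteness} (contravariant finiteness $\Leftrightarrow$ the fountain condition). The extra bookkeeping you flag --- verifying the Krull--Schmidt/Hom-finite hypotheses, noting $\sY=\sX^{\perp}$ is automatic, and faithfulness of the arc correspondence --- is sound and in the same spirit as the paper's brief treatment.
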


\subsection{Combinatorial description of the Ext-hammocks}
We first need to introduce some notation regarding \textit{partial fountains}, which is borrowed from \cite{CS13}. 

\begin{notation}
Let $v>t>u$ be integers such that $\{t,u\}$ and $\{t,v\}$ are $d$-admissible arcs. Define the \emph{partial right fountain at $t$ starting at $v$} and the \emph{partial left fountain at $t$ starting at $u$} by
\begin{align*}
\rfountain{t}{v} & \coloneqq \{\{t,x\} \ \text{$d$-admissible} \mid x \geq v\}; \\
\lfountain{t}{u} & \coloneqq \{\{t,y\} \ \text{$d$-admissible} \mid y \leq u\}.
\end{align*}
Let $V \subseteq \bZ$ such that $\{t,v\}$ is a $d$-admissible arc for each $v \in V$. Write
\[
\rfountain{V}{t} \coloneqq \bigcup_{v \in V} \rfountain{v}{t}
\ \text{and} \
\lfountain{V}{t} \coloneqq \bigcup_{v \in V} \lfountain{v}{t}.
\]
\end{notation}

Below is a description of the Ext-hammocks in terms of partial fountains. 

\begin{lemma}\label{partialfountainslemma}
Let $a, b \in \ind{\Tw}$, and $V_{\arc{a}} = \{s(\arc{a}) + id \mid i = 1, \ldots, k \}$, where $k \geq 1$ is such that $t(\arc{a}) - s(\arc{a}) = kd +1$.
\begin{compactenum}
\item If $w \geq 2$, then $\Ext^1_{\Tw}(b,a) \neq 0$ if and only if $\arc{b} \in \lfountain{V_{\arc{a}}}{s(\arc{a})-1} \cup \rfountain{V_{\arc{a}}}{t(\arc{a})+d}$. In particular, 
\[
b \in \extfrom{a} \iff \arc{b} \in \rfountain{V_{\arc{a}}}{t(\arc{a})+d}
\ \text{and} \
b \in \extto{a} \iff \arc{b} \in \lfountain{V_{\arc{a}}}{s(\arc{a})-1}.
\]
\item If $w \leq 0$, then $\Ext^1_{\Tw}(b,a) \ne 0$ if and only if $\arc{b} \in \rfountain{V_{\arc{a}}}{s(\arc{a})-1} \cup \lfountain{V_{\arc{a}}}{t(\arc{a})+d}$.
In particular,
\[
b \in \extfrom{a} \iff \arc{b} \in \lfountain{V_{\arc{a}}}{t(\arc{a})+d}
\ \text{and} \
b \in \extto{a} \iff \arc{b} \in \rfountain{V_{\arc{a}}}{s(\arc{a})-1}.
\]
\end{compactenum}
\end{lemma}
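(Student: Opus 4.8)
The plan is to translate the description of the Ext-hammocks $\extfrom{a}$ and $\extto{a}$ from the Auslander--Reiten quiver (Proposition~\ref{prop:ext-hammocks} together with the explicit formulas $\extfrom{X_r} = \bigcup_{i=0}^{r} \rayfrom{\Sigma^{-d} X_{r-i}}$ and $\extto{X_r} = \bigcup_{i=0}^r \corayto{\Sigma^{id+1} X_{r-i}}$) into the language of admissible arcs via the correspondence of Section~\ref{sec:comb}, using the coordinate description of the AR quiver in Figure~\ref{fig:AR-quiver2} and the formulas $\Sigma(t,u) = (t-1,u-1)$, $\tau(t,u) = (t-d,u-d)$, $\SSS(t,u) = (t-w,u-w)$. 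Since $\Sigma$, $\tau$, $\SSS$ act transitively on the AR quiver and these functors act by translations on arc coordinates, it suffices to prove the statement for $a = X_r$; the general case follows by applying a translation to all arcs simultaneously.

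First I would fix $a = X_r$, so that $\arc{a} = (s(\arc a), t(\arc a))$ with $t(\arc a) - s(\arc a) = rd+1$ — whence $k = r$ and $V_{\arc a} = \{s(\arc a) + id \mid i = 1,\ldots,r\}$ — reading off the precise coordinates of $X_r$ from Figure~\ref{fig:AR-quiver2}. The key computation is then to identify, for each $i$ with $0 \leq i \leq r$, the arc coordinates of the objects on the ray $\rayfrom{\Sigma^{-d} X_{r-i}}$ and on the coray $\corayto{\Sigma^{id+1} X_{r-i}}$. Using $\rayfrom{\Sigma^{j} X_\ell} = \{\Sigma^{j-nd} X_{\ell + n} \mid n \geq 0\}$ and the coordinate formula, one checks that the objects of $\rayfrom{\Sigma^{-d} X_{r-i}}$ are precisely the arcs $\{s(\arc a) + id, x\}$ ranging over admissible $x$ with $x \geq t(\arc a) + d$ (for $w \geq 2$) — i.e. exactly $\rfountain{s(\arc a)+id}{t(\arc a)+d}$ — and dually the objects of $\corayto{\Sigma^{id+1} X_{r-i}}$ are the admissible arcs $\{s(\arc a)+id, y\}$ with $y \leq s(\arc a) - 1$, i.e. $\lfountain{s(\arc a)+id}{s(\arc a)-1}$. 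Taking the union over $i = 0, \ldots, r$, and noting that the $i=0$ term shares the endpoint $s(\arc a)$ which is already covered (or handled by the precise bookkeeping of which endpoints of $\arc a$ itself are included), yields $\extfrom{X_r}$ as $\rfountain{V_{\arc a}}{t(\arc a)+d}$ and $\extto{X_r}$ as $\lfountain{V_{\arc a}}{s(\arc a)-1}$. The equivalence $\Ext^1(b,a) \neq 0 \iff \arc b \in \extfrom{a} \cup \extto{a}$ is then immediate from Proposition~\ref{prop:ext-hammocks} (for $w \neq 0$; for $w=0$, $d = -1$ and $\extfrom{X_r} \cap \extto{X_r} = \{\Sigma X_r\}$ by Lemma~\ref{lem:w=0twoextensions}, but the arc description of the union is unaffected). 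For $w \leq 0$ the roles of $\extfrom{}$ and $\extto{}$ and of left/right fountains swap, because $d = w - 1 < 0$ reverses the inequalities on endpoints; the same computation goes through with signs reversed, which is why in part~(2) the identification reads $\extfrom{a} \leftrightarrow \lfountain{}{}$ and $\extto{a} \leftrightarrow \rfountain{}{}$.

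The main obstacle I anticipate is purely bookkeeping: keeping straight the sign conventions and the distinction between $\{t,u\}$ (unordered) and $(t,u)$ (ordered with $t>u$ when $w \leq -1$, $t<u$ when $w \geq 2$), and making sure the endpoints of $\arc a$ itself and the boundary cases $i=0$ and $i=r$ (where a ray or coray degenerates, or where $\Sigma^{-d}X_{-i}$ would be ill-defined) are handled correctly so that no arc is double-counted or omitted. There is no conceptual difficulty — the statement is a direct dictionary translation — but the indexing must be verified carefully against Figure~\ref{fig:AR-quiver2}, and I would present the $w \geq 2$ case in detail and then indicate that the $w \leq 0$ case is obtained by the evident sign changes.
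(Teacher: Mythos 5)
Your approach is correct in spirit but takes a genuinely different route from the paper's. Where you propose a direct computation translating the AR-quiver description of the Ext-hammocks (Proposition~\ref{prop:ext-hammocks} and the explicit ray/coray formulas for $\extfrom{X_r}$ and $\extto{X_r}$) into arc coordinates via Figure~\ref{fig:AR-quiver2} and the formulas for $\Sigma$, $\tau$, $\SSS$, the paper's proof is a one-line citation: it refers to \cite[Remark 2.2]{CS13} for the case $w\leq -1$, then notes that $w=0$ is identical since the admissible-arc coordinates obey the same ordering, and that $w\geq 2$ is obtained by swapping $\rfountain{-}{-}$ and $\lfountain{-}{-}$ because the coordinate ordering is reversed. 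Your version is more self-contained (it removes the dependence on the external reference) at the cost of length, and the underlying content is the same, since \cite[Remark 2.2]{CS13} is itself established by exactly the dictionary you describe.

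One concrete bookkeeping caveat, of precisely the kind you flagged as your main risk: with the normalisation of Figure~\ref{fig:AR-quiver2}, the arc of $X_r$ satisfies $t(\arc a) - s(\arc a) = (r+1)d + 1$, so $k = r+1$, not $k=r$ as written. This is forced by the count: $\extfrom{X_r} = \bigcup_{i=0}^{r}\rayfrom{\Sigma^{-d}X_{r-i}}$ is a union of $r+1$ rays, and these must biject with the $k$ vertices of $V_{\arc a}$. If you leave $k=r$, the identification of the ray/coray labels with the elements $s(\arc a)+id$ of $V_{\arc a}$ will be off by one throughout; with $k=r+1$ it matches (for instance, $X_0$ on the mouth has $k=1$, one element in $V_{\arc a}$, and a single ray in its forward Ext-hammock).
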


\begin{proof}
The case when $w \leq -1$ is \cite[Remark 2.2]{CS13}. In the case when $w = 0$, the first coordinate of an admissible arc is greater than or equal to the second coordinate, and so the result is the same as in $w \leq -1$. When $w \geq 2$, the admissible arcs are ordered from left to right, and so we swap $\rfountain{-}{-}$ and $\lfountain{-}{-}$.
\end{proof}

The following two propositions are direct consequences of the previous lemma.

\begin{proposition}\label{Extnonzerowpositive}
Let $V_{\arc{a}}$ be as in Lemma~\ref{partialfountainslemma}. If $w \geq 2$, and $a, b \in \ind{\Tw}$, then $\Ext^1_{\Tw} (b,a) \ne 0$ if and only if $\arc{a}$ and $\arc{b}$ cross and $\arc{b}$ is incident with a vertex in $V_{\arc{a}}$.
\end{proposition}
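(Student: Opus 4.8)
The plan is to translate the condition $\Ext^1_{\Tw}(b,a) \neq 0$, which by Lemma~\ref{partialfountainslemma}(1) is equivalent to $\arc{b} \in \lfountain{V_{\arc{a}}}{s(\arc{a})-1} \cup \rfountain{V_{\arc{a}}}{t(\arc{a})+d}$, directly into the crossing/incidence condition on arcs. Write $\arc{a} = (s,t)$ with $s < t$ (recall $w \geq 2$, so starting point is smaller) and $t - s = kd+1$, so $V_{\arc{a}} = \{s+d, s+2d, \ldots, s+kd\}$ is precisely the set of interior lattice points of $\arc{a}$ that are $\equiv s+1 \pmod d$, equivalently the admissible ``break points'' of $\arc{a}$. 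First I would unpack what membership in the two partial fountains means: $\arc{b} \in \rfountain{V_{\arc{a}}}{t+d}$ means $\arc{b} = \{v', x\}$ for some $v' \in V_{\arc{a}}$ and $x \geq t+d$, i.e. $\arc{b}$ has one endpoint $v'$ strictly inside $\arc{a}$ (with $s < v' < t$) and the other endpoint $x$ strictly outside and to the right ($x > t$, using admissibility to get $x \geq t+d$); similarly $\arc{b} \in \lfountain{V_{\arc{a}}}{s-1}$ means $\arc{b} = \{y, v'\}$ with $v' \in V_{\arc{a}}$ interior to $\arc{a}$ and $y < s$ exterior on the left.

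The key step is to observe that each of these two configurations is exactly a crossing of $\arc{a}$ and $\arc{b}$ in which $\arc{b}$ is incident with a vertex of $V_{\arc{a}}$. Indeed, recall the definition of crossing: $\{s,t\}$ and $\{v,w\}$ (with $s<t$, $w<v$, matching the convention) cross iff $s < w < t < v$ or $w < s < v < t$ — i.e. exactly one endpoint of $\arc{b}$ lies strictly between $s$ and $t$ and the other lies strictly outside $[s,t]$. So the ``right'' partial-fountain case corresponds to the crossing with the exterior endpoint on the right, and the ``left'' partial-fountain case to the crossing with the exterior endpoint on the left. Conversely, if $\arc{a}$ and $\arc{b}$ cross and $\arc{b}$ is incident with some $v' \in V_{\arc{a}}$, then $v'$ is necessarily the interior endpoint of $\arc{b}$ (the vertices of $V_{\arc{a}}$ all lie strictly inside $\arc{a}$), and the other endpoint lies strictly outside, so $\arc{b}$ lands in one of the two partial fountains. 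The only subtlety to verify is that the interior endpoint of a crossing admissible arc $\arc{b}$ is forced to lie in $V_{\arc{a}}$ rather than merely strictly between $s$ and $t$: this is where the congruence condition from admissibility does the work. If $\arc{b} = \{v', x\}$ is $d$-admissible then $v' \equiv x + 1 \pmod d$ and $x \equiv v' + 1$... more precisely, both endpoints of any admissible arc lie in a fixed residue class structure, and one checks that an endpoint strictly between $s$ and $t$ that is the ``inner'' vertex of a crossing admissible arc must be one of $s+d, \ldots, s+kd$. I would make this precise by noting that admissible arcs have endpoints differing by $\equiv 1 \pmod d$ and using the explicit description in Figure~\ref{fig:AR-quiver2}.

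The main obstacle I anticipate is precisely this last bookkeeping: carefully matching the inequalities defining $\rfountain{V_{\arc{a}}}{t(\arc{a})+d}$ and $\lfountain{V_{\arc{a}}}{s(\arc{a})-1}$ against the two disjunction cases in the definition of ``cross'', while simultaneously checking that the admissibility congruences force the interior endpoint to land exactly on $V_{\arc{a}}$ and not on some other interior integer. This is genuinely a finite case-check but it is easy to get the orientation conventions ($w \geq 2$ versus $w \leq -1$, which endpoint is larger) tangled; I would fix the convention $\arc{a} = (s,t)$ with $s < t$ at the outset and keep it throughout. Once the dictionary is set up, the equivalence is immediate from Lemma~\ref{partialfountainslemma}(1), so the proof should be short:

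\begin{proof}
Write $\arc{a} = (s,t)$ with $s < t$, so that $s(\arc{a}) = s$, $t(\arc{a}) = t$, and $t - s = kd+1$ with $V_{\arc{a}} = \{s+id \mid 1 \leq i \leq k\}$. Note that every element of $V_{\arc{a}}$ satisfies $s < s+id < t$.

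Suppose first that $\Ext^1_{\Tw}(b,a) \neq 0$. By Lemma~\ref{partialfountainslemma}(1), $\arc{b} \in \lfountain{V_{\arc{a}}}{s-1} \cup \rfountain{V_{\arc{a}}}{t+d}$. If $\arc{b} \in \rfountain{V_{\arc{a}}}{t+d}$, then $\arc{b} = \{v', x\}$ for some $v' \in V_{\arc{a}}$ and some $x$ with $x \geq t+d > t$. Since $s < v' < t < x$, the arcs $\arc{a}$ and $\arc{b}$ cross, and $\arc{b}$ is incident with $v' \in V_{\arc{a}}$. Similarly, if $\arc{b} \in \lfountain{V_{\arc{a}}}{s-1}$, then $\arc{b} = \{y, v'\}$ with $v' \in V_{\arc{a}}$ and $y \leq s-1 < s$, so that $y < s < v' < t$, again exhibiting a crossing of $\arc{a}$ and $\arc{b}$ with $\arc{b}$ incident with $v' \in V_{\arc{a}}$.

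Conversely, suppose $\arc{a}$ and $\arc{b}$ cross and $\arc{b}$ is incident with some $v' \in V_{\arc{a}}$. Write $\arc{b} = \{v', z\}$ with $z \neq v'$. Since $s < v' < t$, the definition of crossing forces the other endpoint $z$ to lie strictly outside the closed interval $[s,t]$, so either $z > t$ or $z < s$. As $\arc{b}$ is $d$-admissible, $z - v' \equiv 1 \pmod d$ (or $v' - z \equiv 1 \pmod d$, according to which is larger); in either case admissibility gives $z \geq t+d$ in the first case and $z \leq s-1$ in the second. Hence $\arc{b} \in \rfountain{V_{\arc{a}}}{t+d}$ or $\arc{b} \in \lfountain{V_{\arc{a}}}{s-1}$, and Lemma~\ref{partialfountainslemma}(1) yields $\Ext^1_{\Tw}(b,a) \neq 0$.
\end{proof}
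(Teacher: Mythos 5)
Your proof is correct and takes exactly the approach the paper intends: the paper simply asserts that this proposition (and its negative-$w$ companion) are ``direct consequences'' of Lemma~\ref{partialfountainslemma}, and your argument is a careful unpacking of that claim. You correctly identify the one non-trivial piece of bookkeeping — that when the exterior endpoint $z$ of $\arc{b}$ satisfies $z > t(\arc{a})$, admissibility forces $z \geq t(\arc{a})+d$ (since $z = s(\arc{a}) + (i+j)d + 1$ and $z > t(\arc{a}) = s(\arc{a}) + kd + 1$ gives $i+j \geq k+1$), and similarly the $z < s(\arc{a})$ case trivially gives $z \leq s(\arc{a})-1$ — which is precisely what is needed to land in the two partial fountains.
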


\begin{proposition}\label{cor:Extnonzerownegative}
Let $V_{\arc{a}}$ be as in Lemma \ref{partialfountainslemma}. If $w \leq -1$, and $a, b \in \ind{\Tw}$, then $\Ext^1_{\Tw} (b,a) \ne 0$ if and only if we are in one of the following situations:
\begin{compactenum}
\item $\arc{a}$ and $\arc{b}$ cross and $\arc{b}$ is incident with a vertex in $V_{\arc{a}} \setminus \{t(\arc{a})-1\}$;
\item $\arc{a}$ and $\arc{b}$ are neighbouring arcs such that $\arc{b}$ is incident with $s(\arc{a})-1$ or $t(\arc{a})-1$;
\item $\arc{b} = (s(\arc{a})-1, t(\arc{a})-1)$, i.e. $b = \Sigma a$. 
\end{compactenum}
\end{proposition}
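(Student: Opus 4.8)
The plan is to deduce the proposition directly from Lemma~\ref{partialfountainslemma}(2), which for $w\leq -1$ re-expresses the condition $\Ext^1_{\Tw}(b,a)\neq 0$ as $\arc{b}\in\rfountain{V_{\arc{a}}}{s(\arc{a})-1}\cup\lfountain{V_{\arc{a}}}{t(\arc{a})+d}$. What then remains is the purely combinatorial task of showing that this union of partial fountains is precisely the union of the three families of arcs described in (1)--(3). Unlike the $w\geq 2$ situation of Proposition~\ref{Extnonzerowpositive}, the set $V_{\arc{a}}$ now contains a vertex lying just outside $\arc{a}$, and this is what gives rise to the extra families (2) and (3).

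First I would record two elementary facts. Writing $d=w-1\leq -2$ and recalling that $t(\arc{a})-s(\arc{a})=kd+1$ with $k\geq 1$, we have $V_{\arc{a}}=\{s(\arc{a})+id\mid 1\leq i\leq k\}$ with $s(\arc{a})+kd=t(\arc{a})-1$, so since $d<0$ the elements of $V_{\arc{a}}$ run downwards from $s(\arc{a})+d$ to $t(\arc{a})-1$. Hence $V_{\arc{a}}$ consists exactly of the vertices strictly between $t(\arc{a})$ and $s(\arc{a})$ (the \emph{internal} vertices of $\arc{a}$) together with the single external vertex $t(\arc{a})-1$; in particular $V_{\arc{a}}\setminus\{t(\arc{a})-1\}$ is the set of internal vertices, and this set is empty precisely when $k=1$. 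Secondly, every $v\in V_{\arc{a}}$ satisfies $v\equiv s(\arc{a})\pmod{|d|}$, so any admissible arc $\{v,x\}$ with $x>v$ has $x\equiv s(\arc{a})-1\pmod{|d|}$ and any admissible arc $\{v,y\}$ with $y<v$ has $y\equiv t(\arc{a})\pmod{|d|}$; consequently, if $x\geq s(\arc{a})-1$ then $x=s(\arc{a})-1$ or $x\geq s(\arc{a})+|d|-1$, and if $y<t(\arc{a})$ then $y\leq t(\arc{a})+d$. Informally, such an arc ``jumps over'' the relevant endpoint of $\arc{a}$.

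Next I would run the case analysis on the admissible arcs $\{v,x\}\in\rfountain{V_{\arc{a}}}{s(\arc{a})-1}$, that is, those with $v\in V_{\arc{a}}$ and $x\geq s(\arc{a})-1$, using the dichotomy above for $x$. There are four cases: (a) $v=t(\arc{a})-1$, $x=s(\arc{a})-1$, giving $\arc{b}=(s(\arc{a})-1,t(\arc{a})-1)=\Sigma\arc{a}$, which is (3); (b) $v=t(\arc{a})-1$, $x>s(\arc{a})$, giving an arc whose two endpoints lie strictly outside $\arc{a}$, hence non-crossing, at distance $1$ from $\arc{a}$ (witnessed by $|t(\arc{a})-(t(\arc{a})-1)|=1$) and incident with $t(\arc{a})-1$, which is (2); (c) $v$ internal, $x=s(\arc{a})-1$, giving an arc nested strictly inside $\arc{a}$, non-crossing, at distance $1$ and incident with $s(\arc{a})-1$, again (2); (d) $v$ internal, $x>s(\arc{a})$, so $t(\arc{a})<v<s(\arc{a})<x$, giving a crossing arc incident with the internal vertex $v\in V_{\arc{a}}\setminus\{t(\arc{a})-1\}$, which is (1). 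A similar but shorter analysis of $\lfountain{V_{\arc{a}}}{t(\arc{a})+d}$ (where the congruence leaves no boundary value, so only two cases arise) yields: for $v=t(\arc{a})-1$ an arc lying entirely below $\arc{a}$, non-crossing, at distance $1$, incident with $t(\arc{a})-1$, case (2); and for $v$ internal a crossing arc incident with $v\in V_{\arc{a}}\setminus\{t(\arc{a})-1\}$, case (1). For the reverse inclusion I would check that each arc in (1)--(3) lies in one of the two partial fountains: $\Sigma\arc{a}$ was produced in (a); a crossing arc incident with an internal vertex $v$ has its other endpoint $z$ strictly outside $\arc{a}$, and the congruences force $z\geq s(\arc{a})-1$ when $z$ is above $\arc{a}$ and $z\leq t(\arc{a})+d$ when it is below, so the arc lies in $\rfountain{v}{s(\arc{a})-1}$ or $\lfountain{v}{t(\arc{a})+d}$; a neighbouring arc incident with $s(\arc{a})-1$ must, being non-crossing, have its other endpoint an internal vertex, hence in $V_{\arc{a}}\setminus\{t(\arc{a})-1\}$; and a neighbouring arc incident with $t(\arc{a})-1$ has its other endpoint either above $s(\arc{a})$ or at most $t(\arc{a})+d$, again placing it in the appropriate partial fountain.

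The only real work is the bookkeeping of the crossing and distance claims in degenerate configurations, and I expect this to be the main (if modest) obstacle. The sharpest case is when $\arc{a}$ has length $1$, which forces $w=-1$: then $s(\arc{a})-1=t(\arc{a})$, so $V_{\arc{a}}=\{t(\arc{a})-1\}$, case (1) is vacuous, the nested arcs of case (c) do not occur, and the arc $(s(\arc{a})-1,t(\arc{a})-1)$ of case (a) still equals $\Sigma\arc{a}$, now sharing a vertex with $\arc{a}$. One should also observe why $t(\arc{a})-1$ is excluded from family (1): a crossing arc incident with $t(\arc{a})-1$ other than $\Sigma\arc{a}$ has its other endpoint $\equiv s(\arc{a})-1\pmod{|d|}$ and strictly inside $\arc{a}$, so it reaches neither partial fountain and indeed has vanishing Ext. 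None of these checks is hard individually; the care lies in tracking which of the two partial fountains contributes each family.
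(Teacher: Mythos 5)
Your proposal is correct and follows exactly the route the paper has in mind: the paper states that this proposition is ``a direct consequence'' of Lemma~\ref{partialfountainslemma} and leaves the combinatorial unwinding to the reader, which is precisely what you carry out. The two congruence observations (that any admissible arc emanating from $v\in V_{\arc{a}}$ has its other endpoint $\equiv s(\arc{a})-1$ or $\equiv t(\arc{a})$ mod $|d|$ depending on direction) together with the partition of $V_{\arc{a}}$ into its internal elements and the lone external vertex $t(\arc{a})-1$ are exactly the right two facts, and your case analysis of $\rfountain{V_{\arc{a}}}{s(\arc{a})-1}\cup\lfountain{V_{\arc{a}}}{t(\arc{a})+d}$ into families (1)--(3) is complete in both directions. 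The distance-$1$ verifications in cases (b) and (c) and the final observation explaining why $t(\arc{a})-1$ must be excluded from family (1) are the points one has to be most careful about, and you handle them correctly, as well as the degenerate $w=-1$, length-$1$ case. One small phrasing caveat: $V_{\arc{a}}\setminus\{t(\arc{a})-1\}$ is not the set of \emph{all} vertices strictly between $t(\arc{a})$ and $s(\arc{a})$ but only those congruent to $s(\arc{a})$ modulo $|d|$; this is clearly what you mean (and what you use), but it would be worth saying explicitly so the reader does not misread ``internal vertices.''
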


\subsection{The middle terms of extensions correspond to admissible Ptolemy arcs}

We now define some arcs associated with $a, b \in \ind{\Tw}$ for which $\Ext^1_{\Tw}(b,a) \neq 0$. A priori these arcs need not be admissible, but when they are, they will correspond to the indecomposable summands of the middle term of the extension.

\begin{definition} \label{def:es}
Let $a, b \in \ind{\Tw}$ and suppose $\Ext^1_{\Tw} (b,a) \neq 0$.
\begin{compactenum}
\item If $b \in \extfrom{a}$ then $\arc{e_1} := (s(\arc{a}),t(\arc{b}))$ and $\arc{e_2} := (s(\arc{b}),t(\arc{a}))$.
\item If $b \in \extto{a}$ then $\arc{e_1} := (s(\arc{b}),s(\arc{a}))$ and $\arc{e_2} := (t(\arc{b}),t(\arc{a}))$.
\end{compactenum}
\end{definition}

\begin{proposition}\label{prop:middletermsarcs}
Let $a, b \in \ind{\Tw}$ be such that $\Ext^1_{\Tw} (b,a) \neq 0$, and $e_1$ and $e_2$ be as in Theorem \ref{thm:graphical-calculus}.
\begin{compactenum}
\item Suppose $b \in \extfrom{a}$. The arc $\arc{e_1}$ is always admissible. The arc $\arc{e_2}$ is admissible if and only if $s(\arc{b}) \neq t(\arc{a})-1$.
\item Suppose $b \in \extto{a}$. 
For $w \geq 2$, the arc $\arc{e_1}$ is admissible if and only if $s(\arc{b}) < s(\arc{a})-1$.
For $w \leq -1$, $\arc{e_1}$ is admissible if and only if $s(\arc{b}) > s(\arc{a})$. 
For $w \in \bZ\setminus\{0,1\}$, the arc $\arc{e_2}$ is admissible if and only if $t(\arc{b}) \ne t(\arc{a})-1$.  
\end{compactenum}
Moreover, $e_i$ is nonzero if and only if $\arc{e_i}$, which is the corresponding arc, is admissible. 
\end{proposition}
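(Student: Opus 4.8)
The plan is to deduce the statement by transporting the explicit description of the middle terms in Theorem~\ref{thm:graphical-calculus} through the combinatorial model of Section~\ref{sec:comb}, and then performing a direct, if fiddly, check of the admissibility conditions. The point is that the arcs $\arc{e_1}$ and $\arc{e_2}$ of Definition~\ref{def:es} are literally the endpoint pairs cut out by the ray--coray intersections in Theorem~\ref{thm:graphical-calculus}, so that everything reduces to arithmetic.

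First I would record the combinatorial meaning of the rays and corays appearing in Theorem~\ref{thm:graphical-calculus}. Using the labelling of the AR quiver in Figure~\ref{fig:AR-quiver2} together with $\Sigma(t,u)=(t-1,u-1)$ and $\SSS(t,u)=(t-w,u-w)$, one checks that $\rayfrom{x}$ and $\rayto{x}$ consist precisely of the admissible arcs with starting point $s(\arc{x})$ lying, respectively, further from or nearer to the mouth than $\arc{x}$, so that $\rayfrom{x}\cup\rayto{x}$ is the set of \emph{all} admissible arcs with starting point $s(\arc{x})$; dually $\corayto{x}\cup\corayfrom{x}$ is the set of all admissible arcs with ending point $t(\arc{x})$. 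I would also identify the relevant mouth objects: the arc of $L(\SSS a)$ has ending point $s(\arc{a})$ and the arc of $R(\SSS^{-1}b)$ has starting point $t(\arc{b})$. Substituting this into Theorem~\ref{thm:graphical-calculus} shows, in each of its two cases, that the intersection computing $e_i$ is the set of admissible arcs with a prescribed starting point and a prescribed ending point; since an arc is determined by its pair of endpoints, this set is $\{\arc{e_i}\}$ (with $\arc{e_i}$ as in Definition~\ref{def:es}) when $\arc{e_i}$ is admissible and lies on the correct side of the ray and coray, and is empty otherwise. As $e_i$ is indecomposable or zero, this already yields the final sentence of the proposition, once the ``correct side'' proviso is verified; that verification is subsumed in the length comparisons of the last step.

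Next I would pin down the position of $\arc{b}$ relative to $\arc{a}$. By Lemma~\ref{partialfountainslemma} (equivalently Propositions~\ref{Extnonzerowpositive} and \ref{cor:Extnonzerownegative}), whenever $b\in\extfrom{a}\cup\extto{a}$ exactly one endpoint of $\arc{b}$ lies in $V_{\arc{a}}=\{s(\arc{a})+id\mid 1\le i\le k\}$, where $t(\arc{a})-s(\arc{a})=kd+1$; writing that endpoint as $s(\arc{a})+jd$ with $1\le j\le k$, the congruence $t(\arc{b})-s(\arc{b})\equiv 1\pmod d$ forces the other endpoint to be $\equiv s(\arc{a})\pm1\pmod d$, and the partial-fountain description fixes the relevant inequalities among the four endpoints. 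This also absorbs the degenerate case $b=\Sigma a$, which lies in $\extto{a}$ with $V_{\arc{a}}$-endpoint equal to $t(\arc{a})-1$.

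Finally I would run the four-fold case analysis: $b\in\extfrom{a}$ or $b\in\extto{a}$, crossed with $w\ge2$ or $w\le-1$, being careful that for $w\ge2$ an admissible arc is written with smaller coordinate first and for $w\le-1$ with larger coordinate first. In every case the congruence $\equiv1\pmod d$ for $\arc{e_1}$ and $\arc{e_2}$ holds automatically by the previous paragraph, so admissibility reduces to the length/order inequality, read off from $j$: for $b\in\extfrom{a}$ the arc $\arc{e_1}$ always has strictly greater length than $\arc{a}$ and is admissible, whereas $\arc{e_2}=(s(\arc{b}),t(\arc{a}))$ has ``signed length'' $(k-j)d+1$ and is admissible exactly when $j\le k-1$, i.e.\ $s(\arc{b})\ne t(\arc{a})-1$; for $b\in\extto{a}$, $\arc{e_1}$ is admissible iff $s(\arc{b})$ lies strictly on the correct side of $s(\arc{a})$, which unwinds to $s(\arc{b})<s(\arc{a})-1$ for $w\ge2$ and $s(\arc{b})>s(\arc{a})$ for $w\le-1$, while $\arc{e_2}$ is admissible iff $t(\arc{b})\ne t(\arc{a})-1$. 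I expect the main obstacle to be purely organisational: carrying the bookkeeping of the endpoints modulo $d$ together with the opposite ordering conventions for $w\ge2$ and $w\le-1$ without sign errors, and confirming in the borderline cases ($s(\arc{b})=t(\arc{a})-1$, etc.) that the non-admissible pair $\arc{e_i}$ really corresponds to an \emph{empty} ray--coray intersection, so that the object produced by Theorem~\ref{thm:graphical-calculus} is genuinely zero and not merely sitting just off the Ext-hammock.
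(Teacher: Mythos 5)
Your proposal is correct and takes essentially the same route as the paper: reduce to the combinatorics of the $\infty$-gon model via Lemma~\ref{partialfountainslemma}, compute the signed lengths $t(\arc{e_i})-s(\arc{e_i})$ in terms of $k$, $k'$, and the index $i$ (your $j$) locating the $V_{\arc{a}}$-endpoint of $\arc{b}$, and check the admissibility bound and congruence; for the last sentence, use exactly the two facts that rays/corays fix the first/second coordinate and that $L(\SSS a)$ and $R(\SSS^{-1}b)$ correspond to $(s(\arc{a})-w,s(\arc{a}))$ and $(t(\arc{b}),t(\arc{b})+w)$. One small inaccuracy in a parenthetical aside: when $b\in\extfrom{a}$, the arc $\arc{e_1}$ need not have greater length than $\arc{a}$ (it does have greater length than $\arc{b}$) — but since the signed length is $(k'+i)d+1$ with $k',i\geq1$, admissibility still holds, so the conclusion is unaffected.
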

\begin{proof}
We will first see when $\arc{e_1}$ and $\arc{e_2}$ are admissible. Let $k$ and $V_a$ be as in Lemma \ref{partialfountainslemma}, and $k^\prime \geq 1$ be such that $t(\arc{b}) -s(\arc{b}) = k^\prime d +1$.

Suppose $b \in \extfrom{a}$. Then $s(\arc{b}) \in V_{\arc{a}}$, and so we have:
\begin{compactitem}
\item $t(\arc{e_1})-s(\arc{e_1}) = (k^{\prime} + i)d + 1$, for some $i \in \{1, \ldots, k\}$; and
\item $t(\arc{e_2})-s(\arc{e_2}) = (k - i)d + 1$. 
\end{compactitem}
Hence, $\arc{e_1}$ is always admissible, and $\arc{e_2}$ is admissible if and only if $k-i \geq 1$, i.e. $s(\arc{b}) \ne t(\arc{a}) -1$.

Suppose now that  $b \in \extto{a}$. Then $t(\arc{b}) \in V_{\arc{a}}$, and so we have:
\begin{compactitem}
\item $t(\arc{e_1})-s(\arc{e_1}) = (k^{\prime} -i)d + 1$, for some $i \in \{1, \ldots, k\}$; and
\item $t(\arc{e_2})-s(\arc{e_2}) = (k - i)d + 1$. 
\end{compactitem}
Hence, $\arc{e_1}$ is admissible if and only if $k^{\prime} -i \geq 1$, i.e. $s(\arc{b}) < s(\arc{a})-1$ if $w \geq 2$ and $s(\arc{b}) > s(\arc{a})$ if $w \leq -1$. On the other hand, $\arc{e_2}$ is admissible if and only if $k-i \geq 1$, i.e. $t(\arc{b}) \ne t(\arc{a}) -1$.

The last statement of the proposition follows from applying the following facts:
\begin{compactitem}
\item For $x \in \ind{\Tw}$, the arc corresponding to $L(\SSS x)$ is $(s(\arc{x})-w, s(\arc{x}))$ and the arc corresponding to $R(\SSS^{-1} x)$ is $(t(\arc{x}), t(\arc{x})+w)$.
\item Two indecomposable objects lie in the same ray (resp. coray) if and only if the first (resp. second) coordinate of the corresponding arcs coincides. \qedhere
\end{compactitem}
\end{proof}

\begin{corollary}
If $w \leq -1$ and $\arc{a}$ and $\arc{b}$ are neighbouring admissible arcs, then the corresponding Ptolemy arcs of class II are always admissible. 
\end{corollary}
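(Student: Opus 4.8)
The plan is to argue directly from the combinatorial model of Section~\ref{sec:comb}. The key preliminary observation is that, for $w\leq -1$ with $d=w-1$ (so $|d|=1-w\geq 2$), an arc $\{p,q\}$ with $p>q$ is $d$-admissible \emph{if and only if} $p\equiv q-1\pmod{|d|}$: the congruence $q-p\equiv 1\pmod d$ is the same as $p-q\equiv -1\pmod{|d|}$, and any positive integer congruent to $-1$ modulo $|d|$ is automatically at least $|d|-1=-w$, so the length condition $q-p\leq w$ is already implied by the congruence. Hence, throughout the argument, verifying that a proposed arc of class II is admissible reduces to checking one congruence modulo $|d|$, once its two endpoints are correctly ordered.

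Next I would reduce to two configurations. Let $\arc{a}$ and $\arc{b}$ be neighbouring admissible arcs. Since $d(\arc{a},\arc{b})=1>0$, they are incident with four distinct vertices, and listing these in decreasing order and using that $\arc{a}$ and $\arc{b}$ do not cross shows that, after possibly swapping $\arc{a}$ and $\arc{b}$, exactly one of the following holds: the \emph{disjoint} case $t(\arc{b})<s(\arc{b})<t(\arc{a})<s(\arc{a})$, or the \emph{nested} case $t(\arc{a})<t(\arc{b})<s(\arc{b})<s(\arc{a})$.

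Then I would carry out the case analysis. In the disjoint case a short comparison of the four cross-differences shows that $d(\arc{a},\arc{b})=t(\arc{a})-s(\arc{b})$ is the unique minimal one, so $d(\arc{a},\arc{b})=1$ forces $s(\arc{b})=t(\arc{a})-1$; the relevant pair of vertices is $\{t(\arc{a}),t(\arc{a})-1\}$ and the arc of class II is $\{s(\arc{a}),t(\arc{b})\}$, and combining $s(\arc{a})\equiv t(\arc{a})-1$ with $s(\arc{b})\equiv t(\arc{b})-1\pmod{|d|}$ and $s(\arc{b})=t(\arc{a})-1$ gives $s(\arc{a})\equiv t(\arc{b})-1\pmod{|d|}$, so (as $s(\arc{a})>t(\arc{b})$) the arc is admissible. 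In the nested case the two smallest cross-differences are $s(\arc{a})-s(\arc{b})$ and $t(\arc{b})-t(\arc{a})$, so the distance $1$ is realised by $s(\arc{a})-s(\arc{b})=1$, by $t(\arc{b})-t(\arc{a})=1$, or, only when $w=-1$, by both; in the first case the arc of class II is $\{t(\arc{a}),t(\arc{b})\}$ and the congruences give $t(\arc{b})\equiv t(\arc{a})-1\pmod{|d|}$, in the second it is $\{s(\arc{a}),s(\arc{b})\}$ and they give $s(\arc{a})\equiv s(\arc{b})-1\pmod{|d|}$, and in the doubly degenerate case both of these arcs occur; in every instance the resulting arc is admissible. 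This exhausts all cases and proves the corollary. (Alternatively, one may invoke Proposition~\ref{prop:middletermsarcs}: for neighbouring arcs Corollary~\ref{cor:Extnonzerownegative}(2) provides, after relabelling, an object $b$ with $\Ext^1_{\Tw}(b,a)\neq 0$, one checks that exactly one of $\arc{e_1},\arc{e_2}$ is admissible, and identifies it with the arc of class II.)

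The main obstacle is purely organisational: one must pin down, in each non-crossing configuration, exactly which pair of vertices realises the distance $1$---and hence which of the four candidate arcs is the one of class II---and then chase the $\pm 1$ shifts through the two admissibility congruences. The only genuinely delicate point is the degenerate $w=-1$ nested configuration, where the minimal distance is attained by two vertex pairs; there one must confirm that \emph{both} resulting arcs of class II are admissible, which the congruence computations above do.
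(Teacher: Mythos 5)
Your proof is correct. The paper itself gives no explicit argument for this corollary; its placement immediately after Proposition~\ref{prop:middletermsarcs} signals that the intended derivation is the one you sketch parenthetically at the end: for neighbouring arcs one of $\Ext^1(b,a)$, $\Ext^1(a,b)$ is nonzero by Proposition~\ref{cor:Extnonzerownegative}(2), exactly one of $\arc{e_1},\arc{e_2}$ from Definition~\ref{def:es} is admissible by Proposition~\ref{prop:middletermsarcs}, and that arc is the class~II arc. Your primary argument takes a cleaner, self-contained route: you observe that for $w\leq -1$ the length constraint in the definition of admissibility is subsumed by the residue condition modulo $|d|$, so admissibility of a pair $(p,q)$ with $p>q$ reduces to $p\equiv q-1\pmod{|d|}$; you then reduce to the disjoint and nested non-crossing configurations, determine in each which pair of vertices realises the distance $1$, and chase the single congruence through. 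This is a sound and elementary verification that makes the purely combinatorial content of the corollary explicit without invoking the Ext-hammock machinery, and you correctly flag and handle the only delicate point: the $w=-1$ nested configuration in which the distance $1$ is attained at both ends simultaneously, yielding two class~II arcs, both of which your congruence check confirms to be admissible.
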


\begin{definition} \label{def:E(a,b)}
Let $a, b \in \ind{\Tw}$. Write $a * b := \add{a} * \add{b}$, where the star product is defined on page~\pageref{star-product}. We define $E(a,b) := \ind{a * b \cup b * a}\setminus \{a,b\}$. Denote the set of arcs corresponding to the objects in $E(a,b)$ by $\tE (\arc{a},\arc{b})$.
\end{definition}

\begin{remark}\label{rem:addE(a,b)}
By Remark~\ref{rem:multiplesummands}, $E(a,b)$ consists of the indecomposable objects which occur as summands in the middle terms of non-split extensions $\tri{a}{e}{b}$ and $\tri{b}{f}{a}$. 
\end{remark}

\begin{remark} \label{rem:exceptionalcase}
Note that the only case in Proposition \ref{cor:Extnonzerownegative} where $\arc{a}$ and $\arc{b}$ are neither crossing nor neighbouring arcs is when $w = -1$, $a$ lies on the mouth of the AR quiver and $b = \Sigma a$. In this case, $a \notin \extfrom{\Sigma a} \cup \extto{\Sigma a}$, and so $\Ext^1_{\sT_{-1}} (a,b) = 0$. On the other hand, $\ext^1_{\sT_{-1}}(b,a) = 1$ and the extension of $b (= \Sigma a)$ by $a$ is $\trilabels{a}{0}{\Sigma a}{}{}{\id}$. Therefore $E(a,b) = \emptyset$.
\end{remark}

We have the following corollary to Proposition~\ref{prop:middletermsarcs}.

\begin{corollary} \label{cor:extensions-vs-Ptolemy}
Let $a,b\in \ind{\Tw}$ and $\arc{a},\arc{b}$ the corresponding admissible arcs. Then
\[
\tE(\arc{a},\arc{b}) \subseteq \{\text{admissible Ptolemy arcs incident with the endpoints of $\arc{a}$ and $\arc{b}$}\}.
\]
\end{corollary}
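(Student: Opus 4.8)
The plan is to combine the structural description of middle terms of extensions with indecomposable outer terms (Theorem~\ref{thm:graphical-calculus} and Proposition~\ref{prop:middletermsarcs}) with the combinatorial description of when $\Ext^1$ is non-zero (Propositions~\ref{Extnonzerowpositive} and~\ref{cor:Extnonzerownegative}).

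First I would dispose of the degenerate case: if $\Ext^1_{\Tw}(b,a) = 0 = \Ext^1_{\Tw}(a,b)$ then $a*b = \add(a\oplus b) = b*a$, so $E(a,b) = \emptyset$ and there is nothing to prove. We may therefore assume, say, that $\Ext^1_{\Tw}(b,a) \neq 0$; since $E(a,b)$ is symmetric in $a$ and $b$, it is enough to handle the indecomposable summands of the middle term of the unique non-split extension $\tri{a}{e}{b}$, the contribution of $\tri{b}{f}{a}$ being treated identically with the roles of $a$ and $b$ exchanged.

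Next, Theorem~\ref{thm:graphical-calculus} gives $e = e_1 \oplus e_2$ with each $e_i$ indecomposable or zero, and Proposition~\ref{prop:middletermsarcs} identifies the arc of a non-zero $e_i$ with the admissible arc $\arc{e_i}$ of Definition~\ref{def:es}. Inspecting Definition~\ref{def:es} shows that in every case $\arc{e_i}$ joins an endpoint of $\arc{a}$ to an endpoint of $\arc{b}$. Now I would invoke Proposition~\ref{Extnonzerowpositive} (if $w \geq 2$) or Proposition~\ref{cor:Extnonzerownegative} (if $w \leq -1$): since $\Ext^1_{\Tw}(b,a) \neq 0$, either $\arc{a}$ and $\arc{b}$ cross, or they are neighbouring, or $w = -1$ with $a$ on the mouth of the AR quiver and $b = \Sigma a$. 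In the last case $E(a,b) = \emptyset$ by Remark~\ref{rem:exceptionalcase}, and we are done. In the other two cases $\arc{a}$ and $\arc{b}$ have four distinct endpoints, whence a comparison with the formulas of Definition~\ref{def:es} shows $\arc{e_i} \neq \arc{a}, \arc{b}$. If $\arc{a}$ and $\arc{b}$ cross, then by definition the admissible arcs joining two endpoints of $\arc{a}$ or $\arc{b}$ and distinct from $\arc{a}, \arc{b}$ are exactly the admissible Ptolemy arcs of class~I, so $\arc{e_1}$ and $\arc{e_2}$ lie among them.

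The step I expect to be the main obstacle is the neighbouring case (necessarily $w \leq -1$, arising as case~(2) of Proposition~\ref{cor:Extnonzerownegative}): here one must check that the surviving $\arc{e_i}$ is precisely the class~II Ptolemy arc, rather than merely some admissible arc incident with the endpoints. This requires a short case analysis. Proposition~\ref{cor:Extnonzerownegative}(2) forces $\arc{b}$ to be incident with $s(\arc{a})-1$ or $t(\arc{a})-1$; tracing through Lemma~\ref{partialfountainslemma} determines which of $b \in \extfrom{a}$ or $b \in \extto{a}$ holds, and hence which distance-$1$ condition in Proposition~\ref{prop:middletermsarcs} renders the other $\arc{e_i}$ inadmissible; then comparing the remaining $\arc{e_i}$ of Definition~\ref{def:es} with the description of the class~II Ptolemy arc --- the arc joining the two of the four endpoints that are not the adjacent pair $\{x,x-1\}$ realising $d(\arc{a},\arc{b}) = 1$ --- shows they agree. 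For example, when $b \in \extfrom{a}$ and $\arc{b}$ is incident with $t(\arc{a})-1$, one has $s(\arc{b}) = t(\arc{a})-1$, so $\arc{e_2}$ is inadmissible and only $\arc{e_1} = (s(\arc{a}), t(\arc{b}))$ survives, which is exactly the class~II arc obtained from $\arc{a}$ and $\arc{b}$ by deleting the adjacent pair $\{t(\arc{a}), t(\arc{a})-1\}$; the remaining sub-cases run along the same lines.
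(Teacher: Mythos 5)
Your argument is correct and follows essentially the same route as the paper: reduce by symmetry to the summands of the non-split triangle $\tri{a}{e}{b}$, dispose of the degenerate non-crossing/non-neighbouring case via Remark~\ref{rem:exceptionalcase}, appeal to Proposition~\ref{prop:middletermsarcs} for admissibility, observe that class~I arcs account for all admissible arcs between the four endpoints when $\arc{a}$ and $\arc{b}$ cross, and then handle the neighbouring case by a direct comparison with the class~II arc. The only superficial difference is that in the neighbouring case you show the surviving $\arc{e_i}$ coincides with the class~II arc, whereas the paper instead rules out the admissible length-one arcs $(x,x-1)$ that fail to be Ptolemy arcs; the two checks are logically equivalent and rest on the same case analysis.
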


\begin{proof}
By Remark \ref{rem:addE(a,b)}, we have $E(a,b) = \{e_1, e_2, f_1, f_2\}$, where the $e_i$'s and $f_i$'s are such that the extensions are $\tri{a}{e_1 \oplus e_2}{b}$ and $\tri{b}{f_1 \oplus f_2}{a}$. Note that some of these objects may be zero. We will only check that $e_1$ and $e_2$ correspond to admissible Ptolemy arcs when nonzero, as the proof is analogous for $f_1$ and $f_2$. 

Suppose $e_i \neq 0$. By Remark~\ref{rem:exceptionalcase}, $\arc{a}$ and $\arc{b}$ are either crossing or neighbouring arcs. By Proposition \ref{prop:middletermsarcs}, $\arc{e_i}$ is admissible. It remains to check that $\arc{e_i}$ is a Ptolemy arc. By definition, the Ptolemy arcs and the arc $\arc{e_i}$ connect endpoints of $\arc{a}$ and $\arc{b}$. Ptolemy arcs of class I cover all the possibilities for this connection, so the only non-trivial case that we need to consider is when $\arc{a}$ and $\arc{b}$ are neighbours. The problem here lies in the fact that the Ptolemy arc of class II might not be the only admissible arc connecting the endpoints of $\arc{a}$ and $\arc{b}$. Namely, when $w = -1$, the arc $(x,x-1)$ connecting the two closest endpoints of $\arc{a}$ and $\arc{b}$ is also admissible, and it is not in general a Ptolemy arc of class II.

Since $\Ext^1(b,a) \neq 0$, we have $\arc{b}$ incident with either $s(\arc{a})-1$ or $t(\arc{a})-1$ (see Proposition \ref{cor:Extnonzerownegative}(2)). Hence, the arc in question is:
\begin{compactitem}
\item $(s(\arc{a}),s(\arc{b}))$, if $b$ is incident with $s(\arc{a})-1$, 
\item $(t(\arc{a}),s(\arc{b}))$ or $(t(\arc{a}),t(\arc{b}))$, if $b$ is incident with $t(\arc{a})-1$.
\end{compactitem}
Note that, by definition, $\arc{e_i}$ is not any of these arcs, and therefore $\arc{e_i}$ is the Ptolemy arc of class II. 
\end{proof}

Let $\sX$ be a full subcategory of $\Tw$. By the above and Theorem~\ref{thm:extensionsnonindecomposable}, the arcs corresponding to objects of $\ind{\extn{\sX}}$ are a subset of the closure of $\tX$ under admissible Ptolemy arcs. We now need to show that the inclusion in Corollary~\ref{cor:extensions-vs-Ptolemy} is in fact an equality.

\subsection{The extension closure}

Let $a, b \in \ind{\Tw}$. Combinatorially, Ptolemy arcs of class I and II arise out of the following situations, respectively:
\begin{compactitem}
\item $w \in \bZ\setminus\{0,1\}$ and $\arc{a}$ and $\arc{b}$ are crossing arcs,
\item $w \leq -1$ and $\arc{a}$ and $\arc{b}$ are neighbouring arcs.
\end{compactitem}

We shall show that in each of the two cases above $\tE(\arc{a},\arc{b})$ is precisely the set of all the admissible Ptolemy arcs of the appropriate class associated to $\arc{a}$ and $\arc{b}$. 
First, let us consider the case when $\arc{a}$ and $\arc{b}$ are neighbours. 

\begin{proposition}\label{prop:neighbourotherinclusion}
Let $w \leq -1$ and $a, b \in \ind{\Tw}$ be such that $\arc{a}$ and $\arc{b}$ are neighbouring arcs. Then $\Ext^1_{\Tw} (a,b) \neq 0$ or $\Ext^1_{\Tw} (b,a) \neq 0$, and $\tE(\arc{a},\arc{b})$ is the set of admissible Ptolemy arcs of class II associated to $\arc{a}$ and $\arc{b}$.  
\end{proposition}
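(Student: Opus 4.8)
The plan is to show both that a non-trivial extension exists in one direction and that $\tE(\arc{a},\arc{b})$ exhausts the admissible Ptolemy arcs of class II.

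First I would set up the geometry of neighbouring arcs. Since $\arc{a}$ and $\arc{b}$ are neighbouring, they do not cross and $d(\arc{a},\arc{b}) = 1$, so $d(\arc{a},\arc{b})$ is realised by two adjacent vertices $x, x-1$, one incident with $\arc{a}$ and the other with $\arc{b}$. I would enumerate the possible configurations: $x-1 = s(\arc{a})$ or $x-1 = t(\arc{a})$ (using $w \leq -1$, so the first coordinate exceeds the second), with $x$ being $s(\arc{b})$ or $t(\arc{b})$ accordingly (or the symmetric situation with the roles of $\arc{a}$ and $\arc{b}$ swapped). In each configuration, one of the vertices incident with $\arc{a}$ is of the form $s(\arc{a})-1$ or $t(\arc{a})-1$ and is incident with $\arc{b}$; by Proposition~\ref{cor:Extnonzerownegative}(2) this forces $\Ext^1_{\Tw}(b,a) \neq 0$, or in the symmetric case $\Ext^1_{\Tw}(a,b) \neq 0$. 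This establishes the first assertion.

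Next, with the non-vanishing extension in hand — say $\Ext^1_{\Tw}(b,a) \neq 0$ — I would apply Proposition~\ref{prop:middletermsarcs} together with Definition~\ref{def:es} to identify the arcs $\arc{e_1}, \arc{e_2}$ of the middle term. The point is that the extension being non-trivial comes from case (2) of Proposition~\ref{cor:Extnonzerownegative}, i.e. $b \in \extfrom{a} \cup \extto{a}$ with $\arc{b}$ incident with $s(\arc{a})-1$ or $t(\arc{a})-1$; in each such configuration exactly one of $\arc{e_1}, \arc{e_2}$ is admissible (the admissibility conditions in Proposition~\ref{prop:middletermsarcs} rule out exactly the arc that would run between $x$ and $x-1$), and that admissible arc is, by the computation already carried out in the proof of Corollary~\ref{cor:extensions-vs-Ptolemy}, precisely the Ptolemy arc of class II. So $\tE(\arc{a},\arc{b})$ contains the class II Ptolemy arc. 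Combining with Corollary~\ref{cor:extensions-vs-Ptolemy}, which gives the reverse inclusion $\tE(\arc{a},\arc{b}) \subseteq \{\text{admissible Ptolemy arcs}\}$, and noting that in the neighbouring case the only admissible Ptolemy arc is the single class II arc (the class I arcs are not defined for non-crossing arcs, and the arc between $x$ and $x-1$ is the unique other connecting arc but is precisely the one shown non-admissible), yields equality.

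The main obstacle I anticipate is the bookkeeping in the case analysis: there are several sub-configurations depending on which endpoints of $\arc{a}$ and $\arc{b}$ are the close pair $\{x, x-1\}$ and on whether $b \in \extfrom{a}$ or $b \in \extto{a}$ (equivalently whether $\arc{b}$ is incident with $s(\arc{a})-1$ or $t(\arc{a})-1$), and one must check in each that the surviving $\arc{e_i}$ is exactly the class II Ptolemy arc and that no spurious extra admissible arc appears. I would handle this by fixing $w \leq -1$, writing $\arc{a} = (s(\arc{a}), t(\arc{a}))$, parametrising $\arc{b}$ by which of the four possible incidences with $\{s(\arc{a})-1, t(\arc{a})-1\}$ it realises, and appealing directly to the admissibility criteria in Proposition~\ref{prop:middletermsarcs} rather than recomputing; the congruence condition $u - t \equiv 1 \pmod d$ does the work of pinning down the unique arc. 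The edge case $w = -1$ with $a$ on the mouth and $b = \Sigma a$ is excluded here since such $\arc{a}, \arc{b}$ are neither crossing nor neighbouring (Remark~\ref{rem:exceptionalcase}), so it does not intrude.
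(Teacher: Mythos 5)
Your first assertion and the general strategy match the paper's, but there is a genuine gap in the second half. You assert that "in the neighbouring case the only admissible Ptolemy arc is the single class II arc," and conclude equality by squeezing between the reverse inclusion of Corollary~\ref{cor:extensions-vs-Ptolemy} and the single arc you produced. This is false when $w=-1$. If $\arc{a}=(x,y)$ and $\arc{b}=(x-1,y+1)$ with $x\geq y+3$ (a nested configuration realising distance $1$ at \emph{both} pairs of endpoints), then by the definition of Ptolemy arcs of class II there are \emph{two} of them, namely the arc avoiding $\{x,x-1\}$ and the arc avoiding $\{y,y+1\}$, and both are $(-2)$-admissible. Your argument only produces the one Ptolemy arc coming from the single extension $\tri{a}{e}{b}$ (or $\tri{b}{e}{a}$) you chose to analyse, so it does not show $\tE(\arc{a},\arc{b})$ contains the other. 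The paper closes this by observing that this two-Ptolemy-arc situation occurs only for $w=-1$, and that in it there are nontrivial extensions in \emph{both} directions, each contributing one of the two Ptolemy arcs; you need an explicit step of that kind.

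A secondary slip: you say "the arc between $x$ and $x-1$ is \ldots precisely the one shown non-admissible," but for $w=-1$ the arc $(x,x-1)$ \emph{is} admissible (it has length $1$, which satisfies $u-t\leq w$ and $u-t\equiv 1\pmod d$ when $d=-2$). It simply fails to be a Ptolemy arc of class II by definition. The separation you need is "not a Ptolemy arc," not "not admissible," and indeed the proof of Corollary~\ref{cor:extensions-vs-Ptolemy} addresses exactly this distinction when $w=-1$. For $w\leq -2$ your statement happens to be correct, but the blanket claim is wrong in the one boundary case that matters.
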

\begin{proof}
If $\arc{b}$ is incident with $s(\arc{a})-1$ or $t(\arc{a})-1$, then $\Ext^1_{\Tw} (b,a) \neq 0$, by Proposition \ref{prop:middletermsarcs}. Dually, if $\arc{b}$ is incident with $s(\arc{a}) + 1$ or $t(\arc{a}) +1$, then $\Ext^1_{\Tw} (a,b) \neq 0$. In both cases, the middle term of the extension must be nonzero, since $b \neq \Sigma a, \Sigma^{-1} a$. Therefore, $\tE(\arc{a},\arc{b}) \neq \emptyset$, and so in particular, its cardinality is greater than or equal to one. 

Note that the set of (admissible) Ptolemy arcs of class II associated to $\arc{a}$ and $\arc{b}$ has cardinality one or two. If the cardinality is one, then by Corollary \ref{cor:extensions-vs-Ptolemy}, $\tE(\arc{a},\arc{b})$ must be equal to the set of admissible Ptolemy arcs associated to $\arc{a}$ and $\arc{b}$. Now, suppose there are two admissible Ptolemy arcs associated to $\arc{a}$ and $\arc{b}$. Then these must be of the form $(x,y)$ and $(x-1,y+1)$, with $x \geq y + 3$, for $w = -1$. In this case we have extensions in both directions, giving rise to the Ptolemy arcs $(x,x-1)$ and $(y,y-1)$. Hence, we also have equality in this case. 
\end{proof}

We now turn our attention to the case when $\arc{a}$ and $\arc{b}$ cross.

\begin{proposition}\label{prop:crossingwithextensionsS}
Let $a, b \in \ind{\Tw}$. If $\Ext^1_{\Tw} (b,a) \neq 0$ and $\arc{a}, \arc{b}$ are crossing arcs, then the set of admissible Ptolemy arcs of class I associated to $\arc{a}$ and $\arc{b}$ is contained in $\tE(\arc{a},\arc{b})$. 
\end{proposition}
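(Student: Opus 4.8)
The plan is to show that each of the four admissible Ptolemy arcs of class I arising from the crossing arcs $\arc{a}$ and $\arc{b}$ actually appears as a summand in the middle term of one of the extensions $\tri{a}{e}{b}$ or $\tri{b}{f}{a}$. We already know from Proposition~\ref{prop:middletermsarcs} and Corollary~\ref{cor:extensions-vs-Ptolemy} that the nonzero summands $e_1, e_2$ of the middle term of $\tri{a}{e}{b}$ correspond to the arcs $\arc{e_1}, \arc{e_2}$ of Definition~\ref{def:es}, and similarly for $\tri{b}{f}{a}$ we get arcs $\arc{f_1}, \arc{f_2}$ by symmetry. So the strategy is: name the four endpoints, write down the four class-I Ptolemy arcs explicitly, and match each against $\arc{e_1}, \arc{e_2}, \arc{f_1}, \arc{f_2}$, checking admissibility via Proposition~\ref{prop:middletermsarcs} using the hypothesis that $\Ext^1_{\Tw}(b,a) \neq 0$ (which by Proposition~\ref{Extnonzerowpositive}/\ref{cor:Extnonzerownegative} pins down precisely which endpoint of $\arc{b}$ lies in $V_{\arc{a}}$).

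First I would reduce to a normal form. Since the crossing hypothesis is symmetric in $\arc{a}$ and $\arc{b}$ up to swapping, and since $\Sigma$ acts transitively, write $\arc{a} = (s,t)$ with, say, $w \geq 2$ so $s < t$ (the case $w \leq -1$ being dual/analogous with inequalities reversed, as throughout the paper). By Proposition~\ref{Extnonzerowpositive}, $\Ext^1_{\Tw}(b,a) \neq 0$ forces $\arc{b}$ to be incident with a vertex of $V_{\arc{a}} = \{s + id \mid 1 \leq i \leq k\}$, where $t - s = kd+1$. There are then two cases according to whether $\arc{b} \in \extfrom{a}$ (so $s(\arc{b}) \in V_{\arc{a}}$, and the other endpoint $t(\arc{b})$ lies beyond $t$) or $\arc{b} \in \extto{a}$ (so $t(\arc{b}) \in V_{\arc{a}}$, and $s(\arc{b})$ lies before $s$). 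In either case the four endpoints of $\arc{a}$ and $\arc{b}$ are four distinct integers, and Definition~\ref{def:es} already supplies two of the four possible connecting arcs as $\arc{e_1}, \arc{e_2}$. Proposition~\ref{prop:middletermsarcs} tells us $\arc{e_1}$ is always admissible and $\arc{e_2}$ is admissible provided $s(\arc{b}) \neq t(\arc{a}) - 1$; but the crossing hypothesis (as opposed to the neighbouring case) guarantees the endpoints are far enough apart that this degeneracy does not occur, so both $\arc{e_1}$ and $\arc{e_2}$ are genuine arcs in $\tE(\arc{a},\arc{b})$.

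Next I would run the same analysis for the reverse extension. Since $\arc{a}$ and $\arc{b}$ cross, one checks directly from the hom-hammock/Ext-hammock description (Proposition~\ref{prop:ext-hammocks} together with Lemma~\ref{partialfountainslemma}) that $\Ext^1_{\Tw}(a,b) \neq 0$ as well — crossing is a symmetric condition and both objects lie in each other's Ext-hammocks. Applying Definition~\ref{def:es} and Proposition~\ref{prop:middletermsarcs} to the triangle $\tri{b}{f_1 \oplus f_2}{a}$ then yields arcs $\arc{f_1}, \arc{f_2}$, again both admissible by the crossing (non-degeneracy) hypothesis. Finally I would verify by inspection of the four explicit endpoint-pairings that $\{\arc{e_1}, \arc{e_2}, \arc{f_1}, \arc{f_2}\}$ is exactly the set of four class-I Ptolemy arcs $\{\{x,y\} \mid x,y \in \{s(\arc{a}), t(\arc{a}), s(\arc{b}), t(\arc{b})\},\ x\neq y,\ \{x,y\} \neq \arc{a}, \arc{b}\}$ — this is a finite bookkeeping check comparing Definition~\ref{def:es} in its two cases against the combinatorial definition of class-I Ptolemy arcs. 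Hence every admissible class-I Ptolemy arc lies in $\tE(\arc{a},\arc{b})$.

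The main obstacle I anticipate is the case analysis itself: one must be careful that, depending on whether $\arc{b} \in \extfrom{a}$ or $\extto{a}$ (and the dual split for $a$ relative to $b$), the formulas of Definition~\ref{def:es} produce the correct four arcs and not a repetition, and that in the crossing regime none of the $\arc{e_i}, \arc{f_i}$ degenerate to $\arc{a}$, $\arc{b}$ or to an inadmissible "short" arc. The key point making this work is that for crossing arcs the distance condition that caused trouble in the neighbouring case (Proposition~\ref{prop:neighbourotherinclusion}) is automatically satisfied, so every one of the four Ptolemy arcs of class~I is admissible and each is realised. Everything else is a translation between the arc combinatorics of Section~\ref{sec:comb} and the graphical calculus of Theorem~\ref{thm:graphical-calculus}.
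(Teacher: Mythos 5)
Your overall template — name the four class-I Ptolemy arcs, match them against the summands of the two middle terms $\tri{a}{e}{b}$ and $\tri{b}{f}{a}$ — is the right shape, but there is a real gap in the middle step that makes the argument fail for $w\neq 2$.

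The problematic claim is: ``Since $\arc{a}$ and $\arc{b}$ cross, one checks directly \dots\ that $\Ext^1_{\Tw}(a,b)\neq 0$ as well — crossing is a symmetric condition and both objects lie in each other's Ext-hammocks.'' This is true only for $w=2$, by the $2$-Calabi--Yau property. For $w\geq 3$ and $w\leq -1$, crossing is necessary but not sufficient for an extension (that is exactly the content of Lemma~\ref{lemma:extvanishnotadmissiblePtolemyarcs}), and in fact one can check from Lemma~\ref{partialfountainslemma} that when $b\in\extfrom{a}$ with $s(\arc{b})\in V_{\arc{a}}$ and $d>1$, the arc $\arc{a}$ is incident with no vertex of $V_{\arc{b}}$, so $\Ext^1(a,b)=0$. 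Consequently the objects $f_1,f_2$ you rely on need not exist, and your ``bookkeeping'' of $\{\arc{e_1},\arc{e_2},\arc{f_1},\arc{f_2}\}$ against the four class-I Ptolemy arcs breaks down. The correct observation, which is what the paper's proof supplies, is that the two Ptolemy arcs connecting like endpoints, $\{s(\arc{a}),s(\arc{b})\}$ and $\{t(\arc{a}),t(\arc{b})\}$, have length congruent to $0$ modulo $d$, hence are \emph{inadmissible} except precisely when $|d|=1$ (so $w=2$, or $w=-1$ in the degenerate crossing case $b=\Sigma a$) — and those are exactly the cases where Serre duality does give a reverse extension. For all other $w$ there is nothing to produce, because the arcs in question are simply not admissible.

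A second, smaller slip: you assert that the crossing hypothesis ``guarantees the endpoints are far enough apart'' so that $s(\arc{b})\neq t(\arc{a})-1$, making both $\arc{e_1}$ and $\arc{e_2}$ admissible. This is not so: for $b\in\extfrom{a}$ one can have $s(\arc{b})=s(\arc{a})+kd=t(\arc{a})-1$ while $\arc{a}$ and $\arc{b}$ still cross. In that situation $e_2=0$ and $\arc{e_2}$ has length $1$, hence is inadmissible — so this does not actually threaten the proposition, but the justification you give is wrong, and for the same reason as above: you must use inadmissibility, not non-occurrence of the degeneracy, to dispose of the case.
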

\begin{proof}
Firstly, let us consider the case when $w \geq 2$. 

\Case{1} $b \in \extfrom{a}$.
We need to check whether $(s(\arc{a}),s(\arc{b}))$ and $(t(\arc{a}),t(\arc{b}))$ are admissible. Note that these two arcs are Ptolemy arcs associated to $\arc{a}$ and $\arc{b}$, but they do not correspond to the middle term of the extension $\tri{a}{e}{b}$.

We have $s(\arc{b}) = s(\arc{a}) +id$, for some $i = 1, \ldots, k$ because $s(\arc{b}) \in V_{\arc{a}}$. Hence, $s(\arc{b}) - s(\arc{a}) = id$, and $t(\arc{b}) - t(\arc{a}) = (i + k^\prime + k)d$.
Therefore, the arcs $(s(\arc{a}),s(\arc{b}))$ and $(t(\arc{a}),t(\arc{b}))$ are admissible if and only if $w=2$. Indeed, $\Ext^1_{\sT_2} (a,b) \simeq D \Ext^1_{\sT_2} (b,a)$ since $\sT_2$ is $2$-CY. Since $a \in \extto{b}$, $(s(\arc{a}),s(\arc{b}))$ and $(t(\arc{a}),t(\arc{b}))$ are the arcs corresponding to the indecomposable summands of the middle term of $\tri{b}{e^\prime}{a}$, by Proposition \ref{prop:middletermsarcs}. 

\Case{2} $b \in \extto{a}$.
We need to check whether $(s(\arc{b}),t(\arc{a}))$ and $(s(\arc{a}),t(\arc{b}))$ are admissible. We have $t(\arc{b}) = s(\arc{a}) + id$, for some $i = 1, \ldots, k$, and so, $t(\arc{b})-s(\arc{a}) = id$ and $t(\arc{a})-s(\arc{b}) = (k+k^\prime-i)d + 2$. Therefore, $(s(\arc{b}),t(\arc{a}))$ and $(s(\arc{a}),t(\arc{b}))$ are admissible if and only if $w = 2$, as in case 1. As we have seen above, when $w = 2$, $\Ext^1_{\sT_2} (a,b) \simeq \Ext^1_{\sT_2} (b,a) \neq 0$, and here we have $a \in \extfrom{b}$. Hence, by Proposition \ref{prop:middletermsarcs}, $(s(\arc{b}),t(\arc{a}))$ and $(s(\arc{a}),t(\arc{b}))$ are the arcs corresponding to the indecomposable summands of the middle term of the triangle $\tri{b}{e^\prime}{a}$.  

Now, let $w \leq -1$. We need to consider the following three cases.

\Case{1} $\arc{b}$ crosses $\arc{a}$, it is to the left of $\arc{a}$, and $s(\arc{b}) \in V_{\arc{a}}$. 
As we have seen in case 1 for $w \geq 2$, we have $s(\arc{b})-s(\arc{a}) = id$ and $t(\arc{b})-t(\arc{a}) = (i+k+k^\prime)d$. However, $d \leq -2$ since $w \leq -1$, and so the arcs $(s(\arc{a}),s(\arc{b}))$ and $(t(\arc{a}),t(\arc{b}))$ are never admissible. 

\Case{2} $\arc{b}$ crosses $\arc{a}$, it is to the right of $\arc{a}$, and $t(\arc{b}) \in V_{\arc{a}}$.
As in case 2 for $w \geq 2$, we have $t(\arc{b})-s(\arc{a}) = id$ and $t(\arc{a})-s(\arc{b}) = (k+k^\prime+ i) d +2$. But again, since $d \leq -2$, there is no $l \geq 1$ such that $t(\arc{b})-s(\arc{a}) = ld+1$ or $t(\arc{a})-s(\arc{b}) = ld+1$. Therefore, $(s(\arc{b}),t(\arc{a}))$ and $(s(\arc{a}),t(\arc{b}))$ are never admissible.

\Case{3} $b = \Sigma a$, i.e. $\arc{b} = (s(\arc{a})-1,t(\arc{a})-1)$, and $a$ does not lie on the mouth if $w = -1$.
It is easy to check that $(s(\arc{a}),t(\arc{b}))$ and $(s(\arc{b}),t(\arc{a}))$ are never admissible. On the other hand, $(s(\arc{a}),s(\arc{b}))$ and $(t(\arc{a}),t(\arc{b}))$ are admissible if and only if $w = -1$. From now on let $w = -1$. We have $\Ext^1_{\sT_{-1}} (a,\Sigma a) \simeq D \Hom_{\sT_{-1}} (a,\tau \SSS a) \neq 0$ if and only if $\tau \SSS a \in \homto{\SSS a}$, which always holds unless $a$ lies on the mouth of the AR quiver. 

Since, by hypothesis, $a$ does not lie on the mouth, we have $\Ext^1_{\sT_{-1}} (a,\Sigma a) \neq 0$ and $\Sigma a \in \extto{a}$. So $(s(\arc{a}),s(\arc{a})-1)$ and $(t(\arc{a}),t(\arc{a})-1)$, which are the only admissible Ptolemy arcs of class I associated to $a$ and $\Sigma a$, are the arcs corresponding to the indecomposable summands of the middle term of the triangle $\Sigma a \rightarrow e \rightarrow a \rightarrow \Sigma^2 a$. 
\end{proof}

In contrast to the case $w = 2$, for $w \in \bZ\setminus\{0,1,2\}$, there are crossing arcs whose corresponding indecomposable objects do not have extensions between them. We must check that these yield no admissible Ptolemy arcs.

\begin{lemma}\label{lemma:extvanishnotadmissiblePtolemyarcs}
Suppose, in addition to $w \in \bZ \setminus \{0, 1\}$, that $w \neq 2$ and let $a, b \in \ind{\Tw}$ be such that $\Ext^1_{\Tw} (a,b) = 0 = \Ext^1_{\Tw}(b,a)$ and $\arc{a}$ and $\arc{b}$ cross. Then none of the associated Ptolemy arcs of class I is admissible. 
\end{lemma}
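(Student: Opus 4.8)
The plan is to argue entirely inside the combinatorial model, reducing the statement to a short piece of modular arithmetic via the Ext‑hammock descriptions of Propositions~\ref{Extnonzerowpositive} and \ref{cor:Extnonzerownegative}. Since both the hypothesis and the conclusion are symmetric in $a$ and $b$, I would first label the four endpoints of $\arc{a}$ and $\arc{b}$ as $v_1 < v_2 < v_3 < v_4$ so that, by the definition of crossing, $\arc{a}$ joins $v_1$ and $v_3$ while $\arc{b}$ joins $v_2$ and $v_4$. Writing $a_i = v_{i+1} - v_i$ for $i = 1,2,3$, the arc $\arc{a}$ then has length $a_1 + a_2$, the arc $\arc{b}$ has length $a_2 + a_3$, and the four Ptolemy arcs of class~I associated to $\arc{a}$ and $\arc{b}$ are precisely the arcs joining $\{v_1,v_2\}$, $\{v_2,v_3\}$, $\{v_3,v_4\}$ and $\{v_1,v_4\}$, of lengths $a_1$, $a_2$, $a_3$ and $a_1+a_2+a_3$ respectively. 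Recalling the definition of admissibility, an arc joining two vertices at distance $\ell$ can be admissible only if $\ell \equiv \epsilon \pmod{d}$, where $\epsilon = 1$ if $w \geq 2$ and $\epsilon = -1$ if $w \leq -1$; moreover $|d| = |w-1| \geq 2$ since $w \notin \{0,1,2\}$, so this congruence is a genuine obstruction. Admissibility of $\arc{a}$ and $\arc{b}$ thus supplies the two relations $a_1 + a_2 \equiv \epsilon \pmod{d}$ and $a_2 + a_3 \equiv \epsilon \pmod{d}$.

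Next I would convert the vanishing of the two Ext‑groups into congruence conditions on the $a_i$. For $w \geq 2$ this is read off directly from Proposition~\ref{Extnonzerowpositive}: $\Ext^1_{\Tw}(b,a) = 0$ forces the endpoint $v_2$ of $\arc{b}$ lying in the interior of $\arc{a}$ not to belong to $V_{\arc{a}}$, which amounts to $a_1 \not\equiv 0 \pmod{d}$, and symmetrically $\Ext^1_{\Tw}(a,b) = 0$ gives $a_2 \not\equiv 0 \pmod{d}$. For $w \leq -1$ one first observes that $\Ext^1_{\Tw}(b,a) = 0$ excludes $b = \Sigma a$ (by Proposition~\ref{cor:Extnonzerownegative}(3) and Remark~\ref{rem:exceptionalcase}), and likewise $a \neq \Sigma b$, and that crossing arcs are never neighbouring; hence only clause~(1) of Proposition~\ref{cor:Extnonzerownegative} can detect a nonzero Ext. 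Using the explicit form of $V_{\arc{a}}$ and the fact that its excluded vertex $t(\arc{a}) - 1$ is never an endpoint of $\arc{b}$ (the relevant endpoint of $\arc{b}$ lies strictly between the two endpoints of $\arc{a}$), the same bookkeeping yields $a_2 \not\equiv 0 \pmod{d}$ from $\Ext^1_{\Tw}(b,a) = 0$ and $a_3 \not\equiv 0 \pmod{d}$ from $\Ext^1_{\Tw}(a,b) = 0$. In either case I would keep only the weaker conclusion: $a_2 \not\equiv 0 \pmod{d}$, and at least one of $a_1, a_3$ is $\not\equiv 0 \pmod{d}$.

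Finally I would combine these relations. If $a_1 \equiv \epsilon \pmod{d}$ then $a_2 \equiv (a_1+a_2) - a_1 \equiv 0 \pmod{d}$, contradicting $a_2 \not\equiv 0$; similarly $a_3 \equiv \epsilon$ forces $a_2 \equiv 0$. If $a_2 \equiv \epsilon$ then $a_1 \equiv (a_1+a_2)-a_2 \equiv 0$ and $a_3 \equiv (a_2+a_3)-a_2 \equiv 0$, contradicting that one of $a_1, a_3$ is nonzero. And if $a_1 + a_2 + a_3 \equiv \epsilon$ then subtracting $a_1 + a_2 \equiv \epsilon$ gives $a_3 \equiv 0$ while subtracting $a_2 + a_3 \equiv \epsilon$ gives $a_1 \equiv 0$, the same contradiction. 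Hence none of the four lengths $a_1, a_2, a_3, a_1+a_2+a_3$ is $\equiv \epsilon \pmod{d}$, so none of the four Ptolemy arcs of class~I satisfies the congruence required for admissibility, and the lemma follows.

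The genuinely delicate step, and the one I would be most careful about, is the second paragraph: performing, separately for $w \geq 2$ and $w \leq -1$, the bookkeeping that identifies which endpoint of $\arc{b}$ lies in the interior of $\arc{a}$, writes $V_{\arc{a}}$ (and, in the negative case, its excluded vertex) explicitly in terms of $v_1, v_3$ and $d$, and thereby pins down exactly which gap $a_i$ is forced to be nonzero modulo $d$. Once those two statements are secured, the remaining modular computation is immediate. It is also worth remarking that the hypothesis $w \neq 2$ is needed only in the vacuous sense that crossing arcs in $\sT_2$ always carry a nonzero extension, so for $w = 2$ the lemma has no content.
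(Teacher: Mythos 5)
Your proof is correct and follows essentially the same approach as the paper: both translate admissibility of $\arc{a}$ and $\arc{b}$ together with the vanishing of the two Ext-groups into congruences mod $d$ on the gaps between the four endpoints, and then check that each of the four candidate Ptolemy lengths fails the admissibility congruence. Your version is slightly cleaner in that the $a_1,a_2,a_3$ notation unifies the four cases into one short modular computation and treats $w\leq -1$ explicitly rather than by analogy, but the underlying argument (and even the observation that the lemma is vacuous for $w=2$) is the same.
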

\begin{proof}
Let $w \geq 3$ and suppose $\arc{a}$ and $\arc{b}$ cross each other in such a way that $s(\arc{a}) < s(\arc{b}) < t(\arc{a}) < t(\arc{b})$. Since there are no extensions between $a$ and $b$, we cannot have $s(\arc{b}) = s(\arc{a}) + id$, for some $i \geq 1$, nor can we have $t(\arc{a}) = s(\arc{b}) + jd$, for some $j \geq 1$. 

Since $\arc{a}$ and $\arc{b}$ are admissible arcs, we have $t(\arc{a}) - s(\arc{a}) = kd +1$ and $t(\arc{b}) - s(\arc{b}) = k^\prime d+1$, for some $k, k^\prime \geq 1$. We also have $s(\arc{b}) = s(\arc{a}) + x$ for some $x \geq 1$, and $t(\arc{a}) = s(\arc{b}) + y$, for some $y \geq 1$. 
  
Since $t(\arc{b})-s(\arc{a}) = t(\arc{b}) -s(\arc{b}) + s(\arc{b}) - s(\arc{a}) = k^\prime d +x+1$, we conclude that the Ptolemy arc $(s(\arc{a}),t(\arc{b}))$ is admissible if and only if $k^\prime d + x+ 1 = ld+1$, for some $l \geq 1$. This is equivalent to $x = (l-k^\prime) d$, and $l - k^\prime \geq 1$ since $s(\arc{a}) < s(\arc{b})$. However, this contradicts the hypothesis, and therefore this Ptolemy arc is not admissible. 

The arc $(s(\arc{b}),t(\arc{a}))$ is admissible if and only if $t(\arc{a}) - s(\arc{b}) = kd+x+1 = ld+1$, for some $l \geq 1$, i.e. $x = (k-l)d$. Since $x, d \geq 1$, we must also have $k-l \geq 1$, but this contradicts the hypothesis. 

Using the fact that $t(\arc{a}) = s(\arc{b}) + y$, for some $y \geq 1$, we can similarly show that $(t(\arc{a}), t(\arc{b}))$ is not admissible.

Finally, $(s(\arc{a}),s(\arc{b}))$ is admissible if and only if  $s(\arc{b})- s(\arc{a}) = x = ld+1$, for some $l \geq 1$. Suppose, for a contradiction, that this holds, i.e. $s(\arc{b}) = s(\arc{a}) + ld +1$, for some $l \geq 1$. Then we have $t(\arc{a}) = s(\arc{b}) + (k-l)d$, with $k-l \geq 1$, a contradiction. 

The proof for the case when $w \leq -1$ is analogous. 
\end{proof}

\begin{corollary}
Let $a, b \in \ind{\sT_w}$ be such that $\arc{a}$ and $\arc{b}$ are crossing arcs. Then $\tE(\arc{a},\arc{b})$ is the set of admissible Ptolemy arcs of class I associated to $\arc{a}$ and $\arc{b}$.
\end{corollary}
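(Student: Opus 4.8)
The plan is to establish both inclusions separately. The inclusion $\tE(\arc{a},\arc{b}) \subseteq \{\text{admissible Ptolemy arcs of class I associated to }\arc{a}\text{ and }\arc{b}\}$ will come directly from Corollary~\ref{cor:extensions-vs-Ptolemy}, once one observes that when $\arc{a}$ and $\arc{b}$ cross, the only arcs incident with the four endpoints of $\arc{a}$ and $\arc{b}$ other than $\arc{a},\arc{b}$ themselves are exactly the four Ptolemy arcs of class~I; Ptolemy arcs of class~II are attached to \emph{neighbouring}, hence non-crossing, pairs. So the substance is the reverse inclusion, and I would split the argument according to whether $a$ and $b$ have a non-zero extension in one direction or the other.

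Suppose first that $\Ext^1_{\Tw}(b,a)\neq 0$ or $\Ext^1_{\Tw}(a,b)\neq 0$. Because $E(a,b)=E(b,a)$ and the set of class~I Ptolemy arcs is symmetric in $\arc{a}$ and $\arc{b}$, I may relabel so that $\Ext^1_{\Tw}(b,a)\neq 0$; then Proposition~\ref{prop:crossingwithextensionsS} yields the reverse inclusion, and hence the equality. I would remark here that this case is forced whenever $w=2$: for $w=2$ we have $d=1$, so $V_{\arc{a}}$ is the set of \emph{all} integers strictly between the two endpoints of $\arc{a}$; a crossing arc $\arc{b}$ necessarily has exactly one endpoint strictly inside that range, so Proposition~\ref{Extnonzerowpositive} gives $\Ext^1_{\Tw}(b,a)\neq 0$.

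It remains to treat the case $\Ext^1_{\Tw}(a,b)=0=\Ext^1_{\Tw}(b,a)$. By the previous paragraph $w\neq 2$, hence $w\in\bZ\setminus\{0,1,2\}$, and Lemma~\ref{lemma:extvanishnotadmissiblePtolemyarcs} applies: none of the class~I Ptolemy arcs of $\arc{a}$ and $\arc{b}$ is admissible, so that set is empty. On the other side, $\arc{a}$ and $\arc{b}$ crossing forces $a\not\simeq b$, and the simultaneous vanishing of both $\Ext$-spaces makes every distinguished triangle with outer terms in $\add{a}$ and $\add{b}$ split; thus $a*b=\add(a\oplus b)=b*a$ and $E(a,b)=\emptyset$. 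Both sides of the asserted equality are then empty, so the corollary follows by combining the three parts. Since Propositions~\ref{Extnonzerowpositive} and \ref{prop:crossingwithextensionsS} and Lemma~\ref{lemma:extvanishnotadmissiblePtolemyarcs} already carry all the geometry and homological algebra, the proof is essentially bookkeeping; the one point I expect to need genuine care — and would treat as the crux — is pinning down that the hypothesis of Proposition~\ref{prop:crossingwithextensionsS} holds automatically in the $w=2$ case, so that the truly extension-free situation is confined to $w\geq 3$ and $w\leq -1$, exactly where Lemma~\ref{lemma:extvanishnotadmissiblePtolemyarcs} is available to finish.
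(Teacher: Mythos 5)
Your argument is correct and follows essentially the same route as the paper's one-line proof, which simply cites Corollary~\ref{cor:extensions-vs-Ptolemy}, Proposition~\ref{prop:crossingwithextensionsS} and Lemma~\ref{lemma:extvanishnotadmissiblePtolemyarcs}. You have simply filled in the bookkeeping the paper leaves implicit: using the symmetry $E(a,b)=E(b,a)$ to reduce to $\Ext^1(b,a)\neq 0$, checking that when both $\Ext$-spaces vanish the set $E(a,b)$ is actually empty (since all relevant triangles split), and observing that the $w=2$ case never falls into the vanishing-$\Ext$ situation because $V_{\arc{a}}$ then consists of every integer strictly between the endpoints of $\arc{a}$, so that Lemma~\ref{lemma:extvanishnotadmissiblePtolemyarcs}'s hypothesis $w\neq 2$ causes no gap.
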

\begin{proof}
This follows from Corollary~\ref{cor:extensions-vs-Ptolemy}, Proposition~\ref{prop:crossingwithextensionsS} and Lemma~\ref{lemma:extvanishnotadmissiblePtolemyarcs}.
\end{proof}

Putting these together with Corollary~\ref{cor:extensions-vs-Ptolemy} and Proposition~\ref{prop:neighbourotherinclusion} yields the following corollary, which together with Theorem~\ref{thm:extensionsnonindecomposable} gives Theorem~\ref{thm:extensionclosurewneq01}.

\begin{corollary} \label{cor:Ptolemy-equality}
Let $a,b\in \ind{\Tw}$ and $\arc{a},\arc{b}$ be the corresponding admissible arcs. Then
\[
\tE(\arc{a},\arc{b}) = \{\text{admissible Ptolemy arcs incident with the endpoints of $\arc{a}$ and $\arc{b}$}\}.
\]
\end{corollary}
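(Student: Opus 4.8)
The plan is a case analysis according to how the admissible arcs $\arc{a}$ and $\arc{b}$ sit relative to one another, since admissible Ptolemy arcs incident with their endpoints only arise when $\arc{a}$ and $\arc{b}$ cross (Ptolemy arcs of class~I) or when they are neighbouring (Ptolemy arcs of class~II, which forces $w \leq -1$). The inclusion $\tE(\arc{a},\arc{b}) \subseteq \{\text{admissible Ptolemy arcs}\}$ is already provided by Corollary~\ref{cor:extensions-vs-Ptolemy}, so only the reverse inclusion needs attention, and in the remaining degenerate configuration both sides must be seen to be empty.

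First I would treat the case where $\arc{a}$ and $\arc{b}$ cross. If at least one of $\Ext^1_{\Tw}(a,b)$, $\Ext^1_{\Tw}(b,a)$ is nonzero, then Proposition~\ref{prop:crossingwithextensionsS} shows the admissible Ptolemy arcs of class~I all lie in $\tE(\arc{a},\arc{b})$, and together with Corollary~\ref{cor:extensions-vs-Ptolemy} this gives equality. If both Ext groups vanish --- which can only happen for $w \neq 2$ --- then Lemma~\ref{lemma:extvanishnotadmissiblePtolemyarcs} says there are no admissible Ptolemy arcs of class~I at all, while $\tE(\arc{a},\arc{b}) = \emptyset$ by Remark~\ref{rem:addE(a,b)}; so both sides are empty. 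Next, when $\arc{a}$ and $\arc{b}$ are neighbouring (hence $w \leq -1$), the desired equality is precisely Proposition~\ref{prop:neighbourotherinclusion}.

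It remains to handle the case where $\arc{a}$ and $\arc{b}$ are neither crossing nor neighbouring, in which the right-hand side is empty by definition. For $w \geq 2$, Proposition~\ref{Extnonzerowpositive} forces $\Ext^1_{\Tw}(a,b) = \Ext^1_{\Tw}(b,a) = 0$, so $E(a,b) = \emptyset$ by Remark~\ref{rem:addE(a,b)}. For $w \leq -1$, Proposition~\ref{cor:Extnonzerownegative} shows that, up to swapping $a$ and $b$, the only way a nonzero extension between $a$ and $b$ can occur outside the crossing and neighbouring configurations is the exceptional situation of Remark~\ref{rem:exceptionalcase}, namely $w=-1$ with $a$ on the mouth of the AR quiver and $b = \Sigma a$; and there $E(a,b) = \emptyset$ by the same remark. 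Either way $\tE(\arc{a},\arc{b}) = \emptyset$, which finishes the proof.

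The main obstacle is really just careful bookkeeping rather than any new idea: all the substantive work has been done in the earlier results, and the one point requiring vigilance is the degenerate case, where one must combine Propositions~\ref{Extnonzerowpositive} and \ref{cor:Extnonzerownegative} with Remark~\ref{rem:exceptionalcase} to rule out a nonempty $\tE(\arc{a},\arc{b})$ --- the delicate configuration being $w=-1$ with $a$ on the mouth and $b=\Sigma a$, where an extension group is nonzero yet the corresponding middle term is zero.
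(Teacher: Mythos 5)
Your proposal is correct and follows essentially the same route as the paper: the inclusion $\tE(\arc{a},\arc{b}) \subseteq \{\text{admissible Ptolemy arcs}\}$ is Corollary~\ref{cor:extensions-vs-Ptolemy}, the crossing case is handled by combining Proposition~\ref{prop:crossingwithextensionsS} with Lemma~\ref{lemma:extvanishnotadmissiblePtolemyarcs}, and the neighbouring case is Proposition~\ref{prop:neighbourotherinclusion}. One small observation: your final paragraph (establishing that $\tE(\arc{a},\arc{b})=\emptyset$ when $\arc{a}$ and $\arc{b}$ are neither crossing nor neighbouring) is a useful sanity check but is in fact already implied by the inclusion of Corollary~\ref{cor:extensions-vs-Ptolemy} --- in that configuration the right-hand side is empty by definition, so the inclusion forces the left-hand side to be empty as well, without appealing separately to Propositions~\ref{Extnonzerowpositive} and \ref{cor:Extnonzerownegative} or Remark~\ref{rem:exceptionalcase}. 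The paper's very terse proof relies exactly on this shortcut, which is why it does not mention the degenerate case explicitly.
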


%==============================================================================
% Section
\section{Torsion pairs and the Ptolemy condition in $\sT_0$} \label{sec:Ptolemy-0}
%==============================================================================

In this section we complete the proof of Theorem~\ref{thm:A}. Throughout this section $w=0$. The AR quiver of $\sT_0$ has only one component since $|d| = 1$. Recall that $\sT_0$ is $0$-CY, i.e. for $a \in \sT_0$ we have $\Sigma a = \tau^{-1} a$, and $\SSS a = a$. In this case, the admissible arcs are pairs $(t,u)$ with $t \geq u$. In particular, for $a \in \ind{\sT_0}$ lying on the mouth of the AR quiver, the corresponding arc $\arc{a}$ is a \emph{loop}, i.e. a pair of integers $(x,x)$.

To classify extension closed subcategories of $\sT_0$ we need to introduce a new class of Ptolemy arcs.

\begin{definition}
Let $\arc{a}$ and $\arc{b}$ be $(-1)$-admissible arcs which are not loops. Note that we can have $\arc{a} = \arc{b}$. We say $\arc{a}$ and $\arc{b}$ are \emph{adjacent} if they are incident with a common vertex $x$. In this case, the \emph{Ptolemy arcs of class III} associated to $\arc{a}$ and $\arc{b}$ are the loop at $x$ and the arc connecting the other two vertices. See Figure \ref{fig:Ptolemy-classIII} for an illustration.
\end{definition}

\begin{figure}[!ht]
\begin{center}
\begin{tikzpicture}[thick,scale=0.75, every node/.style={scale=0.75}]

\draw (3,4) -- (10,4);

%neighbours which are not innerarcs:
\path[out=85,in=95] (4,4) edge (6.5,4);
\path[out=85,in=95] (6.5,4) edge (9,4);

%the Ptolemy arcs of class III:
\path[out=90,in=90,dashed] (4,4) edge (9,4);
\node (vertex) at (6.5,4) {};
\path[dashed] (vertex) edge [loop below] ();

%neighbours which are innerarcs:
\draw (1,1) -- (6,1);

\draw (7,1) -- (12,1);

%neighbour to the left:
\path[out=85,in=95] (1.5,1) edge (5.5,1);
\path[out=85,in=95] (1.5,1) edge (3.5,1);
\path[out=45,in=135,dashed] (3.5,1) edge (5.5,1);
\node (vertex2) at (1.5,1) {};
\path[dashed] (vertex2) edge [loop below] ();

%neighbour to the right:
\path[out=85,in=95] (7.5,1) edge (11.5,1);
\path[out=85,in=95] (9.5,1) edge (11.5,1);
\path[out=45,in=135,dashed] (7.5,1) edge (9.5,1);
\node (vertex3) at (11.5,1) {};
\path[dashed] (vertex3) edge [loop below] ();

\end{tikzpicture}
\end{center}
\caption{Ptolemy arcs of class III.}
\label{fig:Ptolemy-classIII}
\end{figure}
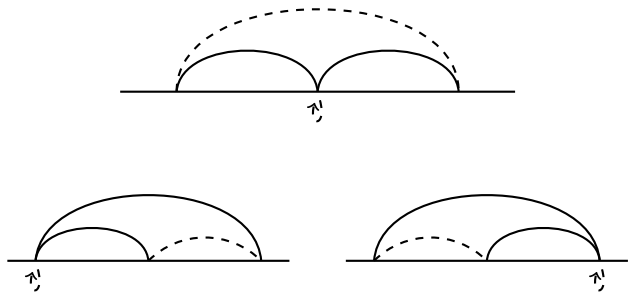

Note that the notion of crossing arcs only makes sense for non-loops. However, we admit loops in the notion of neighbouring arcs. Note also that Ptolemy arcs in $\sT_0$ are always admissible.

The aim of this section is to prove the following theorem.

\begin{theorem}\label{thm:extensionclosureT0}
Let $\sX$ be a full additive subcategory of $\sT_0$ and $\tX$ be the arcs corresponding to the objects of $\ind{\sX}$. Then the objects of $\ind{\extn{\sX}}$ correspond to the arcs of the closure of $\tX$ under Ptolemy arcs of classes I, II, and III.
\end{theorem}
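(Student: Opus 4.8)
The plan is to mirror the strategy of Section~\ref{sec:Ptolemy}, adapted to the features of $\sT_0$. By Theorem~\ref{thm:extensionsnonindecomposable}, which is valid for $w=0$, it suffices to compute, for every pair $a,b\in\ind{\sT_0}$, the set $\tE(\arc{a},\arc{b})$ of arcs corresponding to the indecomposable summands of the middle terms of the non-split extensions between $a$ and $b$ (in either direction), and to show that $\tE(\arc{a},\arc{b})$ coincides with the set of Ptolemy arcs of classes I, II and III incident with the endpoints of $\arc{a}$ and $\arc{b}$. A simplification over Section~\ref{sec:Ptolemy} is that every pair $(t,u)$ with $t\geq u$ is $(-1)$-admissible, so every arc produced by the graphical calculus is automatically admissible; the genuinely new phenomenon is that such an arc may be a loop, and that non-mouth objects carry self-extensions.

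First I would record the $w=0$ version of Proposition~\ref{prop:middletermsarcs}. Using Proposition~\ref{prop:graphical-calculus} together with the arc dictionary from the end of the proof of Proposition~\ref{prop:middletermsarcs} --- two indecomposables lie on a common ray (resp. coray) iff the first (resp. second) coordinates of their arcs agree, and, since $\SSS=\id$ in $\sT_0$, the arc of $L(x)$ is the loop $(s(\arc{x}),s(\arc{x}))$ and the arc of $R(x)$ is the loop $(t(\arc{x}),t(\arc{x}))$ --- one reads off that whenever $b\in\extfrom{a}$ the summands of the middle term of $\tri{a}{e}{b}$ are $\arc{e_1}=(s(\arc{a}),t(\arc{b}))$ and $\arc{e_2}=(s(\arc{b}),t(\arc{a}))$, and whenever $b\in\extto{a}$ they are $\arc{e_1}=(s(\arc{b}),s(\arc{a}))$ and $\arc{e_2}=(t(\arc{b}),t(\arc{a}))$, exactly as in Definition~\ref{def:es}. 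One must separately feed in the two triangles from Proposition~\ref{prop:graphical-calculus} with $b=\Sigma a$ (one being $\tri{a}{0}{\Sigma a}$), and, following the remark after Proposition~\ref{prop:graphical-calculus}, the self-extension $\tri{a}{e_1\oplus e_2}{a}$, whose second outer term is treated as a copy of $\SSS a$ in a fictitious separate AR component so that Theorem~\ref{thm:graphical-calculus}(ii) applies and returns the two loops at $s(\arc{a})$ and $t(\arc{a})$.

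Next I would run a case analysis on the mutual position of $\arc{a}$ and $\arc{b}$, using Lemma~\ref{partialfountainslemma}(2) and Proposition~\ref{prop:ext-hammocks}(ii) to detect when $\Ext^1_{\sT_0}(b,a)\neq 0$. If $\arc{a}$ and $\arc{b}$ are crossing non-loops, their four endpoints are distinct, extensions exist in both directions by $0$-CY duality, and the four middle-term arcs produced are exactly the four Ptolemy arcs of class I --- the argument is the $w=2$ argument of Proposition~\ref{prop:crossingwithextensionsS}. If $\arc{a}$ and $\arc{b}$ are neighbouring (loops allowed) one obtains the Ptolemy arcs of class II exactly as in Proposition~\ref{prop:neighbourotherinclusion}, with no exceptional arc of the $w=-1$ kind to exclude. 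If $\arc{a}$ and $\arc{b}$ are adjacent non-loops with common vertex $x$ --- including $\arc{a}=\arc{b}$, and including the ``chaining'' configuration where $x$ is the starting point of one and the ending point of the other --- the objects $a$ and $b$ either lie on a common ray or coray or differ by $\Sigma$, and the relevant extensions are the self-extension of the appropriate object together with the extensions coming from the overlap $\extfrom{a}\cap\extto{a}=\{\Sigma a\}$ noted in Lemma~\ref{lem:w=0twoextensions}; computing their middle terms via Proposition~\ref{prop:graphical-calculus} yields the loop at $x$ and the arc joining the remaining two endpoints, i.e. the Ptolemy arcs of class III. Finally, as in Lemma~\ref{lemma:extvanishnotadmissiblePtolemyarcs}, one checks that crossing, neighbouring or adjacent arcs with vanishing $\Ext^1$ in both directions contribute no new arc. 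Assembling these computations identifies $\tE(\arc{a},\arc{b})$ as claimed; combining with Theorem~\ref{thm:extensionsnonindecomposable} proves the theorem.

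The step I expect to be the main obstacle is the class III analysis. Because $\SSS=\id$ in $\sT_0$, the forward and backward Ext-hammocks overlap and every non-mouth object has a self-extension, so the combinatorics of which adjacent pair (including the diagonal case $\arc{a}=\arc{b}$ and the chaining case) produces which loop is the delicate part, as is the verification that every loop arising in $\ind{\extn{\sX}}$ is accounted for and none is produced spuriously. The correct bookkeeping hinges on systematically regarding the second outer term of a self-extension as a copy of $\SSS a$ sitting in a separate AR component, so that the graphical calculus of Theorem~\ref{thm:graphical-calculus}(ii) can be applied unchanged.
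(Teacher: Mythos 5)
Your overall strategy --- reduce to indecomposable outer terms via Theorem~\ref{thm:extensionsnonindecomposable}, compute the sets $\tE(\arc{a},\arc{b})$ with the graphical calculus, and match against Ptolemy arcs via a position analysis --- is exactly the paper's, and your treatment of the crossing and neighbouring cases is fine. The gap is in the class~III (adjacent) analysis, which you correctly flag as the delicate part, and it is not merely a bookkeeping inconvenience.

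Two claims in that paragraph are false. First, adjacent non-loops do \emph{not} always lie on a common ray or coray or differ by $\Sigma$: in the chaining configuration $\arc{a}=(s_a,t_a)$, $\arc{b}=(t_a,t_b)$ with $s_a>t_a>t_b$, the objects $a$ and $b$ share neither a ray nor a coray, and $b$ is not $\Sigma a$ (since $\Sigma a=(s_a-1,t_a-1)$), nor generically any power of $\Sigma$ applied to $a$. Second, the pair $(a,\Sigma a)$ you import from Lemma~\ref{lem:w=0twoextensions} is a \emph{neighbouring} pair (both coordinates shift by one, no shared vertex), so the overlap $\extfrom{a}\cap\extto{a}=\{\Sigma a\}$ belongs to the class~II analysis, not class~III. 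Following your description literally --- ``the relevant extensions are the self-extension of the appropriate object together with the extensions coming from the overlap'' --- would therefore miss the chaining case entirely and conflate a class~II pair with class~III. In fact the class~III Ptolemy arcs arise from \emph{ordinary} extensions (detected by conditions (3)--(5) of Proposition~\ref{lemma:Ext(b,a)w=0}), and the loop appears simply because one of the two arcs of Definition~\ref{def:es} degenerates: for the chaining pair above, $b\in\extfrom{a}$ and the graphical calculus returns $\arc{e_1}=(s_a,t_b)$ and $\arc{e_2}=(t_a,t_a)$. The paper's class~III proposition is a six-case (three plus duals) analysis of adjacency at $s(\arc{a})$ and $t(\arc{a})$, each settled by Proposition~\ref{lemma:middletermsw=0}; no new ingredient beyond Theorem~\ref{thm:graphical-calculus} and Proposition~\ref{prop:graphical-calculus} is needed, and the ``$\SSS a$ in a fictitious component'' device is only required for the genuine self-extension $\arc{a}=\arc{b}$. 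You should also state an analogue of the $w\leq -1$ Remark~\ref{rem:exceptionalcase} (the paper's Remark~\ref{rmk:exceptionalcasesw0}) to isolate the degenerate pairs that are neither crossing, neighbouring nor adjacent but still have $\Ext^1\neq 0$; these have $\tE=\emptyset$ and contribute nothing.
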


Putting this together with Propositions~\ref{prop:Iyama-Yoshino} and \ref{cor:contravariant-finiteness} completes Theorem~\ref{thm:A}.

\begin{corollary}
Let $\sX$ be a full additive subcategory of $\sT_0$ and $\tX$ be the corresponding set of arcs. Then $(\sX,\sX^{\perp})$ is a torsion pair in $\Tw$ if and only if any left fountain in $\tX$ is also a right fountain and $\tX$ is closed under taking admissible Ptolemy arcs of classes I, II and III.
\end{corollary}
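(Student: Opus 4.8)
The plan is to assemble this corollary directly from the machinery already in place, exactly as in the $w \neq 0,1$ case. First I would invoke Proposition~\ref{prop:Iyama-Yoshino}: since $\sT_0$ is $\kk$-linear, Hom-finite and Krull--Schmidt, the pair $(\sX,\sX^{\perp})$ is a torsion pair if and only if $\sX$ is an extension-closed, contravariantly finite subcategory of $\sT_0$. So it suffices to translate each of the two conditions ``extension-closed'' and ``contravariantly finite'' into the combinatorial language of arcs.

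For extension-closure, note that $\sX$ is extension-closed precisely when $\extn{\sX} = \sX$, equivalently $\ind{\extn{\sX}} = \ind{\sX}$. By Theorem~\ref{thm:extensionclosureT0}, the arcs of $\ind{\extn{\sX}}$ are exactly the closure of $\tX$ under Ptolemy arcs of classes I, II and III; hence $\sX$ is extension-closed if and only if $\tX$ is already closed under taking these Ptolemy arcs. For contravariant finiteness, Corollary~\ref{cor:contravariant-finiteness}(2), applied with $w = 0 \leq 0$, says that $\sX$ is contravariantly finite in $\sT_0$ if and only if every left fountain in $\tX$ is also a right fountain. Conjoining the two equivalences yields the statement.

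Since all the substantive work --- the computation of the middle terms of extensions (Section~\ref{sec:extensions}), the reduction to indecomposable outer terms (Theorem~\ref{thm:extensionsnonindecomposable}), the identification of middle terms with Ptolemy arcs (Theorem~\ref{thm:extensionclosureT0}), and the fountain characterisation of contravariant finiteness (Proposition~\ref{prop:contravariant-finiteness}) --- has already been carried out, there is no real obstacle here; the only point requiring a moment's care is that in $\sT_0$ every Ptolemy arc is admissible (as noted just before Theorem~\ref{thm:extensionclosureT0}), so the adjective ``admissible'' in the statement is harmless and the closure operation on $\tX$ is the naive one. I would therefore keep the write-up to a couple of sentences, in parallel with the corollary stated after Theorem~\ref{thm:extensionclosurewneq01}.
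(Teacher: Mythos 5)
Your proposal is correct and matches the paper's argument: the corollary is obtained by combining Proposition~\ref{prop:Iyama-Yoshino} with Theorem~\ref{thm:extensionclosureT0} for extension-closure and Corollary~\ref{cor:contravariant-finiteness}(2) for contravariant finiteness. The observation that admissibility of Ptolemy arcs is automatic in $\sT_0$ is a helpful clarifying remark but does not change the substance.
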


\subsection{Combinatorial description of the Ext-hammocks}
The following propositions are direct consequences of Lemma \ref{partialfountainslemma}(2) and give us a combinatorial description of the arcs $\arc{b}$ for which there is an extension of $b$ by $a$. 

\begin{proposition}\label{lemma:Ext(b,a)w=0}
Let $a, b \in \ind{\sT_0}$, and assume $\arc{a}$ has length greater than or equal to one. We have $\Ext^1_{\sT_0} (b,a) \ne 0$ if and only if $\arc{b}$ satisfies one of the following conditions:
\begin{compactenum}
\item $\arc{b}$ crosses $\arc{a}$,
\item $\arc{b}$ is a neighbour of $\arc{a}$ incident with $t(\arc{a})-1$ or $s(\arc{a})-1$,
\item $s(\arc{b}) = t(\arc{a})$ and $t(\arc{b}) \leq t(\arc{a})-1$,
\item $t(\arc{b}) = t(\arc{a})$ and $s(\arc{b}) \geq s(\arc{a})-1$,
\item $s(\arc{b}) = s(\arc{a})$ and $t(\arc{a})-1 \leq t(\arc{b}) \leq s(\arc{a})-1$.
\end{compactenum} 
\end{proposition}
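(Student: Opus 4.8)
The plan is to read the statement off from Lemma~\ref{partialfountainslemma}(2) specialised to $w = 0$ (so $d = -1$), by unwinding the definition of the two partial fountains and then sorting the resulting endpoint inequalities into the five listed geometric configurations.

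First I would make $V_{\arc{a}}$ explicit. Since $d = -1$ and $\arc{a}$ has length $\geq 1$, the integer $k$ with $t(\arc{a}) - s(\arc{a}) = kd + 1$ equals $k = s(\arc{a}) - t(\arc{a}) + 1 \geq 2$, so
\[
V_{\arc{a}} = \{\, s(\arc{a}) - i \mid i = 1, \ldots, k \,\} = \{\, t(\arc{a}) - 1,\ t(\arc{a}),\ \ldots,\ s(\arc{a}) - 1 \,\}
\]
is a block of (at least two) consecutive integers; this is the only place the length hypothesis enters, ruling out $\arc{a}$ being a loop. Next I would unwind the fountains: recalling that for $w = 0$ every pair $(t,u)$ with $t \geq u$ is an admissible arc, one obtains
\[
\rfountain{V_{\arc{a}}}{s(\arc{a})-1} = \{\, \arc{b} : t(\arc{b}) \in V_{\arc{a}},\ s(\arc{b}) \geq s(\arc{a}) - 1 \,\}, \qquad \lfountain{V_{\arc{a}}}{t(\arc{a})-1} = \{\, \arc{b} : s(\arc{b}) \in V_{\arc{a}},\ t(\arc{b}) \leq t(\arc{a}) - 1 \,\},
\]
and Lemma~\ref{partialfountainslemma}(2) says $\Ext^1_{\sT_0}(b,a) \neq 0$ if and only if $\arc{b}$ lies in the union of these two sets. (As a sanity check, their intersection is the single arc $(s(\arc{a})-1, t(\arc{a})-1) = \Sigma\arc{a}$, matching the two-dimensional $\Ext$-space in Proposition~\ref{prop:ext-hammocks}(ii).)

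The remaining work is a case analysis on the positions of $s(\arc{b})$ and $t(\arc{b})$ relative to the endpoints of $\arc{a}$. If $\arc{b}$ shares no vertex with $\arc{a}$ and its endpoints interleave those of $\arc{a}$ --- i.e.\ $\arc{b}$ crosses $\arc{a}$ --- then, because $V_{\arc{a}}$ is a \emph{full} interval, the inner endpoint of $\arc{b}$ automatically lies in $V_{\arc{a}}$ and the outer one automatically satisfies the relevant bound, so $\arc{b}$ lies in one of the fountains; this gives (1). If $\arc{b}$ shares a vertex with $\arc{a}$, the membership conditions pick out exactly $s(\arc{b}) = t(\arc{a})$, $t(\arc{b}) = t(\arc{a})$ and $s(\arc{b}) = s(\arc{a})$ with the stated one-sided bounds on the free endpoint, i.e.\ (3), (4), (5), while the fourth possibility $t(\arc{b}) = s(\arc{a})$ never satisfies the inequalities. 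Finally, the arcs lying in a fountain that neither cross $\arc{a}$ nor share a vertex with it must have an endpoint equal to $t(\arc{a})-1$ or $s(\arc{a})-1$, and one checks these are precisely the non-crossing arcs at distance $1$ from $\arc{a}$ incident with one of those two vertices, which is (2).

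The main thing to watch --- and really the only subtlety, since the proposition is essentially a dictionary lookup --- is the degeneration of these cases when $\arc{a}$ has length exactly one: then $t(\arc{a})-1$, $t(\arc{a}) = s(\arc{a})-1$ and $s(\arc{a})$ are three consecutive integers, so several of the crossing, neighbour, shared-vertex and $\Sigma\arc{a}$ configurations collapse onto one another, and one must verify the claimed equivalences survive (they do; the \emph{union} of (1)--(5) is unchanged even though the assignment of a given $\arc{b}$ to a particular clause shifts). Loops deserve a brief check as well, being genuine arcs in $\sT_0$: for example the loop at $t(\arc{a})-1$ lies in $\lfountain{V_{\arc{a}}}{t(\arc{a})-1}$ and is accounted for, as a neighbour incident with $t(\arc{a})-1$, by (2). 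Beyond these edge cases the argument is a routine comparison of inequalities.
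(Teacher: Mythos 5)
Your proof is correct and takes exactly the approach the paper intends: it declares this proposition a ``direct consequence'' of Lemma~\ref{partialfountainslemma}(2) and leaves the unwinding to the reader, which is precisely what you carry out. Your identification of $V_{\arc{a}}$ as the interval $\{t(\arc{a})-1,\ldots,s(\arc{a})-1\}$, the translation of the two fountains into the two pairs of endpoint inequalities, and the sorting into the five clauses (including the $\Sigma\arc{a}$ sanity check and the length-one degeneration) are all accurate.
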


\begin{proposition}\label{lemma:Ext(b,a)w=0length0}
If $\arc{a}$ is a loop, and $b \in \ind{\sT_0}$, then $\Ext^1_{\sT_0} (b,a) \neq 0$ if and only if $\arc{b}$ satisfies condition (2) or (5) of Proposition \ref{lemma:Ext(b,a)w=0}.  
\end{proposition}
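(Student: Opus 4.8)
The plan is to treat this as the degenerate case $k=1$ of Lemma~\ref{partialfountainslemma}(2). First I would write $\arc{a}=(x,x)$ for the loop, so that $s(\arc{a})=t(\arc{a})=x$. In the notation of Lemma~\ref{partialfountainslemma}, the relation $t(\arc{a})-s(\arc{a})=kd+1$ becomes $0=-k+1$ (recall $d=-1$ when $w=0$), which forces $k=1$ and hence $V_{\arc{a}}=\{s(\arc{a})+d\}=\{x-1\}$. Since moreover $s(\arc{a})-1=x-1=t(\arc{a})+d$, Lemma~\ref{partialfountainslemma}(2) collapses to the assertion that $\Ext^1_{\sT_0}(b,a)\neq 0$ if and only if $\arc{b}\in\rfountain{x-1}{x-1}\cup\lfountain{x-1}{x-1}$. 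Directly from the definition of partial fountains, this union is exactly the set of all $(-1)$-admissible arcs incident with the vertex $x-1$: such an arc is either $(z,x-1)$ with $z\geq x-1$, hence in $\rfountain{x-1}{x-1}$, or $(x-1,y)$ with $y\leq x-1$, hence in $\lfountain{x-1}{x-1}$ (and the loop $(x-1,x-1)=\Sigma a$ lies in both, consistently with $\ext^1_{\sT_0}(\Sigma a,a)=2$).

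The second step is to check that, when specialised to the loop $\arc{a}=(x,x)$, conditions (2) and (5) of Proposition~\ref{lemma:Ext(b,a)w=0} describe precisely the arcs incident with $x-1$. Condition (5) reads $s(\arc{b})=x$ and $t(\arc{a})-1\leq t(\arc{b})\leq s(\arc{a})-1$, i.e.\ $t(\arc{b})=x-1$, so it is satisfied by exactly one arc, namely $(x,x-1)$, which is the unique arc incident with both $x$ and $x-1$. For any other arc $\arc{b}$ incident with $x-1$, the endpoint of $\arc{b}$ nearest to $x$ is $x-1$, so $d(\arc{a},\arc{b})=1$; since a loop crosses no arc, $\arc{b}$ is then a neighbouring arc of $\arc{a}$ incident with $t(\arc{a})-1=s(\arc{a})-1=x-1$, i.e.\ it satisfies condition (2). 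Conversely, every arc satisfying (2) or (5) is visibly incident with $x-1$. Combining this with the first step yields the stated equivalence.

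The point I expect to need the most care is the legitimacy of invoking Lemma~\ref{partialfountainslemma}(2) for a loop $\arc{a}$: its proof reduces the $w=0$ case to the $w\leq-1$ case, where loops do not arise. If this needs to be shored up, the fallback is to argue the loop instance directly from Proposition~\ref{prop:ext-hammocks} and the explicit formulae for the Ext-hammocks stated just after it --- for $X_0$ on the mouth one has $\extfrom{X_0}=\rayfrom{\Sigma X_0}$ and $\extto{X_0}=\corayto{\Sigma X_0}$ --- and then to translate via the ray/coray dictionary recorded in the proof of Proposition~\ref{prop:middletermsarcs}, which again returns exactly the admissible arcs incident with $x-1$. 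A minor accompanying sanity check is that conditions (1), (3), (4) of Proposition~\ref{lemma:Ext(b,a)w=0} contribute nothing new in the loop case: condition (1) is vacuous because crossing is undefined for loops, and conditions (3)--(4) only produce arcs incident with $x$, apart from $(x,x-1)$ which is already covered by (5).
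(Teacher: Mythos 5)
Your proof is correct, and the approach coincides with the paper's: the paper simply observes that both Propositions~\ref{lemma:Ext(b,a)w=0} and~\ref{lemma:Ext(b,a)w=0length0} are direct consequences of Lemma~\ref{partialfountainslemma}(2), whereas you spell out the specialisation to the loop case ($k=1$, $V_{\arc{a}}=\{x-1\}$, and $s(\arc{a})-1=t(\arc{a})+d=x-1$) and then check that the resulting set of arcs incident with $x-1$ is exactly what conditions (2) and (5) describe. Your worry about invoking Lemma~\ref{partialfountainslemma}(2) for a loop is harmless: one can confirm directly, as in your fallback, that for $a=X_0$ (so $\arc{a}=(0,0)$) one has $\extfrom{X_0}=\rayfrom{\Sigma X_0}=\lfountain{-1}{-1}$ and $\extto{X_0}=\corayto{\Sigma X_0}=\rfountain{-1}{-1}$, giving precisely the arcs incident with $-1$, in agreement with the lemma.
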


\subsection{The middle terms of extensions correspond to admissible Ptolemy arcs}

Recall the definitions of $\arc{e_1}$ and $\arc{e_2}$ from Definition~\ref{def:es}. The next proposition shows that, like the case $w \in \mathbb{Z}\setminus\{0,1\}$, these arcs, when admissible, correspond to the indecomposable summands of the middle term of the extension. 

\begin{proposition}\label{lemma:middletermsw=0}
Let $a, b \in \ind{\sT_0}$ be such that $\Ext^1_{\sT_0}(b,a) \neq 0$, and let $e_1$ and $e_2$ be as in Theorem \ref{thm:graphical-calculus}.
\begin{compactenum}
\item Suppose $b \in \extfrom{a}$. The arc $\arc{e_1}$ is always admissible. The arc $\arc{e_2}$ is admissible if and only if $s(\arc{b}) \geq t(\arc{a})$.
\item Suppose $b \in \extto{a}$. The arc $\arc{e_1}$ is admissible if and only if $s(\arc{b}) \geq s(\arc{a})$. The arc $\arc{e_2}$ is admissible if and only if $t(\arc{b}) \leq t(\arc{a})$. 
\end{compactenum}
Moreover, $e_i$ is nonzero if and only if $\arc{e_i}$, which is the corresponding arc, is admissible. 
\end{proposition}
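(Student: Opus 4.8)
The plan is to run, in the degenerate case $w=0$, the same bookkeeping argument used to prove Proposition~\ref{prop:middletermsarcs}. For $w=0$ we have $d=-1$, the Serre functor is the identity, and the congruence ``$u-t\equiv 1\pmod d$'' in the definition of an admissible arc is vacuous, so that an ordered pair $(t,u)$ is admissible exactly when $t\ge u$. Thus checking admissibility of the arcs $\arc{e_1},\arc{e_2}$ of Definition~\ref{def:es} amounts to checking a single linear inequality between their endpoints, and the whole proposition sits on top of Theorem~\ref{thm:graphical-calculus} and Proposition~\ref{prop:graphical-calculus}.

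First I would record, for an admissible arc $\arc{a}$ with $t(\arc{a})-s(\arc{a})=kd+1$, that $k=s(\arc{a})-t(\arc{a})+1\ge 1$ and that $V_{\arc{a}}=\{s(\arc{a})-i\mid 1\le i\le k\}$ is the block of consecutive integers running from $t(\arc{a})-1$ up to $s(\arc{a})-1$; write $k'$ for the analogous integer attached to $\arc{b}$. Using the combinatorial description of the Ext-hammocks --- Lemma~\ref{partialfountainslemma}(2), or equivalently Propositions~\ref{lemma:Ext(b,a)w=0} and \ref{lemma:Ext(b,a)w=0length0} --- I read off that $b\in\extfrom{a}$ forces $s(\arc{b})=s(\arc{a})-i$ for some $1\le i\le k$ together with $t(\arc{b})\le t(\arc{a})-1$, whereas $b\in\extto{a}$ forces $t(\arc{b})=s(\arc{a})-i$ for some $1\le i\le k$ together with $s(\arc{b})\ge s(\arc{a})-1$.

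Next, in each of the four cases I substitute these values into Definition~\ref{def:es} and compute $t(\arc{e_j})-s(\arc{e_j})$ as a linear function of $i,k,k'$ --- these are precisely the expressions $(k'+i)d+1$, $(k-i)d+1$ and $(k'-i)d+1$ that appear in the proof of Proposition~\ref{prop:middletermsarcs}, now with $d=-1$. Imposing the admissibility inequality $t(\arc{e_j})-s(\arc{e_j})\le 0$ and translating the resulting bound on $i$ back into a bound on $s(\arc{b})$ or $t(\arc{b})$ via $s(\arc{b})=s(\arc{a})-i$ (resp.\ $t(\arc{b})=s(\arc{a})-i$) yields the stated conditions; in part~(1) the arc $\arc{e_1}$ is always admissible because $(k'+i)d+1=1-(k'+i)\le 0$.

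Finally, for the ``moreover'' clause I would argue exactly as at the end of the proof of Proposition~\ref{prop:middletermsarcs}: by Theorem~\ref{thm:graphical-calculus} and Proposition~\ref{prop:graphical-calculus} each $e_j$ is the indecomposable additive generator of an intersection of a (half-)ray and a (half-)coray in the AR quiver of $\sT_0$, and the dictionary ``two objects lie on a common ray (resp.\ coray) iff the first (resp.\ second) coordinates of their arcs agree'', together with the facts that the arc of $L(\SSS a)=L(a)$ is the loop at $s(\arc{a})$ and the arc of $R(\SSS^{-1}b)=R(b)$ is the loop at $t(\arc{b})$, identifies this generator --- when it exists --- with the ordered pair $\arc{e_j}$; the same $k,k'$ bookkeeping then shows the intersection is nonempty precisely when $\arc{e_j}$ is admissible. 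I expect the main obstacle to be keeping track of which \emph{half} of a ray or coray $e_j$ is forced to lie on: it is not enough that the ambient ray and coray meet, one needs the meeting point to lie on $\rayto{b}$ rather than $\rayfrom{b}$ (and dually), which is exactly what the constraint $1\le i\le k$ from $b\in\extfrom{a}\cup\extto{a}$ guarantees. A secondary point is the degeneracy at $w=0$: the AR quiver is a single $\bZ\Ainf$ component and $\SSS=\id$, so Theorem~\ref{thm:graphical-calculus}(ii) must be applied with the ``different component'' convention of the remark following Proposition~\ref{prop:graphical-calculus}, and the case $b=\Sigma a$ --- where two non-split extensions exist --- is dispatched separately, the AR triangle falling under part~(1) and the split triangle $\trilabels{a}{0}{\Sigma a}{}{}{\id}$ under part~(2) with both $e_j$ zero.
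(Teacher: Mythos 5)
Your plan coincides with the paper's own (very terse) proof, which just observes that for $w=0$ admissibility of $(t,u)$ reduces to the single inequality $t\ge u$, and refers to Proposition~\ref{prop:middletermsarcs} for the ``moreover'' clause; re-running the $k,k'$ bookkeeping at $d=-1$ and handling the $b=\Sigma a$ degeneracy via Proposition~\ref{prop:graphical-calculus} is exactly the right way to flesh that out.

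However, you assert that the translation ``yields the stated conditions'' without actually carrying it out, and if you do carry it out you will find a discrepancy in part~(2). There $\arc{e_2}=(t(\arc{b}),t(\arc{a}))$ by Definition~\ref{def:es}, so for $w=0$ it is admissible precisely when its first coordinate is $\ge$ its second, i.e.\ $t(\arc{b})\ge t(\arc{a})$. In your own notation, $t(\arc{e_2})-s(\arc{e_2})=(k-i)d+1=1-(k-i)\le 0$ forces $i\le k-1$, i.e.\ $t(\arc{b})=s(\arc{a})-i\ge s(\arc{a})-k+1=t(\arc{a})$. This is also the $d=-1$ specialisation of the $w\neq 0,1$ condition $t(\arc{b})\neq t(\arc{a})-1$: the permissible values of $t(\arc{b})$ are $t(\arc{a})-1,\dots,s(\arc{a})-1$, and the excluded one is the \emph{smallest}, so what survives is $t(\arc{b})\ge t(\arc{a})$. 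The proposition as printed has the inequality reversed; your argument, carried through honestly, should flag this rather than reproduce it.
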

\begin{proof}
Since $w = 0$, the arcs $\arc{e_1}$ and $\arc{e_2}$ are admissible if and only if their first component is greater than or equal to the second component. The proof of the last statement is the same as in Proposition \ref{prop:middletermsarcs}.
\end{proof}

Recall the definitions of $E(a,b)$ and $\tE(\arc{a},\arc{b})$ from Definition~\ref{def:E(a,b)}. We end this subsection by checking that $\tE (\arc{a},\arc{b})$ is contained in the set of Ptolemy arcs associated to $\arc{a}$ and $\arc{b}$. 

\begin{remark}\label{rmk:exceptionalcasesw0}
The only cases in Propositions \ref{lemma:Ext(b,a)w=0} and \ref{lemma:Ext(b,a)w=0length0} where $\arc{a}$ and $\arc{b}$ are neither crossing, neighbouring nor adjacent arcs are when:
\begin{compactenum}[(i)]
\item $\arc{a}$ is a loop and $\arc{b} = (t(\arc{a}),t(\arc{a})-1)$, or
\item $\arc{a} = (s(\arc{a}),s(\arc{a})-1)$ and $\arc{b} = (t(\arc{a}),t(\arc{a}))$.
\end{compactenum}
The extensions of $b$ by $a$ are $\trilabels{a}{a}{b}{f}{}{}$ in (i) and $\trilabels{a}{b}{b}{}{g}{}$ in (ii), where $f$ and $g$ are non-isomorphisms. On the other hand, $\Ext^1_{\sT_0} (a,b) = 0$. Therefore $\tE(\arc{a},\arc{b}) = \emptyset$ in these cases.
\end{remark}

\begin{corollary}\label{cor:middletermsarePtolemyw=0}
Let $a, b \in \ind{\sT_0}$ and $\arc{a},\arc{b}$ be the corresponding admissible arcs. Then
\[
\tE(\arc{a},\arc{b}) \subseteq \{\text{admissible Ptolemy arcs incident with the endpoints of $\arc{a}$ and $\arc{b}$}\}.
\]
\end{corollary}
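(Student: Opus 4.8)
The plan is to adapt the proof of Corollary~\ref{cor:extensions-vs-Ptolemy} to the new features of $\sT_0$, namely loops and Ptolemy arcs of class III. By Remark~\ref{rem:addE(a,b)}, $E(a,b)=\{e_1,e_2,f_1,f_2\}$, where $\tri{a}{e_1\oplus e_2}{b}$ and $\tri{b}{f_1\oplus f_2}{a}$ are the non-split extensions supplied by Proposition~\ref{prop:graphical-calculus} (some of the $e_i,f_i$ possibly zero), so it suffices to show that each nonzero $e_i$, and, by the symmetry $a\leftrightarrow b$, each nonzero $f_i$, corresponds to an admissible Ptolemy arc incident with the endpoints of $\arc{a}$ and $\arc{b}$. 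First I would dispose of the exceptional configurations: by Remark~\ref{rmk:exceptionalcasesw0}, if $\arc{a}$ and $\arc{b}$ are neither crossing, neighbouring, nor adjacent, then $\tE(\arc{a},\arc{b})=\emptyset$ and there is nothing to prove; so from now on assume they are crossing, neighbouring, or adjacent. Since $w=0$, an arc is admissible precisely when its first coordinate is $\geq$ its second, so by Proposition~\ref{lemma:middletermsw=0} each nonzero $e_i$ corresponds to the admissible arc $\arc{e_i}$ of Definition~\ref{def:es}, which by construction joins two of the endpoints of $\arc{a}$ and $\arc{b}$.

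The crossing case is then immediate: if $\arc{a}$ and $\arc{b}$ cross they have four distinct endpoints, and the four Ptolemy arcs of class I are by definition exactly the four arcs joining these endpoints other than $\arc{a}$ and $\arc{b}$; since Definition~\ref{def:es} forces $\arc{e_i}\neq\arc{a},\arc{b}$, it is one of them. The substance of the proof lies in the neighbouring and adjacent cases, where, exactly as in Corollary~\ref{cor:extensions-vs-Ptolemy}, the Ptolemy arc of the relevant class (class II for neighbours, class III for adjacent arcs) need not be the only admissible arc joining endpoints of $\arc{a}$ and $\arc{b}$: for instance the short arc $(x,x-1)$ between the two closest endpoints of neighbouring arcs, or the loops at the non-shared endpoints of adjacent arcs, are admissible but are not in general Ptolemy arcs. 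The remaining task is to verify that the explicit arc $\arc{e_i}$ of Definition~\ref{def:es} never lands on such a spurious arc.

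To do this I would run the case analysis dictated by Propositions~\ref{lemma:Ext(b,a)w=0} and \ref{lemma:Ext(b,a)w=0length0}. For neighbouring $\arc{a},\arc{b}$, condition (2) of Proposition~\ref{lemma:Ext(b,a)w=0} pins $\arc{b}$ to be incident with $s(\arc{a})-1$ or $t(\arc{a})-1$, and reading $\arc{e_1},\arc{e_2}$ off Definition~\ref{def:es} (together with whether $b\in\extfrom{a}$ or $b\in\extto{a}$) shows the nonzero ones are precisely the class II Ptolemy arc(s) and never the short arc. For adjacent $\arc{a},\arc{b}$, conditions (3), (4), (5) of Proposition~\ref{lemma:Ext(b,a)w=0}, and the loop analogue through Proposition~\ref{lemma:Ext(b,a)w=0length0}, identify the shared vertex, after which one checks in each case that $\arc{e_1}$ is the arc joining the two non-shared endpoints and $\arc{e_2}$ is the loop at the shared vertex (or these roles swap, or one of the two degenerates to the zero object), both of which are class III Ptolemy arcs. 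The dual assertions for $\arc{f_1},\arc{f_2}$ follow by interchanging $a$ and $b$. I expect the adjacent case to be the main obstacle — tracking which endpoints coincide, the possibility that $\arc{a}=\arc{b}$, loops, and the boundary instances where $\arc{a}$ or $\arc{b}$ lies on the mouth of the AR quiver (so its arc is a loop) makes the bookkeeping of vanishing summands most delicate there; the crossing and neighbouring cases follow the template of Corollary~\ref{cor:extensions-vs-Ptolemy}.
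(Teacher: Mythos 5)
Your proposal is correct and follows essentially the same route as the paper: defer the crossing and neighbouring cases to the argument of Corollary~\ref{cor:extensions-vs-Ptolemy}, then treat the adjacent case separately. One simplification worth noting: the case analysis you anticipate for adjacent arcs is not actually needed, because each $\arc{e_i}$ from Definition~\ref{def:es} by construction joins one endpoint of $\arc{a}$ to one endpoint of $\arc{b}$; when $\arc{a}$ and $\arc{b}$ share a vertex $x$, the only such arcs other than $\arc{a}$ and $\arc{b}$ themselves are the loop at $x$ and the arc joining the two non-shared endpoints, both of which are Ptolemy arcs of class III. So the ``spurious'' loops at the non-shared endpoints can never arise as an $\arc{e_i}$ (they would force $\arc{a}$ to be a loop or $\arc{a}=\arc{b}$, in which cases the formulas still land on class III arcs), and the adjacent case is in fact the simplest, not the hardest, of the three.
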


\begin{proof}
The proof is similar to that of Corollary \ref{cor:extensions-vs-Ptolemy}, when $\arc{a}$ and $\arc{b}$ are crossing or neighbouring arcs. When $\arc{a}$ and $\arc{b}$ are adjacent, any arc connecting endpoints of $\arc{a}$ and $\arc{b}$ are Ptolemy arcs of class III, so in particular, $\arc{e_i}$ is a Ptolemy arc. The result then follows by Remark~\ref{rem:multiplesummands}; cf. Remark~\ref{rem:addE(a,b)}.
\end{proof}

\subsection{The extension closure}

Let $a, b \in \ind{\sT_0}$. Combinatorially, Ptolemy arcs of class I, II and III arise out of the following situations, respectively:
\begin{compactitem}
\item $\arc{a}$ and $\arc{b}$ are crossing arcs,
\item $\arc{a}$ and $\arc{b}$ are neighbouring arcs,
\item $\arc{a}$ and $\arc{b}$ are adjacent arcs.
\end{compactitem}

We shall show that in each of the three cases above $\tE(\arc{a},\arc{b})$ is precisely the set of all the admissible Ptolemy arcs of the appropriate class associated to $\arc{a}$ and $\arc{b}$. We consider each situation in turn.

\begin{proposition}
If $\arc{a}$ and $\arc{b}$ cross each other then $\Ext^1_{\sT_0} (b,a) \ne 0$, $\Ext^1_{\sT_0}(a,b) \ne 0$ and $\tE(\arc{a},\arc{b})$ is the set of the Ptolemy arcs of class I associated to $\arc{a}$ and $\arc{b}$. 
\end{proposition}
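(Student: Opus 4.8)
The plan is to exploit the symmetry of the situation in $\arc a$ and $\arc b$, record both Ext-groups, and then read off the two middle terms from the graphical calculus of Section~\ref{sec:graphical}. First observe that a crossing pair of arcs consists of two non-loops, so $\arc a$ and $\arc b$ both have length at least one and Proposition~\ref{lemma:Ext(b,a)w=0} applies to each of them. Since ``$\arc b$ crosses $\arc a$'' is precisely condition~(1) of that proposition, $\Ext^1_{\sT_0}(b,a)\neq 0$, and as the crossing relation is symmetric the same argument yields $\Ext^1_{\sT_0}(a,b)\neq 0$; this disposes of the first two assertions. Because $\tE(\arc a,\arc b)$, the set of Ptolemy arcs of class~I, and the crossing relation are all symmetric in $\arc a$ and $\arc b$, I would now fix, without loss of generality, the normal form $\arc a=(t,u)$, $\arc b=(v,w)$ with $u<w<t<v$. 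The four Ptolemy arcs of class~I associated to $\arc a$ and $\arc b$ are then exactly $(w,u)$, $(v,u)$, $(t,w)$, $(v,t)$, four pairwise distinct arcs, none of which equals $\arc a$ or $\arc b$.

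Next I would locate each object in the appropriate Ext-hammock. From the combinatorial description of the Ext-hammocks (Lemma~\ref{partialfountainslemma}(2), equivalently a direct inspection of the relevant rays and corays) one checks that $b\in\extto{a}$ and $a\in\extfrom{b}$. In this normal form $b=\Sigma a$ is impossible, since $\Sigma a$ has second endpoint $u-1<w$; hence the non-split extension $\tri{a}{e}{b}$ is unique and is computed by Theorem~\ref{thm:graphical-calculus}(ii). It can, however, happen that $a\simeq\Sigma b$, and then the extension $\tri{b}{f}{a}$ falls under Proposition~\ref{prop:graphical-calculus}: its non-trivial triangle is still computed by Theorem~\ref{thm:graphical-calculus}(i), while the trivial triangle $\trilabels{b}{0}{a}{}{}{\id}$ has zero middle term and contributes nothing to $E(b,a)$.

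Now I would read off the middle terms, recalling that in $\sT_0$ an arc $(x,y)$ is admissible exactly when $x\geq y$. By Definition~\ref{def:es} together with Theorem~\ref{thm:graphical-calculus}(ii) (and Proposition~\ref{prop:graphical-calculus}), the indecomposable summands of the middle term of $\tri{a}{e}{b}$ correspond to the arcs $\arc{e_1}=(s(\arc b),s(\arc a))=(v,t)$ and $\arc{e_2}=(t(\arc b),t(\arc a))=(w,u)$, both admissible since $v>t$ and $w>u$. Symmetrically, since $a\in\extfrom{b}$, the summands of the middle term of $\tri{b}{f}{a}$ correspond to $(s(\arc b),t(\arc a))=(v,u)$ and $(s(\arc a),t(\arc b))=(t,w)$, both admissible since $v>u$ and $t>w$. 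Via Remark~\ref{rem:multiplesummands} and Definition~\ref{def:E(a,b)} this gives $\{(w,u),(v,u),(t,w),(v,t)\}\subseteq\tE(\arc a,\arc b)$, i.e.\ every Ptolemy arc of class~I lies in $\tE(\arc a,\arc b)$. Conversely, Corollary~\ref{cor:middletermsarePtolemyw=0} shows $\tE(\arc a,\arc b)$ consists of admissible arcs connecting endpoints of $\arc a$ and $\arc b$, and for crossing arcs every such arc is a Ptolemy arc of class~I; hence the two sets coincide.

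I expect the only delicate point to be the second step: keeping the coordinate and sign conventions straight while deciding which half of the Ext-hammock contains $b$ (respectively $a$), and cleanly isolating the boundary case $a\simeq\Sigma b$, where the $0$-Calabi--Yau degeneracy contributes an extra trivial extension. Everything else is a direct application of the results of Sections~\ref{sec:extensions}--\ref{sec:Ptolemy-0}.
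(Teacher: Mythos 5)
Your proof is correct and follows essentially the same route as the paper: apply Proposition~\ref{lemma:Ext(b,a)w=0}(1) for both nonvanishing Ext statements, fix the normal form in which $\arc{b}$ crosses $\arc{a}$ to the right, identify $b\in\extto{a}$ and $a\in\extfrom{b}$, and read off the four admissible Ptolemy arcs from Proposition~\ref{lemma:middletermsw=0}. The only difference is that you explicitly isolate the degenerate boundary case $a\simeq\Sigma b$ and invoke Corollary~\ref{cor:middletermsarePtolemyw=0} for the reverse inclusion, points the paper leaves implicit; this makes your write-up slightly more complete without changing the argument.
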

\begin{proof} 
The fact that there are extensions in both directions is an immediate consequence of Proposition \ref{lemma:Ext(b,a)w=0}(1). 

We can assume that $\arc{b}$ crosses $\arc{a}$ to the right, as the other case is dual. We must show that the four Ptolemy arcs associated to $\arc{a}$ and $\arc{b}$ lie in $\tE(\arc{a},\arc{b})$. 

On one hand, $b \in \extto{a}$ and so the admissible Ptolemy arcs $(s(\arc{b}),s(\arc{a}))$ and $(t(\arc{b}),t(\arc{a}))$ lie in $\tE (\arc{a}, \arc{b})$, by Proposition \ref{lemma:middletermsw=0}(2). On the other hand, $a \in \extfrom{b}$, and so the other two admissible Ptolemy arcs, namely $(s(\arc{b}),t(\arc{a}))$ and $(s(\arc{a}),t(\arc{b}))$, also lie in $\tE(\arc{a},\arc{b})$, by Proposition \ref{lemma:middletermsw=0}(1). 
\end{proof}

\begin{proposition}
If $\arc{a}$ and $\arc{b}$ are neighbouring arcs, then $\Ext^1_{\sT_0}(b,a) \neq 0$ or $\Ext^1_{\sT_0}(a,b) \neq 0$, and $\tE (\arc{a},\arc{b})$ is the set of Ptolemy arcs of class II associated to $\arc{a}$ and $\arc{b}$. 
\end{proposition}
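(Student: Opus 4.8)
The plan is to mimic the structure of the proof of Proposition~\ref{prop:neighbourotherinclusion}, which handled exactly this situation for $w \leq -1$, but now taking into account the extra phenomena of $\sT_0$: loops are allowed as neighbouring arcs, and self-extensions exist. First I would establish that at least one of $\Ext^1_{\sT_0}(b,a)$ and $\Ext^1_{\sT_0}(a,b)$ is nonzero. Since $\arc{a}$ and $\arc{b}$ are neighbours, $d(\arc{a},\arc{b}) = 1$, so $\arc{b}$ is incident with one of $s(\arc{a}) \pm 1$ or $t(\arc{a}) \pm 1$ (or, if $\arc{a}$ is a loop at $x$, incident with $x-1$ or $x+1$). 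Using Propositions~\ref{lemma:Ext(b,a)w=0} and \ref{lemma:Ext(b,a)w=0length0}, in the first two cases (incident with $s(\arc{a})-1$ or $t(\arc{a})-1$) we get $\Ext^1_{\sT_0}(b,a) \neq 0$ via condition (2), and dually in the remaining cases $\Ext^1_{\sT_0}(a,b) \neq 0$; one must also sweep up the borderline conditions (3), (4), (5) of Proposition~\ref{lemma:Ext(b,a)w=0} that can occur when a neighbour shares a vertex in a degenerate way, but Remark~\ref{rmk:exceptionalcasesw0} tells us precisely which shared-vertex configurations are genuinely \emph{not} neighbouring, so the remaining ones are covered.

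Next I would pin down the cardinality of the set of Ptolemy arcs of class II associated to $\arc{a}$ and $\arc{b}$, and compare it with $|\tE(\arc{a},\arc{b})|$. As in the $w\leq -1$ case, the set of class-II Ptolemy arcs has cardinality one or two (two occurs when both $\arc{a}$ and $\arc{b}$ are long enough that nudging either pair of nearest endpoints together still leaves an admissible arc). If the cardinality is one, Corollary~\ref{cor:middletermsarePtolemyw=0} forces equality $\tE(\arc{a},\arc{b}) = \{\text{the class-II Ptolemy arc}\}$ as soon as we know $\tE(\arc{a},\arc{b}) \neq \emptyset$, which follows from the previous paragraph together with the fact that the middle term of the relevant extension is nonzero --- here I would invoke Proposition~\ref{lemma:middletermsw=0} to check the admissibility of the appropriate $\arc{e_i}$, using that $b \neq \Sigma a, \Sigma^{-1} a$ (this inequality holds because $\arc{a}$ and $\arc{b}$ are distinct neighbouring arcs rather than a suspension pair). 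If the cardinality is two, I would write the two Ptolemy arcs explicitly in terms of $s(\arc{a}), t(\arc{a}), s(\arc{b}), t(\arc{b})$ and exhibit, using Proposition~\ref{lemma:middletermsw=0}, extensions realising each one --- typically one arc arises as a summand of the middle term of $\tri{a}{e}{b}$ and the other from $\tri{b}{f}{a}$, so that both directions of extension are needed, exactly as in Proposition~\ref{prop:neighbourotherinclusion}.

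The main obstacle I anticipate is the bookkeeping around loops and the degenerate cases singled out in Remark~\ref{rmk:exceptionalcasesw0}. Specifically, when $\arc{a}$ is a loop at $x$ and $\arc{b}$ is a neighbour, or when one of $\arc{a},\arc{b}$ has length exactly one, some of the would-be class-II Ptolemy arcs degenerate into loops, and I must make sure that (i) the extension computation of Proposition~\ref{lemma:middletermsw=0} still produces exactly those arcs as the nonzero $\arc{e_i}$, and (ii) the genuinely exceptional pairs of Remark~\ref{rmk:exceptionalcasesw0}, where $\tE(\arc{a},\arc{b}) = \emptyset$, are \emph{not} neighbouring arcs and hence fall outside the hypothesis of this proposition. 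I would verify this last point directly: in case (i) of that remark $\arc{b}=(t(\arc{a}),t(\arc{a})-1)$ has distance $0$ from the loop $\arc{a}$ at $t(\arc{a})$, not distance $1$, so it is adjacent, not neighbouring; similarly in case (ii). With those configurations excluded, the two preceding paragraphs give the equality $\tE(\arc{a},\arc{b}) = \{\text{admissible class-II Ptolemy arcs of } \arc{a},\arc{b}\}$, completing the proof.
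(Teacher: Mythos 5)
There is a genuine gap. In your second paragraph you assert that $b \neq \Sigma a, \Sigma^{-1} a$ holds ``because $\arc{a}$ and $\arc{b}$ are distinct neighbouring arcs rather than a suspension pair,'' and you rely on this to run the $w \leq -1$ argument from Proposition~\ref{prop:neighbourotherinclusion} verbatim. But that assertion is false precisely when $\arc{a}$ and $\arc{b}$ are two loops at consecutive integers: if $\arc{a} = (x,x)$ then $\Sigma\arc{a} = (x-1,x-1)$, and these are distinct, non-crossing, and at distance one, hence neighbouring, yet they form a suspension pair. (Note that $\arc{a}$ and $\Sigma\arc{a}$ cross when $\arc{a}$ has length $\geq 2$, are adjacent when it has length $1$, and are neighbouring exactly when $\arc{a}$ is a loop --- your Remark~\ref{rmk:exceptionalcasesw0} check in the third paragraph rules out the adjacent configurations, but not this one.) This adjacent-loops case is exactly what the paper flags as the point where the $w = 0$ proof departs from the $w \leq -1$ one: here $\ext^1_{\sT_0}(\Sigma a, a) = 2$ (Proposition~\ref{prop:ext-hammocks}), and by Proposition~\ref{prop:graphical-calculus} / Lemma~\ref{lem:w=0twoextensions} the two non-split extensions are $\trilabels{a}{0}{\Sigma a}{}{}{\id}$ and the AR triangle $\tri{a}{e}{\Sigma a}$ with $e$ nonzero, computed via Theorem~\ref{thm:graphical-calculus}(i). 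You need to observe that the single nonzero middle term $e$ is exactly the class~II Ptolemy arc associated to $\arc{a}$ and $\Sigma\arc{a}$ (e.g.\ for $\arc{a} = (0,0)$ and $\Sigma\arc{a} = (-1,-1)$ one gets $e \leftrightarrow (0,-1)$), which closes the argument. Once that case is handled explicitly, the rest of your plan --- nonvanishing of $\Ext^1$ via Propositions~\ref{lemma:Ext(b,a)w=0}--\ref{lemma:Ext(b,a)w=0length0}, the cardinality count, and the use of Corollary~\ref{cor:middletermsarePtolemyw=0} and Proposition~\ref{lemma:middletermsw=0} --- is sound and matches the paper's strategy of reducing to Proposition~\ref{prop:neighbourotherinclusion} plus a special-case analysis.
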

\begin{proof}
The proof is similar to that of Proposition \ref{prop:neighbourotherinclusion}. The only difference is that $b$ can be $\Sigma a$ or $\Sigma^{-1} a$, namely when $\arc{a}$ is a loop. In these cases, the extension has dimension two, and the middle term of the extension whose middle term is nonzero corresponds to the Ptolemy arc of associated to $\arc{a}$ and $\arc{b}$. 
\end{proof}

\begin{proposition}
If $\arc{a}$ and $\arc{b}$ are adjacent arcs, then $\Ext^1_{\sT_0} (b,a) \ne 0$ or $\Ext^1_{\sT_0} (a,b) \ne 0$, and $\tE (\arc{a}, \arc{b})$ contains the set of Ptolemy arcs of class III associated to $\arc{a}$ and $\arc{b}$. 
\end{proposition}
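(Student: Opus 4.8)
The plan is to split into two cases according to whether $\arc{a}=\arc{b}$ or $\arc{a}\neq\arc{b}$, and in each case to exhibit the required extension and then read off the indecomposable summands of its middle term using Definition~\ref{def:es} and Proposition~\ref{lemma:middletermsw=0}. Throughout, write $x$ for a vertex at which $\arc{a}$ and $\arc{b}$ are incident, let $y$ be the other endpoint of $\arc{a}$ and $z$ the other endpoint of $\arc{b}$; the Ptolemy arcs of class III in question are then the loop $(x,x)$ and the arc $\{y,z\}$ (when $\arc{a}=\arc{b}$, so $y=z$, these are the two loops $(s(\arc{a}),s(\arc{a}))$ and $(t(\arc{a}),t(\arc{a}))$). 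Note that, since $\arc{a},\arc{b}$ are non-loops with $z\neq x$ and $y\neq x$, neither $(x,x)$ nor $\{y,z\}$ can equal $\arc{a}$ or $\arc{b}$, which is consistent with $\tE(\arc{a},\arc{b})$ being defined via $\ind{a*b\cup b*a}\setminus\{a,b\}$.

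First I would dispose of the case $\arc{a}=\arc{b}$. Since $\arc{a}$ is by hypothesis not a loop, $a$ does not lie on the mouth of the AR quiver of $\sT_0$, so $\Ext^1_{\sT_0}(a,a)=\kk\neq 0$ by the remark following Proposition~\ref{prop:graphical-calculus}; this gives the first assertion. The middle term of the self-extension $\tri{a}{e_1\oplus e_2}{a}$ is computed by Theorem~\ref{thm:graphical-calculus}(ii), regarding the second copy of $a$ as $\SSS a$ lying in a `different component'; since $w=0$ we have $\SSS\arc{a}=\arc{a}$, so Definition~\ref{def:es}(2) gives $\arc{e_1}=(s(\arc{a}),s(\arc{a}))$ and $\arc{e_2}=(t(\arc{a}),t(\arc{a}))$, both admissible (loops always are) and hence nonzero. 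These are exactly the class III Ptolemy arcs, so $\tE(\arc{a},\arc{a})$ contains them.

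For $\arc{a}\neq\arc{b}$ the two arcs share exactly the vertex $x$, and I would run a short case analysis on the position of $x$ among $\{x,y,z\}$: either $x=\min\{x,y,z\}$ (so $\arc{a}$ and $\arc{b}$ share their ending vertex), or $x=\max\{x,y,z\}$ (so they share their starting vertex), or $x$ lies strictly between $y$ and $z$ (so one of $s(\arc{a})=t(\arc{b})$, $t(\arc{a})=s(\arc{b})$ holds). Using conditions (3)--(5) of Proposition~\ref{lemma:Ext(b,a)w=0}, one checks directly in each case that $\Ext^1_{\sT_0}(b,a)\neq 0$ or $\Ext^1_{\sT_0}(a,b)\neq 0$ (possibly after interchanging $a$ and $b$), which is the first assertion. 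Then, identifying via Lemma~\ref{partialfountainslemma}(2) whether the relevant object lies in the forward or the backward Ext-hammock of the other, Definition~\ref{def:es} produces candidate arcs $\arc{e_1},\arc{e_2}$ and Proposition~\ref{lemma:middletermsw=0} records which of them are nonzero; a direct inspection shows that in every case the nonzero summands of the middle term are precisely $(x,x)$ and $\{y,z\}$. By Remark~\ref{rem:multiplesummands} (cf.\ Remark~\ref{rem:addE(a,b)}) these arcs therefore lie in $\tE(\arc{a},\arc{b})$.

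The main work — and the main place for error — is the bookkeeping of orientations in this case analysis: keeping track of which endpoint of each arc is its start and which its end, whether one has $b\in\extfrom{a}$ or $b\in\extto{a}$ (so that the correct branch of Definition~\ref{def:es} is applied), and when to swap the roles of $a$ and $b$. A useful internal check is that the arcs $\arc{e_1},\arc{e_2}$ output by Definition~\ref{def:es} are never equal to $\arc{a}$ or $\arc{b}$; such a spurious coincidence signals that the wrong Ext-hammock has been picked. One must also handle the degenerate configuration in which $\arc{a}$ has length $1$ and $\arc{b}=\Sigma^{\pm 1}a$, where $\ext^1_{\sT_0}(b,a)=2$: here Proposition~\ref{prop:graphical-calculus} supplies two triangles, one with zero middle term, and one verifies that the nontrivial triangle already contributes both class III arcs.
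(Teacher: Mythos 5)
Your proposal is correct and follows essentially the same route as the paper's proof: a case analysis on the configuration of the shared vertex $x$, combined with the description of the Ext-hammocks (Proposition~\ref{lemma:Ext(b,a)w=0}) to establish the non-vanishing of the Ext-group and with Definition~\ref{def:es}, Proposition~\ref{lemma:middletermsw=0} (and, for the self-extension case, the remark following Proposition~\ref{prop:graphical-calculus}) to identify the middle-term summands with the two class~III Ptolemy arcs. The paper organizes the cases by which endpoint of $\arc{a}$ is shared (three cases plus duals), implicitly absorbing $\arc{a}=\arc{b}$ and $\arc{b}=\Sigma^{\pm1}a$ into the general pattern, whereas you treat those as explicit degenerate subcases; both organizations cover the same ground.
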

\begin{proof}
Let us fix an arc $\arc{a}$ and check the possibilities for $\arc{b}$.

\Case{1} $\arc{b}$ and $\arc{a}$ are adjacent at $t(\arc{a})$ and $t(\arc{b}) < t(\arc{a})$. We have $\Ext^1_{\sT_0}(b,a) \neq 0$ and $b \in \extfrom{a}$. So, by Proposition \ref{lemma:middletermsw=0}(1), the two Ptolemy arcs associated to $\arc{a}$ and $\arc{b}$ lie in $\tE (\arc{a},\arc{b})$. 

\Case{2} $\arc{b}$ and $\arc{a}$ are adjacent at $t(\arc{a})$ and $s(\arc{b}) \geq s(\arc{a})$. Then $\Ext^1_{\sT_0}(b,a) \neq 0$ and $b \in \extto{a}$. So, by Proposition \ref{lemma:middletermsw=0}(2), the middle term of the extension has two indecomposable summands, which correspond to the Ptolemy arcs associated to $\arc{a}$ and $\arc{b}$.

\Case{3} $\arc{b}$ and $\arc{a}$ are adjacent at $s(\arc{a})$ and $t(\arc{a}) \leq t(\arc{b}) < s(\arc{a})$. In this case we also have $\Ext^1_{\sT_0}(b,a) \neq 0$ and $b \in \extto{a}$. By Proposition \ref{lemma:middletermsw=0}(2), the two Ptolemy arcs of class III associated to $\arc{a}$ and $\arc{b}$ lie in $\tE (\arc{a}, \arc{b})$.

The remaining three cases are dual. 
\end{proof}

Putting these together with Corollary~\ref{cor:middletermsarePtolemyw=0} yields the following corollary, which in turn, together with Theorem \ref{thm:extensionsnonindecomposable}, gives Theorem~\ref{thm:extensionclosureT0}.

\begin{corollary} \label{cor:Ptolemy-equality0}
Let $a,b\in \ind{\sT_0}$ and $\arc{a},\arc{b}$ be the corresponding admissible arcs. Then
\[
\tE(\arc{a},\arc{b}) = \{\text{admissible Ptolemy arcs incident with the endpoints of $\arc{a}$ and $\arc{b}$}\}.
\]
\end{corollary}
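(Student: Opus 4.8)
The plan is to assemble this corollary from the three propositions of this subsection together with Corollary~\ref{cor:middletermsarePtolemyw=0}. The inclusion of $\tE(\arc{a},\arc{b})$ in the set of admissible Ptolemy arcs incident with the endpoints of $\arc{a}$ and $\arc{b}$ is precisely Corollary~\ref{cor:middletermsarePtolemyw=0}, so all the work lies in the reverse inclusion, which I would establish by a case analysis on the mutual position of $\arc{a}$ and $\arc{b}$.

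First, by Propositions~\ref{lemma:Ext(b,a)w=0} and \ref{lemma:Ext(b,a)w=0length0}, a non-split extension between $a$ and $b$ in either direction can only occur when $\arc{a}$ and $\arc{b}$ are crossing, neighbouring, or adjacent, or else when the pair falls into one of the two exceptional configurations recorded in Remark~\ref{rmk:exceptionalcasesw0}. If $\arc{a}$ and $\arc{b}$ cross, I apply the proposition of this subsection treating crossing arcs; if they are neighbours, the proposition treating neighbouring arcs; if they are adjacent, the proposition treating adjacent arcs, whose conclusion that $\tE(\arc{a},\arc{b})$ contains the class~III Ptolemy arcs combines with Corollary~\ref{cor:middletermsarePtolemyw=0} to yield equality. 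In each of these three cases one obtains that $\tE(\arc{a},\arc{b})$ is exactly the set of admissible Ptolemy arcs incident with the endpoints of $\arc{a}$ and $\arc{b}$, recalling that in $\sT_0$ every Ptolemy arc is automatically admissible.

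It remains to dispatch the degenerate situations. If $(\arc{a},\arc{b})$ is one of the two exceptional configurations of Remark~\ref{rmk:exceptionalcasesw0}, then $\tE(\arc{a},\arc{b}) = \emptyset$ by that remark; since $\arc{a}$ and $\arc{b}$ are then neither crossing, neighbouring nor adjacent, no Ptolemy arc of any class is associated to the pair, so the right-hand side is empty as well. If $\arc{a}$ and $\arc{b}$ are in none of the above positions, then $\Ext^1_{\sT_0}(b,a) = 0 = \Ext^1_{\sT_0}(a,b)$ by Propositions~\ref{lemma:Ext(b,a)w=0} and \ref{lemma:Ext(b,a)w=0length0}, so $E(a,b) = \emptyset$ by Remark~\ref{rem:addE(a,b)}, and again no Ptolemy arcs are associated to the pair. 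The main obstacle here is purely bookkeeping: one must check that the list crossing / neighbouring / adjacent / exceptional / none is exhaustive and, modulo the loop subtleties of $\sT_0$, mutually exclusive, and that ``admissible Ptolemy arc incident with the endpoints of $\arc{a}$ and $\arc{b}$'' coincides with ``Ptolemy arc of class~I, II or III associated to $\arc{a}$ and $\arc{b}$'' in each case. Once this is in place the corollary is immediate, and Theorem~\ref{thm:extensionclosureT0} then follows by feeding Corollary~\ref{cor:Ptolemy-equality0} into Theorem~\ref{thm:extensionsnonindecomposable}.
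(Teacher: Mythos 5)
Your proposal is correct and takes essentially the same approach as the paper, which deduces Corollary~\ref{cor:Ptolemy-equality0} by combining Corollary~\ref{cor:middletermsarePtolemyw=0} with the three unnumbered propositions of that subsection (one for each of crossing, neighbouring, and adjacent arcs), with the degenerate cases disposed of via Remark~\ref{rmk:exceptionalcasesw0} and the Ext-hammock descriptions. You have merely spelled out the exhaustiveness and mutual exclusivity of the case split more explicitly than the paper does.
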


%=======================================================================
% Section
\section{Torsion pairs and extensions in $\sT_1$}\label{sec:extensions-in-T1}
%=======================================================================

There is no combinatorial model of $\sT_1$ in terms of admissible arcs of the infinity-gon, and thus no characterisation of torsion pairs in terms of Ptolemy diagrams. However, the classification of torsion pairs in $\sT_1$ is quite simple, see Theorem~\ref{thm:classification-T1} below. 

Recall that the AR quiver of $\sT_1$ consists of $\bZ$ copies of the homogeneous tube below:
\[
\xymatrix{
X_0 \ar@/^/[r] & X_1 \ar@/^/[r] \ar@/^/[l] & X_2 \ar@/^/[r] \ar@/^/[l] & X_3 \ar@/^/[r] \ar@/^/[l] & \cdots . \ar@/^/[l]
}
\]
Let $\cT_0$ be the additive (even abelian) category generated by this tube and $\cT_n \coloneqq \Sigma^n \cT_0$. 

There is a $\bZ$-indexed family of split t-structures in $\sT_1$, $(\sX_n,\sY_n)$ given by 
\[
\sX_n \coloneqq \add{\bigcup_{i\geq n} \cT_i} 
\quad \text{and} \quad
\sY_n \coloneqq \add{\bigcup_{i < n} \cT_i},
\]
whose heart is $\cT_n$, which is a hereditary abelian category. The short exact sequences in $\cT_n$ correspond precisely to distinguished triangles $\tri{a'}{a}{a''}$ with $a',a,a'' \in \cT_n$; see \cite{BBD}.

\subsection{Torsion pairs in $\sT_1$}

The main result of this section is the following.

\begin{theorem} \label{thm:classification-T1}
The only torsion pairs in $\sT_1$ are the (de)suspensions of the standard t-structure, i.e. $(\sX_n,\sY_n)$ for $n\in \bZ$, and the trivial torsion pairs $(\sT_1,0)$ and $(0,\sT_1)$.
\end{theorem}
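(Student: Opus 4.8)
\section*{Proof proposal}

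The plan is to reduce the whole question to combinatorics of \emph{which tubes} $\cT_n$ a subcategory meets, the key input being a Hom-(non)vanishing statement between tubes. First I would establish: for all $i,j\in\bZ$,
\[
\Hom_{\sT_1}(\cT_i,\cT_j)=0 \text{ if } j\notin\{i,i+1\},\qquad\text{while every indecomposable of }\cT_i\text{ maps nonzero to every indecomposable of }\cT_j\text{ if }j\in\{i,i+1\}.
\]
The nonvanishing is immediate from the structure of a homogeneous tube: any two indecomposables admit a nonzero morphism (common simple socle/top) and, by Serre duality in the $1$-CY category $\sT_1$ (where $\SSS=\Sigma$, so $\tau=\id$ on the tube), a nonzero extension; and $\Sigma\cT_i=\cT_{i+1}$. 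For the vanishing I would use dévissage: every indecomposable of $\cT_n$ is filtered inside the abelian category $\cT_n$ by copies of the simple $\Sigma^nX_0$, so a two-step induction on composition length, applying $\Hom(\Sigma^nX_0,\Sigma^\bullet-)$ and then $\Hom(-,\Sigma^\bullet-)$ to the resulting triangles, reduces everything to $\Hom_{\sT_1}(X_0,\Sigma^\ell X_0)=0$ for $\ell\neq 0,1$ — exactly axiom (S1) for the $1$-spherical generator. I would also record the complementary fact $\extn{\Sigma^k X_0}=\cT_k$: the simple $\Sigma^kX_0$ has a nonsplit self-extension (its AR triangle / $\Hom(X_0,\Sigma X_0)\neq0$), and iterating extensions via $0\to\Sigma^kX_{r-1}\to\Sigma^kX_r\to\Sigma^kX_0\to0$ produces every $\Sigma^kX_r$, while $\cT_k$ is extension-closed in $\sT_1$.

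Next, let $(\sX,\sY)$ be a torsion pair and put $A=\{n:\sX\cap\cT_n\neq0\}$ and $B=\{m:\sY\cap\cT_m\neq0\}$. If $n\in A$, pick an indecomposable $a\in\sX\cap\cT_n$; since $a$ maps nonzero into every object of $\cT_n\cup\cT_{n+1}$, the condition $\Hom(\sX,\sY)=0$ forces $\sY\cap(\cT_n\cup\cT_{n+1})=0$, i.e. $n\notin B$ and $n+1\notin B$. Then I would feed the simple $S_k=\Sigma^kX_0$ into $\sX*\sY=\sT_1$: write a triangle $x\xrightarrow{u}S_k\xrightarrow{v}y\xrightarrow{w}\Sigma x$ with $x\in\sX$, $y\in\sY$. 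If $u=0$ then $S_k$ is a summand of $y$, so $k\in B$; if $v=0$ then $S_k$ is a summand of $x$, so $k\in A$. If both are nonzero, then $u\neq0$ puts a summand of $x$ in $\cT_k\cup\cT_{k-1}$ and $v\neq0$ puts a summand of $y$ in $\cT_k\cup\cT_{k+1}$, whence $k\in A$ or $k-1\in A$, and $k\in B$ or $k+1\in B$; and one checks $w\neq0$ (else $y\in\{0,S_k\}$, again giving $k\in A$ or $k\in B$), so a summand of $y$ in some $\cT_m$ ($m\in B$) maps nonzero to a summand of $\Sigma x$ in some $\cT_{n+1}$ ($n\in A$), forcing $n+1\in\{m,m+1\}$, i.e. $n=m$ (impossible since $A\cap B=\varnothing$) or $n=m-1$ (impossible since $n\in A\Rightarrow n+1\notin B$). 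Hence in all cases $k\in A\cup B$, so $\bZ=A\sqcup B$; combined with $n\in A\Rightarrow n+1\notin B$ this makes $A$ an up-set: $A=\emptyset$, $A=\bZ$, or $A=\{n\geq N\}$ for some $N\in\bZ$.

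The first two cases give the trivial torsion pairs. In the remaining case I would upgrade ``$\sX$ meets $\cT_k$'' to ``$\sX\supseteq\cT_k$'' for $k\geq N$: rerunning the triangle over $S_k$, every summand of $y\in\sY$ lies in some $\cT_m$ with $m<N\leq k$, so $\Hom(S_k,y)=0$, forcing $v=0$ and hence $S_k$ a summand of $x\in\sX$; then $\cT_k=\extn{\Sigma^kX_0}\subseteq\sX$. Since conversely every indecomposable summand of an object of $\sX$ lies in some $\cT_n$ with $n\in A=\{n\geq N\}$, we get $\sX=\add\bigcup_{k\geq N}\cT_k=\sX_N$, and then $\sY=\sX^\perp$ computes to $\add\bigcup_{m<N}\cT_m=\sY_N$ straight from the Hom-vanishing. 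A routine verification (or the discussion preceding the theorem) confirms each $(\sX_N,\sY_N)$ is indeed a (split) torsion pair. I expect the only real obstacle to be bookkeeping: organising the ``one of $u,v$ splits'' dichotomy for the triangle over $S_k$ and covering the positional cases among the three tubes $\cT_{k-1},\cT_k,\cT_{k+1}$; the conceptual content sits entirely in the spherelike vanishing $\Hom(X_0,\Sigma^\ell X_0)=0$ ($\ell\neq0,1$) and in $\extn{\Sigma^kX_0}=\cT_k$.
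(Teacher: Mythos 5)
Your proof is correct, and it takes a genuinely different route from the paper's. The paper first proves a separate lemma (Lemma~\ref{lem:split}) that every torsion pair in $\sT_1$ is split, via an argument about factorisations inside a homogeneous tube: if the $\sX$--part of the approximation triangle of an ``undecided'' indecomposable $t\in\cT_k$ had summands only in $\cT_{k-1}$, each map $\Sigma^{k-1}X_r\to t$ would factor through arbitrarily high $\Sigma^{k-1}X_s$, so $x$ must meet $\cT_k$, whence $\cT_k\subseteq\sX$. It then finishes exactly as you do, using $\Hom(\cT_k,\cT_{k+1})\neq 0$ together with splitness to see the set of $\sX$--tubes is an up-set. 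You skip the separate split lemma and instead run the approximation triangle over the mouth objects $S_k=\Sigma^kX_0$, deriving $S_k\in\sX$ or $S_k\in\sY$ purely from the tube-to-tube Hom-vanishing of Proposition~\ref{prop:hom-hammocks-in-T1} (the case $u,v\neq 0$ collapses because a nonzero connecting map $y\to\Sigma x$ would force a position clash between $A$ and $B$), and then using $\extn{\Sigma^kX_0}=\cT_k$ to sweep in the whole tube. This buys you two things: you only invoke the crude Hom-hammock combinatorics rather than the more delicate ``maps factor through arbitrarily high objects on the ray'' property of tubes that the paper appeals to somewhat informally, and your treatment of splitness and the classification are interleaved into a single bookkeeping step ($\bZ=A\sqcup B$, $A$ an up-set) rather than split across a lemma and a theorem. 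The only places to be a bit more careful in a final writeup are the one sentence where you say ``one checks $w\neq 0$ (else $y\in\{0,S_k\}$\dots)'' --- since $w=0$ forces $S_k\cong x\oplus y$, which (with $u,v\neq 0$) is itself already a contradiction --- and the explicit observation that $\sY=\sX^\perp$ for any torsion pair, which you use silently in the last step; both are routine.
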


Before proving Theorem~\ref{thm:classification-T1} we need two preliminary results. The first one describes the Hom- and Ext-hammocks of $\sT_1$.

\begin{proposition}[{\cite[Proposition 3.4]{HJY}}] \label{prop:hom-hammocks-in-T1}
Consider the indecomposable object $X_r$ in $\sT_1$ and let $b \in \ind{\sT_1}$ be any other indecomposable. Then
\[
\ext_{\sT_1}(b,X_r) = \hom_{\sT_1}(X_r,b) =
\left\{
\begin{array}{ll}
\min\{r,s\}+ 1 & \text{if } b= X_s \text{ or } b=\Sigma X_s, \\
0              & \text{otherwise.}
\end{array}
\right.
\]
\end{proposition}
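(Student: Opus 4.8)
The first equality is immediate from Serre duality: since $\sT_1$ is $1$-Calabi-Yau we have $\SSS = \Sigma$, so the functorial isomorphism $\Hom_{\sT_1}(X_r,b) \isoto D\Hom_{\sT_1}(b,\SSS X_r)$ gives $\hom_{\sT_1}(X_r,b) = \dim_\kk\Hom_{\sT_1}(b,\Sigma X_r) = \ext_{\sT_1}(b,X_r)$. It therefore suffices to compute $\hom_{\sT_1}(X_r,b)$. Every indecomposable of $\sT_1$ has the form $b = \Sigma^n X_s$ with $n \in \bZ$ and $s \geq 0$, and a second application of Serre duality yields $\Hom_{\sT_1}(X_r,\Sigma^n X_s) \cong D\Hom_{\sT_1}(\Sigma^n X_s,\Sigma X_r) \cong D\Hom_{\sT_1}(X_s,\Sigma^{1-n}X_r)$, so that
\[
\hom_{\sT_1}(X_r,\Sigma^n X_s) = \hom_{\sT_1}(X_s,\Sigma^{1-n}X_r). \qquad (\star)
\]

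I would first dispose of the degrees $n \notin \{0,1\}$. If $n \leq -1$, then $\Hom_{\sT_1}(X_r,\Sigma^n X_s) \cong \Hom_{\sT_1}(\Sigma^{-n}X_r, X_s)$; here $\Sigma^{-n}X_r \in \cT_{-n} \subseteq \sX_1$ since $-n \geq 1$, while $X_s \in \cT_0 \subseteq \sY_1$, so this group vanishes because $(\sX_1,\sY_1)$ is a t-structure and hence $\Hom_{\sT_1}(\sX_1,\sY_1) = 0$. If $n \geq 2$, then $1-n \leq -1$ and the identity $(\star)$ reduces this to the case just treated, so again the group vanishes. Finally, $(\star)$ with $n = 1$ reduces $\hom_{\sT_1}(X_r,\Sigma X_s)$ to $\hom_{\sT_1}(X_s, X_r)$; thus everything comes down to showing $\hom_{\sT_1}(X_r, X_s) = \min\{r,s\}+1$ for $r,s \geq 0$.

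For this remaining case note that $\hom_{\sT_1}(X_r,X_s) = \hom_{\cT_0}(X_r,X_s)$, so we work inside the heart $\cT_0$, which is a connected hereditary abelian length category with a single simple object, namely the quasi-simple $X_0$ of the tube. One has $\mathrm{End}_{\cT_0}(X_0) = \kk$, and since $\Ext^1_{\cT_0}(A,B) \cong \Hom_{\sT_1}(A,\Sigma B)$ for objects $A,B$ of the heart, the homogeneity of the tube gives $\Ext^1_{\cT_0}(X_0,X_0) \cong \Hom_{\sT_1}(X_0,\Sigma X_0) = \kk$. Consequently $\cT_0$ is uniserial — equivalent to the category of finite-length modules over $\kk[[t]]$ — with $X_r$ the unique indecomposable of length $r+1$, corresponding to $\kk[t]/(t^{r+1})$ (uniqueness of this indecomposable uses that $\Ext^1_{\cT_0}(X_0,X_0)$ is one-dimensional). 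Then $\hom_{\sT_1}(X_r,X_s) = \dim_\kk\Hom_{\kk[[t]]}(\kk[t]/(t^{r+1}),\kk[t]/(t^{s+1})) = \min\{r+1,s+1\} = \min\{r,s\}+1$ by an elementary computation. Equivalently, avoiding the module description, one runs an induction on $r$: the base case is $\Hom_{\cT_0}(X_0,X_s) \cong \Hom_{\cT_0}(X_0,\mathrm{soc}\,X_s) = \kk$, and applying $\Hom_{\cT_0}(-,X_s)$ to the short exact sequence $0 \to X_{r-1} \to X_r \to X_0 \to 0$ in $\cT_0$ gives an exact sequence $0 \to \Hom_{\cT_0}(X_0,X_s) \to \Hom_{\cT_0}(X_r,X_s) \to \Hom_{\cT_0}(X_{r-1},X_s) \to \Ext^1_{\cT_0}(X_0,X_s)$ whose last term is the one-dimensional space $\Hom_{\sT_1}(X_0,\Sigma X_s) \cong D\Hom_{\sT_1}(X_s,X_0)$; a short analysis of the subobject lattice of $X_r$ shows the connecting map vanishes precisely when $r \leq s$, whence the formula by induction.

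The routine part of this plan is the reductions in the first two paragraphs. The crux is the last step: extracting enough of the internal structure of $\cT_0$ — its uniseriality, and the subobject and quotient lattices of the $X_r$ — from the axioms. This is standard for homogeneous tubes, but it can be sidestepped entirely by using the model of $\sT_1$ as a thick subcategory of the derived category of a differential graded algebra, in which the $X_r$ are concrete complexes and all the Hom-spaces above are read off directly; this is essentially the route of \cite[Proposition 3.4]{HJY}, which may simply be cited.
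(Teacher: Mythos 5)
Your proposal is correct in substance, but it is worth noting that the paper does not prove this statement at all: the proposition is imported wholesale from \cite[Proposition 3.4]{HJY} (the citation sits in the proposition header), and the only in-text remark is the one you also make, namely that the first equality is Serre duality in the $1$-Calabi--Yau setting. What you have written is therefore a genuine reconstruction rather than a parallel of the paper's argument. Two caveats on the reconstruction. First, your vanishing argument for $n\leq -1$ invokes the orthogonality $\Hom_{\sT_1}(\sX_1,\sY_1)=0$ of the split t-structure; but that orthogonality is precisely (a large part of) the Hom-vanishing being proven, and in the paper the existence of the t-structures $(\sX_n,\sY_n)$ is itself only asserted, again on the authority of \cite{HJY}. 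So this step does not reduce the logical burden --- it relocates it --- and if one insists on independence one is pushed back to the dg model, i.e.\ back to \cite{HJY}. Second, the heart computation requires knowing that $\cT_0$ is a length category with unique simple $X_0$ and $\ext^1_{\cT_0}(X_0,X_0)=1$ (equivalently, that the tube is homogeneous); you correctly identify $\Hom_{\sT_1}(X_0,\Sigma X_0)=\kk$ from the $1$-spherical axiom, but the fact that $X_0$ is the \emph{only} simple of the heart still needs the AR-quiver description, which the paper also imports. You are candid about both points and offer the citation as the fallback, which is exactly the route the paper takes; with that understanding the proposal is acceptable, and the uniseriality argument (or the induction via $0\to X_{r-1}\to X_r\to X_0\to 0$) does correctly yield $\hom(X_r,X_s)=\min\{r,s\}+1$.
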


Note that the first equality above follows from the $1$-Calabi-Yau property.

\begin{lemma} \label{lem:split}
All torsion pairs in $\sT_1$ are split.
\end{lemma}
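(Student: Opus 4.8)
The plan is to show that any torsion pair $(\sX,\sY)$ in $\sT_1$ is split, i.e. every indecomposable object lies in $\sX$ or in $\sY$. First I would recall that, since $\sT_1$ is Krull-Schmidt and $(\sX,\sY)$ is a torsion pair, every indecomposable $X_r$ (up to (de)suspension it suffices to treat the objects $X_r$, $r\geq 0$, but in fact we should keep track of the suspension shift, so let us say an arbitrary indecomposable $t$) sits in a triangle $\tri{x}{t}{y}$ with $x\in\sX$ and $y\in\sY$, and $\Hom_{\sT_1}(\sX,\sY)=0$. The strategy is to exploit the very restrictive shape of the Hom-hammocks recorded in Proposition~\ref{prop:hom-hammocks-in-T1}: the only indecomposables $b$ with $\Hom_{\sT_1}(X_r,b)\neq 0$ are $b=X_s$ or $b=\Sigma X_s$, and likewise $\Hom_{\sT_1}(b,X_r)\ne 0$ forces $b = X_s$ or $b = \Sigma^{-1}X_s$. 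In particular the suspension orbits are almost totally Hom-orthogonal to each other, which should pin down the decomposition of $t$ forced by the torsion triangle.

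Concretely, fix an indecomposable $t = \Sigma^n X_r$ and its torsion triangle $\trilabels{x}{t}{y}{}{}{h}$. I would argue that $x$ and $y$ must each be built only from indecomposables in the two suspension orbits $\{\Sigma^n X_s\}_{s\geq 0}$ and $\{\Sigma^{n+1}X_s\}_{s\geq 0}$ (respectively $\{\Sigma^{n-1}X_s\}$), because the connecting map $y \to \Sigma x$ is nonzero on each summand of a non-split triangle (Lemma~\ref{lem:homext-vanishing}(1)), and a nonzero map from an indecomposable summand of $y$ to an indecomposable summand of $\Sigma x$ can, by Proposition~\ref{prop:hom-hammocks-in-T1}, only exist between objects in the same or adjacent suspension orbits. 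Then, restricting attention to these orbits, I would use that the abelian subcategory $\cT_n$ (the homogeneous tube) is hereditary with short exact sequences exactly the triangles inside $\cT_n$, as stated in Section~\ref{sec:extensions-in-T1}: a triangle with all three terms in $\cT_n$ is a short exact sequence, and if $t$ is indecomposable in $\cT_n$ then the monomorphism $x \to t$ (when $x,t\in\cT_n$) must be either $0$ or split, since $t$ is a uniserial object of finite length with a unique composition series. The remaining case, where the triangle genuinely mixes the orbit of $n$ and the orbit of $n+1$, is handled by observing that such a triangle would force $x \in \cT_n$ and $y\in\cT_{n+1}$ up to the allowed summands, and then $\Hom(\sX,\sY) = 0$ combined with the nonvanishing of $\Hom_{\sT_1}(X_s,\Sigma X_{s'})$ (again Proposition~\ref{prop:hom-hammocks-in-T1}) kills all the cross terms, leaving $x$ or $y$ to be zero.

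The main obstacle I anticipate is bookkeeping the multiplicities and the direction of the maps carefully enough: the Hom-spaces here are not one-dimensional (they have dimension $\min\{r,s\}+1$), so the clean arguments of Lemma~\ref{lem:homext-vanishing}(2)--(3) are unavailable, and one genuinely has to use the uniserial/hereditary structure of the tube rather than just Hom-dimension counting. I would therefore phrase the key reduction in terms of the length filtration of objects of $\cT_n$: any torsion-free quotient of the uniserial object $X_r$ inside the hereditary category $\cT_n$ is again of the form $X_{r'}$, and any subobject is of the form $X_{r''}$ (after identifying $\cT_n$ with the category of finite-length modules over the completed local ring, where $X_r$ corresponds to $k[[x]]/(x^{r+1})$), so a triangle $\tri{x}{X_r}{y}$ within $\cT_n$ is a short exact sequence $0 \to X_{r-j} \to X_r \to X_{j-1}\to 0$ of the evident shape — but such a sequence splits only when $j = 0$ or $j = r+1$, forcing $x$ or $y$ to vanish. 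Once the split-ness is established, the classification in Theorem~\ref{thm:classification-T1} will follow by the standard argument: a split torsion pair is determined by a subset of $\ind{\sT_1}$ closed under $\Hom$-orthogonality, and the Hom-hammock description forces such a subset to be a union of suspension orbits closed downward, i.e. one of the $\sX_n$ (or the trivial ones).

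\begin{proof}[Proof sketch to be expanded]
Let $(\sX,\sY)$ be a torsion pair in $\sT_1$ and let $t\in\ind{\sT_1}$; write $t = \Sigma^n X_r$. Choose a triangle $\trilabels{x}{t}{y}{}{}{h}$ with $x\in\sX$, $y\in\sY$, which we may assume to be non-split (otherwise $t$ is a summand of $x$ or of $y$, hence in $\sX$ or $\sY$, and we are done). By Lemma~\ref{lem:homext-vanishing}(1) every indecomposable summand of $x$ admits a nonzero map to $t$ and $t$ admits a nonzero map to every indecomposable summand of $y$; by Proposition~\ref{prop:hom-hammocks-in-T1} this forces every summand of $x$ to lie in $\{\Sigma^n X_s\}_{s\ge 0}\cup\{\Sigma^{n-1}X_s\}_{s\ge 0}$ and every summand of $y$ to lie in $\{\Sigma^n X_s\}_{s\ge 0}\cup\{\Sigma^{n+1}X_s\}_{s\ge 0}$. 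Moreover the connecting map $h\colon y\to \Sigma x$ is nonzero on each summand, and $\Sigma x$ has summands in $\{\Sigma^{n+1}X_s\}\cup\{\Sigma^{n}X_s\}$; comparing with the possible summands of $y$ and using $\Hom_{\sT_1}(\Sigma^{n}X_s,\Sigma^{n-1}X_{s'})=0$ we conclude that in fact $x\in \add(\cT_n)\oplus\add(\cT_{n-1})$ has no $\cT_{n-1}$ part unless $y$ has a $\cT_n$ part mapping into it; a short case analysis (using $\Hom(\sX,\sY)=0$) reduces to the two cases where either $x,t,y$ all lie in $\cT_n$, or $x\in\cT_n$, $y\in\cT_{n+1}$.

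In the first case, the triangle is a short exact sequence in the hereditary abelian category $\cT_n$. Identifying $\cT_n$ with finite-length modules over a complete discrete valuation ring so that $\Sigma^n X_r \leftrightarrow k[[x]]/(x^{r+1})$, a short exact sequence $0\to x\to \Sigma^n X_r\to y\to 0$ has $x\cong \Sigma^n X_{r-j}$ and $y\cong \Sigma^n X_{j-1}$ for some $0\le j\le r+1$, and it splits iff $j\in\{0,r+1\}$; but for $0<j<r+1$ we would have $\Hom(\sX,\sY)\ni$ the composite $x\hookrightarrow t \twoheadrightarrow y$ reinterpreted, contradicting orthogonality unless one of $x,y$ is zero. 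Hence $t\in\sX$ or $t\in\sY$.

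In the second case $x\in\cT_n$, $y\in\cT_{n+1}$, and $\Hom_{\sT_1}(x,y)=0$ automatically while the connecting map $y\to\Sigma x$ lands in $\cT_{n+1}$ and is nonzero on each summand, so $\Sigma x$ has a nonzero $\cT_{n+1}$ part, i.e. $x$ has a nonzero $\cT_n$ part — consistent — but then the triangle expresses $t$ as an extension which, projecting to the heart $\cT_{n}$ via the $t$-structure $(\sX_n,\sY_n)$, shows $x=\Sigma^{\ge n}$-part and $y=\Sigma^{<n}$-part; since $t=\Sigma^nX_r$ lies entirely in $\cT_n$ we get $y=0$, so $t\in\sX$. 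This proves that $(\sX,\sY)$ is split.
\end{proof}
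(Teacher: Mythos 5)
Your overall plan --- reduce to the approximation triangle $\tri{x}{t}{y}$, peel off split summands, use the Hom-hammocks to constrain which tubes $x$ and $y$ can meet, and then exploit the abelian/uniserial structure of the homogeneous tube --- is viable, and it differs from the paper's proof, which instead argues from the factorisation of maps in a homogeneous tube (a nonzero map $\Sigma^{k-1}X_r \to t$ factors through $\Sigma^{k-1}X_s$ for all $s>r$) to show $x$ can have no $\cT_{k-1}$ summands and then concludes via extension-closure. However, as written your argument has genuine errors that go beyond bookkeeping.

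The most serious one is in your ``second case'': you assert that for $x\in\cT_n$, $y\in\cT_{n+1}$ one has ``$\Hom_{\sT_1}(x,y)=0$ automatically''. This is false. By Proposition~\ref{prop:hom-hammocks-in-T1}, $\hom_{\sT_1}(\Sigma^n X_a,\Sigma^{n+1}X_b)=\min\{a,b\}+1>0$; i.e.\ $\Hom(\cT_n,\cT_{n+1})$ is always nonzero. Ironically this makes case~2 trivial (both $x,y$ nonzero contradicts $\Hom(\sX,\sY)=0$ directly), but your substitute argument via the t-structure $(\sX_n,\sY_n)$ is also wrong: since $\cT_n$ and $\cT_{n+1}$ both lie in the aisle $\sX_n$, the triangle $\tri{x}{t}{y}$ is not the truncation triangle of $t$, and ``$y=\Sigma^{<n}$-part'' does not hold. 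The same issue makes your case~1 argument imprecise: the composite $x\hookrightarrow t\twoheadrightarrow y$ is zero (it is a short exact sequence), so it is not a witness of $\Hom(\sX,\sY)\neq 0$; what you actually want is that \emph{some} map $x\to y$ is nonzero, which again follows from $\Hom(\cT_n,\cT_n)\neq 0$. Finally, your case reduction omits the genuinely delicate case $x\in\add\cT_{n-1}$, $y\in\add\cT_{n+1}$, for which $\Hom(\cT_{n-1},\cT_{n+1})=0$ and the direct orthogonality argument gives nothing --- this is precisely where your connecting-map observation (a nonzero component $y\to\Sigma x_{n-1}\in\cT_n$ must come from a $\cT_n$ summand of $y$, which then forces $\Hom(\sX,\sY)\neq 0$) does real work and needs to be made precise. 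Once you have $x\in\add\cT_n$, the rest is immediate from the non-vanishing of $\Hom$ within and into the next tube; you do not need the uniserial SES classification or the t-structure at all.

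A minor further point: you cite Lemma~\ref{lem:homext-vanishing}(1) to justify that each summand of $x$ maps nonzero into $t$, but that lemma is stated for triangles with indecomposable \emph{outer} terms and decomposable middle, whereas here the indecomposable object $t$ sits in the middle. The correct justification is the standard Krull--Schmidt reduction (peel off split direct summand triangles using Lemma~\ref{lem:cone}); the conclusion you want is only ``we may assume WLOG'', not an intrinsic property of the given approximation triangle.
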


\begin{proof}
Suppose $(\sX,\sY)$ is a non-split torsion pair in $\sT_1$ and that $t\in \ind{\sT_1}$ does not belong to either $\sX$ or $\sY$. Therefore, there is a non-trivial approximation triangle $\tri{x}{t}{y}$ with $x\in \sX$ and $y\in \sY$. The object $t$ lies in some homogeneous tube, $\cT_k$ say, and thus $t= \Sigma^k X_u$ for some $u\geq 0$. By Proposition~\ref{prop:hom-hammocks-in-T1} there are maps to $t$ only from the tubes $\cT_{k-1}$ and $\cT_k$. Suppose $x$ contains a summand $\Sigma^{k-1} X_r \in \cT_{k-1}$. By the properties of homogeneous tubes (see for example \cite[Chapter X]{SS} and combine it with the properties of derived categories of hereditary categories \cite{Happel}), the maps $\Sigma^{k-1}X_r \to t$ factor through $\Sigma^{k-1} X_s$ for all $s > r$. Hence, no finite sum of indecomposable objects from $\cT_{k-1}$ can be a summand of $x$. The only possibility remaining is that $x$ contains a summand from $\cT_k$. But since $\sX$ is extension-closed, we get $\cT_k \subseteq \sX$, whence $t\in \sX$; a contradiction. Thus, any torsion pair $(\sX,\sY)$ is split.
\end{proof}

\begin{proof}[Proof of Theorem~\ref{thm:classification-T1}]
Let $(\sX,\sY)$ be a torsion pair in $\sT_1$ and suppose $\cT_k \subseteq \sX$. By Proposition~\ref{prop:hom-hammocks-in-T1}, $\Hom_{\sT_1}(\cT_k,\cT_{k+1}) \neq 0$, whence $\cT_{k+1} \subseteq \sX$ since $(\sX,\sY)$ is split by Lemma~\ref{lem:split}. Thus, if $\cT_k \subseteq \sX$ then $\cT_j \subseteq \sX$ for each $j\geq k$. There is either a minimal such $k$, in which case $(\sX, \sY)=(\sX_k,\sY_k)$, or there is not, in which case $(\sX,\sY) = (\sT_1,0)$. Arguing from the point of view of $\sY$ gives the other trivial torsion pair $(0,\sT_1)$.
\end{proof}

\begin{remark}
In \cite{HJY} it was shown that $\sT_1$ has only one family of non-trivial (bounded) t-structures, namely the $(\sX_n,\sY_n)$. Here we have shown that this family of bounded t-structures are the only non-trivial torsion pairs in $\sT_1$.
\end{remark}

\subsection{Extensions with indecomposable outer terms in $\sT_1$}

Whilst this is not needed in the classification of torsion pairs in $\sT_1$, for the sake of completeness, we include a brief description.

Without loss of generality we may consider extensions starting at $X_r$ for some $r\geq 0$. Using Proposition~\ref{prop:hom-hammocks-in-T1} and the $1$-Calabi-Yau property, the extensions whose outer terms are indecomposable and first term is $X_r$ have the following form for $s \geq 0$:
\[
\tri{X_r}{E}{X_s} \ \text{and} \ \tri{X_r}{F}{\Sigma X_s}.
\]

 The first extension $\tri{X_r}{E}{X_s}$ has all three objects lying in the heart $\cT_0$. Thus, it is enough to compute the extensions in the abelian category $\cT_0$, which is a special case of \cite[Lemma 5.1]{Baur-Buan-Marsh}.

The middle term of the second extension is isomorphic to $\cone{f} = (\Sigma \ker f)\oplus \coker f$, where $f \colon X_s \to X_r$, where we use the fact that $\cT_0$ is hereditary. We can now 
compute the cones of these morphisms in $\sT_1$ using, for example, \cite[Chapter X]{SS}.

We summarise these considerations below. 

\begin{proposition} \label{prop:graphical-calculus-w=0}
Consider a non-trivial extension $\tri{X_r}{E}{B}$ in $\sT_1$, where $B$ is either $X_s$ or $\Sigma X_s$ for some $s \geq 0$. We interpret $X_{-1}$ as the zero object.
\begin{compactenum}[(i)]
\item If $B=X_s$, write $n=\min\{r,s\}$ and $m=\max\{r,s\}$. Then the $n+1$ extensions are $\tri{X_r}{X_{m+i} \oplus X_{n-i}}{X_s}$ for $1 \leq i \leq n+1$.
\item If $B=\Sigma X_s$ for $s \geq r$ then the $r+1$ extensions are $\tri{X_r}{\Sigma X_{s-r-i} \oplus X_{i-2}}{\Sigma X_s}$ for $1 \leq i \leq r+1$.
\item If $B=\Sigma X_s$ for $s < r$ then the $s+1$ extensions are $\tri{X_r}{X_{r-s-i} \oplus \Sigma X_{i-2}}{\Sigma X_s}$ for $1 \leq i \leq s+1$.
\end{compactenum} 
\end{proposition}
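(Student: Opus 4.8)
The plan is to reduce every extension of the stated shape to a computation inside the hereditary abelian category $\cT_0$, where the answer can be read off from the classical description of a homogeneous tube. Since each indecomposable of $\sT_1$ has the form $\Sigma^n X_r$ for a unique $n\in\bZ$ and $r\geq 0$, applying a power of $\Sigma$ reduces us to the triangles $\tri{X_r}{E}{B}$. By Proposition~\ref{prop:hom-hammocks-in-T1} together with the $1$-Calabi-Yau property, $\Ext^1_{\sT_1}(B,X_r)\neq 0$ forces $B=X_s$ or $B=\Sigma X_s$ for some $s\geq 0$, and then $\ext_{\sT_1}(B,X_r)=\min\{r,s\}+1$; this pins down the number of equivalence classes of non-split extensions and matches the counts in (i)--(iii) once one adopts the convention $X_{-1}=0$. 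The same proposition shows that the $X_i$ are the indecomposables of $\cT_0$, uniserial of length $i+1$, with $X_0$ the unique simple.

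For part~(i), $B=X_s$: here a non-split extension $\tri{X_r}{E}{X_s}$ corresponds to a nonzero element of $\Ext^1_{\sT_1}(X_s,X_r)=\Hom_{\sT_1}(X_s,\Sigma X_r)$, which is exactly $\Ext^1_{\cT_0}(X_s,X_r)$ since $\cT_0$ is the heart of the standard t-structure \cite{BBD}; in particular $E$ lies in $\cT_0$ and the triangle is a short exact sequence $0\to X_r\to E\to X_s\to 0$. Because $\cT_0$ is a uniserial length category with one simple, the middle terms of such sequences are given by \cite[Lemma~5.1]{Baur-Buan-Marsh} (equivalently, by the classical analysis of extensions of uniserials in a homogeneous tube, \cite[Chapter~X]{SS}), yielding $E\simeq X_{m+i}\oplus X_{n-i}$ for $1\leq i\leq n+1$ with $n=\min\{r,s\}$ and $m=\max\{r,s\}$.

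For parts~(ii) and~(iii), $B=\Sigma X_s$: applying $\Sigma$ identifies $\Ext^1_{\sT_1}(\Sigma X_s,X_r)$ with $\Hom_{\sT_1}(X_s,X_r)$, and the non-split extension attached to $0\neq g=\Sigma f$ with $f\colon X_s\to X_r$ in $\cT_0$ is the triangle $X_r\to E\to \Sigma X_s\rightlabel{\Sigma f}\Sigma X_r$, so $E\simeq\cone{f}$. Since $\cT_0$ is hereditary, every object of $\sT_1$ is the direct sum of its t-structure cohomologies, and in particular $\cone{f}\simeq\coker f\oplus\Sigma\ker f$ (the hereditary-category fact recalled before the proposition, cf.\ \cite{Happel}). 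It then suffices to compute $\ker f$ and $\coker f$ for a nonzero $f\colon X_s\to X_r$ at each of the $\min\{r,s\}+1$ possible ``levels'': by the structure of the tube \cite[Chapter~X]{SS} the image of such an $f$ is a uniserial subobject of $X_r$ whose length is determined by the level, whence $\ker f$ and $\coker f$ are again some $X_i$ (or zero). Treating the cases $s\geq r$ and $s<r$ separately and re-indexing by $i$ then produces the middle terms displayed in (ii) and (iii).

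The proof involves no genuine obstacle: once one has passed into the abelian category $\cT_0$, all three cases are routine computations with uniserial modules. The only points demanding care are bookkeeping ones --- keeping the convention that $X_i$ has length $i+1$ consistent throughout, and fixing the identification of $\Ext^1_{\sT_1}(\Sigma X_s,X_r)$ with the correct Hom-space so that the level of $f$ is matched to the running parameter $i$ in the stated formulas.
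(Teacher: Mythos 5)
Your proof is correct and follows essentially the same route as the paper: reduce to first term $X_r$ by shifting, read off case~(i) inside the heart $\cT_0$ via the Baur--Buan--Marsh computation, and handle cases~(ii)--(iii) by rotating the triangle to identify $E$ with $\cone{f}$ for $f\colon X_s\to X_r$ and then using heredity of $\cT_0$ to split $\cone{f}\simeq \coker f\oplus\Sigma\ker f$, with the kernel and cokernel computed from the tube structure in \cite[Chapter~X]{SS}. Your write-up spells out the counting step $\ext_{\sT_1}(B,X_r)=\min\{r,s\}+1$ and the t-structure cohomology splitting a bit more explicitly than the paper does, but the decomposition and the key references are the same.
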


%==============================================================================
%  Section
\section{Torsion pairs in $\sC_w(A_n)$} \label{sec:orbit}
%==============================================================================

Throughout this section $w \leq -1$ and $m=-w+1$ and we shall consider the orbit category 
\[
\Cm \coloneqq \Cm(A_n) = \Db(\kk A_n)/\Sigma^{m}\tau,
\]
where $\tau$ denotes the AR translate of $\Db(\kk A_n)$. For more detailed background on these categories we refer the reader to the papers \cite{CS10,CS11,CS13}. These categories are triangulated by Keller's Theorem \cite{Keller}, and satisfy $\Sigma^w \simeq \SSS$, where $\SSS = \Sigma \tau$ is the Serre functor. In particular, they can be considered to be $w$-CY.

\subsection{The combinatorial model for $\Cm$}

Given two indecomposable objects $a$ and $b$ in $\Tw$, we say that $\arc{a}$ is an \emph{innerarc} of $\arc{b}$ if one has $t(\arc{b}) < t(\arc{a}) < s(\arc{a}) < s(\arc{b})$.

It was shown in \cite[Theorem 5.1, Corollary 6.3]{CS13} that the combinatorial model in $\Tw$ induces a combinatorial model in $\Cm$ as follows:  $\Cm$ is equivalent to the full subcategory $\cC_w$ of $\Tw$ whose set of indecomposable objects correspond to the admissible innerarcs of an admissible arc $\arc{a}$ of length $|(n+1)(-w-1)+1|$. Note that this equivalence is not a triangle equivalence.

We briefly recall the explicit description of the induced combinatorial model. Let $\mcP_{n,m}$ be the regular $N$-gon, where $N = m(n+1)-2$, with vertices numbered clockwise from $1$ to $N$. All operations on vertices of $\mcP_{n,m}$ will be done modulo $N$, with representatives $1, \ldots, N$. An \emph{$m$-diagonal} of $\mcP_{n,m}$ is a diagonal that divides $\mcP_{n,m}$ into two polygons each of whose number of vertices is divisible by $m$.

The AR quiver of $\Cm$ is equivalent to the stable translation quiver $\Gamma (n,m)$ whose vertices are the $m$-diagonals $\mcP_{n,m}$. We denote a vertex of $\Gamma(n,m)$ by $\{i,j\}$, where $i$ and $j$ are vertices of $\mcP_{n,m}$. The arrows of $\Gamma (n,m)$ are obtained in the following way: given two $m$-diagonals $D$ and $D'$ with a vertex $i$ in common, there is an arrow from $D$ to $D'$ in $\Gamma (n,m)$ if and only if $D'$ can be obtained from $D$ by rotating clockwise $m$ steps around $i$.
The translation automorphism $\tau: \Gamma (n,m) \to \Gamma (n,m)$ sends an $m$-diagonal $\{i,j\}$ to $\tau (\{i,j\}):= \{i-m, j-m\}$. Note that $\Sigma \{i,j\} = \{i+1,j+1\}$. 
Figure \ref{fig:orbitcatexample} shows an example of this stable translation quiver.

\begin{figure}[!ht]
\begin{center}
\begin{tikzpicture}[shorten >=1pt,node distance=1.5cm,thick,scale=0.75, every node/.style={scale=0.75}]

\node[fill=black!20!white,draw=none,regular polygon, regular polygon sides=4, minimum size=1.2cm] (1) {};
\node at (1) {$\smxy{\{1,3\}}$};

\node[fill=black!20!white,draw=none,regular polygon, regular polygon sides=4,minimum size=1.2cm] (2) [above right of=1]{};
\node at (2) {$\smxy{\{1,6\}}$};

\node[fill=black!20!white,draw=none,regular polygon, regular polygon sides=4,minimum size=1.2cm] (3) [above right of=2]{};
\node at (3) {$\smxy{\{1,9\}}$};

\node[fill=black!20!white,draw=none,regular polygon, regular polygon sides=4,minimum size=1.2cm] (4) [below right of=2]{};
\node at (4) {$\smxy{\{4,6\}}$};

\node[fill=black!20!white,draw=none,regular polygon, regular polygon sides=4,minimum size=1.2cm] (5) [above right of=4]{};
\node at (5) {$\smxy{\{4,9\}}$};

\node[fill=black!20!white,draw=none,regular polygon, regular polygon sides=4,minimum size=1.2cm] (6) [below right of=5]{};
\node at (6) {$\smxy{\{7,9\}}$};

\node[fill=black!20!white,draw=none,diamond,minimum size=1.2cm] (7) [above right of=5]{};
\node at (7) {$\smxy{\{2,4\}}$};

\node[fill=black!20!white,draw=none,diamond,minimum size=1.2cm] (8) [below right of=7]{};
\node at (8) {$\smxy{\{2,7\}}$};

\node[fill=black!20!white,draw=none,diamond,minimum size=1.2cm] (9) [below right of=8]{};
\node at (9) {$\smxy{\{2,10\}}$};

\node[fill=black!20!white,draw=none,diamond,minimum size=1.2cm] (10) [above right of=8]{};
\node at (10) {$\smxy{\{5,7\}}$};

\node[fill=black!20!white,draw=none,diamond,minimum size=1.2cm] (11) [below right of=10]{};
\node at (11) {$\smxy{\{5,10\}}$};

\node[fill=black!20!white,draw=none,circle,minimum size=1cm] (12) [below right of=11]{};
\node at (12) {$\smxy{\{3,5\}}$};

\node[fill=black!20!white,draw=none,diamond,minimum size=1.2cm] (13) [above right of=11]{};
\node at (13) {$\smxy{\{8,10\}}$};

\node[fill=black!20!white,draw=none,circle,minimum size=1cm] (14) [below right of=13]{};
\node at (14) {$\smxy{\{3,8\}}$};

\node[fill=black!20!white,draw=none,circle,minimum size=1cm] (15) [below right of=14]{};
\node at (15) {$\smxy{\{6,8\}}$};

\node[fill=black!20!white,draw=none,regular polygon, regular polygon sides=4 ,minimum size=1.2cm] (16) [above right of=14]{};
\node at (16) {$\smxy{\{1,3\}}$};

\node[fill=black!20!white,draw=none,regular polygon, regular polygon sides=4 ,minimum size=1.2cm] (17) [below right of=16]{};
\node at (17) {$\smxy{\{1,6\}}$};

\node[fill=black!20!white,draw=none,regular polygon, regular polygon sides=4 ,minimum size=1.2cm] (18) [below right of=17]{};
\node at (18) {$\smxy{\{1,9\}}$};

\path[->] (1) edge (2)
          (2) edge (3)
              edge (4)
          (3) edge (5)
          (4) edge (5)
          (5) edge (6)
              edge (7)
          (6) edge (8)
          (7) edge (8)
          (8) edge (9)
              edge (10)
          (9) edge (11)
          (10) edge (11)
          (11) edge (12)
               edge (13)
          (12) edge (14)
          (13) edge (14)
          (14) edge (15)
               edge (16)
          (15) edge (17) 
          (16) edge (17) 
          (17) edge (18);
\end{tikzpicture}
\end{center}
\caption{The AR quiver of $\sC_2(A_3)$ is equivalent to $\Gamma (3,2)$.}
\label{fig:orbitcatexample}
\end{figure}
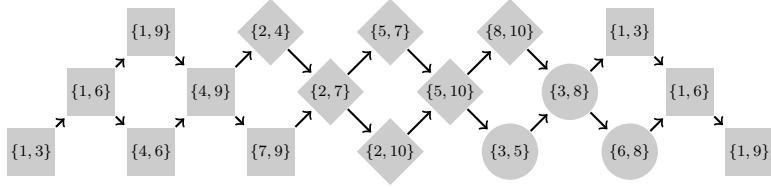

Define Ptolemy diagonals of class I, neighbouring $m$-diagonals and Ptolemy diagonals of class II in the same manner as in $\Tw$. The main result of this section is:

\begin{theorem} \label{thm:orbit-extension-closure}
Let $\sX$ be a full additive subcategory of $\Cm$ and $\tX$ the set of $m$-diagonals corresponding to the objects of $\ind{\sX}$. Then the objects of $\ind{\extn{\sX}}$ correspond to the $m$-diagonals of the closure of $\tX$ under Ptolemy $m$-diagonals of classes I and II.
\end{theorem}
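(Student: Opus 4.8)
The strategy is to transport Theorem~\ref{thm:extensionclosurewneq01}(2) through the equivalence of categories $\cC_w \simeq \Cm$ established in \cite[Theorem 5.1, Corollary 6.3]{CS13}, taking care that this equivalence is \emph{not} a triangle equivalence, so extensions in $\Cm$ must be related to extensions in $\Tw$ by hand. First I would fix the admissible arc $\arc{a}_0$ of length $|(n+1)(-w-1)+1|$, and identify $\ind{\Cm}$ with the set of admissible innerarcs of $\arc{a}_0$ inside $\Tw$, as in Section~\ref{sec:orbit}. The key point to establish is a comparison lemma: for indecomposable objects $a,b$ of $\cC_w$ (equivalently, admissible innerarcs of $\arc{a}_0$), one has $\Ext^1_{\Cm}(b,a)\neq 0$ if and only if $\Ext^1_{\Tw}(b,a)\neq 0$ \emph{and} the nonzero Ext classes in $\Cm$ are realised by triangles whose middle terms again lie in $\cC_w$, i.e. correspond to innerarcs of $\arc{a}_0$. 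This is where the combinatorial translation of \cite{CS13} does the work: an admissible Ptolemy arc (of class I or II) incident with endpoints of two innerarcs of $\arc{a}_0$ is automatically again an innerarc of $\arc{a}_0$, since its endpoints lie among the endpoints of $\arc{a}$ and $\arc{b}$, which are strictly between $t(\arc{a}_0)$ and $s(\arc{a}_0)$.

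Next I would show that the middle-term calculus of Theorem~\ref{thm:graphical-calculus} and Proposition~\ref{prop:middletermsarcs}, together with the reduction to indecomposable outer terms in Theorem~\ref{thm:extensionsnonindecomposable}, descends to $\Cm$. Concretely: given a distinguished triangle $\tri{a}{e}{b}$ in $\Cm$, pull it back to $\Tw$ via the equivalence on objects (using that $\Cm$ and $\Tw$ have the same AR-quiver combinatorics locally, so $e$ is computed by the same graphical rule), observe that all summands of $e$ are innerarcs of $\arc{a}_0$ by the incidence remark above, and conclude that $\tE(\arc{a},\arc{b})$ computed in $\Cm$ coincides with the set of admissible Ptolemy arcs of classes I and II associated to $\arc{a}$ and $\arc{b}$ — exactly as in Corollary~\ref{cor:Ptolemy-equality}. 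Here one must be slightly careful about the exceptional cases (Remark~\ref{rem:exceptionalcase}) and about whether $\Sigma a$ can occur as an outer term; but since $w\leq -1$ and in $\Cm$ the suspension acts by $\{i,j\}\mapsto \{i+1,j+1\}$, these are handled precisely as in Section~\ref{sec:Ptolemy}. With the comparison in place, the extension closure $\extn{\sX}$ in $\Cm$ is generated by iterated middle terms, each of which is an admissible Ptolemy $m$-diagonal of class I or II, and conversely every such Ptolemy $m$-diagonal appears as such a middle term; this gives the theorem.

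**The main obstacle.** The hard part is the comparison of $\Ext^1$-groups and of middle terms between $\Cm$ and $\Tw$: since the embedding $\Cm \hookrightarrow \Tw$ is only an \emph{additive} equivalence onto $\cC_w$, a priori an extension in $\Cm$ need not be an extension in $\Tw$ and the triangulated structures differ. One must argue — presumably by appealing to the explicit description of the triangulated structure on the orbit category from \cite{Keller} and the matching of AR-triangles in \cite{CS13} — that the short exact sequences / triangles with outer terms in $\cC_w$ are nevertheless computed by the same graphical calculus, so that Theorem~\ref{thm:graphical-calculus}, Theorem~\ref{thm:extensionsnonindecomposable} and the Ptolemy identifications of Section~\ref{sec:Ptolemy} all transfer verbatim once one checks that the relevant middle terms stay inside the innerarc region. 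Everything else is then a direct application of Proposition~\ref{prop:Iyama-Yoshino} and Corollary~\ref{cor:contravariant-finiteness}, exactly as in the proofs of Theorems~\ref{thm:extensionclosurewneq01} and \ref{thm:extensionclosureT0}.
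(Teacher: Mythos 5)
Your central comparison lemma is false, and this is exactly the pitfall the paper flags in Remark~\ref{rmk:torsionpairsCvsT}. You claim that for $a,b$ corresponding to innerarcs of the defining arc $\arc{a}_0$, one has $\Ext^1_{\Cm}(b,a)\neq 0$ only if $\Ext^1_{\Tw}(b,a)\neq 0$, and propose to ``pull back'' the triangle $\tri{a}{e}{b}$ from $\Cm$ to $\Tw$. But $\cC_w$ is not a triangulated subcategory of $\Tw$, and the implication only goes one way: an extension in $\Tw$ between two objects of $\cC_w$ does give an extension in $\Cm$, but the converse fails. The paper's Remark~\ref{rmk:torsionpairsCvsT} exhibits a concrete counterexample with $m=2$, $n=3$: there are objects $\{1,2\}$ and $\{5,6\}$ of $\sC_2$ with $\Ext^1_{\sC_2}(\{1,2\},\{5,6\})\neq 0$ but $\Ext^1_{\sT_{-1}}((1,2),(5,6))=0$. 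For such extensions there is no triangle in $\Tw$ to pull back, and the $\Tw$-graphical calculus of Theorem~\ref{thm:graphical-calculus} simply says nothing. Consequently the whole ``transport through the additive equivalence'' strategy breaks down for exactly the extensions that appear in $\Cm$ but not in $\Tw$, which are the ones you would need the most care with.

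The paper circumvents this by not transporting the middle-term calculus from $\Tw$ at all. Instead it works directly in the fundamental domain $\sF\subset \Db(\kk A_n)$ for $\Cm$: the Ext-hammocks are computed from scratch in Corollaries~\ref{corollary:exthammock}--\ref{cor:exthammockconditions}, and the graphical calculus of Proposition~\ref{prop:graphicalcalculusorbitcat} is established by a case analysis (projective/non-projective module, shifts in various degrees) using the hereditary structure of $\Db(\kk A_n)$ and, where applicable, \cite[Corollary~8.5]{BMRRT}. The only place the $\Tw$-picture is used is in the direction you could safely use it, namely when $\Ext^1_{\Cm}(a,b)=0=\Ext^1_{\Cm}(b,a)$: then the corresponding arcs have no extension in $\Tw$ either, so Lemma~\ref{lemma:extvanishnotadmissiblePtolemyarcs} shows none of the Ptolemy arcs are admissible, hence none are $m$-diagonals. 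Your secondary observation — that a Ptolemy arc of two innerarcs of $\arc{a}_0$ is again an innerarc — is correct and implicitly used, but it does not salvage the argument, since the problem is not that middle terms escape the region but that the triangle you want to pull back need not exist in $\Tw$ at all.
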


Since $\Cm$ has finitely many indecomposable objects up to isomorphism, any subcategory of $\Cm$ is contravariantly finite. Thus, Theorem~\ref{thm:B} is an immediate corollary of Theorem~\ref{thm:orbit-extension-closure} and Proposition~\ref{prop:Iyama-Yoshino}.

\begin{corollary}
Let $\sX$ be a full additive subcategory of $\Cm$ and $\tX$ the corresponding set of $m$-diagonals. Then $(\sX,\sX^{\perp})$ is a torsion pair in $\Cm$ if and only if $\tX$ is closed under Ptolemy $m$-diagonals of classes I and II. 
\end{corollary}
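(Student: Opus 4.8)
The plan is to assemble the statement from Proposition~\ref{prop:Iyama-Yoshino} and Theorem~\ref{thm:orbit-extension-closure}. Since $\Cm$ is a $\kk$-linear, Krull-Schmidt, Hom-finite triangulated category, Proposition~\ref{prop:Iyama-Yoshino} tells us that $(\sX,\sX^{\perp})$ is a torsion pair in $\Cm$ if and only if $\sX$ is an extension-closed contravariantly finite subcategory of $\Cm$. So there are two conditions to address: contravariant finiteness and extension-closedness.

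First I would dispose of contravariant finiteness for free. The orbit category $\Cm=\Db(\kk A_n)/\Sigma^m\tau$ has only finitely many isomorphism classes of indecomposable objects, as its AR quiver is the finite stable translation quiver $\Gamma(n,m)$. Hence any full additive subcategory $\sX$ involves only finitely many indecomposables, and, each $\Hom$-space being finite-dimensional, for every $c\in\Cm$ one can choose finitely many morphisms from objects of $\sX$ to $c$ that together span all such morphisms; their direct sum is a right $\sX$-approximation of $c$. Thus every $\sX$ is contravariantly finite (indeed functorially finite), and the torsion-pair condition collapses to the single requirement that $\sX$ be extension-closed, i.e.\ $\sX=\extn{\sX}$.

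Next I would translate extension-closedness into the combinatorial condition using the (already available) Theorem~\ref{thm:orbit-extension-closure}: the $m$-diagonals corresponding to $\ind{\extn{\sX}}$ are precisely the closure of $\tX$ under Ptolemy $m$-diagonals of classes I and II. Therefore $\sX=\extn{\sX}$ if and only if $\tX$ already equals that closure, i.e.\ if and only if $\tX$ is closed under Ptolemy $m$-diagonals of classes I and II. Chaining the two equivalences yields the Corollary, which is exactly the $\Cm$-case of Theorem~\ref{thm:B}.

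There is no genuine obstacle at the level of this Corollary itself; all the real work sits in Theorem~\ref{thm:orbit-extension-closure}, which we are entitled to invoke. The only point requiring a line of care is the observation that finiteness of $\ind{\Cm}$ forces every subcategory to be contravariantly finite, so that Proposition~\ref{prop:Iyama-Yoshino} applies with no further hypothesis needed.
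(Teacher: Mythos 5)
Your proposal is correct and follows exactly the same route as the paper: note that $\Cm$ has finitely many indecomposables so every subcategory is automatically contravariantly finite, then combine Proposition~\ref{prop:Iyama-Yoshino} with Theorem~\ref{thm:orbit-extension-closure}. Nothing to add.
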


\begin{remark}\label{rmk:torsionpairsCvsT}
The characterisation of torsion pairs in $\Cm$ does not follow immediately from that in $\Tw$: $\cC_w$ is not a triangulated subcategory of $\Tw$; see \cite[Theorem 5.1]{CS13}. 
In fact, if there is an extension in $\Tw$ between two objects of $\cC_w$ then there is an extension in $\Cm$ between their images, but the converse is not true.
As an example, consider $m = 2, n= 3$ and $\arc{a} = (7,0)$ to be the arc that defines the equivalence. The admissible arcs $(2,1)$ and $(6,5)$ are objects in $\cC_w$ and the corresponding images $\{1,2\}$ and $\{5,6\}$ are $2$-diagonals of a hexagon. It is easy to check that $\Ext^1_{\sT_{-1}} ((1,2),(5,6)) = 0$ but $\Ext^1_{\sC_{2}} (\{1,2\},\{5,6\}) \neq 0$.   
\end{remark}

\subsection{Combinatorial description of the Ext-hammocks}
We require the following notation to describe the Hom- and Ext-hammocks in $\Cm$.

\begin{notation}
Given the vertices $i_1, i_2, \ldots, i_k$ of $\mcP_{n,m}$, we write $C(i_1, i_2, \ldots, i_k)$ to mean that $i_1, i_2, \ldots, i_k, i_1$ follow each other under the clockwise circular order on the boundary of $\mcP_{n,m}$.
\end{notation}

\begin{lemma}
Let $a, b \in \ind \Cm$ and $\arc{a} = \{a_1, a_2\}$, with $a_1 < a_2$, $\arc{b} = \{b_1, b_2\}$ be the corresponding $m$-diagonals of $\mcP_{n,m}$. 
We have $\Hom_{\Cm} (a, b) \ne 0$ if and only if $\arc{b}$ satisfies the following condition:
\[
b_1 = a_2 +im, b_2 = a_1 + jm, \text{ for some } i, j \geq 0, \text{ and } C(a_2,b_1,a_1,b_2).
\]
\end{lemma}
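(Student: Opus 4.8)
The plan is to transport the Hom-hammock description from $\Tw$ (Proposition~\ref{prop:hom-hammocks}, specialised to $w\leq -1$, hence $w\neq 0,1$) through the equivalence $\cC_w \simeq \Cm$ of \cite[Theorem 5.1, Corollary 6.3]{CS13}, and then re-express the resulting condition in terms of the clockwise circular order on $\mcP_{n,m}$. First I would recall the precise form of the equivalence: an indecomposable object of $\Cm$ corresponds to an admissible innerarc $\arc{c}$ of the fixed defining arc $\arc{a}=(s(\arc{a}),t(\arc{a}))$ of length $|(n+1)(-w-1)+1|$, and the wraparound identification in $\Gamma(n,m)$ comes from the $\SSS$-orbit structure: two innerarcs of $\arc{a}$ represent the same object of $\Cm$ precisely when they differ by a power of $\SSS$ (equivalently $\tau$, up to suspension). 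Concretely, the vertex $\{i,j\}$ of $\mcP_{n,m}$ with $N=m(n+1)-2$ corresponds to an innerarc whose endpoints, read inside the interval cut out by $\arc{a}$, reduce modulo $N$ to $i$ and $j$; the suspension acts as $\{i,j\}\mapsto\{i+1,j+1\}$ and $\tau$ as $\{i,j\}\mapsto\{i-m,j-m\}$, matching $\Sigma(t,u)=(t-1,u-1)$ and $\tau(t,u)=(t-d,u-d)$ with $d=w-1=-m$.

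Next I would pin down, in the $\infty$-gon model, exactly which arcs $\arc{b}$ satisfy $\Hom_{\Tw}(a,b)\neq 0$. By Proposition~\ref{prop:hom-hammocks}(i), $\hom_{\Tw}(a,b)=1$ iff $b\in\homfrom{a}\cup\homto{\SSS a}$. Translating the forward Hom-hammock $\homfrom{a}=\rayfrom{\overline{aR(a)}}$ and the backward Hom-hammock $\homto{\SSS a}=\corayto{\overline{L(\SSS a)\,\SSS a}}$ into arc coordinates using the facts already recorded in the proof of Proposition~\ref{prop:middletermsarcs} (two objects lie on the same ray iff their arcs share a starting point; on the same coray iff they share an ending point; $L(\SSS x)$ has arc $(s(\arc{x})-w,s(\arc{x}))$ and $R(x)$ has arc $(s(\arc{x}),t(\arc{x})+w)$ — mutatis mutandis for $w\leq -1$), one obtains that $\Hom_{\Tw}(a,b)\neq 0$ iff, writing $\arc{a}=(s(\arc{a}),t(\arc{a}))$ and $\arc{b}=(s(\arc{b}),t(\arc{b}))$, the endpoints of $\arc{b}$ lie in a prescribed `staircase' region: $s(\arc{b})$ reachable from $t(\arc{a})$ and $t(\arc{b})$ reachable from $s(\arc{a})$ by non-negative multiples of $d$, subject to an ordering constraint on the four endpoints. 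Passing to $\Cm$, one replaces `non-negative multiples of $d$' by `$b_1=a_2+im$, $b_2=a_1+jm$ for $i,j\geq 0$' (since $d=-m$ and signs flip when we move to the polygon with clockwise labelling), and the ordering constraint on the endpoints becomes exactly the cyclic condition $C(a_2,b_1,a_1,b_2)$. I would verify this last translation by a careful bookkeeping of which of the two admissible representatives (mod $N$) of each $\Cm$-object actually realises the Hom; here is where the $\SSS$-periodicity of $\Cm$ matters, because an arc in $\Tw$ that is `too long' to hit $\homfrom{a}\cup\homto{\SSS a}$ may, after reduction mod $N$, reappear as a shorter arc that does — so one must check that the union over all $\SSS$-translates of the $\Tw$ Hom-hammock is precisely the region carved out by $C(a_2,b_1,a_1,b_2)$ together with $b_1\equiv a_2, b_2\equiv a_1 \pmod m$.

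The main obstacle I anticipate is precisely this last point: the equivalence $\cC_w\simeq\Cm$ is not triangulated, so Hom-spaces do \emph{not} simply descend, and one cannot blindly quote Proposition~\ref{prop:hom-hammocks}. One must instead use that $\Hom_{\Cm}(\bar a,\bar b)=\bigoplus_{k\in\bZ}\Hom_{\Db(\kk A_n)}(a,\SSS^{k}\tau^{?}b)$-style orbit formula (or, equivalently, work directly inside the $N$-gon combinatorics of $\Gamma(n,m)$, where the Hom-hammocks of $\bZ A$-type translation quivers with a cyclic identification are classical — see \cite{CS10,CS11}) to see that in the Hom-finite, finite category $\Cm$ only finitely many $k$ contribute and the resulting condition consolidates into the single cyclic inequality stated. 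So the proof is: (1) recall the combinatorial model and the action of $\Sigma,\tau,\SSS$; (2) read off the $\Tw$ Hom-hammock in arc coordinates; (3) reduce modulo $N$ and take the union over $\SSS$-orbits, checking no extra or missing morphisms; (4) recognise the resulting endpoint constraints as $b_1=a_2+im$, $b_2=a_1+jm$ with $i,j\geq 0$ and $C(a_2,b_1,a_1,b_2)$. A sanity check against Figure~\ref{fig:orbitcatexample} (e.g. computing $\Hom_{\sC_2(A_3)}$ between a couple of explicit $2$-diagonals of the octagon $\mcP_{3,2}$) would confirm the orientation conventions.
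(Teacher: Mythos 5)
The paper's proof of this lemma is a single sentence: ``Explicit computation using the combinatorial model of $\Cm$.'' In other words, the authors compute directly in $\Cm$, presumably via the orbit-category formula $\Hom_{\Cm}(a,b)\cong\bigoplus_{k}\Hom_{\Db(\kk A_n)}(a,(\Sigma^m\tau)^k b)$ — the same device used explicitly in the proof of Proposition~\ref{prop:graphicalcalculusorbitcat} — together with the classical Hom-hammocks of $\Db(\kk A_n)$, and then translate the answer into the $N$-gon labelling of $\Gamma(n,m)$.

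Your route is genuinely different: you propose to transport the Hom-hammock description from $\Tw$ (Proposition~\ref{prop:hom-hammocks}) across the additive equivalence $\cC_w\simeq\Cm$ of \cite{CS13}. This can in principle work, because a $\kk$-linear equivalence is fully faithful, hence preserves Hom-spaces on the nose, and $\cC_w$ is a \emph{full} subcategory of $\Tw$, so $\Hom_{\Cm}(\bar a,\bar b)\cong\Hom_{\cC_w}(a,b)=\Hom_{\Tw}(a,b)$. However, the obstacle you flag is a misconception. You say that ``Hom-spaces do not simply descend'' because the equivalence is not a triangle equivalence, and you therefore plan to ``take the union over $\SSS$-orbits'' after reducing mod $N$. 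This is not right: the equivalence is a bijection on indecomposables (each $m$-diagonal of $\mcP_{n,m}$ corresponds to a \emph{unique} admissible innerarc of the fixed defining arc), and Hom-spaces do descend. What does \emph{not} descend is the suspension functor, and hence $\Ext^1$ — which is exactly what Remark~\ref{rmk:torsionpairsCvsT} records, and which is also why the paper cannot simply transport Theorem~\ref{thm:graphical-calculus} and must reprove the graphical calculus for $\Cm$ in Proposition~\ref{prop:graphicalcalculusorbitcat}. So there is no orbit union to take at the level of Hom: you may read off $\Hom_{\Tw}(a,b)$ between the two innerarcs directly. Once you drop steps (3)–(3') of your outline, what remains is the coordinate translation from the $\infty$-gon to the $N$-gon with the clockwise cyclic order, which is sound; but at that point your argument is not appreciably shorter than the direct computation in $\Cm$ that the paper gestures at, and it additionally relies on the black-boxed equivalence of \cite{CS13}, whose construction itself involves comparing Hom-spaces in the two categories. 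For this routine statement, the paper's direct computation is the cleaner path.
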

\begin{proof}
Explicit computation using the combinatorial model of $\Cm$.
\end{proof}

Using the Auslander--Reiten formula, we obtain the Ext-hammocks:

\begin{corollary}\label{corollary:exthammock}
We have $\Ext^1_{\Cm} (b,a) \ne 0$ if and only if the following condition holds:
\[
b_1 = a_2 + im, b_2 = a_1 + jm, \text{ for some } i, j \geq 1, \text{ and } C(a_2+m, b_1,a_1+m,b_2).
\]
\end{corollary}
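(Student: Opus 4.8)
The plan is to derive Corollary~\ref{corollary:exthammock} directly from the preceding $\Hom$-computation by invoking the Auslander--Reiten formula for $\Cm$. Since $\Cm$ has Serre functor $\SSS = \Sigma\tau$ and is $w$-Calabi--Yau with $\SSS \simeq \Sigma^w$, we have a functorial isomorphism
\[
\Ext^1_{\Cm}(b,a) \simeq \Hom_{\Cm}(b,\Sigma a) \simeq D\Hom_{\Cm}(\Sigma a, \SSS b) = D\Hom_{\Cm}(\Sigma a, \Sigma\tau b).
\]
Thus $\Ext^1_{\Cm}(b,a)\neq 0$ if and only if $\Hom_{\Cm}(\Sigma a,\Sigma\tau b)\neq 0$, and since $\Sigma$ is an autoequivalence this is equivalent to $\Hom_{\Cm}(a,\tau b)\neq 0$. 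Now I would simply feed $\tau b$ into the criterion of the previous lemma and translate.

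The key step is then a bookkeeping exercise. Recall from the combinatorial model that for $b$ with $\arc{b}=\{b_1,b_2\}$ we have $\arc{\tau b}=\{b_1-m,b_2-m\}$ (all arithmetic modulo $N=m(n+1)-2$, with representatives $1,\dots,N$). Writing $\arc{a}=\{a_1,a_2\}$ with $a_1<a_2$, the Hom-lemma applied to the pair $(a,\tau b)$ says $\Hom_{\Cm}(a,\tau b)\neq 0$ precisely when the two endpoints of $\arc{\tau b}$ can be written as $a_2+i'm$ and $a_1+j'm$ for some $i',j'\ge 0$, together with the cyclic-order condition $C(a_2,\,a_2+i'm,\,a_1,\,a_1+j'm)$. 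Substituting $b_1-m=a_2+i'm$ and $b_2-m=a_1+j'm$ gives $b_1=a_2+(i'+1)m$ and $b_2=a_1+(j'+1)m$; setting $i=i'+1\ge 1$ and $j=j'+1\ge 1$ yields the stated condition $b_1=a_2+im,\ b_2=a_1+jm$ for $i,j\ge 1$. The cyclic-order condition $C(a_2,b_1-m,a_1,b_2-m)$ becomes, after adding $m$ to the middle two entries, exactly $C(a_2+m,b_1,a_1+m,b_2)$, since adding a fixed constant $m$ to all four vertices is a rotation and preserves the clockwise circular order, and here we only shift the two entries that were themselves shifted by $-m$ relative to the desired statement. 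One should double-check that the assignment $b_1-m \leftrightarrow$ (the endpoint that plays the role of $a_2+i'm$) is the correct matching of the two endpoints of $\arc{\tau b}$ to the two slots in the Hom-condition; the cyclic-order clause $C(a_2,b_1-m,a_1,b_2-m)$ disambiguates which endpoint is which, and this is inherited faithfully under the substitution.

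I also need to note that there is no ``extremal'' degenerate case to worry about: unlike in $\Tw$ for $w=0$ or the boundary phenomena with $\Sigma a$, in $\Cm$ the $\Hom$-spaces between indecomposables are at most one-dimensional (this is implicit in the combinatorial model of \cite{CS13}), so $\Hom\neq 0$ genuinely detects $\Ext^1\neq 0$ via the above chain of isomorphisms with no multiplicity subtleties, and the condition ``$\neq 0$'' transfers cleanly.

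The main obstacle I anticipate is purely notational rather than conceptual: getting the modular arithmetic on the vertices of $\mcP_{n,m}$ exactly right, in particular confirming that the shift by $m$ in the cyclic-order predicate lands on precisely the entries stated (the $\arc{b}$-endpoints $b_1,b_2$ but not the $\arc{a}$-endpoints $a_1,a_2$ — wait, the statement shifts $a_1,a_2$ by $m$ too), so one must carefully track that applying $\tau^{-1}$ to both sides of the Hom-condition, i.e. adding $m$ everywhere, produces $C(a_2+m,b_1,a_1+m,b_2)$ from $C(a_2,b_1-m,a_1,b_2-m)$. Beyond that the proof is a one-line application of Serre duality plus substitution, so the write-up can be kept very short, mirroring the telegraphic proof style of the surrounding lemmas.
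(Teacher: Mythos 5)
Your proposal is correct and follows exactly the route the paper takes: the paper's entire proof is the phrase ``Using the Auslander--Reiten formula,'' i.e.\ reduce $\Ext^1_{\Cm}(b,a)\neq 0$ to $\Hom_{\Cm}(a,\tau b)\neq 0$ via Serre duality and then substitute $\arc{\tau b}=\{b_1-m,b_2-m\}$ into the preceding Hom-lemma. Your bookkeeping (reindexing $i=i'+1$, $j=j'+1$ and noting that shifting all four entries of the cyclic-order predicate by $m$ preserves it) is accurate and simply makes explicit the computation the paper leaves implicit.
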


Corollary \ref{corollary:exthammock} can be unpacked into the following more readable statement.

\begin{corollary}\label{cor:exthammockconditions}
Let $a, b \in \ind \Cm$ and $\arc{a} = \{a_1, a_2\}$, with $a_1 < a_2$, be the  $m$-diagonal corresponding to $a$. Then $\Ext^1_{\Cm} (b,a) \ne 0$ if and only if $\arc{b}$ satisfies one of the following:
\begin{compactenum}
\item $\arc{b}$ is a neighbour of $\arc{a}$ incident with $a_2 +1$, 
\item $\arc{b}$ is a neighbour of $\arc{a}$ incident with $a_1 +1$,
\item $\arc{b}$ crosses $\arc{a}$ in such a way that $\arc{b}$ is obtained by adding multiples of $m$ to the endpoints of $\arc{a}$,
\item $\arc{b} = \{a_1+1,a_2+1\}$, i.e. $b = \Sigma a$. 
\end{compactenum} 
\end{corollary}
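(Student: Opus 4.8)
The plan is to read Corollary~\ref{cor:exthammockconditions} off Corollary~\ref{corollary:exthammock} by translating the cyclic-order condition appearing there into the four listed configurations. I would start by recording the arithmetic facts that the translation rests on. Since $\arc{a}=\{a_1,a_2\}$ is an $m$-diagonal, numbering $\mcP_{n,m}$ so that $a_1,a_1+1,\dots,a_2$ is one of its two boundary arcs, one has $a_2-a_1=km-1$ for some $k\geq 1$; in particular $a_1+km=a_2+1$. Likewise $\arc{b}$ is an $m$-diagonal, so its two endpoints differ by $\equiv -1\pmod m$. By the defining formula $N=m(n+1)-2\equiv -2\pmod m$, so the orbit of any vertex under clockwise rotation through $m$ (``adding a multiple of $m$'') is the set of vertices congruent to it modulo $\gcd(m,2)$; together with $a_1+km=a_2+1$ this gives that $a_2+1$ lies in the rotation-orbit of $a_1$ and, symmetrically (using that going once around the polygon shifts the residue modulo $m$ by $+2$), that $a_1+1$ lies in the rotation-orbit of $a_2$. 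Finally $\Sigma\{i,j\}=\{i+1,j+1\}$, so the $m$-diagonal of $\Sigma a$ is $\{a_1+1,a_2+1\}$.

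For the forward implication, suppose $\arc{b}=\{b_1,b_2\}$ with $b_1=a_2+im$, $b_2=a_1+jm$, $i,j\geq 1$, and $C(a_2+m,b_1,a_1+m,b_2)$, as in Corollary~\ref{corollary:exthammock}. I would carry out a case analysis on the positions of $b_1$ and $b_2$ relative to $a_1,a_2,a_1+m,a_2+m$. The cyclic-order condition confines $b_1$ to the clockwise arc from $a_2+m$ to $a_1+m$ and $b_2$ to the clockwise arc from $a_1+m$ to $a_2+m$; using $a_1+km=a_2+1$ and that $\arc{b}$ is an $m$-diagonal, the generic outcome is that $b_1$ lies strictly between $a_2$ and $a_1$ and $b_2$ strictly between $a_1$ and $a_2$, so $\arc{b}$ crosses $\arc{a}$ and, having endpoints $a_2+im$ and $a_1+jm$, is obtained from $\arc{a}$ by adding multiples of $m$ to its endpoints --- case (3). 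The only remaining possibilities compatible with the order condition and the $m$-diagonal constraint are: $\arc{b}$ does not cross $\arc{a}$ and has an endpoint equal to $a_2+1$, which forces it to be at distance $1$ from $\arc{a}$ --- case (1); the mirror situation with an endpoint equal to $a_1+1$ --- case (2); and $\{b_1,b_2\}=\{a_1+1,a_2+1\}$, i.e. $b=\Sigma a$ --- case (4). Conversely, for each of (1)--(4) I would write the endpoints of $\arc{b}$ explicitly, exhibit indices $i,j\geq 1$ realising $b_1=a_2+im$ and $b_2=a_1+jm$ (produced from $a_2-a_1=km-1$ and $N\equiv -2\pmod m$, with a wraparound needed in cases (1), (2) and (4)), and verify the clockwise order $C(a_2+m,b_1,a_1+m,b_2)$ directly.

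The main obstacle is exactly the handling of these boundary and degenerate configurations. For $b=\Sigma a$ and for the two neighbour cases, the indices $i,j$ witnessing Corollary~\ref{corollary:exthammock} are not the obvious small ones but require wrapping once (or more) around $\mcP_{n,m}$, so their construction rests essentially on $N\equiv -2\pmod m$; one must also be careful about the meaning of the predicate $C$ when two of its arguments coincide --- for instance when $i=1$, so $b_1=a_2+m$ --- and must consistently keep clockwise separate from anticlockwise and the endpoint of $\arc{b}$ lying ``near $a_1$'' separate from the one lying ``near $a_2$''. Each individual verification is routine; the work lies in keeping this bookkeeping straight over all positions of $b_1$ and $b_2$.
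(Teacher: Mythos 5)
Your plan is correct and coincides with the paper's: the paper offers no explicit argument for this corollary, merely stating that Corollary~\ref{corollary:exthammock} ``can be unpacked'' (with Figure~\ref{fig:ext-shaded} as a visual aid), and your proposal is precisely that unpacking, via a case analysis on where $b_1=a_2+im$ and $b_2=a_1+jm$ sit relative to $a_1,a_2,a_1+m,a_2+m$ together with the arithmetic $a_2-a_1=km-1$ and $N\equiv -2\pmod m$. The points you flag as delicate --- the wraparound needed to realise $i,j\geq 1$ in cases (1), (2) and (4), and the interpretation of $C(\cdots)$ when two of its arguments collide (e.g.\ $b_1=a_2+m$) --- are exactly the places where the paper's one-line ``unpacking'' sweeps detail under the rug, so attending to them is appropriate rather than a deviation.
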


Figure \ref{fig:ext-shaded} shows where the indecomposable objects corresponding to the arcs that satisfy the conditions in Corollary \ref{cor:exthammockconditions} lie in the AR quiver.

\begin{figure}[!ht]
\begin{center}
\begin{tikzpicture}[thick,scale=0.75, every node/.style={scale=0.75}]

%a:
\fill (-10.5,0.5) circle (2pt);
\node (a) at (-10.5,0.5) {};
\node [left] at (a) {$a$};

%shaded areas
\filldraw[fill=black!20!white] (-10,0.5) -- (-8.5,2) -- (-6,-0.5) -- (-7.5,-2) -- (-10,0.5); %case 3

\filldraw[fill=black!30!white] (-8.5,2) -- (-8,2.5) -- (-5.5,0) -- (-6,-0.5) -- (-8.5,2); %case 2

\filldraw[fill=black!40!white] (-7.5,-2) -- (-6,-0.5) -- (-5.5,-1) -- (-7,-2.5) -- (-7.5,-2); %case 1

\filldraw[fill=black!10!white] (-6,-0.5) -- (-5.5,0) -- (-5,-0.5) -- (-5.5,-1) -- (-6,-0.5); %case 4

%tauinverse of a
\fill[black!50!white] (-9.5,0.5) circle (2pt);
\node (tauinv-a) at (-9.5,0.5) {};
\node [right] at (tauinv-a) {$\tau^{-1} a$};

%hom-hammock of tau-inverse of a:
\draw[dotted] (-9.5,0.5) -- (-8,2) -- (-5.5,-0.5) -- (-7,-2) -- (-9.5,0.5);

%shaded areas notation:
\node (2) at (-6.5,1.5) {};
\node at (2) {$(2)$};
\node (3) at (-9,-1) {};
\node at (3) {$(3)$}; 
\node (4) at (-5,-0.5) {};
\node[right] at (4) {$(4)$};
\node (1) at (-5.75,-2){};
\node[above] at (1) {$(1)$};

%top and bottom lines of the AR-quiver:
\draw[thick] (-11,-2) -- (-7.5,-2);
\draw[dotted] (-7.5,-2) -- (-6.5,-2);
\draw[thick] (-6.5,-2) -- (-5,-2);

\draw[thick] (-11,2) -- (-8.5,2);
\draw[dotted] (-8.5,2) -- (-7.5,2);
\draw[thick] (-7.5,2) -- (-5,2);
\end{tikzpicture}
\end{center}
\caption{$\Ext^1_{\Cm}(-,a) \neq 0$.}
\label{fig:ext-shaded}
\end{figure}

\subsection{Graphical calculus}
To prove Theorem~\ref{thm:orbit-extension-closure} we need a graphical calculus analogous to Theorem~\ref{thm:graphical-calculus}. Before doing this, we briefly recall some useful notation and facts about $\Cm$.

Write $\inj{\kk A_n}$ for the subcategory of $\mod{\kk A_n}$ containing the injective $\kk A_n$-modules. We have the following fundamental domain for $\Cm$ in $\Db(\kk A_n)$:
\[
\sF = \ind{\bigcup_{i=1}^{m} \Sigma^{i-1} \mod{\kk A_n} \cup \Sigma^m (\mod{\kk A_n} \setminus \inj{\kk A_n})}, 
\]
From now on, we identify objects in $\ind{\Cm}$ with their representatives in $\sF$. Given $X \in \ind{\Db(\kk A_n)}$, we denote by $\dd{X}$ the degree of $X$, i.e. the integer such that $X = \Sigma^{\dd{X}} \overline{X}$ for  $\overline{X}\in \ind{\mod{\kk A_n}}$. 

\begin{lemma} Let $A, B \in \sF$, $P$ be a projective $\kk A_n$-module and $C \in \ind{\Db (\kk A_n)}$.
\begin{compactenum}
\item $\Hom_{\Db(\kk A_n)} (A, \tau^k \Sigma^{km} B) = 0$, for every $k \neq 0, 1$.
\item $\Hom_{\Db(\kk A_n)} (A, \tau^k \Sigma^{km} B) \neq  0$ for at most one value of $k$.
\item If $\Hom_{\Db(\kk A_n)} (P,C) \neq 0$, then $C \in \ind{\mod{\kk A_n}}$.
\end{compactenum}
\end{lemma}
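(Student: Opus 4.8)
The plan is to prove the three statements about Hom-spaces in $\Db(\kk A_n)$ by reducing to standard facts about the bounded derived category of a hereditary algebra of Dynkin type $A$. Throughout, I will use the description of $\sF$ as a fundamental domain for $\Cm$ and the explicit form of the AR quiver of $\Db(\kk A_n)$, which is of shape $\bZ A_n$.

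For statement (1), first I would observe that $\Sigma^{km}\tau^k = (\Sigma^m\tau)^k$ is a power of the functor defining the orbit category, and that its effect on the AR quiver of $\Db(\kk A_n)$ is to translate by $k$ copies of a fundamental domain (in the appropriate skew direction, since $m = -w+1 \geq 2$). Concretely, since the AR quiver of $\Db(\kk A_n)$ is $\bZ A_n$ and $A,B$ both lie in the fundamental domain $\sF$, which spans a band of $m$ consecutive `levels' (plus part of one more), the supports of $A$ and $\tau^k\Sigma^{km}B$ are separated by a translate whose magnitude grows with $|k|$. For $|k|$ large this immediately forces $\Hom = 0$ because Hom-hammocks in $\Db(\kk A_n)$ are bounded (being finite unions of forward Hom-hammocks of modules, each of which spans only finitely many $\tau$-orbits). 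The only delicate range is $k = 2$ and $k = -1$, where the translated copy of $\sF$ is adjacent to $\sF$; here I would argue directly, by writing $A = \Sigma^{\dd{A}}\bar A$, $B = \Sigma^{\dd{B}}\bar B$ with $\bar A, \bar B$ modules and $0 \leq \dd{A},\dd{B}\leq m$ (with the injectives excluded at the top degree), and checking that the cohomological degrees of $A$ and $\tau^k\Sigma^{km}B$ cannot overlap in the way required for a nonzero morphism between indecomposables in $\Db(\kk A_n)$. This uses only that $\Hom_{\Db(\kk A_n)}(X,\Sigma^j Y)$ for $X,Y$ modules is nonzero only for $j\in\{0,1\}$, together with the fact that $\mod{\kk A_n}$ is directed.

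For statement (2), I would use statement (1) to restrict attention to $k\in\{0,1\}$, so it suffices to show that $\Hom_{\Db(\kk A_n)}(A,B)$ and $\Hom_{\Db(\kk A_n)}(A,\tau\Sigma^m B)$ cannot both be nonzero. This is where the structure of the fundamental domain does the work: $\tau\Sigma^m = \SSS$ is the Serre functor of $\Db(\kk A_n)$ up to the shift discrepancy, and $\Hom_{\Db(\kk A_n)}(A,\SSS B)\cong D\Hom_{\Db(\kk A_n)}(B,A)$. So the claim becomes: one cannot have both $\Hom(A,B)\neq 0$ and $\Hom(B,A)\neq 0$ for $A,B\in\sF$ distinct, and when $A = B$ only one of the two can be nonzero (indeed $\Hom(A,A)\neq 0$ always, and $\Hom(A,\SSS A)= D\Hom(A,A)\neq 0$ too — so I must be careful here and instead argue that the relevant $k$ is determined by \emph{which} translate lands in a region admitting a morphism). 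Because $\mod{\kk A_n}$ is a directed category and $\Db(\kk A_n)$ inherits a partial order on indecomposables from the AR quiver, the mutual-Hom-vanishing follows once one checks the degree bookkeeping within $\sF$; again the excluded injectives in top degree are exactly what rules out the boundary coincidence.

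Statement (3) is the quickest: if $P$ is projective then $\Hom_{\Db(\kk A_n)}(P,C)=\Hom_{\Db(\kk A_n)}(P,\Sigma^{\dd{C}}\bar C) = \Ext^{\dd{C}}_{\kk A_n}(P,\bar C)$, which vanishes for $\dd{C}\neq 0$ since projectives have no higher self-extensions — indeed no higher Ext into anything. Hence $\dd{C}=0$, i.e. $C\in\ind{\mod{\kk A_n}}$. I expect \textbf{the main obstacle} to be the boundary bookkeeping in statements (1) and (2): precisely pinning down, in terms of the degrees $\dd{A},\dd{B}$ and the position of $\bar A,\bar B$ in $\mod{\kk A_n}$, why the translate by $k=2$ (resp. $k=-1$) fails to meet $\sF$ in a Hom-supporting way, and in particular handling the role of $\mod{\kk A_n}\setminus\inj{\kk A_n}$ at the top of the fundamental domain. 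This is genuinely a finite combinatorial check on the AR quiver of $\bZ A_n$, so I would phrase it as such rather than grinding through cases.
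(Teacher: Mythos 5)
Your proposal is correct and takes the natural direct route; the paper itself gives no detailed argument here --- it simply cites \cite[Proposition 2.1]{CS10} for the case $m=1$, says ``the proof for $m\geq 2$ is similar,'' and calls statement (3) ``trivial using projectivity and the hereditary property.'' So your sketch is filling in exactly what the authors leave to the reader, in the same spirit: degree bookkeeping on the AR quiver of $\Db(\kk A_n)$ for (1) and (2), and projectivity plus the hereditary vanishing $\Ext^{\geq 2}=0$ for (3).

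One small correction worth flagging. In your treatment of (2) you temporarily drop the shift discrepancy: Serre duality gives
\[
\Hom_{\Db(\kk A_n)}(A,\tau\Sigma^m B) \;=\; \Hom_{\Db(\kk A_n)}(A,\Sigma^{m-1}\SSS B) \;\cong\; D\Hom_{\Db(\kk A_n)}(\Sigma^{m-1}B,A),
\]
not $D\Hom(B,A)$. The worry you raise about $A=B$ --- that $\Hom(A,A)$ and $\Hom(A,\SSS A)\cong D\Hom(A,A)$ are both nonzero --- is therefore a non-issue. With the $\Sigma^{m-1}$ restored, if $\dd{A}=a$ and $\dd{B}=b$ with $0\leq a,b\leq m$, then $\Hom(A,B)\neq 0$ forces $b-a\in\{0,1\}$, while $\Hom(\Sigma^{m-1}B,A)\neq 0$ forces $a-b\in\{m-1,m\}$. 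For $m\geq 2$ (which always holds since $w\leq -1$) these two conditions are incompatible, so (2) follows immediately from (1) by this one-line degree check --- no case analysis on the top-degree injectives is needed at all. In (3), the identity $\Hom_{\Db(\kk A_n)}(P,\Sigma^{\dd{C}}\overline{C})=\Ext^{\dd{C}}(P,\overline{C})$ should be read with $\Hom_{\Db(\kk A_n)}(P,\Sigma^{j}\overline{C})=0$ for $j<0$, which is just the concentration of modules in degree zero; the substance of your argument is fine.
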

\begin{proof}
Statements (1) and (2) are proved for $m=1$ in \cite[Proposition 2.1]{CS10}; the proof for $m \geq 2$ is similar. Statement (3) is trivial using projectivity and the hereditary property.
\end{proof}

For $a \in \ind{\Cm}$ the \emph{starting} and \emph{ending frames} of $a$ are:
\begin{align*}
F_s(a) & \coloneqq \{b \in \ind \Cm \mid \Hom_{\Cm} (a,b) \neq 0, \Ext^1_{\Cm} (b,a) = 0 \}; \\
F_e (a) & \coloneqq \{b \in \ind \Cm \mid \Hom_{\Cm} (b,a) \neq 0, \Ext^1_{\Cm} (a,b) = 0 \},
\end{align*}
cf. Lemma~\ref{lem:homext-vanishing}(ii), $\rayfrom{a} \cup \corayfrom{a}$ and $\rayto{a} \cup \corayto{a}$ in Theorem~\ref{thm:graphical-calculus}. 

We are now ready to state the graphical calculus result. For $w=-1$ this was done in \cite[Chapter 5]{CS-thesis}.

\begin{proposition} \label{prop:graphicalcalculusorbitcat}
Let $a, b \in \ind{\Cm}$ be such that $\Ext^1_{\Cm}(b,a) \ne 0$. Then $\ext^1_{\Cm} (b,a) = 1$ and the unique (up to equivalence) non-split extension $\tri{a}{e}{b}$ has middle term $e$ whose summands are given by 
$F_s(a) \cap F_e (b)$. If this intersection is empty, then we interpret $e$ to be the zero object.
\end{proposition}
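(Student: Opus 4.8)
### Proof strategy for Proposition~\ref{prop:graphicalcalculusorbitcat}

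\textbf{Overall approach.} The plan is to mimic the proof of Theorem~\ref{thm:graphical-calculus} but transported into the fundamental domain $\sF \subseteq \Db(\kk A_n)$, using the hereditary structure of $\mod{\kk A_n}$ rather than cohomology of DG modules. First I would record that $\ext^1_{\Cm}(b,a) = 1$: this is immediate from Corollary~\ref{corollary:exthammock} together with the fact that $\Hom_{\Db(\kk A_n)}(a,\tau^k\Sigma^{km}b)$ is nonzero for at most one value of $k$ (the preceding Lemma, parts (1)--(2)), so the $\Ext^1$-space in the orbit category, which is a sum over $k$ of such Hom-spaces in $\Db(\kk A_n)$, is one-dimensional. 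Hence there is a unique non-split triangle $\tri{a}{e}{b}$ up to equivalence, and it remains to identify $e$.

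\textbf{Identifying the middle term.} I would lift the triangle to $\Db(\kk A_n)$: the connecting morphism $b \to \Sigma a$ in $\Cm$ is represented by a genuine morphism $\Sigma^{-1}b \to \tau^k\Sigma^{km}a$ in $\Db(\kk A_n)$ for the unique relevant $k$, and since $\Db(\kk A_n)$ is hereditary this morphism fits into a triangle whose third term splits as $\Sigma(\ker f) \oplus \coker f$ with both $\ker f, \coker f \in \ind{\mod{\kk A_n}}$ or zero. Projecting back to $\Cm$ gives that $e$ has at most two indecomposable summands, each lying in $\sF$. The work is then to show these summands are exactly $F_s(a) \cap F_e(b)$. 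For this I would use Lemma~\ref{lem:homext-vanishing}: since $\Hom$-spaces in $\Cm$ are at most one-dimensional and $\Ext^1(x,x)=0$ for indecomposables $x$ not on the boundary (and one treats boundary cases directly, as in Remark~\ref{rem:exceptionalcase}), any indecomposable summand $e_i$ of $e$ must satisfy $\Ext^1(e_i,a)=0$ and $\Ext^1(b,e_i)=0$, and the maps $a\to e_i$ and $e_i \to b$ are nonzero. The latter forces $e_i \in F_s(a)\cap F_e(b)$ once one checks that the combinatorial conditions ``$\Hom(a,e_i)\neq 0$ and $\Ext^1(e_i,a)=0$'' together pin $e_i$ down to the starting frame (and dually for the ending frame of $b$); this is a direct inspection using the Hom- and Ext-hammock descriptions (the Lemma before Corollary~\ref{corollary:exthammock} and Corollary~\ref{cor:exthammockconditions}). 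Conversely, one shows $F_s(a) \cap F_e(b)$ has at most two elements and that each of them genuinely occurs, by a case analysis on the relative position of the $m$-diagonals $\arc{a}$ and $\arc{b}$ (crossing, neighbouring incident at $a_1+1$ or $a_2+1$, or $b=\Sigma a$), exactly as Figure~\ref{fig:ext-shaded} suggests; in each case one exhibits the non-split triangle explicitly and matches its middle term with the frame intersection.

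\textbf{Main obstacle.} The delicate part will be the boundary/degenerate cases: when $a$ or $b$ lies on the mouth of the AR quiver, or when $b = \Sigma a$, the self-extension hypothesis of Lemma~\ref{lem:homext-vanishing}(2)--(3) may fail or the relevant Hom-space in $\Db(\kk A_n)$ involves projective-injective subtleties (the third part of the Lemma before Corollary~\ref{corollary:exthammock} is designed for exactly this). These must be handled by hand, checking that $F_s(a)\cap F_e(b)$ still correctly predicts the (possibly zero, possibly one-term) middle term — mirroring Proposition~\ref{prop:graphical-calculus} and Remark~\ref{rem:exceptionalcase} in the $\Tw$ setting. I expect the crossing case to be routine and the neighbouring and $b=\Sigma a$ cases to carry all the genuine content.
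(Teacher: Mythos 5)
Your proposal is correct in outline and would get to the result, but it is organised differently from the paper's proof and contains a couple of minor technical slips worth flagging.

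The paper does not invoke Lemma~\ref{lem:homext-vanishing}. Instead, it first normalises the situation: since $\tau$ acts on $\Cm$, it assumes without loss of generality that $a$ is an indecomposable projective $\kk A_n$-module inside the fundamental domain $\sF$, and then runs a case analysis on the position of $b$ in $\sF$ (non-projective module; projective module; $\Sigma^i\overline B$ with $0<i<m$; $\Sigma$ of a projective). In each case it computes $\Ext^1_{\Cm}(b,a)$ directly from the orbit-category formula $\Hom_{\Cm}=\bigoplus_k\Hom_{\Db(\kk A_n)}(-,\tau^k\Sigma^{km}-)$, shows that only the $k=0$ term survives, and reduces the generic case (Case~1) to the module-category graphical calculus by citing \cite[Cor.~8.5]{BMRRT}. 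The degenerate cases (which turn out to be exactly the projective/$\Sigma$-projective configurations, the analogue of your ``boundary'' worry) are handled by an explicit short exact sequence or a split triangle. Your approach is to use Lemma~\ref{lem:homext-vanishing} as the organising principle: the forward inclusion (every summand of $e$ lies in $F_s(a)\cap F_e(b)$) then follows formally once one has $\hom\le 1$ and vanishing self-extensions, and a case analysis is needed for the converse. This is a genuinely different decomposition of the argument. It buys uniformity and makes the conceptual relation to Theorem~\ref{thm:graphical-calculus} transparent, but it ends up requiring the same hand-checking of degenerate cases that the paper does, whereas the paper disposes of the main case at a stroke via \cite{BMRRT} and the WLOG normalisation $a=P(i)$ cuts the bookkeeping considerably.

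Two small inaccuracies in your write-up. First, the description of the cone as $\Sigma(\ker f)\oplus\coker f$ is only literally correct when the lifted connecting morphism in $\Db(\kk A_n)$ is a morphism between objects concentrated in the same degree. In the generic case the lifted map has the form of an extension class $b\to\Sigma a$ with $a,b$ both modules, so the cocone is the middle term of the corresponding short exact sequence in $\mod\kk A_n$, not a kernel--cokernel pair; what one actually needs is just that in type $A_n$ this middle term has at most two indecomposable summands. Second, the hedge about ``$\Ext^1(x,x)=0$ for indecomposables $x$ not on the boundary'' is a hold-over from the $\sT_0$ situation and is unnecessary here: one checks from Corollary~\ref{cor:exthammockconditions} (none of conditions (1)--(4) can hold with $\arc{b}=\arc{a}$) that $\Ext^1_{\Cm}(x,x)=0$ for \emph{every} indecomposable of $\Cm$, so the hypotheses of Lemma~\ref{lem:homext-vanishing} hold uniformly and no boundary caveat is needed for that lemma. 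The genuinely delicate cases are the ones where $F_s(a)\cap F_e(b)$ is empty or a singleton, not self-extension issues.
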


\begin{proof}
We can assume, without loss of generality, that the quiver of type $A_n$ has the linear orientation $n \too n-1 \too \cdots \too 1$. 
Recall that we see $a$ and $b$ as sitting inside the fundamental domain $\sF$. Suppose $\Ext^1_{\Cm} (b,a) \neq 0$. By taking a suitable AR translate of $a$ and $b$, we may assume that $a$ is a projective $\kk A_n$-module.

\Case{1} Assume $b$ is a non-projective $\kk A_n$-module. 
Then $\tau b \in \mod{\kk A_n} \subseteq \sF$ and so 
\[
\Ext^1_{\Cm} (b,a) \simeq D \Hom_{\Cm} (a, \tau b) = D \Hom_{\Db(\kk A_n)} (a, \tau b) \oplus D \Hom_{\Db(\kk A_n)} (a, \tau^2 \Sigma^m b).
\] 
Since $\tau b$ is a module, $\dd{\tau^2 \Sigma^m b} = m-1$ or $m$, so $\dd{\tau^2 \Sigma^m b} \geq 1$. Hence, the second summand is zero, since $a$ is a projective module. Therefore $\Ext^1_{\Cm} (b,a) \simeq \Hom_{\kk A_n} (a, \tau b)$. Hence, $\ext^1_{\Cm} (b,a) = 1$, since Hom spaces in type $A$ are either zero or one dimensional. The result then follows from \cite[Corollary 8.5]{BMRRT}.

\Case{2} Assume $b$ is a projective indecomposable module.
Then $\tau b = \Sigma^{-1} I$ for some indecomposable injective $I$.
In $\Cm$ we have $\tau b \simeq \Sigma^{m-1} \tau I$, which lies in $\sF$. Hence,
\[
\Ext^1_{\Cm} (b,a) \simeq D \Hom_{\Cm} (a, \tau b) = D \Hom_{\Db(\kk A_n)} (a, \Sigma^m \tau I) \oplus D \Hom_{\Db(\kk A_n)} (a, \tau^2 \Sigma^{2m-1} I).
\] 
The second summand is always zero since $\dd{\tau^2 \Sigma^{2m-1} I} \geq 1$ and  $a$ is projective. We have $\dd{\tau \Sigma^{m-1} I} = m-2$ or $m-1$. For any $m \geq 2$, we have $m-1 \geq 1$. Thus, if $\dd{\tau \Sigma^{m-1} I} = m-1$, the first summand is also zero giving a contradiction. Analogously if $\dd{\tau \Sigma^{m-1} I} = m-2$ and $m > 2$. 
To avoid a contradiction, we must have  $m=2$ and $\dd{\tau \Sigma I} = 0$ and $I \cong P(n) \cong I(1)$ the indecomposable projective-injective, whence $a \cong P(1)$. Now, 
\[
0 \neq \Ext^1_{\sC_{2}} (b,a) \simeq D \Hom_{\Db(A_n)} (a, \Sigma \tau P(n)) = D \Hom_{\kk A_n} (a,I(n)),
\]
where one-dimensionality follows from being in type $A$.
Since $I(n)=S(n)$, i.e. the simple at vertex $n$, and $a$ is projective, we get $a = P(n)$. In $\sC_{2}$ we have $\Sigma P(n) \cong P(1)$, whence the non-split triangle is $\tri{P(n)}{0}{P(1)}$. However, $F_s(a) \cap F_e(b) = \emptyset$, giving the claim in this case .

\Case{3} $b = \Sigma^i \overline{B}$, for some indecomposable non-injective module $\overline{B}$ and $0 < i < m$. If $i = 1$, assume also that $\overline{B}$ is non-projective. Then $\tau b$ lies in $\sF$ and $\dd{\tau b} \geq 1$. Therefore,
\[
\Ext^1_{\Cm} (b,a) \simeq D \Hom_{\Cm} (a, \tau b) = D \Hom_{\Db(\kk A_n)} (a,\tau b) \oplus D \Hom_{\Db(\kk A_n)} (a, \Sigma^m \tau^2 b).
\]
Since $\dd{\Sigma^m \tau^2 b} \geq 2$ and $\dd{\tau b} \geq 1$, both summands are zero; a contradiction, so this case provides no extensions.

\Case{4} $b = \Sigma \overline{B}$, where $\overline{B}$ is projective.
Then $\tau b$ is an injective module, and thus lies in $\sF$. Hence
\[
\Ext^1_{\Cm} (b,a) \simeq D \Hom_{\Cm} (a, \tau b) = \Hom_{\Db(\kk A_n)} (a, \tau b) \oplus D \Hom_{\Db(\kk A_n)} (a, \Sigma^m \tau^2 b).
\]
The second summand is zero because $\dd{\Sigma^m \tau^2 b} \geq 1$. Therefore $\ext^1_{\Cm} (b,a)=1$, since $\Ext^1_{\Cm}(b,a) \neq 0$ by assumption. We construct a non-split triangle with first term $a$ and last term $b$. Let $a = P(i)$ and $b = \Sigma P(j)$.

For $i < j$ we have $\Hom_{\Db(\kk A_n)} (a, \tau b)  = \Hom_{\Db(\kk A_n)} (P(i),I(j))=0$, giving a contradiction. Thus $i \geq j$.
For $i > j$, there is a short exact sequence $0 \to P(j) \to P(i) \to E \to 0$, which induces a triangle $\tri{P(j)}{P(i)}{E}$. Shifting this triangle gives the desired triangle. It is easy to check that the cokernel $E$ is the unique object in $F_{s}(P(i))\cap F_e(\Sigma P(j))$.
For $i = j$, we get the standard triangle $\tri{P(i)}{0}{\Sigma P(i)}$ in which the last map is an isomorphism. Moreover, $F_s(P(i)) \cap F_e(\Sigma P(i)) = \emptyset$, corresponding to the zero middle term.
\end{proof}

Using Proposition~\ref{prop:graphicalcalculusorbitcat}, one can check that the $m$-diagonals corresponding to the middle term are as in Figure \ref{fig:middle-term}.

\begin{figure}
\begin{center}
\begin{tikzpicture}[thick,scale=0.6, every node/.style={scale=0.6}]

%%%%%%%%%%%%%%case 1%%%%%%%%%%%%%%%%%%%%%%%%%%%%%%%%%%%%%%%%%%%%%%%%
%a:
\fill (-10.5,0.5) circle (2pt);
\node (a) at (-10.5,0.5) {};
\node [left] at (a) {$a$};

\filldraw[fill=black!40!white] (-7.5,-2) -- (-6,-0.5) -- (-5.5,-1) -- (-7,-2.5) -- (-7.5,-2);

%b:
\fill (-6.5,-1.5) circle (2pt);
\node (b) at (-6.5,-1.5) {};
\node [right] at (b) {$b$};

%hom-hammock of tau-inverse of a:
\draw[dotted] (-9.5,0.5) -- (-8,2) -- (-5.5,-0.5) -- (-7,-2) -- (-9.5,0.5);

%section of a:
\draw (-10.5,0.5) -- (-8,-2);

\draw (-10.5,0.5) -- (-9.5,1.5);
\draw[dashed] (-9.5,1.5) -- (-9,2);

%section of b:
\draw (-7,-2) -- (-6.5,-1.5);

\draw (-9.5,1.5) -- (-6.5,-1.5);
\draw[dashed] (-10,2) -- (-9.5,1.5);

%middle terms: 
\node (e) at (-9.5,1.5) [circle,draw] {};
\node [below] at (e.south) {$e$};

%top and bottom lines:
\draw[thick] (-11,-2) -- (-7.5,-2);
\draw[dotted] (-7.5,-2) -- (-6.5,-2);
\draw[thick] (-6.5,-2) -- (-5,-2);

\draw[thick] (-11,2) -- (-5,2);

%%%%%%%%%%%%%%%%%%end of case 1%%%%%%%%%%%%%%%%%%%%%%%%%%%%%%%%%%%%%%

%%%%%%%%%%%%%%%%circle case 1%%%%%%%%%%%%%%%%%%%%%%%%%%%%%%%%%%%%
\filldraw[fill=black!10!white] (0,0) circle (2cm);

\foreach \angle in {0,20,40,60,80,100,120,140,160,180,200,220,240,260,280,300,320,340}
{\draw (\angle:1.95cm) -- (\angle:2cm);}

\draw (200:2cm) -- (340:2cm)  node [pos=0.5,fill=black!10!white]{$a$};
\draw (180:2cm) -- (80:2cm)  node [pos=0.5,fill=black!10!white]{$b$};
\draw[dashed] (80:2cm) -- (340:2cm)  node [pos=0.5,fill=black!10!white]{$e$};

%%%%%%%%%%%%%%%%end of circle case 1%%%%%%%%%%%%%%%%%%%%%%%%%%%%%%%

\end{tikzpicture}
\end{center}

\vspace*{0.5cm}

\begin{center}
\begin{tikzpicture}[thick,scale=0.6, every node/.style={scale=0.6}]

%%%%%%%%%%%%%%case 2%%%%%%%%%%%%%%%%%%%%%%%%%%%%%%%%%%%%%%%%%%%%%%%%
%a:
\fill (-10.5,0.5) circle (2pt);
\node (a) at (-10.5,0.5) {};
\node [left] at (a) {$a$};

\filldraw[fill=black!30!white] (-8.5,2) -- (-8,2.5) -- (-5.5,0) -- (-6,-0.5) -- (-8.5,2);

%b:
\fill (-7,1) circle (2pt);
\node (b) at (-7,1) {};
\node [right] at (b) {$b$};

%hom-hammock of tau-inverse of a:
\draw[dotted] (-9.5,0.5) -- (-8,2) -- (-5.5,-0.5) -- (-7,-2) -- (-9.5,0.5);

%section of a:
\draw (-10.5,0.5) -- (-9,2);

\draw (-10.5,0.5) -- (-9,-1);
\draw[dashed] (-9,-1) -- (-8,-2);

%section of b:
\draw (-8,2) -- (-7,1);

\draw (-9,-1) -- (-7,1);
\draw[dashed] (-10,-2) -- (-9,-1);

%middle terms:
\node (e) at (-9,-1) [circle,draw] {};
\node [above] at (e.north) {$e$};

%top and bottom lines:
\draw[thick] (-11,2) -- (-8.5,2);
\draw[dotted] (-8.5,2) -- (-7.5,2);
\draw[thick] (-7.5,2) -- (-5,2);

\draw[thick] (-11,-2) -- (-5,-2);

%%%%%%%%%%%%%%%%%%end of case 2%%%%%%%%%%%%%%%%%%%%%%%%%%%%%%%%%%%%%%

%%%%%%%%%%%%%%%%circle case 2%%%%%%%%%%%%%%%%%%%%%%%%%%%%%%%%%%%%
\filldraw[fill=black!10!white] (0,0) circle (2cm);

\foreach \angle in {0,20,40,60,80,100,120,140,160,180,200,220,240,260,280,300,320,340}
{\draw (\angle:1.95cm) -- (\angle:2cm);}

\draw (20:2cm) -- (160:2cm) node [pos=0.5,fill=black!10!white]{$a$};
\draw (260:2cm) -- (0:2cm) node [pos=0.5,fill=black!10!white]{$b$};
\draw[dashed] (160:2cm) -- (260:2cm) node [pos=0.5,fill=black!10!white]{$e$};

%%%%%%%%%%%%%%%%end of circle case 2%%%%%%%%%%%%%%%%%%%%%%%%%%%%%%%

\end{tikzpicture}
\end{center}

\vspace*{1cm}

\begin{center}
\begin{tikzpicture}[thick,scale=0.6, every node/.style={scale=0.6}]

%%%%%%%%%%%%%%case 3%%%%%%%%%%%%%%%%%%%%%%%%%%%%%%%%%%%%%%%%%%%%%%%%
%a:
\fill (-10.5,0.5) circle (2pt);
\node (a) at (-10.5,0.5) {};
\node [left] at (a) {$a$};

%boxes:
\filldraw[fill=black!20!white] (-10,0.5) -- (-8.5,2) -- (-6,-0.5) -- (-7.5,-2) -- (-10,0.5);

%b:
\fill (-7.5,-0.5) circle (2pt);
\node (b) at (-7.5,-0.5) {};
\node [right] at (b) {$b$};

%hom-hammock of tau-inverse of a:
\draw[dotted] (-9.5,0.5) -- (-8,2) -- (-5.5,-0.5) -- (-7,-2) -- (-9.5,0.5);

%section of a:
\draw (-10.5,0.5) -- (-8.5,-1.5);
\draw[dashed] (-8.5,-1.5) -- (-8,-2);

\draw (-10.5,0.5) -- (-9.5,1.5);
\draw[dashed] (-9.5,1.5) -- (-9,2);

%section of b:
\draw (-9.5,1.5) -- (-7.5,-0.5);
\draw[dashed] (-10,2) -- (-9.5,1.5);

\draw (-8.5,-1.5) -- (-7.5,-0.5);
\draw[dashed] (-9,-2) -- (-8.5,-1.5);

%middle terms:
\node (e1) at (-9.5,1.5) [circle,draw] {};
\node [left] at (e1.west) {$e_1$};

\node (e2) at (-8.5,-1.5) [circle,draw] {};
\node [left] at (e2.west) {$e_2$};

%top and bottom lines:
\draw[thick] (-11,2) -- (-5,2);

\draw[thick] (-11,-2) -- (-5,-2);

%%%%%%%%%%%%%%%%%%end of case 3%%%%%%%%%%%%%%%%%%%%%%%%%%%%%%%%%%%%%%

%%%%%%%%%%%%%%%%circle case 3%%%%%%%%%%%%%%%%%%%%%%%%%%%%%%%%%%%%
\filldraw[fill=black!10!white] (0,0) circle (2cm);

\foreach \angle in {0,20,40,60,80,100,120,140,160,180,200,220,240,260,280,300,320,340}
{\draw (\angle:1.95cm) -- (\angle:2cm);}

\draw (180:2cm) -- (0:2cm) node [pos=0.5,fill=black!10!white]{$a$};
\draw (140:2cm) -- (280:2cm) node [pos=0.6,fill=black!10!white]{$b$};
\draw[dashed] (140:2cm) -- (0:2cm) node [pos=0.5,fill=black!10!white]{$e_1$};
\draw[dashed] (180:2cm) -- (280:2cm) node [pos=0.3,fill=black!10!white]{$e_2$};

\draw[<-] (140:2.2cm) arc (140:180:2cm);

\draw[<-] (280:2.2cm) arc (280:360:2.2cm);

%%%%%%%%%%%%%%%%end of circle case 3%%%%%%%%%%%%%%%%%%%%%%%%%%%%%%%

\end{tikzpicture}
\end{center}

\vspace*{1cm}

\begin{center}
\begin{tikzpicture}[thick,scale=0.6, every node/.style={scale=0.6}]

%%%%%%%%%%%%%%case 4%%%%%%%%%%%%%%%%%%%%%%%%%%%%%%%%%%%%%%%%%%%%%%%%
%a:
\fill (-10.5,0.5) circle (2pt);
\node (a) at (-10.5,0.5) {};
\node [left] at (a) {$a$};

%boxes:
\filldraw[fill=black!10!white] (-6,-0.5) -- (-5.5,0) -- (-5,-0.5) -- (-5.5,-1) -- (-6,-0.5);

%b:
\fill (-5.5,-0.5) circle (2pt);
\node (b) at (-5.5,-0.5) {};
\node [right] at (b) {$b$};

%hom-hammock of tau-inverse of a:
\draw[dotted] (-9.5,0.5) -- (-8,2) -- (-5.5,-0.5) -- (-7,-2) -- (-9.5,0.5);

%section of a:
\draw (-10.5,0.5) -- (-9,2);

\draw (-10.5,0.5) -- (-8,-2);

%section of b:
\draw (-8,2) -- (-5.5,-0.5);

\draw (-7,-2) -- (-5.5,-0.5);

%top and bottom lines:
\draw[thick] (-11,2) -- (-5,2);

\draw[thick] (-11,-2) -- (-5,-2);

%%%%%%%%%%%%%%%%%%end of case 4%%%%%%%%%%%%%%%%%%%%%%%%%%%%%%%%%%%%%%

%%%%%%%%%%%%%%%%circle case 4%%%%%%%%%%%%%%%%%%%%%%%%%%%%%%%%%%%%
\filldraw[fill=black!10!white] (0,0) circle (2cm);

\foreach \angle in {0,20,40,60,80,100,120,140,160,180,200,220,240,260,280,300,320,340}
{\draw (\angle:1.95cm) -- (\angle:2cm);}

\draw (180:2cm) -- (0:2cm) node [pos=0.75,fill=black!10!white]{$a$};
\draw (160:2cm) -- (340:2cm) node [pos=0.25,fill=black!10!white]{$b$};

%%%%%%%%%%%%%%%%end of circle case 4%%%%%%%%%%%%%%%%%%%%%%%%%%%%%%%

\end{tikzpicture}
\end{center}
\caption{The middle term of the extension of $b$ by $a$. The arrows in the third case mean that the distance between the corresponding endpoints is a multiple of $m$.}
\label{fig:middle-term}
\end{figure}
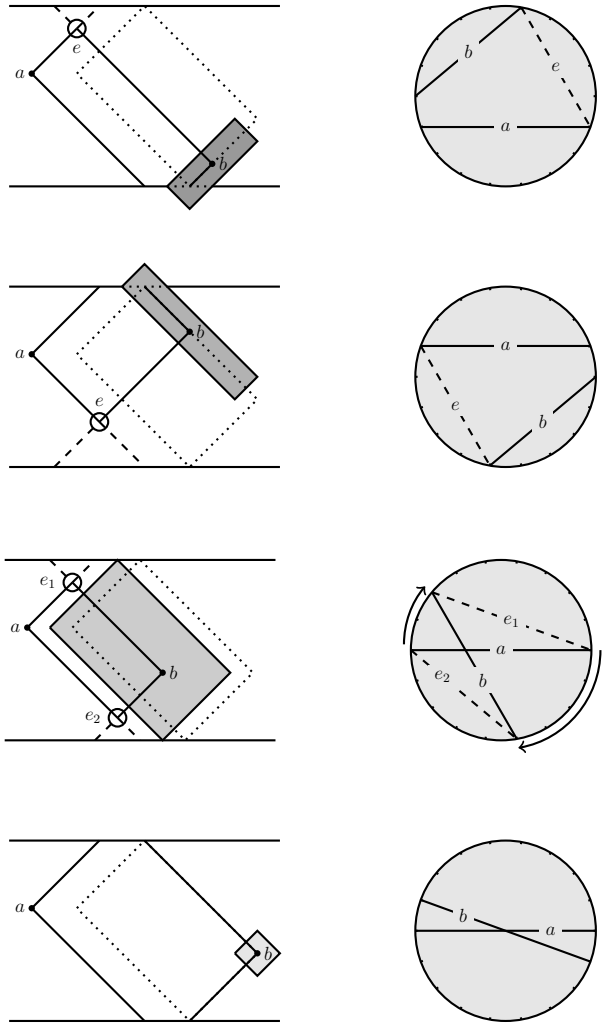

The arguments of Section~\ref{sec:non-indecouterterms} can be applied to $\Cm$ to give the following proposition, which makes it sufficient to consider only extensions between indecomposable objects.

\begin{proposition}\label{prop:extensionsnonindecomposableCm}
Let $\{a_i\}_{i=1}^n$ and $\{b_j \}_{j=1}^m$ be sets of (not necessarily pairwise non-isomorphic) indecomposable objects of $\Cm$. Any extension of the form
\[
\tri{\bigoplus_{i=1}^n a_i}{e}{\bigoplus_{j=1}^m b_j}\]
can be computed iteratively from extensions whose outer terms are indecomposable and built from $\{a_i \}_{i=1}^n$ and $\{b_j \}_{j=1}^m$. 
\end{proposition}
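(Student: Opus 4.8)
The plan is to adapt the proof of Theorem~\ref{thm:extensionsnonindecomposable}, the corresponding statement for $\Tw$ carried out in Section~\ref{sec:non-indecouterterms}, checking at each step that the arguments survive the passage to $\Cm$. All the formal ingredients are in place: $\Cm$ is $\kk$-linear, Hom-finite and Krull-Schmidt, it has Serre functor $\SSS = \Sigma^w$, its Hom-spaces are at most one-dimensional (each $\Hom_{\Cm}(a,b)$ is a single summand $\Hom_{\Db(\kk A_n)}(a,\Sigma^{km}\tau^k b)$, and Hom-spaces in type $A$ are at most one-dimensional), and, crucially, $\ext^1_{\Cm}(b,a)\leq 1$ for all indecomposables by Proposition~\ref{prop:graphicalcalculusorbitcat}. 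In this respect $\Cm$ is even better behaved than $\sT_0$: there are no two-dimensional Ext-groups, so the phenomena handled by Proposition~\ref{prop:Sigma-a} do not occur.

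First I would observe that Corollary~\ref{cor:m=1directsum}, which reduces any extension $\tri{a}{e}{\bigoplus_j b_j}$ to its factorisation-free part, applies unchanged, having been proved for an arbitrary Krull--Schmidt triangulated category. The one genuinely new ingredient required is the $\Cm$-analogue of the factorisation results of \cite[Propositions~2.1 and 2.2]{HJ} (that is, of Proposition~\ref{prop:factoring}); since the proofs in \cite{HJ} use only one-dimensionality of the Hom-spaces and Serre duality, both of which hold in $\Cm$, these factorisation properties carry over, and in any case can be read off the combinatorial model of $\Cm$. Granting them, the $\Cm$-version of Lemma~\ref{lem:orthogonal} --- that a family $\{h_i\colon b_i\to\Sigma a\}$ of maps out of indecomposables lying in the Ext-support of $a$ is factorisation-free precisely when the $b_i$ are pairwise Hom-orthogonal --- follows by the same case analysis, now carried out using the Hom- and Ext-hammock descriptions of Section~\ref{sec:orbit} (in particular Corollary~\ref{cor:exthammockconditions}) in place of Propositions~\ref{prop:hom-hammocks} and \ref{prop:ext-hammocks}.

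Next I would re-prove the $\Cm$-analogues of Lemmas~\ref{lem:extensionsaisandeis} and \ref{lem:extbism>=3}: for Hom-orthogonal indecomposables $b_1,b_2$ in the Ext-support of $a$, suitably ordered, the only non-vanishing cross-extensions among $a$, the indecomposable summands $e'_i,e''_i$ of the middle terms of $\tri{a}{e_i}{b_i}$ furnished by Proposition~\ref{prop:graphicalcalculusorbitcat}, and the $b_j$, are $\Ext^1_{\Cm}(b_2,e''_1)$ and $\Ext^1_{\Cm}(b_1,e'_2)$, and the corresponding middle terms are again read off the graphical calculus (Proposition~\ref{prop:graphicalcalculusorbitcat} and Figure~\ref{fig:middle-term}). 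With these vanishing statements in hand, the inductive octahedral-axiom argument of Proposition~\ref{prop:m=1incomparable} computes the middle term of $\tri{a}{e}{\bigoplus_{i=1}^m b_i}$ for indecomposable $a$, and the octahedral-axiom induction on $n$ from the proof of Theorem~\ref{thm:extensionsnonindecomposable} then reduces the general case to this one.

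I expect the main obstacle to be the case analysis underlying the $\Cm$-analogues of Lemmas~\ref{lem:orthogonal}, \ref{lem:extensionsaisandeis} and \ref{lem:extbism>=3}. Because $\Cm$ has only finitely many indecomposable objects, its AR quiver $\Gamma(n,m)$ wraps around the polygon $\mcP_{n,m}$, so one must be substantially more careful than in $\Tw$ about the possible mutual positions of the diagonals $\arc{a}$, $\arc{b_i}$, $\arc{e'_i}$, $\arc{e''_i}$ and about which Ptolemy configurations can arise. Conceptually, however, nothing beyond the graphical calculus of Proposition~\ref{prop:graphicalcalculusorbitcat}, the Ext-hammock description of Corollary~\ref{cor:exthammockconditions}, and the transferred factorisation properties is needed, and the formal backbone --- Corollary~\ref{cor:m=1directsum} together with the two octahedral inductions --- is identical to the $\Tw$ case.
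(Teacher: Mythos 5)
Your proposal is correct and takes the same route as the paper, which offers no proof of this proposition beyond the one-line remark that ``the arguments of Section~\ref{sec:non-indecouterterms} can be applied to $\Cm$''; your careful accounting of which ingredients transfer (one-dimensionality of Hom- and Ext-spaces via the Lemma in Section~\ref{sec:orbit} and Proposition~\ref{prop:graphicalcalculusorbitcat}, Serre duality, the factorisation properties, Corollary~\ref{cor:m=1directsum}, and the two octahedral inductions) is a faithful elaboration of that assertion. Your observation that $\Cm$ is simpler than $\sT_0$ in one respect --- $\ext^1_{\Cm}(b,a)\leq 1$ always, so the two-dimensional phenomenon handled by Proposition~\ref{prop:Sigma-a} never arises --- is correct and slightly streamlines the transfer.
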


\subsection{Extension closure}
Let $\arc{a}$ and $\arc{b}$ be $m$-diagonals and recall the notion of $\tE(\arc{a},\arc{b})$ from Section \ref{sec:Ptolemy}. 

\begin{remark}
Let $m = 2$, $\arc{a} = \{i,i+1\}$ and $b = \Sigma a$. Note that $\arc{a}$ and $\arc{b}$ are incident with the vertex $i+1$, and this is the only case when extensions occur between noncrossing and non-neighbouring $m$-diagonals. In this case, the middle term of the extension of $b$ by $a$ is zero, and $\Ext^1_{\sC_{2}} (a,b) \simeq D \Hom_{\sC_{2}} (a, \tau^{-1} a) = 0$. Therefore, $\tE(\arc{a},\arc{b}) = \emptyset$.
\end{remark}

Figure \ref{fig:middle-term} (together with Proposition~\ref{prop:extensionsnonindecomposableCm}) shows us that $\tE(\arc{a},\arc{b})$ is contained in the set of Ptolemy $m$-diagonals associated to $\arc{a}$ and $\arc{b}$. Our aim is to prove that these two sets are in fact equal, giving Theorem~\ref{thm:orbit-extension-closure}. This follows from the following three propositions and Proposition~\ref{prop:extensionsnonindecomposableCm}.

\begin{proposition}
Let $a, b \in \ind{\Cm}$ be such that $\arc{a}$ and $\arc{b}$ are neighbouring $m$-diagonals. Then $\tE(\arc{a},\arc{b})$ contains all the Ptolemy $m$-diagonals of class II associated to $\arc{a}$ and $\arc{b}$. 
\end{proposition}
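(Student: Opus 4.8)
The plan is to adapt the proof of Proposition~\ref{prop:neighbourotherinclusion} to the orbit category, using Corollary~\ref{cor:exthammockconditions} for the existence of extensions and Proposition~\ref{prop:graphicalcalculusorbitcat} together with Figure~\ref{fig:middle-term} for the shape of the middle term. Recall it has already been established (from Figure~\ref{fig:middle-term} and Proposition~\ref{prop:extensionsnonindecomposableCm}) that $\tE(\arc a,\arc b)$ is contained in the set of Ptolemy $m$-diagonals associated to $\arc a$ and $\arc b$, which for neighbouring $\arc a,\arc b$ is exactly the set $S$ of Ptolemy $m$-diagonals of class II associated to them; moreover $|S|\in\{1,2\}$. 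Thus it suffices to prove the reverse inclusion $S\subseteq\tE(\arc a,\arc b)$.

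First I would record that, since $\arc a=\{a_1,a_2\}$ and $\arc b=\{b_1,b_2\}$ are neighbouring, the pair $\{x,x-1\}$ realising $d(\arc a,\arc b)=1$ has one endpoint on each of $\arc a,\arc b$; hence either $\arc b$ is incident with $a_1+1$ or $a_2+1$, in which case Corollary~\ref{cor:exthammockconditions}(1)--(2) gives $\Ext^1_{\Cm}(b,a)\ne 0$, or $\arc a$ is incident with $b_1+1$ or $b_2+1$, in which case the dual statement gives $\Ext^1_{\Cm}(a,b)\ne 0$. Next I would check $b\not\simeq\Sigma^{\pm 1}a$: for any $m$-diagonal $\{a_1,a_2\}$ the endpoints are at least two vertices apart on the boundary of $\mcP_{n,m}$, so the shifted diagonal $\{a_1\pm 1,a_2\pm 1\}$ interleaves with $\{a_1,a_2\}$ and therefore crosses $\arc a$ rather than being its neighbour; the only overlap between a diagonal and its shift that is neither a crossing nor a neighbouring pair is the shared-vertex situation ($m=2$, $\arc a=\{i,i+1\}$, $b=\Sigma a$), which has $d(\arc a,\arc b)=0$ and is precisely the case excluded by the remark at the start of this subsection. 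Consequently the non-split extension provided above has, by the proof of Proposition~\ref{prop:graphicalcalculusorbitcat}, a non-zero middle term (the degenerate zero-middle-term triangles there all have $b\simeq\Sigma^{\pm 1}a$), so $\tE(\arc a,\arc b)\ne\emptyset$.

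I would then translate the middle term combinatorially. For neighbouring $\arc a,\arc b$ the relevant cases of Figure~\ref{fig:middle-term} show the middle term is a single indecomposable $e$ whose $m$-diagonal $\arc e$ joins the two endpoints of $\arc a$ and $\arc b$ that do not realise the distance $1$; since $\arc a$ and $\arc b$ do not cross there is no Ptolemy $m$-diagonal of class I in play and $\arc e\ne\arc a,\arc b$, so $\arc e\in S$. If $|S|=1$ this forces $\tE(\arc a,\arc b)=S$, being a non-empty subset of a singleton. If $|S|=2$, a position argument as in Proposition~\ref{prop:neighbourotherinclusion} (using that an $m$-diagonal cutting off a digon can exist only when $m=2$) shows $\arc b$ is the ``symmetric'' neighbour of $\arc a$, both $\{a_1,a_1\mp 1\}$ and $\{a_2,a_2\pm 1\}$ realising $d(\arc a,\arc b)=1$; then Corollary~\ref{cor:exthammockconditions} yields $\Ext^1_{\Cm}(b,a)\ne 0$ and $\Ext^1_{\Cm}(a,b)\ne 0$ simultaneously, and the middle terms of the two non-split triangles $\tri{a}{e}{b}$ and $\tri{b}{f}{a}$ are the two distinct elements of $S$ (they join the two distinct pairs of far endpoints). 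In either case $S\subseteq\tE(\arc a,\arc b)$.

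The step I expect to be the main obstacle is the two-element case of $S$: it requires pinning down, directly from the inequalities in Corollary~\ref{cor:exthammockconditions}, that \emph{both} directional extensions are non-zero and that neither middle term collapses to $0$ (as in the exceptional $\sC_2$ configurations in the proof of Proposition~\ref{prop:graphicalcalculusorbitcat}); ruling out this collapse is exactly where the constraint that each side of an $m$-diagonal has at least $m\ge 2$ vertices does the real work.
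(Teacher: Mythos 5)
Your proposal follows the same approach as the paper, which simply says ``Similar to the proof of Proposition~\ref{prop:neighbourotherinclusion}''. Your elaboration is correct in its essentials: establishing $\Ext^1\ne 0$ in one direction via Corollary~\ref{cor:exthammockconditions}, ruling out $b\simeq\Sigma^{\pm 1}a$ for neighbours to get a non-zero middle term, and then observing that by Figure~\ref{fig:middle-term} the (unique indecomposable) middle term is exactly the Ptolemy $m$-diagonal of class~II, so a non-empty $\tE(\arc a,\arc b)$ inside a singleton $S$ must equal $S$.

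One remark on the $\lvert S\rvert=2$ branch, which you flag as the obstacle: in $\Cm$ this case in fact degenerates. If two pairs of endpoints both realise $d(\arc a,\arc b)=1$, the arcs must be nested with both pairs adjacent, $\arc b=\{a_1\mp 1,a_2\pm 1\}$ (the ``symmetric'' situation you describe, or its cyclic variant), and then \emph{both} candidate Ptolemy arcs are $\{a_1,a_1\mp 1\}$ and $\{a_2,a_2\pm 1\}$ — boundary edges of $\mcP_{n,m}$, hence not $m$-diagonals and not in $S$. Since by Proposition~\ref{prop:graphicalcalculusorbitcat} a neighbouring pair with $b\not\simeq\Sigma a$ has a non-zero middle term whose $m$-diagonal is precisely this Ptolemy arc, that arc must be admissible; so this ``doubly nested'' configuration cannot consist of two valid $m$-diagonals, and $\lvert S\rvert\le 1$. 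Your worry about pinning down both directional extensions is therefore moot, and the claim you end with (that the two middle terms are the two distinct elements of $S$) is vacuous rather than needing the careful inequality-chasing you anticipate. This is a small simplification compared to the infinity-gon case, where $(x,x-1)$ and $(y+1,y)$ are genuine admissible arcs for $w=-1$; none of it affects the correctness of your argument.
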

\begin{proof}
Similar to the proof of Proposition \ref{prop:neighbourotherinclusion}.
\end{proof}

\begin{proposition}
Let $a, b \in \ind{\Cm}$ be such that $\arc{a}$ and $\arc{b}$ are crossing $m$-diagonals and $\Ext^1_{\Cm} (b,a) \neq 0$. Then $\tE(\arc{a},\arc{b})$ contains all the Ptolemy $m$-diagonals of class I associated to $\arc{a}$ and $\arc{b}$. 
\end{proposition}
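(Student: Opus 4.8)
The plan is to follow the pattern of the proof of Proposition~\ref{prop:crossingwithextensionsS}, using the graphical calculus of Proposition~\ref{prop:graphicalcalculusorbitcat} in place of Theorem~\ref{thm:graphical-calculus}. Since $\arc{a}$ and $\arc{b}$ cross and $\Ext^1_{\Cm}(b,a)\neq 0$, Corollary~\ref{cor:exthammockconditions} leaves only two possibilities: either $\arc{b}$ is obtained from $\arc{a}$ by adding positive multiples of $m$ to its two endpoints (case (3) of that corollary), or $b=\Sigma a$ (case (4)); cases (1) and (2) describe neighbouring, not crossing, $m$-diagonals, and the $m=2$ mouth configuration of the preceding remark is excluded for the same reason.

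First I would fix coordinates and determine which Ptolemy arcs of class I are admissible. Write $\arc{a}=\{a_1,a_2\}$ with $a_1<a_2$; in case (3) Corollary~\ref{corollary:exthammock} gives $b_1=a_2+im$, $b_2=a_1+jm$ with $i,j\geq 1$ and clockwise order $C(a_2+m,b_1,a_1+m,b_2)$, so that the four arcs connecting the endpoints of $\arc{a}$ and $\arc{b}$ other than $\arc{a},\arc{b}$ themselves are $\{a_1,b_1\}$, $\{a_2,b_2\}$, $\{a_1,b_2\}$ and $\{a_2,b_1\}$. Recalling that $\{x,y\}$ is an $m$-diagonal precisely when $y-x\equiv-1\pmod m$, and using $a_2-a_1\equiv-1\pmod m$, a short modular computation gives $b_1-a_1\equiv a_2-b_2\equiv-1\pmod m$ while $b_1-a_2\equiv b_2-a_1\equiv 0\pmod m$; hence exactly $\{a_1,b_1\}$ and $\{a_2,b_2\}$ are admissible Ptolemy $m$-diagonals of class I. In case (4) the analogous computation shows that none of the four arcs is an $m$-diagonal unless $m=2$, in which case exactly $\{a_1,a_1+1\}$ and $\{a_2,a_2+1\}$ are.

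It then remains to exhibit these (at most two) arcs as the indecomposable summands of the middle term of the non-split extension $\tri{a}{e}{b}$, which is legitimate since $e\in a*b$ forces the arcs of its summands into $\tE(\arc{a},\arc{b})$. By Proposition~\ref{prop:graphicalcalculusorbitcat} this middle term is $F_s(a)\cap F_e(b)$; unwinding the frame conditions ($\Hom\neq 0$ together with $\Ext^1=0$) through the Hom- and Ext-hammock description of the previous subsection, and tracking the clockwise order around $\mcP_{n,m}$ as in the proof of Proposition~\ref{prop:graphicalcalculusorbitcat} itself, one checks that $F_s(a)\cap F_e(b)$ has exactly the two summands whose arcs are $\{a_1,b_1\}$ and $\{a_2,b_2\}$, in agreement with the third diagram of Figure~\ref{fig:middle-term}; the $m=2$ sub-case of case (4) is treated identically using the extension $\tri{a}{e}{\Sigma a}$, and the $m>2$ sub-case is vacuous since there are then no admissible Ptolemy $m$-diagonals of class I. I expect the main obstacle to be precisely this last identification: translating the frame conditions into the polygon model and keeping the circular order and the divisibility-by-$m$ constraints straight. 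Everything else is routine and parallels Proposition~\ref{prop:crossingwithextensionsS} closely.
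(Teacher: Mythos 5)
Your strategy coincides with the paper's: the proof in the paper is a one-liner observing that among the four Ptolemy arcs of class~I, only the two that appear as summands of the middle term (i.e.\ the $\arc{e_i}$ of Figure~\ref{fig:middle-term}) are $m$-diagonals; the other two have an endpoint distance that is a multiple of $m$ and hence are never $m$-diagonals. Your modular bookkeeping ($b_1-a_2\equiv b_2-a_1\equiv 0$ and $b_1-a_1\equiv a_2-b_2\equiv -1\pmod m$) spells out exactly this fact, which the paper leaves to Figure~\ref{fig:middle-term}, and your reduction of the remaining work to ``these two arcs are the summands of the middle term, hence lie in $\tE(\arc{a},\arc{b})$'' is the same finish.

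There is, however, one concrete misstep in your handling of case~(4) of Corollary~\ref{cor:exthammockconditions} when $m=2$. You correctly identify that $\{a_1,a_1+1\}$ and $\{a_2,a_2+1\}$ are the only admissible Ptolemy arcs of class~I in this sub-case, but you claim they ``are treated identically using the extension $\tri{a}{e}{\Sigma a}$.'' That extension is classified by $\Ext^1(\Sigma a,a)\simeq\Hom(\Sigma a,\Sigma a)=\kk\cdot\id$, so its connecting map is an isomorphism and its middle term is \emph{zero} -- this is exactly what Case~4 of the proof of Proposition~\ref{prop:graphicalcalculusorbitcat} records ($F_s(a)\cap F_e(\Sigma a)=\emptyset$). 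The two admissible Ptolemy arcs therefore do not arise as summands of $e$; they arise as the summands of the middle term of the \emph{other} extension $\tri{\Sigma a}{f}{a}$ (which is non-split whenever $a$ does not lie on the mouth, i.e.\ whenever $\arc{a}$ and $\arc{\Sigma a}$ actually cross). Since $\tE(\arc{a},\arc{b})$ is by definition built from extensions in both directions, your final conclusion still holds; but as written, your verification for this sub-case would show nothing, because you are pointing at a triangle with trivial middle term. Replace ``using the extension $\tri{a}{e}{\Sigma a}$'' by ``using the extension $\tri{\Sigma a}{f}{a}$'' (with $a'=\Sigma a$, $b'=a$ this becomes an instance of case~(3) and your earlier computation of the middle-term arcs applies verbatim) and the argument is complete.
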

\begin{proof}
It follows immediately from the fact that the Ptolemy diagonals of class I associated to $\arc{a}$ and $\arc{b}$ other than $\arc{e_1}$ and $\arc{e_2}$ (see Figure \ref{fig:middle-term}) are not $m$-diagonals.
\end{proof}

\begin{proposition}
Let $a, b \in \ind{\Cm}$ be such that $\arc{a}$ and $\arc{b}$ are crossing $m$-diagonals and $\Ext^1_{\Cm} (a,b) = 0 = \Ext^1_{\Cm} (b,a)$. Then none of the corresponding Ptolemy diagonals is an $m$-diagonal.
\end{proposition}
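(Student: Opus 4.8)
The plan is to reduce everything to the arc-lengths of the quadrilateral cut out by the four endpoints of $\arc{a}$ and $\arc{b}$. Since the $m$-diagonals $\arc{a}$ and $\arc{b}$ cross, their endpoints are four distinct vertices of $\mcP_{n,m}$; I would list them clockwise as $a_1,\beta,a_2,\gamma$, so that $\arc{a}=\{a_1,a_2\}$ and $\arc{b}=\{\beta,\gamma\}$, and let $p,q,r,s\ge 1$ be the numbers of edges of $\mcP_{n,m}$ on the clockwise boundary arcs $a_1\to\beta$, $\beta\to a_2$, $a_2\to\gamma$, $\gamma\to a_1$; thus $p+q+r+s=N\equiv -2\pmod m$, where $N=m(n+1)-2$, and $m=-w+1\ge 2$. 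A diagonal is an $m$-diagonal precisely when the number of edges on each of its two boundary arcs is $\equiv -1\pmod m$ (each side is a sub-polygon with one more vertex than it has boundary edges, and this number must be divisible by $m$). Applying this to $\arc{a}$ (arc of length $p+q$) and to $\arc{b}$ (arc of length $q+r$) yields $p+q\equiv -1$ and $q+r\equiv -1\pmod m$; hence $p\equiv r\pmod m$, and, using $N\equiv -2\pmod m$, also $s\equiv q\pmod m$.

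Next I would observe that the four Ptolemy diagonals of class~I associated to $\arc{a}$ and $\arc{b}$ are $\{a_1,\beta\}$, $\{\beta,a_2\}$, $\{a_2,\gamma\}$ and $\{\gamma,a_1\}$, whose boundary arcs have $p$, $q$, $r$ and $s$ edges on one side (and $N$ minus that number on the other). So if one of them is an $m$-diagonal then $p\equiv -1$, $q\equiv -1$, $r\equiv -1$, or $s\equiv -1\pmod m$; by the congruences $r\equiv p$ and $s\equiv q$ this forces $p\equiv -1\pmod m$ or $q\equiv -1\pmod m$. It therefore suffices to show that, under our hypotheses, neither congruence can hold.

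Suppose for contradiction that $p\equiv -1\pmod m$. Together with $p+q\equiv -1\pmod m$ this gives $q\equiv 0\pmod m$, so $m\mid q$, and then $m\mid s$ since $s\equiv q\pmod m$. As $a_2=\beta+q$ and $a_1=\gamma+s$ (labels taken mod $N$), this says that $\arc{a}$ is obtained from $\arc{b}$ by adding the multiples $q$ and $s$ of $m$ to its two endpoints; moreover $q,s\ge m\ge 2$, so these are genuine nonzero shifts. Since $\arc{a}$ crosses $\arc{b}$, Corollary~\ref{cor:exthammockconditions} (applied with the roles of $a$ and $b$ interchanged) gives $\Ext^1_{\Cm}(a,b)\ne 0$, contradicting the hypothesis. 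The case $q\equiv -1\pmod m$ is entirely symmetric: it forces $m\mid p$ and $m\mid r$, so $\arc{b}$ is obtained from $\arc{a}$ by adding multiples of $m$ to its endpoints, whence $\Ext^1_{\Cm}(b,a)\ne 0$ by Corollary~\ref{cor:exthammockconditions}, again a contradiction. Hence $p\not\equiv -1$ and $q\not\equiv -1\pmod m$, so none of the class~I Ptolemy diagonals of $\arc{a}$ and $\arc{b}$ is an $m$-diagonal.

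The only point requiring care is the step in the last paragraph identifying the arc-length condition ``$\arc{a}$ is obtained from $\arc{b}$ by adding multiples of $m$ to its endpoints'' with the crossing case of the Ext-hammock in Corollary~\ref{cor:exthammockconditions}: one must keep track of which endpoint of $\arc{b}$ is shifted to which endpoint of $\arc{a}$, and check that the crossing hypothesis together with $q,s\ge m$ rules out the exceptional possibility $\arc{a}=\Sigma\arc{b}$ (which for $m\ge 2$ is never obtained by adding multiples of $m$). Everything else is routine modular arithmetic once the parametrization $p,q,r,s$ is fixed.
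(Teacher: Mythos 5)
Your proof is correct, but it takes a genuinely different route from the paper's. The paper argues in two quick steps: by Remark~\ref{rmk:torsionpairsCvsT}, vanishing of $\Ext^1_{\Cm}$ in both directions forces vanishing of $\Ext^1_{\Tw}$ between the corresponding objects of $\cC_w$; then Lemma~\ref{lemma:extvanishnotadmissiblePtolemyarcs} (already proved for the $\infty$-gon) says that crossing admissible arcs with no extensions have no admissible Ptolemy arcs of class I, which under the CS13 correspondence is exactly the statement that the Ptolemy diagonals are not $m$-diagonals. You instead redo the modular arithmetic directly in $\mcP_{n,m}$: parametrize by boundary arc-lengths $p,q,r,s$, use $m$-diagonality of $\arc{a},\arc{b}$ to get $p\equiv r$, $q\equiv s \pmod m$, and then show that $m$-diagonality of any class-I Ptolemy diagonal forces $p\equiv -1$ or $q\equiv -1 \pmod m$, which via $q\equiv 0$ or $p\equiv 0$ (and hence $s\equiv 0$ or $r\equiv 0$) puts one arc in the crossing part of the Ext-hammock of the other by Corollary~\ref{cor:exthammockconditions}(3), a contradiction. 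Your approach is self-contained and transparent about the polygon combinatorics, at the cost of re-deriving the contrapositive of the Ext-hammock characterisation that the paper instead pulls back from $\Tw$; the paper's route is shorter given that Lemma~\ref{lemma:extvanishnotadmissiblePtolemyarcs} and Remark~\ref{rmk:torsionpairsCvsT} are already in hand. You are also right to flag the labelling issue when invoking Corollary~\ref{cor:exthammockconditions}: the crossing hypothesis together with $q,s\ge m$ (resp. $p,r\ge m$) does rule out the $\arc{a}=\Sigma\arc{b}$ degeneracy, and the circular condition in Corollary~\ref{corollary:exthammock} is automatic from the clockwise ordering $a_1,\beta,a_2,\gamma$ once all of $p,q,r,s\ge 1$ and the relevant two lengths are $\ge m$.
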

\begin{proof}
By Remark~\ref{rmk:torsionpairsCvsT}, there is no extension between the corresponding $d=(w-1)$-admissible arcs in $\Tw$. Hence, by Lemma~\ref{lemma:extvanishnotadmissiblePtolemyarcs} the corresponding Ptolemy diagonals of class I are not $m$-diagonals.
\end{proof}

%==========================================================================
% The bibliography

%================================================================

\end{document}